\tikzset{commutative diagrams/.cd,arrow style=tikz,diagrams={>=stealth'}}
\tikzset{middlearrow/.style={decoration={markings,mark=at position #1 with {\arrow[very thick]{stealth' reversed}}},postaction={decorate}}}
\tikzset{middlearrowrev/.style={decoration={markings,mark=at position #1 with {\arrow[very thick]{stealth'}}},postaction={decorate}}}
\def\centerarc[#1](#2)(#3:#4:#5)
\renewcommand{\epsilon}{\ensuremath{\varepsilon}}
\renewcommand{\to}{\ensuremath{\longrightarrow}}
\newcommand{\R}{\ensuremath{\mathbb R}}
\newcommand{\T}{\ensuremath{\mathbb{T}^{2}}}
\newcommand{\rp}{\ensuremath{{\mathbb R}P^2}}
\newcommand{\C}{\ensuremath{\mathbb C}}
\newcommand{\N}{\ensuremath{\mathbb N}}
\newcommand{\Z}{\ensuremath{\mathbb Z}}
\newcommand{\dt}[1][2]{\ensuremath{\mathbb D}^{#1}}
\newcommand{\St}[1][2]{\ensuremath{\mathbb S}^{#1}}
\newcommand{\vide}{\ensuremath{\varnothing}}
\newcommand{\im}[1]{\ensuremath{\operatorname{\text{Im}}\left({#1}\right)}}
\renewcommand{\ker}[1]{\ensuremath{\operatorname{\text{Ker}}\left({#1}\right)}}
\def\@enum@{\list{\csname label\@enumctr\endcsname}%
           {\usecounter{\@enumctr}\def\makelabel##1{
\normalfont\ignorespaces\emph{{##1}~}}
\setlength{\labelsep}{3pt}
\setlength{\parsep}{0pt}
\setlength{\itemsep}{0pt}
\setlength{\leftmargin}{0pt}
\setlength{\labelwidth}{0pt}
\setlength{\listparindent}{\parindent}%
\setlength{\itemsep}{0pt}
\setlength{\itemindent}{0pt}
\setlength{\topsep}{3pt plus 1pt minus 1 pt}}}
\renewcommand\theenumi{\@alph\c@enumi}
\renewcommand\theenumii{\@alph\c@enumii}
\renewcommand\theenumiii{\@alph\c@enumiii}
\renewcommand\theenumiv{\@alph\c@enumiv}
\newcommand{\map}[4][\to]{\ensuremath{{#2} \colon\thinspace {#3} #1 {#4}}}
\newcommand{\id}{\ensuremath{\operatorname{\text{Id}}}}
\newcommand{\lhra}{\mathrel{\lhook\joinrel\to}}
\newcommand{\splitmap}[3]{\operatorname{\text{Sp}}(#1,#2,#3)}
\DeclareRobustCommand*{\up}[1]{\textsuperscript{#1}}
\renewcommand{\th}{\ensuremath{\up{th}}}
\newcommand{\ft}[1][n]{\ensuremath{\Delta_{#1}^{2}}}
\newcommand{\brak}[1]{\ensuremath{\left\{ #1 \right\}}}
\newcommand{\ang}[1]{\ensuremath{\left\langle #1\right\rangle}}
\newcommand{\set}[2]{\ensuremath{\brak{#1 \,\mid\, #2}}}
\newcommand{\setr}[2]{\ensuremath{\brak{#1 \,\left\lvert \, #2 \right.}}}
\newtheoremstyle{theoremm}{}{}{\itshape}{}{\scshape}{.}{ }{}
\theoremstyle{theoremm}
\newtheorem{thm}{Theorem}
\newtheorem{lem}[thm]{Lemma}
\newtheorem{prop}[thm]{Proposition}
\newtheorem{cor}[thm]{Corollary}
\newtheoremstyle{remarkk}{}{}{}{}{\scshape}{.}{ }{}
\theoremstyle{remarkk}
\newtheorem*{defn}{Definition}
\newtheorem*{defns}{Definitions}
\newtheorem*{rem}{Remark}
\newtheorem{rems}[thm]{Remarks}
\newcommand{\reth}[1]{Theorem~\protect\ref{th:#1}}
\newcommand{\relem}[1]{Lemma~\protect\ref{lem:#1}}
\newcommand{\repr}[1]{Proposition~\protect\ref{prop:#1}}
\newcommand{\reco}[1]{Corollary~\protect\ref{cor:#1}}
\newcommand{\resec}[1]{Section~\protect\ref{sec:#1}}
\newcommand{\req}[1]{equation~(\protect\ref{eq:#1})}
\newcommand{\reqref}[1]{(\protect\ref{eq:#1})}
\begin{document}

\title{Fixed points of $n$-valued maps, the fixed point property and the case of surfaces~--~a braid approach}

\author{DACIBERG~LIMA~GON\c{C}ALVES\\
Departamento de Matem\'atica - IME-USP,\\
Caixa Postal~66281~-~Ag.~Cidade de S\~ao Paulo,\\ 
CEP:~05314-970 - S\~ao Paulo - SP - Brazil.\\
e-mail:~\url{dlgoncal@ime.usp.br}\vspace*{4mm}\\
JOHN~GUASCHI\\
Normandie Universit\'e, UNICAEN,\\
Laboratoire de Math\'ematiques Nicolas Oresme UMR CNRS~\textup{6139},\\
CS 14032, 14032 Caen Cedex 5, France.\\
e-mail:~\url{john.guaschi@unicaen.fr}}

\maketitle

\begin{abstract}
\noindent We study the fixed point theory of $n$-valued maps of a space $X$ using the fixed point theory of maps between $X$ and its configuration spaces. We give some general results to decide whether an $n$-valued map can be deformed to a fixed point free $n$-valued map. In the case of surfaces, we provide an algebraic criterion in terms of the braid groups of $X$ to study this problem. If $X$ is either the $k$-dimensional ball or an even-dimensional real or complex projective space, we show that the fixed point property holds for $n$-valued maps for all $n\geq 1$, and we prove the same result for even-dimensional spheres for all $n\geq 2$. If $X$ is the $2$-torus, we classify the homotopy classes of $2$-valued maps in terms of the braid groups of $X$. We do not currently have a complete characterisation of the homotopy classes of split $2$-valued maps of the $2$-torus that contain a fixed point free representative, but we give an infinite family of such homotopy classes.
%
\end{abstract}

\section{Introduction}\label{sec:intro}

Multifunctions and their fixed point theory have been studied for a number of years, see for example the books~\cite{Be,Gor}, where fairly general classes of multifunctions and spaces are considered. Continuous $n$-valued functions are of particular interest, and more information about their fixed point theory on finite complexes may be found in~\cite{Brr1,Brr2,Brr3,Brr4,Brr5,Brr6,Bet1,Bet2,Sch0,Sch1,Sch2}. In this paper, we will concentrate our attention on the case of metric spaces, and notably that of surfaces. In all of what follows, $X$ and $Y$ will be topological spaces, and  $\phi\colon\thinspace  X \multimap  Y$ will be a multivalued function \emph{i.e.}\  a function that to each $x\in X$ associates a non-empty subset $\phi(x)$ of $Y$. Following the notation and the terminology of the above-mentioned papers, a multifunction $\phi\colon\thinspace  X \multimap  Y$  is \emph{upper semi-continuous} if for all $x\in X$, $\phi(x)$ is closed, and given an open set $V$ in $Y$, the set $\set{x\in X}{\phi(x)\subset V}$ is open in $X$, is \emph{lower semi-continuous} if the set $\set{x \in  X}{\phi(x)\cap V \neq  \varnothing}$ is open in $X$, and is \emph{continuous} if it is upper semi-continuous and lower semi-continuous. Let $I$ denote the unit interval $[0,1]$. We recall the definitions of (split) $n$-valued maps.

\begin{defns}
Let $X$ and $Y$ be topological spaces, and let $n\in \N$.
\begin{enumerate}
\item An \emph{$n$-valued map} (or multimap) $\phi\colon\thinspace X \multimap Y$,  is a continuous multifunction that to each $x\in X$ assigns an unordered subset of $Y$ of cardinal exactly $n$.  
\item A \emph{homotopy} between two $n$-valued maps $\phi_1,\phi_2\colon\thinspace X \multimap Y$ is an $n$-valued  map $H\colon\thinspace  X\times I \multimap Y$ such that $\phi_1=H ( \cdot , 0)$ and  $\phi_2=H ( \cdot , 1)$.
\end{enumerate}
\end{defns}

\begin{defn}[\cite{Sch0}]
An $n$-valued function $\phi\colon\thinspace  X \multimap Y$ is said to be a \emph{split $n$-valued map} if there exist single-valued maps $f_1, f_2, \ldots, f_n\colon\thinspace X \to Y$ such that $\phi(x)=\brak{f_1(x),\ldots,f_n(x)}$ for all $x\in X$. This being the case, we shall write $\phi=\brak{f_1,\ldots,f_n}$. Let $\splitmap{X}{Y}{n}$ denote the set of split $n$-valued maps between $X$ and $Y$.
 \end{defn}

\emph{A priori}, $\phi\colon\thinspace X \multimap Y$ is just an $n$-valued function, but if it is split then it is continuous by \repr{multicont} in the Appendix, which justifies the use of the word `map' in the above definition. Partly for this reason, split $n$-valued maps play an important r\^ole in the theory.

We now  recall  the notion of coincidence of a pair $(\phi, f)$ where $\phi$ is an $n$-valued map and  $f\colon\thinspace X \to Y$ is a  single-valued map (meaning continuous)~\emph{cf.}~\cite{Brr5}.  Let $\id_{X}\colon\thinspace X \to X$ denote the identity map of $X$. 

\begin{defns}
Let $\phi\colon\thinspace  X\multimap Y$  be an $n$-valued map, and let $f\colon\thinspace X \to Y$ be a single-valued map. The set of coincidences of the pair $(\phi, f)$ is denoted by $\operatorname{\text{Coin}}(\phi, f)=\set{x\in X}{f(x)\in \phi(x)}$.  If $X=Y$ and $f=\id_X$ then $\operatorname{\text{Coin}}(\phi, \id_X)=\set{x\in X}{x\in \phi(x)}$ is called the \emph{fixed point set} of $\phi$, and will be denoted by $\operatorname{\text{Fix}}(\phi)$. If $f$ is the constant map $c_{y_0}$ at a point $y_0\in Y$ then $\operatorname{\text{Coin}}(\phi, c_{y_0})=\set{x\in X}{y_0\in \phi(x)}$ is called the set of \emph{roots} of $\phi$ at $y_0$. 
\end{defns} 

Recall that a space $X$ is said to have the \emph{fixed point property} if any self-map of $X$ has a fixed point. This notion may be generalised to $n$-valued maps as follows.

\begin{defn}\label{fppro} 
If $n\in \N$, a space $X$ is said to have the \emph{fixed point property} for $n$-valued maps if any $n$-valued map  $\phi\colon\thinspace  X \multimap X$ has a fixed point.
\end{defn}
 
If $n=1$ then we obtain the classical notion of the fixed point property. It is well known that the fixed point theory of surfaces is more complicated than that of manifolds of higher dimension. This is also the case for $n$-valued maps. A number of results for singled-valued maps of manifolds of dimension at least three may be generalised to the setting of $n$-valued maps, see for example the results of Schirmer from the 1980's~\cite{Sch0,Sch1,Sch2}. In dimension one or two, the situation is more complex, and has only been analysed within the last ten years or so, see~\cite{Brr1} for the study of $n$-valued maps of the circle. The papers~\cite{Brr4,Brr6} illustrate some of the difficulties that occur when the manifold is the $2$-torus $\T$. Our expectation is that the case of surfaces of negative Euler characteristic will be much more involved.

In this paper, we explore the fixed point property for $n$-valued maps, and we extend the famous result of L.~E.~J.~Brouwer that every self-map of the disc has a fixed point to this setting  \cite{Bru}. We will also develop some tools to decide whether an $n$-valued map can be deformed to a fixed point free $n$-valued map, and we give a partial classification of those split $2$-valued maps of $\T$ that can be deformed to fixed point free $2$-valued maps. Our approach to the study of fixed point theory of $n$-valued maps makes use of the homotopy theory of configuration spaces. It is probable that these ideas can also be adapted to coincidence theory. This viewpoint is fairly general. It helps us to understand the theory, and provides some means to perform (not necessarily easy) computations in general. Nevertheless, for some specific situations, such as for surfaces of non-negative Euler characteristic, these calculations are often tractable. To explain our approach, let $F_{n}(Y)$ denote the \emph{$n\th$ (ordered) configuration space} of a space $Y$, defined by:
\begin{equation*}
F_n(Y)=\setr{(y_1,\ldots,y_n)}{\text{$y_i\in Y$, and $y_i\neq y_j$ if $i\neq j$}}.
\end{equation*}
Configuration spaces play an important r\^ole in several branches of mathematics and have been extensively studied, see~\cite{CG,FH} for example. The symmetric group $S_n$ on $n$ elements acts freely on $F_n(Y)$ by permuting coordinates. The corresponding quotient space, known as the \emph{$n\th$ (unordered) configuration space of $Y$}, will be denoted by $D_n(Y)$, and the quotient map will be denoted by $\pi \colon\thinspace  F_{n}(Y) \to D_{n}(Y)$. The \emph{$n\th$ pure braid group $P_n(Y)$} (respectively the \emph{$n\th$ braid group $B_n(Y)$}) of $Y$ is defined to be the fundamental group of $F_n(Y)$ (resp.\ of $D_n(Y)$), and there is a short exact sequence:
\begin{equation}\label{eq:sesbraid}
1\to P_n(Y) \to B_n(Y) \stackrel{\tau}{\to} S_n \to 1,
\end{equation}
where $\tau$ is the homomorphism that to a braid associates its induced permutation. For $i=1,\ldots,n$, let $p_{i}\colon\thinspace F_{n}(Y)\to Y$ denote projection onto the $i\th$ factor. The notion of intermediate configuration spaces was defined in~\cite{GG2,GG4}. More precisely, if $n, m\in \N$,  the subgroup $S_n\times S_m \subset S_{n+m}$ acts freely on $F_{n+m}(Y)$ by restriction, and the corresponding orbit space $F_{n+m}(Y)/(S_n\times S_m)$ is denoted by $D_{n, m}(Y)$. Let $B_{n,m}=\pi_{1}(D_{n, m}(Y))$ denote the associated `mixed' braid group. The space $F_{n+m}(Y)$ is equipped with the topology induced by the inclusion $F_{n+m}(Y)\subset Y^{n+m}$, and $D_{n, m}(Y)$ is equipped with the quotient topology. If $Y$ is a manifold without boundary then the natural projections $\overline{p}_{m,n}\colon\thinspace  D_{m,n}(Y) \to D_m(Y)$ onto the first $m$ coordinates are fibrations. For maps whose target is a configuration space, we have the following notions.

\begin{defns}
Let $X$ and $Y$ be topological spaces, and let $n\in \N$. A map $\Phi\colon\thinspace X \to D_n(Y)$ will be called an \emph{$n$-unordered map}, and a map $\Psi\colon\thinspace X \to F_n(Y)$ will be called an \emph{$n$-ordered map}. For such an $n$-ordered map, for $i=1,\ldots,n$, there exist maps $f_i\colon\thinspace X \to Y$ such that $\Psi(x)=(f_1(x),\ldots, f_n(x))$ for all $x\in X$, and for which $f_i(x)\neq f_j(x)$ for all $1\leq i,j\leq n$, $i\neq j$, and all $x\in X$. In this case, we will often write $\Psi=(f_{1},\ldots,f_{n})$.  
\end{defns}

The fixed point-theoretic concepts that were defined earlier for $n$-valued maps carry over naturally to $n$-unordered  and $n$-ordered maps as follows.

\begin{defns}
Let $X$ and $Y$ be topological spaces, let $f\colon\thinspace X \to Y$ be a single-valued map, let $y_0\in Y$, and let $n\in \N$. 
\begin{enumerate}[(a)]
\item Given an $n$-unordered map $\Phi\colon\thinspace X \to D_n(Y)$, $x\in X$ is said to be a \emph{coincidence} of the pair $(\Phi, f)$ if there exist $(x_1,\ldots,x_n)\in F_n(Y)$ and $j\in\brak{1,\ldots,n}$ such that $\Phi(x)= \pi(x_1,\ldots, x_n)$ and $f(x)=x_j$. The set of coincidences of the pair $(\Phi, f)$ will be denoted by $\operatorname{\text{Coin}}(\Phi, f)$.
If $X=Y$ and $f=\id_X$ then $\operatorname{\text{Coin}}(\Phi, \id_X)$
is called the \emph{fixed point set} of $\Phi$, and is denoted by  $\operatorname{\text{Fix}}(\Phi)$. If $f$ is the constant map $c_{y_0}$ at $y_0$ then $\operatorname{\text{Coin}}(\Phi, c_{y_0})$
is called the set of \emph{roots} of $\Phi$ at $y_0$.

\item Given an $n$-ordered  map $\Psi\colon\thinspace  X\to F_n(Y)$, the set of coincidences of the pair $(\Psi, f)$ is defined by $\operatorname{\text{Coin}}(\Psi, f)= \set{x\in X}{\text{$f(x)= p_j\circ \Psi(x)$ for some $1\leq j\leq n$}}$.  If $X=Y$ and $f=\id_X$ then $\operatorname{\text{Coin}}(\Psi, \id_X)=\set{x\in X}{\text{$x=p_j\circ \Psi(x)$ for some $1\leq j\leq n$}}$  is called the \emph{fixed point set} of $\Psi$, and is denoted by  $\operatorname{\text{Fix}}(\Psi)$. If $f$ is the constant map $c_{y_0}$ then $\operatorname{\text{Coin}}(\Psi, c_{y_0})=\set{x\in X}{\text{$y_0=p_j\circ \Psi(x)$ for some $1\leq j\leq n$}}$ is called the set of \emph{roots} of $\Psi$ at $y_0$. 
\end{enumerate}  
\end{defns} 

In order to study $n$-valued maps via single-valued maps, we use the following natural relation between multifunctions and functions. First observe that there is an obvious bijection between the set of $n$-point subsets of a space $Y$ and the unordered configuration space $D_{n}(Y)$. This bijection induces a one-to-one correspondence between the set of $n$-valued functions from $X$ to $Y$ and the set of functions from $X$ to $D_n(Y)$. In what follows, given an $n$-valued function $\phi\colon\thinspace  X \multimap Y$, we will denote the corresponding function whose target is the configuration space $D_{n}(Y)$ by $\Phi\colon\thinspace  X \to D_n(Y)$, and \emph{vice-versa}. Since we are concerned with the study of continuous multivalued functions, we wish to ensure that this correspondence restricts to a bijection between the set of (continuous) $n$-valued maps and the set of continuous single-valued maps whose target is $D_{n}(Y)$. It follows from \reth{metriccont} that this is indeed the case if $X$ and $Y$ are metric spaces. This hypothesis will clearly be satisfied throughout this paper. If the map $\Phi\colon\thinspace  X \to D_n(Y)$ associated to $\phi$ admits a lift $\widehat{\Phi}\colon\thinspace  X \to F_n(Y)$ via the covering map $\pi$ then we shall say that $\widehat{\Phi}$ is a \emph{lift} of $\phi$ (see \resec{pres} for a formal statement of this definition). We will make use of this notion to develop a correspondence between split $n$-valued maps and maps from $X$ into $F_n(Y)$. As we shall see, the problems that we are interested in for $n$-valued maps, such as coincidence, fixed point and root problems, may be expressed within the context of $n$-unordered maps, to which we may apply the classical theory of single-valued maps.  

Our main aims in this paper are to explore the fixed point property of spaces for $n$-valued maps, and to study the problem of whether an $n$-valued map map can be deformed to a fixed point free $n$-valued map. We now give the statements of the main results of this paper. The first theorem shows that for simply-connected metric spaces, the usual fixed point property implies the fixed point property for $n$-valued maps.
 
\begin{thm}\label{th:sccfpp}
Let $X$ be a simply-connected metric space that has the fixed point property, and let $n\in \N$. Then every $n$-valued map of $X$ has at least $n$ fixed points, so $X$ has the fixed point property for $n$-valued maps. In particular, for all $n,k\geq 1$, the $k$-dimensional disc $\dt[k]$ and the $2k$-dimensional complex projective space $\C P^{2k}$  have the fixed point property for $n$-valued maps.
\end{thm}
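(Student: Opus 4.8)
The plan is to prove that over a simply-connected metric space every $n$-valued map is automatically split, and then to exploit the elementary observation that the fixed point sets of the $n$ branches of a split map are pairwise disjoint.

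First I would pass from the $n$-valued map $\phi\colon\thinspace X\multimap X$ to the associated $n$-unordered map $\Phi\colon\thinspace X\to D_n(X)$, which is continuous by \reth{metriccont} since $X$ is metric. Because $\pi_1(X)$ is trivial, the induced homomorphism $\Phi_{\#}\colon\thinspace \pi_1(X)\to\pi_1(D_n(X))=B_n(X)$ is trivial, and in particular its image is contained in $\pi_{\#}(\pi_1(F_n(X)))$; by the lifting criterion for the covering $\pi\colon\thinspace F_n(X)\to D_n(X)$, in the form established in \resec{pres}, $\Phi$ admits a lift $\widehat{\Phi}=(f_1,\ldots,f_n)\colon\thinspace X\to F_n(X)$. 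Hence $\phi=\brak{f_1,\ldots,f_n}$ is a split $n$-valued map, and by the definition of an $n$-ordered map, $f_i(x)\neq f_j(x)$ for every $x\in X$ whenever $i\neq j$.

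Next I would observe that $x$ is a fixed point of $\phi$ if and only if $x=f_i(x)$ for some $i$, so that $\operatorname{\text{Fix}}(\phi)=\bigcup_{i=1}^{n}\operatorname{\text{Fix}}(f_i)$, and moreover that this union is \emph{disjoint}: if $x\in\operatorname{\text{Fix}}(f_i)\cap\operatorname{\text{Fix}}(f_j)$ with $i\neq j$, then $f_i(x)=x=f_j(x)$, contradicting the fact that the coordinates of a lift are everywhere distinct. Since $X$ has the fixed point property, each single-valued map $f_i\colon\thinspace X\to X$ has a fixed point, so each set $\operatorname{\text{Fix}}(f_i)$ is non-empty; picking one point from each of the $n$ pairwise disjoint sets $\operatorname{\text{Fix}}(f_1),\ldots,\operatorname{\text{Fix}}(f_n)$ yields $n$ distinct fixed points of $\phi$, which proves the first assertion.

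For the stated examples it then suffices to check that the spaces in question are simply-connected metric spaces with the fixed point property. The $k$-disc $\dt[k]$ has the fixed point property by Brouwer's theorem~\cite{Bru}. For $\C P^{2k}$, given a self-map $f$ let $d\in\Z$ be the integer by which $f^{*}$ acts on $H^{2}(\C P^{2k};\Z)$; since $H^{*}(\C P^{2k};\Z)$ is generated in degree $2$, the Lefschetz number of $f$ equals $1+d+d^{2}+\cdots+d^{2k}$, which is never $0$ for $d\in\Z$, so $f$ has a fixed point. Applying the first part gives the conclusion. The only step that is not essentially formal is the lifting of $\Phi$, for which I would rely on the covering-space framework of \resec{pres}; once $\phi$ is known to be split, the argument reduces to the disjointness observation together with classical facts about $\dt[k]$ and $\C P^{2k}$.
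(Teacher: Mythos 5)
Your proposal is correct and follows essentially the same route as the paper: the splitting of $\phi$ via the lifting criterion for the covering $\pi\colon\thinspace F_n(X)\to D_n(X)$ (the content of \relem{split1}), followed by applying the fixed point property to each branch $f_i$ and using the pairwise-disjointness of the sets $\operatorname{\text{Fix}}(f_i)$. Your explicit disjointness remark and the Lefschetz computation for $\C P^{2k}$ merely spell out details the paper leaves implicit.
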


It may happen that a space does not have the (usual) fixed point property but that it has the fixed point property for $n$-valued maps for $n>1$. This is indeed the case for the $2k$-dimensional sphere $\St[2k]$.
 
\begin{prop}\label{prop:S2fp}
If $n\geq 2$ and $k\geq 1$, $\St[2k]$ has the fixed point property for $n$-valued maps.
\end{prop}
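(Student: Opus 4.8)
The plan is to reduce the statement to single-valued maps and then to extract a contradiction from degree theory. Since $k\geq 1$, the sphere $\St[2k]$ is simply connected and metrisable. Let $\phi\colon\thinspace \St[2k]\multimap\St[2k]$ be an $n$-valued map, with associated (continuous, by \reth{metriccont}) map $\Phi\colon\thinspace \St[2k]\to D_{n}(\St[2k])$. The lifting criterion for the covering $\pi\colon\thinspace F_{n}(\St[2k])\to D_{n}(\St[2k])$ is automatically satisfied, because $\pi_{1}(\St[2k])$ is trivial, so $\Phi$ lifts to a map $\widehat{\Phi}=(f_{1},\ldots,f_{n})\colon\thinspace \St[2k]\to F_{n}(\St[2k])$. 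Hence $\phi=\brak{f_{1},\ldots,f_{n}}$ is a split $n$-valued map, where each $f_{i}\colon\thinspace \St[2k]\to\St[2k]$ is a single-valued map and $f_{i}(x)\neq f_{j}(x)$ for all $x\in\St[2k]$ and all $i\neq j$. So in fact every $n$-valued map of $\St[2k]$ is split; this is the same observation that underlies \reth{sccfpp}.

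Next I would argue by contradiction, assuming that $\operatorname{\text{Fix}}(\phi)=\vide$. Then $f_{i}(x)\neq x$ for every $i$ and every $x\in\St[2k]$, so each $f_{i}$ is a fixed point free self-map of $\St[2k]$. Since $2k$ is even, the Lefschetz number of $f_{i}$ is $L(f_{i})=1+\deg(f_{i})$, and the Lefschetz fixed point theorem forces $L(f_{i})=0$, hence $\deg(f_{i})=-1$ for all $i\in\brak{1,\ldots,n}$.

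The hypothesis $n\geq 2$ now lets me compare $f_{1}$ and $f_{2}$. Regard $\St[2k]$ as the unit sphere of $\R^{2k+1}$, and let $a\colon\thinspace \St[2k]\to\St[2k]$ be the antipodal map, of degree $(-1)^{2k+1}=-1$. Because $f_{1}(x)\neq f_{2}(x)$ for every $x$, the straight-line homotopy $H(x,t)=\bigl((1-t)f_{1}(x)-tf_{2}(x)\bigr)/\bigl\lVert (1-t)f_{1}(x)-tf_{2}(x)\bigr\rVert$ never has vanishing denominator: the equality $(1-t)f_{1}(x)=tf_{2}(x)$ with $0\leq t\leq 1$ would force $t=1/2$ and $f_{1}(x)=f_{2}(x)$, which is excluded. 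Thus $H$ is well defined and shows that $f_{1}\simeq a\circ f_{2}$. Taking degrees gives $-1=\deg(f_{1})=\deg(a)\deg(f_{2})=(-1)\cdot(-1)=1$, a contradiction. Therefore $\phi$ has a fixed point, as required.

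The argument is elementary, and I do not expect a real obstacle; the two points needing a little care are the existence of the lift $\widehat{\Phi}$ (which uses only that $\St[2k]$ is simply connected, and hence gives no information when $k=0$) and the routine verification that the displayed homotopy is well defined. It is worth recording that the hypothesis $n\geq 2$ is essential: for $n=1$ the antipodal map is a fixed point free single-valued self-map of $\St[2k]$, so $\St[2k]$ does not have the classical fixed point property. One could equally phrase the comparison of $f_{1}$ and $f_{2}$ using the homotopy equivalence $F_{2}(\St[2k])\simeq\St[2k]$, which keeps the discussion within the configuration space framework of the paper, but the direct homotopy above is more transparent.
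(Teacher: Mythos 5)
Your proof is correct and follows essentially the same route as the paper: split the map using simple connectivity, then use the fact that $f_1$ and $f_2$ are coincidence-free to show $f_1\simeq -f_2$ (the paper uses the geodesic homotopy, you use the normalised straight-line homotopy, which is the same idea), so that their degrees are negatives of one another and cannot both equal $-1$. The only difference is cosmetic: the paper argues directly that one of $f_1,f_2$ has degree $\neq -1$ and hence a fixed point, whereas you phrase it as a contradiction after first deducing $\deg(f_i)=-1$ for all $i$ from the Lefschetz theorem.
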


\reth{sccfpp} and \repr{S2fp} will be proved in \resec{pres}. Although the $2k$-dimensional real projective space $\R P^{2k}$ is not simply connected, in \resec{rp2k} we will show that it has the fixed point property for $n$-valued maps for all $n\in \N$.

\begin{thm}\label{th:rp2Kfpp}
Let $k,n\geq 1$. The real projective space $\R P^{2k}$ has the fixed point property for $n$-valued maps. Further, any $n$-valued map of $\R P^{2k}$ has at least $n$ fixed points.
\end{thm}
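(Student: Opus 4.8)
The plan is to transfer the fixed point problem for an $n$-valued map $\phi$ to a coincidence problem on a finite covering of $\R P^{2k}$, and then to count coincidences one connected component at a time. Given $\phi\colon\thinspace\R P^{2k}\multimap\R P^{2k}$ with associated map $\Phi\colon\thinspace\R P^{2k}\to D_n(\R P^{2k})$, I would take $\widehat{X}=\set{(x,y)\in\R P^{2k}\times\R P^{2k}}{y\in\phi(x)}$, the graph of $\phi$ (which is $\Phi^{\ast}(D_{1,n-1}(\R P^{2k}))$ in the notation of the paper). Then $p\colon\thinspace\widehat{X}\to\R P^{2k}$, $(x,y)\mapsto x$, is a covering of degree $n$ (using the continuity of $\Phi$, see \reth{metriccont}), and $g\colon\thinspace\widehat{X}\to\R P^{2k}$, $(x,y)\mapsto y$, is the evaluation map. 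One checks at once that $p$ restricts to a bijection from $\operatorname{\text{Coin}}(p,g)=\set{(x,x)}{x\in\phi(x)}$ onto $\operatorname{\text{Fix}}(\phi)$, and that this bijection respects the partition of $\widehat{X}$ into its connected components. Since $\pi_1(\R P^{2k})\cong\Z/2$ has a unique index-$2$ subgroup, each component $C$ of $\widehat{X}$ is a connected covering of $\R P^{2k}$ of degree $1$ or $2$, hence homeomorphic to $\R P^{2k}$ or to $\St[2k]$ respectively, and $\sum_C\deg(p|_C)=n$. So it suffices to prove that $\bigl\lvert\operatorname{\text{Coin}}(p|_C,g|_C)\bigr\rvert\geq\deg(p|_C)$ for each component $C$.

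If $\deg(p|_C)=1$, then $p|_C\colon\thinspace C\to\R P^{2k}$ is a homeomorphism and $\operatorname{\text{Coin}}(p|_C,g|_C)$ is in bijection with the fixed point set of the self-map $g|_C\circ(p|_C)^{-1}$ of $\R P^{2k}$; this set is non-empty because $\R P^{2k}$ has the fixed point property (for instance, every self-map has odd Lefschetz number with $\Z/2$ coefficients). This case already subsumes the ``liftable'' situation in which $\Phi_{\ast}(\pi_1(\R P^{2k}))\subset P_n(\R P^{2k})$ and $\widehat{X}$ is a disjoint union of $n$ copies of $\R P^{2k}$, which is handled exactly as in \reth{sccfpp}.

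The essential case is $\deg(p|_C)=2$, where $C\cong\St[2k]$; fixing a homeomorphism $C\cong\St[2k]$ under which $p|_C$ becomes the universal covering $q\colon\thinspace\St[2k]\to\R P^{2k}$, the non-trivial deck transformation of $p|_C$ becomes the antipodal map $a$ of $\St[2k]$. The crucial observation is that $g|_C(e)\neq g|_C(a(e))$ for every $e\in C$: the two points of $(p|_C)^{-1}(p(e))$ both lie in the single fibre $\phi(p(e))$, they are distinct, and the deck transformation interchanges them. Since $\St[2k]$ is simply connected ($k\geq1$), $g|_C$ lifts through $q$ to a map $G\colon\thinspace\St[2k]\to\St[2k]$ with $q\circ G=g|_C$, and a direct check gives $\operatorname{\text{Coin}}(p|_C,g|_C)=\operatorname{\text{Fix}}(G)\sqcup\operatorname{\text{Fix}}(a\circ G)$, the union being disjoint because $a$ is fixed point free. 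The inequality $g|_C(e)\neq g|_C(a(e))$ lifts to $G(e)\notin\brak{G(a(e)),\,-G(a(e))}$ for all $e$, so in particular $G$ and $G\circ a$ are nowhere antipodal, hence homotopic; therefore $\deg(G)=\deg(G\circ a)=\deg(a)\cdot\deg(G)=-\deg(G)$, and $\deg(G)=0$. Consequently $L(G)=1+\deg(G)=1$ and $L(a\circ G)=1+\deg(a\circ G)=1-\deg(G)=1$ are both non-zero, so each of $G$ and $a\circ G$ has a fixed point and $\bigl\lvert\operatorname{\text{Coin}}(p|_C,g|_C)\bigr\rvert\geq2=\deg(p|_C)$. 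Summing over the components yields $\lvert\operatorname{\text{Fix}}(\phi)\rvert\geq n$, and in particular $\operatorname{\text{Fix}}(\phi)\neq\varnothing$, which proves both assertions.

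The only real difficulty I anticipate is the degree computation for $G$ in the case $\deg(p|_C)=2$: this is where the even-dimensionality of $\R P^{2k}$ enters essentially, through $\deg(a)=-1$ on $\St[2k]$, combined with the Borsuk--Ulam-type fact that a self-map of $\St[2k]$ whose values at antipodal points are never equal and never antipodal must be homotopic to its composite with the antipodal map. The reduction to a coincidence problem on $\widehat{X}$, and the treatment of the degree-one components, are formal once \reth{metriccont} and the classical fixed point property of $\R P^{2k}$ are granted.
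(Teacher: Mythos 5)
Your argument is correct, and it takes a genuinely different route from the paper's. The paper works with the all-or-nothing dichotomy on $H=\Phi_{\#}^{-1}(P_n(\R P^{2k}))\subset \Z_2$: if $H$ is the whole group, $\phi$ splits and the classical fixed point property applies to each branch; if $H$ is trivial, the paper pulls $\phi$ back along the universal covering $p\colon\thinspace \St[2k]\to\R P^{2k}$, where it splits as $(f_1,\ldots,f_n)$, and then uses Lefschetz coincidence theory with twisted coefficients on the non-orientable target (first $L(f_i,f_j)=\deg(f_i)=0$ because the branches are pairwise coincidence free, then $L(p,f_i)=\deg(p)=2\neq 0$) to produce a coincidence of each $f_i$ with $p$. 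You instead pull back the tautological $n$-fold covering $D_{1,n-1}(\R P^{2k})\to D_n(\R P^{2k})$ along $\Phi$ to obtain the graph, and refine the dichotomy component by component --- equivalently, orbit by orbit for the monodromy $\Z_2\to S_n$ --- so that fixed orbits are handled by the ordinary fixed point property of $\R P^{2k}$, and $2$-cycles by an elementary antipodality/degree argument on $\St[2k]$ (the relation $G\simeq G\circ a$ forces $\deg(G)=0$, whence $L(G)=L(a\circ G)=1$). What your version buys: it avoids the twisted-coefficient coincidence machinery entirely, using only degrees of self-maps of spheres and the ordinary Lefschetz fixed point theorem, and the count of $n$ fixed points is transparent, since $(x,x)\mapsto x$ is an honest bijection from $\operatorname{\text{Coin}}(p,g)$ onto $\operatorname{\text{Fix}}(\phi)$ and each component contributes at least its covering degree. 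What the paper's version buys is that it runs through the general machinery of \resec{rp2k} (the covering associated to $H$ and the coincidence description of $\operatorname{\text{Fix}}(\phi)$ in Propositions~\ref{prop:nielsen} and~\ref{prop:coinfix}), which is stated for arbitrary spaces and reusable elsewhere.
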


We do not know of an example of a space that has the fixed point property, but that does not have the fixed point property for $n$-valued maps for some $n\geq 2$.

In \resec{fixfree}, we turn our attention to the question of deciding whether an $n$-valued map of a surface $X$ of non-negative Euler characteristic $\chi(X)$ can be deformed to a fixed point free $n$-valued map. In the following result, we give algebraic criteria involving the braid groups of $X$.

\begin{thm}\label{th:defchineg}
Let $X$ be a compact surface without boundary such that $\chi(X)\leq 0$, let $n\geq 1$, and let $\phi\colon\thinspace  X \multimap X$ be an $n$-valued map.
\begin{enumerate}[(a)]
\item\label{it:defchinega} The $n$-valued map $\phi$ can be deformed to a fixed point free $n$-valued  map if and only if there is a homomorphism 
$\varphi\colon\thinspace  \pi_1(X) \to B_{1,n}(X)$ that makes the following diagram commute:
\begin{equation}\label{eq:commdiag1}
\begin{tikzcd}[ampersand replacement=\&]
\&\& B_{1,n}(X) \ar{d}{(\iota_{1,n})_{\#}}\\
\pi_{1}(X) \ar[swap]{rr}{(\id_{X}\times \Phi)_{\#}}  \ar[dashrightarrow, end anchor=south west]{rru}{\varphi}  \&\&\pi_{1}(X) \times B_{n}(X),
\end{tikzcd}
\end{equation}
where $\iota_{1,n}\colon\thinspace D_{1,n}(X) \to X \times D_{n}(X)$ is the inclusion map. 

\item\label{it:defchinegb} If the $n$-valued map $\phi$ is split, it can be deformed to a fixed point free $n$-valued  map if and only if there is a homomorphism $\widehat{\varphi}\colon\thinspace  \pi_1(X) \to P_{n+1}(X)$ that makes the following diagram commute:
\begin{equation*}
\begin{tikzcd}[ampersand replacement=\&]
\&\& P_{n+1}(X) \ar{d}{(\widehat{\iota}_{n+1})_{\#}}\\
\pi_{1}(X) \ar[swap]{rr}{(\id_{X}\times \widehat{\Phi})_{\#}}  \ar[dashrightarrow, end anchor=south west]{rru}{\widehat{\varphi}}  \&\& \pi_{1}(X) \times P_{n}(X).
\end{tikzcd}
\end{equation*} 
where $\widehat{\iota}_{n+1}\colon\thinspace F_{n+1}(X) \to X \times F_{n}(X)$ is the inclusion map.
\end{enumerate}\end{thm}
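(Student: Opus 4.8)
The plan is to reformulate "can be deformed to a fixed point free $n$-valued map" as a purely homotopy-theoretic lifting/extension problem, and then translate that into the algebraic statement about $\pi_1$ using the fact that the spaces in play are aspherical. First I would observe that giving an $n$-valued map $\phi$ of $X$ together with the identity $\id_X$ is the same as giving the single-valued map $\id_X\times \Phi\colon X \to X\times D_n(X)$, and that $\phi$ has a fixed point precisely when this map lands in the "diagonal" subspace, i.e.\ when the point of $X$ lies in the unordered tuple $\Phi(x)$. The complement of that locus is exactly $D_{1,n}(X)$ (the first coordinate distinct from the other $n$), and $\iota_{1,n}\colon D_{1,n}(X)\to X\times D_n(X)$ is the inclusion. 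Hence: \emph{$\phi$ is homotopic to a fixed point free $n$-valued map if and only if $\id_X\times\Phi$ is homotopic, as a map into $X\times D_n(X)$, to a map that factors through $\iota_{1,n}$} — here one must check that a homotopy of the composite $X\to X\times D_n(X)$ can be realised through a homotopy of $\phi$ itself, which uses the product structure and the bijection between $n$-valued maps and maps to $D_n(X)$ established via \reth{metriccont}.

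Next I would invoke asphericity. For a compact surface $X$ with $\chi(X)\le 0$, $X$ is a $K(\pi_1(X),1)$; moreover the configuration spaces $F_{n+1}(X)$, and hence the orbit spaces $D_n(X)$, $D_{1,n}(X)$, $X\times D_n(X)$, are also aspherical (this is classical for surfaces with $\chi\le 0$: the Fadell--Neuwirth fibrations have aspherical fibres and base, so the total spaces are aspherical). Since the domain $X$ and the target $D_{1,n}(X)$ are both $K(\pi,1)$'s, and the inclusion $\iota_{1,n}$ is $\pi_1$-injective (this needs a brief justification — it follows because $F_{n+1}(X)\hookrightarrow X\times F_n(X)$ is $\pi_1$-injective, itself a consequence of the Fadell--Neuwirth fibration structure and the fact that surface braid groups inject as claimed), the existence of a homotopy-lift of $\id_X\times\Phi$ through $\iota_{1,n}$ is equivalent, by the standard obstruction theory for maps into aspherical spaces, to the existence of a group homomorphism $\varphi\colon \pi_1(X)\to B_{1,n}(X)$ with $(\iota_{1,n})_\#\circ\varphi = (\id_X\times\Phi)_\#$. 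That is precisely the commutativity of diagram~\reqref{commdiag1}. This proves part~\reqref{defchinega}.

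For part~\reqref{defchinegb}, I would run the same argument one level up the covering, using that $\phi$ split means $\Phi$ lifts to an $n$-ordered map $\widehat{\Phi}\colon X\to F_n(X)$, so that $\id_X\times\widehat{\Phi}\colon X\to X\times F_n(X)$ is defined; its fixed-point locus is again where the first coordinate coincides with one of the other $n$, whose complement is $F_{n+1}(X)$, included via $\widehat{\iota}_{n+1}$. One must check that a deformation of $\phi$ to a fixed point free $n$-valued map can be taken to preserve the splitting, i.e.\ lifts through $\pi$ — here the argument is that the relevant homotopy of $X\to X\times D_n(X)$ through $\iota_{1,n}$ can be covered, since $X$ is connected and the obstruction to lifting lives in $\pi_1$; concretely a fixed point free deformation of a split map stays split because the lift $\widehat{\Phi}$ persists along the homotopy. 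Replacing $B_{1,n}(X)$ by $P_{n+1}(X)=\pi_1(F_{n+1}(X))$ and $B_n(X)$ by $P_n(X)$ throughout, asphericity of $F_{n+1}(X)$ and $\pi_1$-injectivity of $\widehat{\iota}_{n+1}$ give the corresponding algebraic criterion.

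The main obstacle I anticipate is the first reduction: carefully showing that a homotopy of the \emph{single-valued} map $\id_X\times\Phi$ into $X\times D_n(X)$ landing in (a homotoped copy of) $D_{1,n}(X)$ can be promoted to a genuine homotopy of the $n$-valued map $\phi$ through fixed point free $n$-valued maps, and conversely. One has to be attentive to the difference between the map $\Phi$ and the pair $(\id_X,\Phi)$: a fixed-point-free deformation of $\phi$ is a homotopy $H$ of $\phi$ with $\operatorname{\text{Fix}}(H(\cdot,t))=\varnothing$ for some final $t$, which corresponds to $(\id_X\times \mathcal H)$ with $\mathcal H$ the homotopy of $\Phi$, and the fixed-point-free condition is exactly that the track stays in $D_{1,n}(X)$ at the end — but the homotopy itself need not stay in $D_{1,n}(X)$, so this is genuinely a "deform into the subspace" problem and not a "lift within the subspace" problem, which is why the asphericity/$\pi_1$ formulation is the right tool. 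The remaining ingredients — asphericity of the configuration spaces of surfaces with $\chi\le 0$, and $\pi_1$-injectivity of the two inclusions — are standard and can be cited, so the real content is this translation.
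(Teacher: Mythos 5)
Your overall architecture coincides with the paper's: reduce ``deformable to a fixed point free map'' to the existence of a factorisation of $\id_X\times \Phi$ through $\iota_{1,n}$ up to homotopy, then use asphericity of $D_{1,n}(X)$ and $X\times D_n(X)$ to translate this into a statement about fundamental groups. However, the step you yourself flag as ``the main obstacle''~--~passing from a map $\Theta\colon X\to D_{1,n}(X)$ with $\iota_{1,n}\circ\Theta\simeq \id_X\times\Phi$ back to an actual fixed point free deformation of $\phi$~--~is left unproved, and the tool you propose for it (asphericity, the ``$\pi_1$ formulation'') is not the one that works. Asphericity only controls maps up to homotopy, whereas here you must arrange for the first coordinate of $\Theta$ to be \emph{exactly} $\id_X$, not merely homotopic to it: if $\Theta(x)=(g(x),\Psi(x))$ with $g\simeq\id_X$, the condition $g(x)\notin\Psi(x)$ says nothing about whether $x\in\Psi(x)$. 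The paper's \repr{defor} resolves this by composing $\Theta$ with the projection $\overline{p}_{1,n}\colon D_{1,n}(X)\to X$, which is a \emph{fibration} because $X$ is a manifold without boundary: the self-map $\overline{p}_{1,n}\circ\Theta$ is homotopic to $\id_X$, and lifting that homotopy through $\overline{p}_{1,n}$ deforms $\Theta$ inside $D_{1,n}(X)$ to a map whose first coordinate is the identity, whose second coordinate is then the required fixed point free deformation of $\Phi$. This homotopy-lifting argument is the missing ingredient (and is precisely where the hypothesis that $X$ is a closed manifold enters); ``the product structure and the bijection of \reth{metriccont}'' do not supply it.

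A second, concrete error: the $\pi_1$-injectivity of $\iota_{1,n}$ and of $\widehat{\iota}_{n+1}$ that you invoke is false in general. For $X=\T$ and $n=1$ one has $\widehat{\iota}_2\colon F_2(\T)\to \T\times\T$, and $(\widehat{\iota}_2)_{\#}\colon P_2(\T)\to \pi_1(\T)\times\pi_1(\T)\cong\Z^4$ cannot be injective because $P_2(\T)\cong \mathbb{F}_2(u,v)\times\Z^2$ (\reco{compactpres}) is non-abelian. Fortunately injectivity is not needed: for maps into aspherical CW-complexes, free homotopy classes correspond to \emph{conjugacy classes} of homomorphisms, so from $\iota_{1,n}\circ\Theta\simeq\id_X\times\Phi$ one only gets $(\iota_{1,n})_{\#}\circ\Theta_{\#}=\delta\,(\id_X\times\Phi)_{\#}\,\delta^{-1}$ for some $\delta\in\pi_1(X)\times B_n(X)$, and to obtain the strictly commuting $\varphi$ of the statement one must still replace $\Theta_{\#}$ by its conjugate by a preimage of $\delta$ in $B_{1,n}(X)$, as the paper does; your appeal to ``standard obstruction theory'' skips this basepoint adjustment. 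Part~(b) is otherwise in order modulo the same two points, the persistence of the splitting along the deformation being exactly \relem{split}(\ref{it:splitIV}) and \reco{inject}.
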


If $\phi\colon\thinspace  X \multimap X$ is a split $n$-valued map given by $\phi=\{f_1, \cdots ,f_n\}$ that can be deformed to a fixed point free $n$-valued map, then certainly each of the single-valued maps $f_i$ can be deformed to a fixed point free map.  The question of whether the converse of this statement holds for surfaces is open. We do not know the answer for any compact surface without boundary different from $\St$ or $\rp$, but it is likely that the converse does not hold. More generally, one would like to know if the homotopy class of $\phi$ contains a representative for which the number of fixed points is exactly the Nielsen number. Very little is known about this question, even for the $2$-torus.  Recall that the Nielsen number of an $n$-valued map $\phi\colon\thinspace  X\multimap X$, denoted $N(\phi)$, was defined by Schirmer~\cite{Sch1}, and generalises the usual Nielsen number in the single-valued case. She showed that $N(\phi)$ is a lower bound for the number of fixed points among all $n$-valued maps homotopic to $\phi$. 

Within the framework of \reth{defchineg}, it is natural to study first the case of $2$-valued maps of the $2$-torus $\T$, which is the focus of  \resec{toro}. In what follows, $\mu$ and $\lambda$ will denote the meridian and the longitude respectively of $\T$. Let $(e_1, e_2)$ be a basis of $\pi_1(\T)$ such that $e_1=[\mu]$ and $e_2=[\lambda]$. For self-maps of $\T$, we will not be overly concerned with the choice of basepoints since the fundamental groups of $\T$ with respect to two different basepoints may be canonically identified. In \resec{toro2}, we will study the groups $P_{2}(\T)$, $B_{2}(\T)$ and $P_{2}(\T\setminus\brak{1})$, and in \reco{compactpres}, we will see that $P_{2}(\T)$ is isomorphic to the direct product of a free group $\mathbb{F}_2(u,v)$ of rank $2$ and $\Z^2$. In what follows, the elements of $P_2(\T)$ will be written with respect to the decomposition $\mathbb{F}_2(u,v) \times \Z^2$, and $\operatorname{\text{Ab}}\colon\thinspace \mathbb{F}_2(u,v) \to \Z^{2}$ will denote Abelianisation. \reth{helgath01}, which is a result of~\cite{Sch1} for the Nielsen number of split $n$-valued maps, will be used in part of the proof of the following proposition.

\begin{prop}\label{prop:exisfpf}
Let $\phi\colon\thinspace \T \multimap \T$ be a split $2$-valued map of the torus $\T$, and let $\widehat{\Phi}=(f_1,f_2) \colon\thinspace \T \to F_{2}(\T)$ be a lift of $\phi$ such that $\widehat{\Phi}_{\#}(e_{1})=(w^r,(a,b))$ and $\widehat{\Phi}_{\#}(e_{2})= (w^s, (c,d)))$, where $(r,s)\in \Z^{2}\setminus \brak{(0,0)}$, $a,b,c,d\in \Z$ and $w\in \mathbb{F}_2(u,v)$. Then the Nielsen number of $\phi$ is given by:
\begin{equation*}
N(\phi)=\left\lvert\det\begin{pmatrix}
 a-1 & c  \\
 b & d-1 
\end{pmatrix}\right\rvert
+
\left\lvert\det\begin{pmatrix}
 rm+a-1 & sm+c  \\
 rn+b & sn+d-1 
\end{pmatrix}\right\rvert,
\end{equation*}
where $\operatorname{\text{Ab}}(w)=(m,n)\in \Z^{2}$.  If the map $\phi$ can be deformed to a fixed point free $2$-valued map, then both of the maps $f_1$ and $f_2$ can be deformed  to fixed point free maps. Furthermore, $f_1$ and $f_2$  can be deformed to fixed point free maps if and only if either:
\begin{enumerate}[(a)]
\item\label{it:exisfpfa} the pairs of integers $(a-1, b),(c,d-1)$ and $(m,n)$ belong to a cyclic subgroup of $\Z^2$, or
\item\label{it:exisfpfb} $s(a-1, b)=r(c,d-1)$.
\end{enumerate}
\end{prop}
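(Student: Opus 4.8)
The plan is to translate the statement into the single-valued Nielsen theory of self-maps of $\T$. Since $\phi=\{f_1,f_2\}$ is split, \reth{helgath01} (Schirmer's additivity formula for split maps) gives $N(\phi)=N(f_1)+N(f_2)$, so everything reduces to computing $N(f_1)$ and $N(f_2)$, and for this we only need the induced endomorphisms $(f_1)_\#,(f_2)_\#$ of $\pi_1(\T)=\Z^2$. As $f_i=p_i\circ\widehat{\Phi}$ we have $(f_i)_\#=(p_i)_\#\circ\widehat{\Phi}_\#$, so the first step is to describe $(p_1)_\#$ and $(p_2)_\#$ on $P_2(\T)=\mathbb{F}_2(u,v)\times\Z^2$ relative to the decomposition of \reco{compactpres}.

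First I would pin these down using \resec{toro2}. The $\Z^2$-factor is the centre of $P_2(\T)$, generated by the two diagonal loops, on which $p_1$ and $p_2$ agree and restrict to an isomorphism onto $\pi_1(\T)$, whereas the $\mathbb{F}_2(u,v)$-factor is (identified with $\pi_1$ of) the fibre $\T\setminus\{1\}$ of $p_1$, on which $p_1$ is trivial while $p_2$ is the inclusion-induced map, namely the abelianisation. Hence $(p_1)_\#(g,z)=z$ and $(p_2)_\#(g,z)=\operatorname{\text{Ab}}(g)+z$. Using $\widehat{\Phi}_\#(e_1)=(w^r,(a,b))$, $\widehat{\Phi}_\#(e_2)=(w^s,(c,d))$ and $\operatorname{\text{Ab}}(w)=(m,n)$, we get $(f_1)_\#(e_1)=(a,b)$, $(f_1)_\#(e_2)=(c,d)$, $(f_2)_\#(e_1)=(rm+a,rn+b)$ and $(f_2)_\#(e_2)=(sm+c,sn+d)$, i.e. $(f_i)_\#$ has matrix
\[
A_1=\begin{pmatrix}a&c\\b&d\end{pmatrix},\qquad A_2=\begin{pmatrix}rm+a&sm+c\\rn+b&sn+d\end{pmatrix}
\]
in the basis $(e_1,e_2)$. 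Since $\T$ is a torus (a Jiang space), $N(f_i)=\lvert L(f_i)\rvert=\lvert\det(A_i-I)\rvert$, and summing these two absolute values gives exactly the asserted formula for $N(\phi)$.

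For the assertion that a fixed point free deformation of $\phi$ forces both $f_1$ and $f_2$ to be deformable to fixed point free maps, I would argue numerically: if $\phi$ is homotopic to a fixed point free $2$-valued map then $N(\phi)=0$, since $N(\phi)$ is a lower bound for the number of fixed points in its homotopy class, so $N(f_1)=N(f_2)=0$ as both are non-negative. The torus has the Wecken property — a self-map $f$ with $N(f)=0$ is homotopic to a fixed point free map, for instance by deforming $f$ to the linear endomorphism $x\mapsto A_fx$ of $\T=\R^2/\Z^2$ and adding a constant $c$ for which $x\mapsto A_fx+c$ is fixed point free, which is possible because $\det(I-A_f)=0$ makes the endomorphism $I-A_f$ of $\T$ non-surjective — hence $f_1$ and $f_2$ can each be deformed to fixed point free maps.

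Finally, those same two facts show that $f_1$ and $f_2$ can both be deformed to fixed point free maps if and only if $\det(A_1-I)=\det(A_2-I)=0$. Put $P=(a-1,b)$, $Q=(c,d-1)$, $M=(m,n)$, and write $u\wedge v=u_1v_2-u_2v_1$ for $u,v\in\Z^2$. Then $\det(A_1-I)=P\wedge Q$, and expanding $\det(A_2-I)=\det(rM+P,\,sM+Q)$ by multilinearity in the columns gives $r(M\wedge Q)-s(M\wedge P)+(P\wedge Q)$, so the two conditions together are equivalent to
\[
P\wedge Q=0\qquad\text{and}\qquad r(M\wedge Q)=s(M\wedge P).
\]
A short elementary case analysis over $\Z$, using $(r,s)\neq(0,0)$ — essentially according to whether or not $M$ is proportional to $P$ and $Q$ — then shows that this holds precisely when $P,Q,M$ all lie in a common cyclic subgroup of $\Z^2$ (case (a)) or $sP=rQ$ (case (b)); the converse implications are immediate. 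The step I expect to require the most care is the identification of $(p_1)_\#$ and $(p_2)_\#$ with respect to the explicit product decomposition of $P_2(\T)$: getting the coordinate conventions for $u$, $v$ and the central $\Z^2$ exactly right, so that $\operatorname{\text{Ab}}(w)$ enters the formula with the correct signs, depends on the bookkeeping of \resec{toro2}. The Nielsen-theoretic inputs and the closing integer linear algebra are routine by comparison.
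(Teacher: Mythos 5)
Your proposal is correct and follows essentially the same route as the paper: Schirmer's additivity theorem \reth{helgath01}, computation of the matrices of $(f_i)_\#$ from the decomposition $P_2(\T)\cong\mathbb{F}_2(u,v)\times\Z^2$ of \reco{compactpres}, the formula $N(f)=\lvert\det(A-I)\rvert$ for torus self-maps together with the Wecken property, and the same bilinear expansion and case analysis at the end. The one inaccuracy is exactly at the step you flagged: with the paper's conventions $u=\rho_{1,1}$ and $v=\rho_{1,2}$ are loops of the \emph{first} string, so $\mathbb{F}_2(u,v)$ lies in the kernel of $(p_2)_\#$ while $(p_1)_\#$ restricts to abelianisation on it --- the opposite of what you assert --- so your matrices $A_1$ and $A_2$ should be attached to $f_2$ and $f_1$ respectively; since the Nielsen number formula and the two vanishing conditions are symmetric in $f_1$ and $f_2$, this transposition has no effect on any of the stated conclusions.
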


Within the framework of \repr{exisfpf}, given a split $2$-valued map $\phi\colon\thinspace \T \multimap \T$ for which $N(\phi)=0$, we would like to know whether $\phi$ can be deformed to a fixed point free $2$-valued map. If $N(\phi)=0$, then by this proposition, one of the conditions~(\ref{it:exisfpfa}) or~(\ref{it:exisfpfb}) must be satisfied. The following result shows that condition~(\ref{it:exisfpfb}) is also sufficient.

\begin{thm}\label{th:necrootfree3} 
Let $\widehat{\Phi}\colon\thinspace \T \to F_2({\T})$ be a lift of a split $2$-valued map $\phi\colon\thinspace \T \multimap \T$ that satisfies $\widehat{\Phi}_{\#}(e_{1})=(w^{r},(a,b))$ and $\widehat{\Phi}_{\#}(e_{2})= (w^{s}, (c,d))$, where $w\in \mathbb{F}_2(u,v)$, $a,b,c,d\in \Z$ and $(r,s)\in \Z^{2}\setminus \brak{(0,0)}$ satisfy $s(a-1,b)=r(c,d-1)$. Then $\phi$ may be deformed to a fixed point free  $2$-valued map. 
 \end{thm}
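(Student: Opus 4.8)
The plan is to use the criterion of \reth{defchineg}\reqref{defchinegb}: since $\phi$ is split, it suffices to produce a homomorphism $\widehat{\varphi}\colon\thinspace \pi_1(\T) \to P_{3}(\T)$ lifting $(\id_{\T}\times \widehat{\Phi})_{\#}\colon\thinspace \pi_1(\T) \to \pi_1(\T)\times P_2(\T)$ through $(\widehat{\iota}_{3})_{\#}$. Concretely, writing elements of $\pi_1(F_3(\T))$ with respect to the known structure of $P_3(\T)$, I must lift the two generators $e_1, e_2$ to elements $g_1, g_2 \in P_3(\T)$ such that (i) $g_i$ maps to $(e_i, \widehat{\Phi}_{\#}(e_i))$ under the projection onto the first coordinate and the last two coordinates, and (ii) $g_1$ and $g_2$ commute (so that $e_1 e_2 e_1^{-1} e_2^{-1}\mapsto 1$ is respected). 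The first coordinate records the ``identity'' strand, and the hypothesis $s(a-1,b)=r(c,d-1)$ is precisely the algebraic condition that will let me choose the new strand's trajectory so that it is disjoint from the graphs of $f_1$ and $f_2$ while remaining compatible with the torus relation.

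First I would set up coordinates. Using \reco{compactpres} and the results of \resec{toro2}, I would fix a presentation of $P_3(\T)\cong \mathbb{F}(\text{--}) \times \Z^2$-type decomposition analogous to the one quoted for $P_2(\T)$, with the $\Z^2$ factor detecting the ``winding'' of the three strands around the torus and the free part detecting their relative linking. Pulling back along $\widehat{\iota}_3$, a lift $\widehat{\varphi}$ amounts to prescribing, for the new (first) strand, a class in $\pi_1(\T\setminus\{f_1(\ast),f_2(\ast)\})$-worth of data, subject to: the total $\Z^2$-winding of the new strand is some pair $(p,q)$ that we are free to choose, and the ``commutation'' constraint coming from $e_1e_2=e_2e_1$. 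The key computation is that the obstruction to $g_1g_2=g_2g_1$ in $P_3(\T)$, after we have already matched the $P_2(\T)$-coordinates (which commute because $\widehat{\Phi}$ is a genuine map, i.e.\ $\widehat{\Phi}_{\#}$ is a homomorphism and $[e_1,e_2]$ dies), lies in a free group and is governed by a bilinear/determinantal expression in $(r,s)$, $(a-1,b)$, $(c,d-1)$ and the chosen $(p,q)$. The hypothesis $s(a-1,b)=r(c,d-1)$ forces this expression to vanish for a suitable choice of $(p,q)$ — geometrically, it says the new strand can be made to run ``parallel'' to the family $\{f_1,f_2\}$ in the direction $(r,s)$, so it never collides with either graph.

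The key steps, in order: (1) recall/establish the presentation of $P_3(\T)$ and identify the homomorphism $(\widehat{\iota}_3)_{\#}$ explicitly in these coordinates, in particular identifying its image and the fibre data (the position of the extra strand relative to the other two); (2) write the general form of a candidate lift $\widehat{\varphi}(e_i)=g_i$ with a free parameter $(p,q)\in\Z^2$ for the winding of the new strand, automatically matching the $\pi_1(\T)$ and $P_2(\T)$ coordinates; (3) compute $[g_1,g_2]\in P_3(\T)$ and show it lies in the free (linking) part, expressed as a word whose exponents are explicit linear combinations of the data; (4) use $s(a-1,b)=r(c,d-1)$ to solve for $(p,q)$ making $[g_1,g_2]=1$, thereby defining $\widehat{\varphi}$ on all of $\pi_1(\T)$; (5) invoke \reth{defchineg}\reqref{defchinegb} to conclude that $\phi$ deforms to a fixed point free $2$-valued map.

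The main obstacle I anticipate is step (3): correctly computing the commutator $[g_1,g_2]$ in $P_3(\T)$ requires a careful bookkeeping of the mixed braid relations (how the extra strand's winding interacts with the windings and mutual linking of the two original strands), and getting the signs and the precise determinantal form right is where the hypothesis must be used in exactly the right way. A secondary subtlety is ensuring that the chosen lift actually factors through $\pi_1(\T)=\Z^2$ and not merely through the free group on $e_1,e_2$ — i.e.\ that the single relation is the only constraint — which is why pinning down the presentation of $P_3(\T)$ in step (1) is essential. Once the commutator is shown to vanish, the rest is formal.
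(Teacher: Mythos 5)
Your plan is a genuinely different route from the one in the paper, and as written it has both a conceptual slip and a substantive gap.

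The paper does \emph{not} work in $P_3(\T)$. Instead it first converts the fixed-point problem to a root problem (Lemma~\ref{lem:equivroot}): replacing $\phi$ by the $2$-valued map $\phi_0(x)=\{f_1(x)x^{-1},f_2(x)x^{-1}\}$ turns the hypothesis $s(a-1,b)=r(c,d-1)$ into $s(a,b)=r(c,d)$ and shifts the target criterion to Theorem~\ref{th:defchinegI}(\ref{it:defchinegbI}), i.e.\ lifting $\widehat{\Phi}_\#\colon\pi_1(\T)\to P_2(\T)$ through $P_2(\T\setminus\{1\})\to P_2(\T)$. The payoff is decisive: under $s(a,b)=r(c,d)$ the two images $\widehat{\Phi}_\#(e_1)=(w^r,(a,b))$ and $\widehat{\Phi}_\#(e_2)=(w^s,(c,d))$ lie in a \emph{cyclic} subgroup of $P_2(\T)$, generated by $(w^{\ell},(a_0,b_0))$ with $\ell=\gcd(r,s)$. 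One then lifts that single generator arbitrarily to $z\in P_2(\T\setminus\{1\})$ and sets $\varphi(e_1)=z^{r/\ell}$, $\varphi(e_2)=z^{s/\ell}$: these commute automatically, so there is no commutator computation at all (Proposition~\ref{prop:necrootfree2}). Your route via Theorem~\ref{th:defchineg}(\ref{it:defchinegb}) targets $(e_1,\widehat{\Phi}_\#(e_1))$ and $(e_2,\widehat{\Phi}_\#(e_2))$ in $\pi_1(\T)\times P_2(\T)$, and because $e_1,e_2$ are independent these two targets are \emph{not} contained in a cyclic subgroup, so the cheap argument is unavailable: you would genuinely have to compute $[g_1,g_2]$ in $P_3(\T)$, which you correctly flag as the obstacle but do not resolve. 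There is also a setup error in your step~(2): you treat the winding $(p,q)$ of the new strand as a free parameter, but the new strand is the identity map, so its winding is forced to be $(1,0)$ and $(0,1)$ for $e_1$ and $e_2$ respectively; the only freedom is in the fibre of $(\widehat{\iota}_3)_\#$, i.e.\ the linking data of the new strand around the two old ones, not its winding in $\T$. Without that correction and without the commutator computation, the proposal does not close; I recommend adopting the paper's root reformulation, which makes the lifting step immediate.
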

 
With respect to condition~(\ref{it:exisfpfa}), we obtain a partial converse for certain values of $a,b,c,d,m$ and $n$.

\begin{thm}\label{th:construct2val}   Suppose that $(a-1, b),(c,d-1)$ and $(m,n)$  belong to a cyclic  subgroup of $\Z^2$ generated by an element of the form $(0,q), (1,q), (p,0)$ or $(p,1)$, where $p,q\in \Z$, and let $r,s\in \Z$. Then there exist $w\in \mathbb{F}_2(u,v)$, a split fixed point free $2$-valued map $\phi\colon\thinspace \T \multimap \T$ and a lift $\widehat{\Phi} \colon\thinspace  \T \to F_2(\T)$ of $\phi$ such that  such that $\operatorname{\text{Ab}}(w)=(m,n)$, $\widehat{\Phi}_{\#}(e_1)=((w^{r},(a,b))$ and $\widehat{\Phi}_{\#}(e_2)= (w^{s}, (c,d))$. 
\end{thm}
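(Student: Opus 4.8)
The plan is to construct the desired $2$-valued map explicitly as a split map $\phi=\{f_1,f_2\}$, where $f_1$ will be chosen to be fixed point free in a strong sense (for instance, homotopic to a map with no fixed points whose linearisation we control), and then to build $f_2$ so that the pair $(f_1,f_2)$ avoids the fixed-point diagonal, \emph{i.e.}\ so that $f_1(x)\neq x\neq f_2(x)$ and additionally $f_1(x)\neq f_2(x)$ for all $x\in\T$, which is exactly what is needed for $(f_1,f_2)$ to define an $n$-ordered map $\widehat\Phi\colon\thinspace\T\to F_2(\T)$ lifting a fixed point free split $2$-valued map. By \reth{defchineg}\reqref{it:defchinegb} (or directly), it suffices to realise the prescribed boundary data on $\pi_1$. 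Since the hypothesis places $(a-1,b)$, $(c,d-1)$ and $(m,n)$ in a common cyclic subgroup $\langle(0,q)\rangle$, $\langle(1,q)\rangle$, $\langle(p,0)\rangle$ or $\langle(p,1)\rangle$, the generator has a coordinate equal to $0$ or $\pm1$; after applying an automorphism of $\T$ (a self-homeomorphism inducing an element of $\mathrm{GL}_2(\Z)$) we may reduce to the normalised case where the generator is $(1,q)$ or $(0,q)$, say the generator is $(1,0)=e_1$ up to a shear. The point of the normalisation is that a cyclic subgroup generated by a primitive vector is the image of $\pi_1$ of an embedded non-separating simple closed curve, and we can then work in the complement $\T\setminus(\text{curve})$, or equivalently in an annulus/cylinder neighbourhood.

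The key steps, in order, are as follows. First, normalise via $\mathrm{GL}_2(\Z)$ so that the common cyclic subgroup is generated by $e_1=[\mu]$; write $(a-1,b)=\alpha e_1$, $(c,d-1)=\gamma e_1$, $(m,n)=\beta e_1$ for integers $\alpha,\gamma,\beta$. Second, choose the word $w\in\mathbb F_2(u,v)$ with $\operatorname{\text{Ab}}(w)=(m,n)$: after normalisation this means $\operatorname{\text{Ab}}(w)=(\beta,0)$, and a convenient choice is $w=u^{\beta}$ (or a suitable conjugate/commutator-adjusted word if finer control of the $\mathbb F_2$-factor is needed to ensure the map is genuinely fixed point free and not merely Nielsen-zero). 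Third, build $f_1\colon\thinspace\T\to\T$ to be fixed point free with $(f_1)_{\#}(e_1)=e_1+(a-1,b)=(a,b)$ and $(f_1)_{\#}(e_2)=(c,d)$: because the ``twisting'' data $(a-1,b),(c,d-1)$ all lie along $e_1$, the associated Lefschetz/Nielsen obstruction for $f_1$ vanishes, and one can produce $f_1$ concretely as a map that factors through a map into the $e_1$-circle composed with a fixed point free circle map, \emph{cf.}\ the explicit constructions used in~\cite{Brr4,Brr6}. Fourth, define $f_2$ by ``pushing $f_1$ off itself'' along the second coordinate direction: set $f_2(x)=f_1(x)+\epsilon(x)$, where $\epsilon\colon\thinspace\T\to\T$ is a map with image in a small circle's worth of translations, chosen so that $f_2(x)\neq f_1(x)$ everywhere and so that the induced maps on $\pi_1$ pick up exactly the extra factors $(w^r,(\,\cdot\,))$ versus $(w^s,(\,\cdot\,))$ dictated by the statement; concretely the discrepancy between $\widehat\Phi_{\#}(e_1)$ and $\widehat\Phi_{\#}(e_2)$ in the $\mathbb F_2(u,v)$-factor is $u^{r\beta}$ vs $u^{s\beta}$, which is realised by letting the basepoint of the second strand wind appropriately around the first. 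Fifth and last, verify that $\widehat\Phi=(f_1,f_2)$ indeed lands in $F_2(\T)$, that $\operatorname{\text{Fix}}(\widehat\Phi)=\vide$, hence that the associated split $2$-valued map $\phi=\{f_1,f_2\}$ is fixed point free, and compute $\widehat\Phi_{\#}(e_1)$ and $\widehat\Phi_{\#}(e_2)$ to confirm they equal $(w^r,(a,b))$ and $(w^s,(c,d))$ respectively; finally undo the $\mathrm{GL}_2(\Z)$-normalisation, which transports all the data back to the general hypothesis since automorphisms of $\T$ act compatibly on $\pi_1(\T)$ and on $P_2(\T)\cong\mathbb F_2(u,v)\times\Z^2$.

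The main obstacle I expect is Step four: controlling simultaneously the \emph{geometric} non-collision condition $f_1(x)\neq f_2(x)$ for all $x$ \emph{and} the \emph{algebraic} condition that the induced homomorphism $\widehat\Phi_{\#}\colon\thinspace\pi_1(\T)\to P_2(\T)$ hits the prescribed elements $(w^r,(a,b))$ and $(w^s,(c,d))$ on the nose. The non-collision condition constrains the perturbation $\epsilon$ to be small and to avoid certain values, while the $\pi_1$-condition forces $\epsilon$ to wrap a prescribed number of times around loops, and these pull in opposite directions; the reason the restrictive hypothesis on the generator being of the form $(0,q),(1,q),(p,0)$ or $(p,1)$ is needed is precisely that it gives us one ``free'' coordinate direction in which we can do the pushing-off without creating collisions, and it is what makes the two requirements compatible. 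A secondary subtlety is checking that $f_1$ really can be taken fixed point free rather than merely having zero Lefschetz number — here one invokes the known classification of fixed point free self-maps of $\T$ (equivalently, that on $\T$ the Nielsen number is a complete invariant for the minimal number of fixed points, a result of Brown/Jiang for the torus) to pass from the algebraic vanishing of the obstruction to an honest fixed point free representative, after which the homotopy can be absorbed into the choice of $f_1$.
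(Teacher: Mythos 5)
Your proposal takes a genuinely different, geometric route from the paper's, but it has a gap at precisely its crucial step --- one that you identify yourself but do not close. Step four, producing $f_2$ by ``pushing $f_1$ off itself'' so that simultaneously (i) $f_1(x)\neq f_2(x)$ for all $x$, (ii) both maps are fixed point free, and (iii) the induced homomorphism takes the prescribed values $(w^r,(a,b))$ and $(w^s,(c,d))$ in $P_2(\T)\cong\mathbb{F}_2(u,v)\times\Z^2$, is never carried out. A ``small'' perturbation $\epsilon$ yields an essentially trivial element of the $\mathbb{F}_2(u,v)$-factor, whereas realising $w^r$ and $w^s$ with $\operatorname{\text{Ab}}(w)=(m,n)$ forces the second strand to wind a prescribed, possibly large, number of times around the first as $x$ traverses each generator of $\pi_1(\T)$; you observe that these requirements ``pull in opposite directions'' and then assert that the special form of the generator reconciles them. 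That assertion is the whole content of the theorem, so the argument is incomplete as it stands. Two secondary points would also need attention: the $\mathrm{GL}_2(\Z)$-normalisation must be shown to act compatibly on the non-abelian factor $\mathbb{F}_2(u,v)$ of $P_2(\T)$ (a homeomorphism of $\T$ induces an automorphism of the free group, not merely of its abelianisation, so the shape of the pair $((w^r,(a,b)),(w^s,(c,d)))$ has to be checked to be preserved in both directions); and the generators $(0,q)$ and $(p,0)$ need not be primitive, so the reduction to an embedded simple closed curve does not apply to them.

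The paper avoids all of this by working algebraically. It first converts the fixed point problem into a root problem via \relem{equivroot} (replacing $\phi$ by $\phi_0(x)=\phi(x)\ldotp x^{-1}$, which is what produces the offsets $(a-1,b)$ and $(c,d-1)$ in the statement), and then applies \reth{defchinegI}(\ref{it:defchinegbI}): it suffices to exhibit a homomorphism $\varphi\colon\thinspace\pi_1(\T)\to P_2(\T\setminus\brak{1})$ lifting the prescribed one. Writing $(a-1,b)=\lambda_1(p,q)$, $(c,d-1)=\lambda_2(p,q)$ and $(m,n)=\lambda_3(p,q)$, it sets $\varphi(e_1)=(u^pv^q)^{\lambda_3r}(x^py^q)^{\lambda_1}$ and $\varphi(e_2)=(u^pv^q)^{\lambda_3s}(x^py^q)^{\lambda_2}$ in the presentation of \repr{presTminus1alta}; the only thing to verify is that these two elements commute in $P_2(\T\setminus\brak{1})$, which is exactly \relem{exfreero}, and whose short computational proof is where the hypothesis that $p\in\brak{0,1}$ or $q\in\brak{0,1}$ enters. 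If you want to salvage your geometric construction, the honest way to finish step four is to prove the geometric counterpart of this commutation relation, which is essentially equivalent in difficulty to the algebraic argument the paper gives.
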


\repr{exisfpf} and Theorems~\ref{th:necrootfree3} and~\ref{th:construct2val} will be proved in \resec{toro}. Besides the introduction and an appendix, this paper is divided into 4 sections. In \resec{pres}, we give some basic definitions, we establish the connection between multimaps and maps whose target is a configuration space, and we show that simply-connected spaces have the fixed point property for $n$-valued maps if they have the usual fixed point property.  In \resec{rp2k}, we show that even-dimensional real projective spaces  have the fixed point property for $n$-valued maps. In \resec{fixfree}, we provide general criteria of a homotopic and algebraic nature, to decide whether an $n$-valued map can be deformed or not to a fixed point free $n$-valued map, and we give the corresponding statements for the case of roots.  In \resec{toro}, we study the fixed point theory of $2$-valued maps of the $2$-torus. In \resec{toro2}, we give presentations of certain braid groups of $\T$, in \resec{descript}, we describe the set of homotopy classes of split $2$-valued maps of $\T$, and in \resec{fptsplit2}, we study the fixed point theory of split $2$-valued maps. In the Appendix, written with R.~F.~Brown, in \reth{metriccont}, we show that for the class of metric spaces that includes those considered in this paper, $n$-valued maps can be regarded as single-valued maps whose target is the associated unordered configuration space.

\subsubsection*{Acknowledgements}

The authors are grateful to R.~F.~Brown for his collaboration in the writing of the Appendix. The first-named author would also like to thank him for interesting and useful discussions on the topics of this paper. The first-named author was partially supported by FAPESP-Funda\c c\~ao de Amparo a Pesquisa do Estado de S\~ao Paulo, Projeto Tem\'atico Topologia Alg\'ebrica, Geom\'etrica   2012/24454-8. The authors would like to thank 
the `R\'eseau Franco-Br\'esilien en Math\'ematiques' for financial support for their respective visits to the Laboratoire de Math\'ematiques Nicolas Oresme UMR CNRS~6139, Universit\'e de Caen Normandie, from the 2\textsuperscript{nd} to the 15\textsuperscript{th} of September 2016, and to the Instituto de Matem\'atica e Estat\'istica, Universidade de S\~ao Paulo, from the 9\textsuperscript{th} of July to the 1\textsuperscript{st} of August 2016. 

\section{Generalities and the $n$-valued fixed point property}\label{sec:pres}

In \resec{relnsnvm}, we begin by describing the relations between $n$-valued maps and $n$-unordered maps. We will assume throughout that $X$ and $Y$  are metric spaces, so that we can apply \reth{metriccont}. Making use of  unordered configuration space, in \relem{split} and \reco{inject}, we prove some properties about the fixed points of $n$-valued maps.  In  \resec{disc}, we give an algebraic condition that enables us to decide whether an $n$-valued map is split. We also study the case where $X$ is simply connected (the $k$-dimensional disc for example, which has the usual fixed point property) and we prove \reth{sccfpp}, and in \resec{sph}, we analyse the case of the $2k$-dimensional sphere (which does not have the usual fixed point property), and we prove \repr{S2fp}.
 
\subsection{Relations between $n$-valued maps, $n$-(un)ordered maps and their fixed point sets}\label{sec:relnsnvm}
 
A proof of the following result may be found in the Appendix. 

\begin{thm}\label{th:metriccont}
Let $X$ and $Y$ be metric spaces, and let $n\in \N$. An $n$-valued function $\phi\colon\thinspace X \multimap  Y$ is continuous if and only if the 
corresponding function $\Phi\colon\thinspace  X \to D_n(Y)$ is continuous.
\end{thm}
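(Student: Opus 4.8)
The statement to prove is \reth{metriccont}: for metric spaces $X$ and $Y$, an $n$-valued function $\phi\colon X \multimap Y$ is continuous if and only if the associated function $\Phi\colon X \to D_n(Y)$ is continuous. Here $D_n(Y)$ carries the metric (or at least the topology) coming from viewing an unordered $n$-point subset of $Y$ as a point of $Y^n/S_n$, equivalently via the Hausdorff metric on the set of $n$-point subsets of $Y$. The plan is to reduce everything to a statement about the Hausdorff metric $d_H$ on the collection $\mathcal{K}_n(Y)$ of $n$-point subsets of $Y$, and to check that the topology induced by $d_H$ on $\mathcal{K}_n(Y)$ agrees with both the topology that makes $\phi$ continuous in the sense of upper and lower semi-continuity, and with the quotient topology on $D_n(Y)$ coming from $F_n(Y)\subset Y^n$.

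First I would set up the identifications carefully: there is a natural bijection between $n$-valued functions $\phi\colon X\multimap Y$ and functions $\Phi\colon X\to D_n(Y)$, and it suffices to prove that the map $\Phi$ is continuous (for the quotient topology on $D_n(Y)$) if and only if $\phi$ is continuous as a multifunction, i.e.\ is both upper and lower semi-continuous. The key technical lemma I would isolate and prove is: on the set $\mathcal{K}_n(Y)$ of $n$-point subsets of a metric space $Y$, the quotient topology from $F_n(Y)\to D_n(Y)$ coincides with the topology induced by the Hausdorff metric $d_H$. One direction is formal (the quotient map $F_n(Y)\to D_n(Y)$ is continuous and open for the Hausdorff metric, since a small coordinatewise perturbation of a configuration gives a small Hausdorff perturbation and conversely, using that the $n$ points are distinct so they are separated by some $\delta>0$); the converse uses that locally near a configuration with distinct coordinates one can choose disjoint balls around each point and the unordered set determines, up to permutation, the coordinates lying in those balls.

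Next I would translate continuity of $\phi$ in the semi-continuity sense into an $\epsilon$--$\delta$ statement in terms of $d_H$. Concretely: $\phi$ is upper semi-continuous at $x_0$ iff for every $\epsilon>0$ there is a neighbourhood $U$ of $x_0$ with $\phi(x)\subset N_\epsilon(\phi(x_0))$ for all $x\in U$ (here $N_\epsilon(\cdot)$ is the open $\epsilon$-neighbourhood), and $\phi$ is lower semi-continuous at $x_0$ iff for every $\epsilon>0$ there is a neighbourhood $U$ of $x_0$ with $\phi(x_0)\subset N_\epsilon(\phi(x))$ for all $x\in U$. For finite sets of fixed cardinality $n$, the conjunction of these two conditions is exactly the statement that $d_H(\phi(x),\phi(x_0))\to 0$ as $x\to x_0$, i.e.\ that $\Phi$ is continuous into $(\mathcal{K}_n(Y),d_H)$. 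The upper semi-continuity direction needs the standard fact that $\set{x}{\phi(x)\subset V}$ being open for all open $V$ is equivalent to the $N_\epsilon$-formulation once one knows $\phi(x_0)$ is compact (finite), which it is; I would spell this out. Combining this translation with the topological lemma of the previous paragraph gives the theorem.

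The main obstacle, and the part I expect to require the most care, is the topological lemma identifying the quotient topology on $D_n(Y)$ with the Hausdorff-metric topology on $n$-point subsets of $Y$; in particular showing that the quotient map is open and that a $d_H$-small neighbourhood of an unordered configuration pulls back to a coordinatewise-small neighbourhood. The subtlety is purely local — one must use that the $n$ coordinates of a point of $F_n(Y)$ are genuinely distinct (hence uniformly separated), choose $\epsilon$ smaller than half the minimal pairwise distance, and then argue that the ``matching'' of an $\epsilon$-close $n$-point set to the original one is forced and unique. Once this local picture is pinned down, everything else is a routine unwinding of definitions, and the equivalence with the semi-continuity definitions of $n$-valued maps follows. (I would also remark that metrisability of $X$ is not actually needed for this argument — only metrisability of $Y$ — but since the paper only claims it for metric $X$ and $Y$, I would state it in that generality and not belabour the point.)
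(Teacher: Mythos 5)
Your proposal is correct and essentially mirrors the paper's argument. The paper proceeds in exactly the two stages you isolate: it invokes Berge's theorem (that a compact-valued multifunction on metric spaces is continuous iff the corresponding single-valued map into the hyperspace $\mathcal K'$ with the Hausdorff metric is continuous), and then proves that on $D_n(Y)$ the Hausdorff-metric topology coincides with the quotient topology coming from $F_n(Y)$, with precisely the local disjoint-balls argument you describe — choosing $\delta$ smaller than $\epsilon$ and half the minimal pairwise distance $d(y_i,y_j)$, so that the matching of a nearby unordered configuration is forced. The only real difference is that you propose to reprove the Berge step from scratch, by directly unwinding upper and lower semi-continuity into the $N_\epsilon$-neighbourhood statements whose conjunction is $d_H$-continuity; that translation is correct (compactness of $\phi(x_0)$, here automatic since it is finite, is indeed what lets you replace "open $V\supset\phi(x_0)$" by "$N_\epsilon(\phi(x_0))$"), and makes the argument self-contained at the cost of redoing a standard hyperspace fact. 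Your aside that metrisability of $X$ plays no role is also right — the paper's proof never uses the metric on $X$ either — though the paper states the theorem for metric $X$ because Berge's theorem as cited assumes it.
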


It would be beneficial for the statement of \reth{metriccont} to hold under weaker hypotheses on $X$ and $Y$.  See~\cite{Brr7} for some recent results  in this direction. 

\begin{defn} 
If $\phi\colon\thinspace  X \multimap Y$ is an $n$-valued map and $\Phi\colon\thinspace  X\to D_{n}(Y)$ is the associated $n$-unordered map, an $n$-ordered map $\widehat{\Phi}\colon\thinspace  X \to F_n(Y)$ is said to be a \emph{lift} of $\phi$ if the composition $\pi\circ \widehat{\Phi}\colon\thinspace X \to D_{n}(Y)$ of $\widehat{\Phi}$ with the covering map $\pi\colon\thinspace  F_n(Y) \to D_n(Y)$ is equal to $\Phi$.
\end{defn} 

If $\phi=\brak{f_1,\ldots,f_n}\colon\thinspace X \multimap Y$  is a split $n$-valued map, then it admits a lift $\widehat{\Phi}=(f_1,\ldots,f_n)\colon\thinspace X \to F_n(Y)$. For any such lift, $\operatorname{\text{Fix}}(\widehat{\Phi})=\operatorname{\text{Fix}}(\phi)$, and the map
$\widehat{\Phi}$ determines  an ordered set of $n$ maps $(f_1=p_1\circ \widehat{\Phi},\ldots, f_n=p_n\circ  \widehat{\Phi})$ from $X$ to $Y$ for which $f_i(x)\ne f_j(x)$ for all $x\in X$ and all $1\leq i<j\leq n$. Conversely, any ordered set of $n$ maps  $(f_1,\ldots,f_n)$ from $X$ to $Y$ for which $f_i(x)\ne f_j(x)$ for all $x\in X$ and all $1\leq i<j\leq n$ determines an $n$-ordered map $\Psi\colon\thinspace  X \to F_n(Y)$ defined by $\Psi(x)=(f_1(x),\ldots,f_n(x))$ and a split $n$-valued map $\phi=\brak{f_1,\ldots,f_n}\colon\thinspace X \multimap Y$  of which $\Psi$ is a lift. 
 So the existence of such a split $n$-valued map $\phi$ is equivalent to that of an $n$-ordered map $\Psi\colon\thinspace X \to F_n(Y)$, where $\Psi=(f_1,\ldots,f_n)$. This being the case, the composition $\pi\circ \Psi\colon\thinspace X \to D_n(Y)$ is the map $\Phi\colon\thinspace X \to D_n(Y)$ that corresponds (in the sense described in \resec{intro}) to the $n$-valued map $\phi$. Consequently, an $n$-valued  map  $\phi\colon\thinspace  X \multimap Y$ admits a lift if and only if it is split. As we shall now see, \reth{metriccont} will be of help in the description of the relations between (split) $n$-valued maps and $n$-(un)ordered maps of metric spaces. As we have seen, to each $n$-valued map (resp.\ split $n$-valued map),  we may associate an $n$-unordered map (resp.\ a lift), and \emph{vice-versa}. Note that the symmetric group $S_n$ not only acts (freely) on $F_n(Y)$ by permuting coordinates, but it also acts on the set of ordered $n$-tuples of maps between $X$ and $Y$. Further, the restriction of the latter action to the subset $F_{n}(Y)^{X}$ of $n$-ordered maps, \emph{i.e.}\ maps of the form $\Psi\colon\thinspace X \to F_{n}(Y)$, where $\Psi(x)=(f_1(x),\ldots,f_n(x))$ for all $x\in X$ for which $f_i(x)\ne f_j(x)$ for all $x\in X$ and $1\leq i< j\leq n$, is also free.  In what follows, $[X,Y]$ (resp.\ $[X,Y]_{0}$) will denote the set of homotopy classes (resp.\ based homotopy classes) of maps between $X$ and $Y$.

\begin{lem}\label{lem:split}\mbox{} Let $X$ and $Y$ be metric spaces, and let $n\in \N$. 
\begin{enumerate}[(a)]
\item\label{it:splitII} The set $\splitmap{X}{Y}{n}$ of split $n$-valued  maps from $X$ to $Y$ is in one-to-one correspondence with the orbits of the set of  maps $F_{n}(Y)^{X}$ from $X$ to $F_n(Y)$ modulo the free action defined above of $S_n$ on $F_{n}(Y)^{X}$.
\item\label{it:splitIII} If two $n$-valued maps from $X$ to $Y$ are homotopic and one is split, then the other is also split. Further, the set $\splitmap{X}{Y}{n}/\!\sim$ of homotopy classes of split $n$-valued  maps from $X$ to $Y$ is in one-to-one correspondence with the orbits of the set $[X,F_{n}(Y)]$ of homotopy classes of maps from $X$ to $F_n(Y)$ under the action of $S_n$ induced by that of $S_n$ on $F_{n}(Y)^{X}$.
\item\label{it:splitIV} Suppose that $X=Y$. If an  $n$-valued  map $\phi\colon\thinspace X \multimap X$ is split and deformable to a fixed point free map, then a lift 
$\widehat{\Phi}\colon\thinspace  X \to F_{n}(X)$  of $\phi$
may be written as ${\Psi}=(f_1,\ldots,f_n)$, where for all $i=1,\ldots,n$, the map $f_i\colon\thinspace X\to X$ is a map that is deformable to a fixed point free map.     
\end{enumerate}
\end{lem}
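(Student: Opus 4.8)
The plan is to handle the three assertions in turn, the covering map $\pi\colon\thinspace F_n(Y)\to D_n(Y)$ doing essentially all the work. Two preliminary facts are needed. First, since $S_n$ is finite and acts freely on the Hausdorff space $F_n(Y)$, the action is properly discontinuous -- this holds for any free action of a finite group on a Hausdorff space -- so $\pi$ is a covering map and, in particular, has the homotopy lifting property with respect to all spaces. Second, by \reth{metriccont} an $n$-valued map $\phi\colon\thinspace X\multimap Y$ is the same thing as a map $\Phi\colon\thinspace X\to D_n(Y)$, and a homotopy of $n$-valued maps is the same thing as a homotopy in $D_n(Y)$; I would use these identifications silently. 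There is no real obstacle in what follows -- the lemma is soft -- and the only ingredient that is not pure formalism is the first fact above, which is where the metric (really just Hausdorff) hypothesis on $Y$ enters, and on which the homotopy lifting used in parts~(b) and~(c) rests.

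For part~(a): a split $n$-valued map $\phi=\brak{f_1,\ldots,f_n}$ determines the $n$-ordered map $\widehat{\Phi}=(f_1,\ldots,f_n)\in F_n(Y)^{X}$, two orderings of the $f_i$ differing precisely by the $S_n$-action, while conversely an $n$-ordered map $\Psi=(f_1,\ldots,f_n)$ yields the split map $\brak{f_1,\ldots,f_n}$, which depends only on the $S_n$-orbit of $\Psi$. These two assignments are mutually inverse, which gives the stated bijection. Freeness of the $S_n$-action on $F_n(Y)^{X}$ is immediate: if $\sigma\cdot\Psi=\Psi$ then $\sigma$ fixes the point $\Psi(x)\in F_n(Y)$ for each $x\in X$, so $\sigma=1$ because $S_n$ acts freely on $F_n(Y)$.

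For part~(b): if $\phi_0$ is split with lift $\widehat{\Phi}_0$ and $H$ is a homotopy from $\phi_0$ to $\phi_1$, then, viewing $H$ as a map $\mathcal{H}\colon\thinspace X\times I\to D_n(Y)$ with $\mathcal{H}(\cdot,0)=\pi\circ\widehat{\Phi}_0$, the homotopy lifting property gives a lift $\widehat{\mathcal{H}}$ of $\mathcal{H}$ with $\widehat{\mathcal{H}}(\cdot,0)=\widehat{\Phi}_0$, and then $\widehat{\mathcal{H}}(\cdot,1)$ is a lift of $\phi_1$, so $\phi_1$ is split. For the statement about homotopy classes I would combine part~(a) with the remark that two split $n$-valued maps are homotopic if and only if some lift of one is homotopic, as a map into $F_n(Y)$, to some lift of the other: the ``if'' direction follows by composing a homotopy of lifts with $\pi$, and the ``only if'' direction is the lifting argument just used, applied to a homotopy between the associated maps into $D_n(Y)$ and started at a chosen lift of the first map. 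Since the $S_n$-action on $F_n(Y)^{X}$ descends to an action on $[X,F_n(Y)]$ (post-composition with a homeomorphism preserves homotopy), this produces the bijection between $\splitmap{X}{Y}{n}/\!\sim$ and $[X,F_n(Y)]/S_n$.

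For part~(c): suppose $\phi\colon\thinspace X\multimap X$ is split and $H$ is a homotopy from $\phi$ to a fixed point free $n$-valued map $\phi'$. Fix any lift $\widehat{\Phi}=(f_1,\ldots,f_n)$ of $\phi$ and, as in part~(b), lift the homotopy (viewed as a map into $D_n(X)$) starting at $\widehat{\Phi}$, obtaining a homotopy in $F_n(X)$ from $\widehat{\Phi}$ to a lift $\widehat{\Phi}'=(f_1',\ldots,f_n')$ of $\phi'$; hence $f_i=p_i\circ\widehat{\Phi}$ is homotopic to $p_i\circ\widehat{\Phi}'=f_i'$ for each $i$. Finally $\operatorname{\text{Fix}}(\widehat{\Phi}')=\operatorname{\text{Fix}}(\phi')=\varnothing$, and by the definition of the fixed point set of an $n$-ordered map one has $\operatorname{\text{Fix}}(\widehat{\Phi}')=\bigcup_{i=1}^{n}\operatorname{\text{Fix}}(f_i')$, so every $f_i'$ is fixed point free; therefore every $f_i$ is deformable to a fixed point free map, which is exactly what part~(c) asserts.
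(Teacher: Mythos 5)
Your proof is correct and follows essentially the same route as the paper's: part~(a) via the correspondence between lifts of a split map and $S_n$-orbits of $n$-ordered maps, and parts~(b) and~(c) via the homotopy lifting property of the covering $\pi\colon\thinspace F_n(Y)\to D_n(Y)$. Your explicit justification that the free action of the finite group $S_n$ on the Hausdorff space $F_n(Y)$ is properly discontinuous, and your streamlined statement that two split maps are homotopic if and only if suitable lifts are homotopic in $F_n(Y)$, are slightly tidier than the paper's more laborious verification of the induced bijection, but the substance is identical.
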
 

\begin{proof}\mbox{}
\begin{enumerate}[(a)]
\item Let $\phi\colon\thinspace X \multimap Y$ be a  split $n$-valued map. From the definition, there exists an $n$-ordered map $\widehat{\Phi}\colon\thinspace X\to F_n(Y)$ such that $\Phi=\pi\circ \widehat{\Phi}$, up to the identification given by \reth{metriccont}.
If $\widehat{\Phi}=(f_1,\ldots,f_n)$, the other lifts of $\phi$ are obtained via the action of the group of deck  transformations of the covering space, this group being $S_n$ in our case, and so are of the form $(f_{\sigma(1)},\ldots,f_{\sigma(n)})$, where $\sigma\in S_{n}$. This gives rise to the stated one-to-one correspondence between $\splitmap{X}{Y}{n}$ and the orbit space $F_{n}(Y)^{X}/S_{n}$.

\item By naturality, the map $\pi\colon\thinspace  F_n(Y) \to D_n(Y)$ induces a map $\widehat{\pi}\colon\thinspace [X, F_n(Y)]\to [X, D_n(Y)]$ defined by $\widehat{\pi}([\Psi])=[\pi\circ \Psi]$ for any $n$-ordered  map $\Psi\colon\thinspace X\to F_{n}(Y)$. Given two homotopic $n$-valued maps between $X$ and $Y$, which we regard as maps from $X$ to $D_n(Y)$ using \reth{metriccont}, if the first has a lift to $F_n(Y)$, then the lifting property of a covering implies that the second also admits a lift to $F_n(Y)$, so if the first map is split then so is the second map. To prove the second part of the statement, first note that there is a surjective map $f\colon\thinspace  F_{n}(Y)^{X} \to \splitmap{X}{Y}{n}$ given by $f(g)=\pi\circ g$, 
where we identify $\splitmap{X}{Y}{n}$ with the set of maps $D_{n}(Y)^{X}$ from $X$ to $D_{n}(Y)$, that induces a surjective map $\overline{f}\colon\thinspace  [X,F_{n}(Y)] \to \splitmap{X}{Y}{n}/\!\sim$ on the corresponding sets of homotopy classes. Further, if ${\Psi}_1, {\Psi}_2\in F_{n}(Y)^{X}$ are two $n$-ordered maps that are homotopic via a homotopy $H$, and if $\alpha\in S_n$, then the maps $\alpha \circ\Psi_1,\alpha \circ \Psi_2 \in F_{n}(Y)^{X}$ are also homotopic via the homotopy $\alpha \circ H$, and so we obtain a quotient map $q\colon\thinspace  [X,F_{n}(Y)] \to [X,F_{n}(Y)]/S_{n}$. We claim that $\overline{f}$ factors through $q$ via the map $\overline{\overline{f}}\colon\thinspace  [X,F_{n}(Y)]/S_{n}\to \splitmap{X}{Y}{n}/\!\sim$ defined by $\overline{\overline{f}}([g])=[f(g)]$. To see this, let $g, h\in F_{n}(Y)^{X}$ be such that $q([g])=q([h])$. Then there exists $\alpha\in S_{n}$ such that $\alpha([g])=[h]$. Then $\overline{f}(\alpha[g])=[\alpha f(h)]= \overline{f}([h])=[f(h)]$. But from the definition of $\splitmap{X}{Y}{n}$, $[\alpha f(g)]=[f(g)]$, and so $[f(g)]=[f(h)]$, which proves the claim. By construction, the map $\overline{\overline{f}}$ is surjective. It remains to show that it is injective. Let $g,h\in F_{n}(Y)^{X}$ be such that $\overline{\overline{f}}([g])=\overline{\overline{f}}([h])$. Then $[f(g)]=[f(h)]$, and thus $f(g)$ and $f(h)$ are homotopic via a homotopy $H$ in $\splitmap{X}{Y}{n}$, where $H(0,f(g))=f(g)$ and $H(1,f(g))=f(h)$. Then $H$ lifts to a homotopy $\widetilde{H}$ such that $\widetilde{H}(0,g)=g$, and $\widetilde{H}(1,g)$ is a lift of $f(h)$. But $h$ is also a lift of $f(h)$, so there exists $\alpha\in S_{n}$ such that $f(h)=\alpha h$. Further, $g$ is homotopic to $f(h)$, so is homotopic to $\alpha(h)$, and hence $q([g])=q([\alpha(h)])= q([h])$ from the definition of $q$, which proves the injectivity of $\overline{\overline{f}}$.

\item Since $\phi$ is split, we may choose a lift $\widehat{\Phi}=(f_1,\ldots,f_n)\colon\thinspace  X \to F_{n}(X)$ of $\phi$. By hypothesis, there is a homotopy $H\colon\thinspace X\times I \to D_n(X)$ such that $H(\cdot ,0)=\Phi$, and $H(\cdot ,1)$ is fixed point free. Since the initial part of the homotopy $H$ admits a lift, there exists a lift $\widehat{H}\colon\thinspace X\times I \to F_n(X)$ of $H$ such that $\widehat{H}(\cdot, 0)=\widehat{\Phi}$, and $\widehat{H}(\cdot, 1)$ is fixed point free. So $\widehat{H}(\cdot, 1)$ is of the form $(f_1,\ldots,f_n)$,    where $f_i$ is fixed point free for $1\leq i\leq n$, 
and the conclusion follows.\qedhere
\end{enumerate}
\end{proof}
 
\begin{rems}\mbox{}
\begin{enumerate}[(a)]
\item The action of $S_n$ on the set of homotopy classes $[X, F_n(Y)]$ is not necessarily free (see \repr{classif}(\ref{it:classifb})). 

\item The question of whether the converse of \relem{split}(\ref{it:splitIV}) is valid for surfaces is open, see the introduction. 
\end{enumerate}
\end{rems}

The following consequence of \relem{split}(\ref{it:splitIV}) will be useful in what follows, and implies that if a split $n$-valued map can be deformed to a fixed point free $n$-valued map (through $n$-valued maps), then the deformation 
is through split $n$-valued maps.

\begin{cor}\label{cor:inject}  
Let $X$ be a metric space, and let $n\in \N$.
A split $n$-valued map $\phi\colon\thinspace X\multimap X$ may be deformed within $\splitmap{X}{X}{n}$   to a fixed point free $n$-valued map
if and only if any lift $\widehat{\Phi}=(f_1,\ldots,f_n)\colon\thinspace  X \to F_n(X)$ of $\phi$ may be deformed within $F_{n}(X)$
to a fixed point free map $\widehat{\Phi}'=(f_1',\ldots,f_n')\colon\thinspace  X \to F_n(X)$. In particular, for all $1\leq i\leq n$, there exists a homotopy $H_i\colon\thinspace X\times I\to X$ between $f_i$ and  $f_i'$, where $f_i'$ is a fixed point free map, and $H_j(x, t)\ne H_k(x, t)$ for all $1\leq j<k\leq n$, $x\in X$ and  $t\in [0,1]$.
\end{cor}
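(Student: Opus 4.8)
The plan is to deduce \reco{inject} directly from \relem{split}, using the covering-space homotopy lifting property of $\pi\colon\thinspace F_n(X)\to D_n(X)$ in both directions. The essential point is that a homotopy of $n$-valued maps is, via \reth{metriccont}, the same thing as a homotopy $H\colon\thinspace X\times I\to D_n(X)$, and such a homotopy lifts through $\pi$ once an initial lift is chosen; conversely, composing a homotopy in $F_n(X)$ with $\pi$ gives a homotopy in $D_n(X)$, hence a homotopy of split $n$-valued maps by \relem{split}(\ref{it:splitIII}).

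First I would prove the forward implication. Suppose $\phi$ can be deformed within $\splitmap{X}{X}{n}$ to a fixed point free $n$-valued map, and let $\widehat{\Phi}=(f_1,\ldots,f_n)$ be any lift of $\phi$. The deformation is a homotopy $H\colon\thinspace X\times I\to D_n(X)$ with $H(\cdot,0)=\Phi$ and $H(\cdot,1)$ fixed point free. Since $\widehat{\Phi}$ is a lift of $H(\cdot,0)$, the homotopy lifting property for the covering $\pi$ yields a lift $\widehat{H}\colon\thinspace X\times I\to F_n(X)$ with $\widehat{H}(\cdot,0)=\widehat{\Phi}$. Writing $\widehat{H}=(H_1,\ldots,H_n)$, the maps $H_j$ automatically satisfy $H_j(x,t)\neq H_k(x,t)$ for $j\neq k$ because $\widehat{H}$ takes values in $F_n(X)$. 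Setting $\widehat{\Phi}'=\widehat{H}(\cdot,1)=(f_1',\ldots,f_n')$ with $f_i'=H_i(\cdot,1)$, the fact that $\operatorname{\text{Fix}}(\widehat{\Phi}')=\operatorname{\text{Fix}}(\pi\circ\widehat{\Phi}')=\operatorname{\text{Fix}}(H(\cdot,1))=\varnothing$ shows $\widehat{\Phi}'$ is fixed point free, and hence so is each $f_i'$. This proves the forward direction, and simultaneously gives the concluding `In particular' assertion, with $H_i$ the coordinate homotopies of $\widehat{H}$.

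For the converse, suppose some lift $\widehat{\Phi}=(f_1,\ldots,f_n)$ may be deformed within $F_n(X)$ to a fixed point free map $\widehat{\Phi}'$, via a homotopy $\widehat{H}\colon\thinspace X\times I\to F_n(X)$. Composing with $\pi$ gives a homotopy $H=\pi\circ\widehat{H}\colon\thinspace X\times I\to D_n(X)$ from $\Phi=\pi\circ\widehat{\Phi}$ to $\pi\circ\widehat{\Phi}'$, which by \reth{metriccont} corresponds to a homotopy of $n$-valued maps; each stage $\pi\circ\widehat{H}(\cdot,t)$ admits the lift $\widehat{H}(\cdot,t)$, so is split, whence the deformation is within $\splitmap{X}{X}{n}$, and $\operatorname{\text{Fix}}(\pi\circ\widehat{\Phi}')=\operatorname{\text{Fix}}(\widehat{\Phi}')=\varnothing$. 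Finally, if the statement holds for one lift it holds for any lift, since any two lifts of $\phi$ differ by the action of an element $\sigma\in S_n$ (acting by deck transformations), and applying $\sigma$ to $\widehat{H}$ produces the required homotopy for the permuted lift. I do not expect a serious obstacle here; the only point requiring a little care is checking that the homotopy lifting property applies in the form needed (i.e.\ that $\pi$ is a genuine covering map, which holds because $S_n$ acts freely and properly discontinuously on $F_n(X)$), and that the coordinate functions of a lift of a homotopy inherit the disjointness condition, both of which are immediate.
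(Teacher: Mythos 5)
Your proof is correct and follows essentially the same route as the paper: the `only if' direction lifts the deformation through the covering $\pi\colon\thinspace F_n(X)\to D_n(X)$ exactly as in the proof of \relem{split}(\ref{it:splitIV}), and the `if' direction composes the homotopy in $F_n(X)$ with $\pi$ and invokes \reth{metriccont}. Your additional remark that the conclusion for one lift transfers to any other lift via the deck-transformation action of $S_n$ is a worthwhile explicit check that the paper leaves implicit.
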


\begin{proof}
The `if' part of the statement may be obtained by considering the composition of the deformation between ${\Psi}$ and ${\Psi}'$ by the projection $\pi$ and by applying \reth{metriccont}.
The `only if' part follows in a manner similar to that of the proof of the first part of \relem{split}(\ref{it:splitIII}).
\end{proof}
 
\subsection{The fixed point property of simply connected spaces and the $k$-disc $\dt[k]$ for $n$-valued maps} \label{sec:disc} 

In this section, we analyse the case where $X$ is a simply-connected metric space that possesses the fixed point property, such as the closed $k$-dimensional disc $\dt[k]$. In \relem{split1}, we begin by proving a variant of the so-called `Splitting Lemma' that is more general than the versions that appear in the literature, such as that of Schirmer given in~\cite[Section~2, Lemma~1]{Sch0} for example. The hypotheses are expressed in terms of the homomorphism on the level of the fundamental group of the target $Y$, rather than that of the domain $X$, and the criterion is an algebraic condition,  in terms of the fundamental group, for an $n$-valued map from $X$ to $Y$ to be split. This allows us to prove \reth{sccfpp}, which says that a simply-connected metric space that has the fixed point property also possesses the fixed point property for $n$-valued maps for all $n\geq 1$. In particular, $\dt[k]$ satisfies this property for all $k\geq 1$. The $2$-disc will be the only surface with boundary that will be considered in this paper. The cases of other surfaces with boundary, such as the annulus and the M\"obius band, will be studied elsewhere.   

\begin{lem}\label{lem:split1}
Let $n\geq 1$, let $\phi\colon\thinspace  X \multimap Y$ be an $n$-valued map between metric spaces, where $X$ is connected and locally arcwise-connected, and let $\Phi\colon\thinspace X \to D_n(Y)$ be the associated $n$-unordered map. Then $\phi$ is split if and only if the image of the induced homomorphism $\Phi_{\#}\colon\thinspace \pi_1(X) \to B_n(Y)$ is contained in the image of the homomorphism $\pi_{\#}\colon\thinspace  P_n(Y) \to B_{n}(Y)$ induced by the covering map $\pi\colon\thinspace F_{n}(Y)\to D_{n}(Y)$. In particular, if $X$ is simply connected then all $n$-valued maps from $X$ to $Y$ are split.
 \end{lem}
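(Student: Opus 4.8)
The plan is to derive the lemma from the classical lifting criterion for covering spaces, combined with the equivalence established in \resec{relnsnvm} between splitness of $\phi$ and the existence of a lift of $\Phi$ through $\pi$. First I would record the facts about $\pi\colon\thinspace F_n(Y)\to D_n(Y)$ that we need. Since $S_n$ acts freely on the Hausdorff (indeed metric) space $F_n(Y)$, the quotient map $\pi$ is a covering map; moreover $S_n$ is its full group of deck transformations and acts transitively on each fibre, so $\pi$ is a regular (normal) covering. In particular, by the short exact sequence \reqref{sesbraid}, $\pi_{\#}\colon\thinspace P_n(Y)\to B_n(Y)$ is injective onto a normal subgroup of index $n!$, and — the point that makes the statement well-posed — the subgroup $\im{\pi_{\#}}\subseteq B_n(Y)$ is independent of the choice of basepoint in the relevant fibre, since for a regular covering the subgroups obtained from the various lifts of a basepoint all coincide.

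Next I would fix a basepoint $x_0\in X$, put $z_0=\Phi(x_0)\in D_n(Y)$, choose $\widehat{z}_0\in\pi^{-1}(z_0)$, and use $z_0$ and $\widehat{z}_0$ as the basepoints with respect to which $B_n(Y)=\pi_1(D_n(Y),z_0)$ and $P_n(Y)=\pi_1(F_n(Y),\widehat{z}_0)$ are computed, so that $\Phi_{\#}$ and $\pi_{\#}$ genuinely take values in the same group $B_n(Y)$. Because $X$ is connected and locally arcwise-connected it is arcwise-connected and locally arcwise-connected, so the hypotheses of the lifting criterion are satisfied, and it yields: the map $\Phi\colon\thinspace (X,x_0)\to (D_n(Y),z_0)$ lifts to a map $\widehat{\Phi}\colon\thinspace (X,x_0)\to (F_n(Y),\widehat{z}_0)$ with $\pi\circ\widehat{\Phi}=\Phi$ if and only if $\Phi_{\#}(\pi_1(X,x_0))\subseteq\pi_{\#}(\pi_1(F_n(Y),\widehat{z}_0))=\im{\pi_{\#}}$. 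By the basepoint-independence noted above, the right-hand condition is exactly $\im{\Phi_{\#}}\subseteq\im{\pi_{\#}}$, and by the discussion in \resec{relnsnvm} the existence of such a lift $\widehat{\Phi}$ is equivalent to $\phi$ being split. Chaining these two equivalences proves the first assertion, in both directions simultaneously.

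Finally, the ``in particular'' clause is immediate: if $X$ is simply connected then $\pi_1(X)$ is trivial, so $\im{\Phi_{\#}}$ is the trivial subgroup of $B_n(Y)$, which is trivially contained in $\im{\pi_{\#}}$; hence by the criterion just proved every $n$-valued map from $X$ to $Y$ is split.

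I do not expect a genuine obstacle here, as the argument is essentially a repackaging of the covering-space lifting theorem. The only step requiring a little care is the basepoint bookkeeping: one must check that ``$\im{\Phi_{\#}}\subseteq\im{\pi_{\#}}$'' is a well-defined condition — which is precisely where regularity of $\pi$ enters — and confirm that the identification of $\im{\pi_{\#}}$ with $P_n(Y)$ coming from \reqref{sesbraid} is consistent with the choices of $z_0$ and $\widehat{z}_0$. Everything else is routine.
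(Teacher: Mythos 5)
Your proof is correct and follows essentially the same route as the paper, which also deduces the result from the standard lifting criterion for covering spaces (citing Massey) after using \reth{metriccont} to pass from $\phi$ to $\Phi$. Your extra care about regularity of $\pi$ and basepoint-independence of $\im{\pi_{\#}}$ is sound and merely makes explicit what the paper leaves implicit.
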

 
\begin{proof}
Since $X$ and $Y$ are metric spaces, using \reth{metriccont},  we may consider the $n$-unordered map $\Phi\colon\thinspace X \to D_n(Y)$ that corresponds to $\phi$. The first part of the statement follows from standard results about the lifting property of a map to a covering space in terms of the fundamental group~\cite[Chapter~5, Section~5, Theorem~5.1]{Mas}. The second part is a consequence of the first part.
\end{proof}
 
As a consequence  of \relem{split1}, we are able to prove~\reth{sccfpp}. 
 
\begin{proof}[Proof of \reth{sccfpp}]
Let $X$ be a simply-connected metric space that has the fixed point property.  By \relem{split1}, any $n$-valued map $\phi\colon\thinspace X \multimap X$ is split. Writing $\phi=\{f_1,\ldots,f_n\}$, each of the maps $f_i\colon\thinspace X \to X$ is a self-map of $X$ that has at least one  fixed point. So $\phi$ has at least $n$ fixed points, and in particular, $X$ has the fixed point property for $n$-valued maps. The last part of the statement then follows.
\end{proof}

\subsection{$n$-valued maps of the sphere $\mathbb S^{2k}$}\label{sec:sph}

Let $k\geq 1$. Although $\mathbb S^{2k}$ does not have the fixed point property for self-maps, we shall show in this section that it has the fixed point property for $n$-valued maps  for all $n>1$, which is the statement of \repr{S2fp}. We first prove a lemma.

\begin{lem}\label{lem:S2split}
Let $n\geq 1$ and $k\geq 2$. Then any $n$-valued map of $\mathbb S^{k}$ is split.
\end{lem}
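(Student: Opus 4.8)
The plan is to use Lemma~\ref{lem:split1}, which reduces splitness to a question about the induced homomorphism on fundamental groups, together with the known structure of the braid groups of $\St[k]$ for $k\geq 2$. First I would invoke \relem{split1}: since $\mathbb S^k$ is locally arcwise-connected, an $n$-valued map $\phi\colon\thinspace \mathbb S^k \multimap \mathbb S^k$ is split if and only if the image of $\Phi_{\#}\colon\thinspace \pi_1(\mathbb S^k) \to B_n(\mathbb S^k)$ lies in the image of $\pi_{\#}\colon\thinspace P_n(\mathbb S^k) \to B_n(\mathbb S^k)$. The key point is that for $k\geq 2$ the sphere $\mathbb S^k$ is simply connected, so $\pi_1(\mathbb S^k)$ is trivial, and hence $\Phi_{\#}$ is the trivial homomorphism; its image is the trivial subgroup, which is certainly contained in the image of $\pi_{\#}$. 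Therefore $\phi$ is split.

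Actually, this shows the lemma is an immediate special case of the last sentence of \relem{split1} (``if $X$ is simply connected then all $n$-valued maps from $X$ to $Y$ are split''), applied with $X = Y = \mathbb S^k$. So the proof is essentially a one-line citation: for $k\geq 2$, $\mathbb S^k$ is simply connected, so by \relem{split1} every $n$-valued map of $\mathbb S^k$ is split.

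There is no real obstacle here; the content is entirely in \relem{split1}, and the role of the hypothesis $k\geq 2$ is solely to guarantee simple connectivity (the case $k=1$ genuinely fails, since $\pi_1(\mathbb S^1)=\Z$ admits $n$-valued maps that are not split, consistent with the lemma being stated only for $k\geq 2$). If anything needs care, it is only to confirm that $\mathbb S^k$ satisfies the standing hypotheses of \relem{split1}, namely that it is a metric space (clear), connected (clear for $k\geq 1$), and locally arcwise-connected (clear, being a manifold). Having checked these, the conclusion follows directly.

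I should also note that the statement as given assumes $n\geq 1$; the case $n=1$ is trivial (a $1$-valued map is just a single-valued map, which is automatically ``split''), so the substance is for $n\geq 2$, but the uniform argument via \relem{split1} covers all $n\geq 1$ at once without case distinction.
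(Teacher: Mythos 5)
Your argument is exactly the paper's: the lemma is an immediate consequence of \relem{split1} together with the simple connectivity of $\St[k]$ for $k\geq 2$. The additional checks you mention (metric, connected, locally arcwise-connected) are routine and the proposal is correct.
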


\begin{proof}
The result follows from \relem{split1} using the fact that $\mathbb S^k$ is simply connected.
\end{proof}

\begin{proof}[Proof of \repr{S2fp}]
Let $n\geq 2$, and let $\phi \colon\thinspace  \mathbb S^{2k} \multimap \mathbb S^{2k}$ be an $n$-valued map. By \relem{S2split}, $\phi$ is split, so it admits a lift $\widehat{\Phi}\colon\thinspace  \mathbb S^{2k} \to F_{n}(\St[2k])$, where $\widehat{\Phi}=(f_1,f_2,\ldots,f_n)$. Since $f_1(x)\ne f_2(x)$ for 
all $x\in \St[2k]$, we have $f_2(x)\neq -(-f_1(x))$, it follows that $f_2$ is homotopic to $-f_1$ via a homotopy that for all $x\in \mathbb S^{2k}$, takes $-f_1(x)$ to $f_2(x)$ along the unique geodesic that joins them. Thus the degree of one of the maps $f_1$ and $f_2$ is different from $-1$, and so has a fixed point, which implies that $\phi$ has a fixed point.
\end{proof}

\begin{rem}
If $n>2$ and $k=1$ then the result of \repr{S2fp} is clearly true since by~\cite[pp.~43--44]{GG5}, the set $[\St, F_{n}(\St)]$ of homotopy classes of maps between $\St$ and $F_{n}(\St)$ contains only one class, which is that of the constant map. So any representative of this class is of the form $\phi=(f_1,\ldots,f_n)$, where all of the maps $f_i\colon\thinspace \St\to \St$ are homotopic to the constant map. Such a map always has a  fixed point, and hence $\phi$ has at least $n$ fixed points. 
\end{rem}

\section{$n$-valued maps of the projective space $\R P^{2k}$}\label{sec:rp2k}

In this section, we will show that the projective space $\R P^{2k}$ also has the fixed point property for $n$-valued maps, which is the statement of \reth{rp2Kfpp}. Since $\R P^{2k}$ is not simply connected, we will require more elaborate arguments than those used in Sections~\ref{sec:disc} and~\ref{sec:sph}. We separate the discussion into two cases, $k=1$, and $k>1$.

\subsection{$n$-valued maps of \rp}\label{sec:rp2ka}

The following result is the analogue of \relem{S2split} for $\rp$. 

\begin{lem}\label{lem:rp2split}
 Let $n\geq 1$. Then any $n$-valued map of the projective plane is split.
\end{lem}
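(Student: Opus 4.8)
The statement to prove is \relem{rp2split}: every $n$-valued map of the projective plane $\rp$ is split. By \relem{split1}, this reduces to a purely algebraic claim about the braid groups of $\rp$. The plan is to invoke \relem{split1} directly: an $n$-valued map $\phi\colon\thinspace \rp \multimap \rp$ is split if and only if the image of $\Phi_{\#}\colon\thinspace \pi_1(\rp) \to B_n(\rp)$ lies in the image of $\pi_{\#}\colon\thinspace P_n(\rp) \to B_n(\rp)$. Since $\pi_1(\rp) \cong \Z/2\Z$, the image of $\Phi_{\#}$ is generated by a single element $\Phi_{\#}(g)$, where $g$ is the generator of $\Z/2\Z$, and this element satisfies $\Phi_{\#}(g)^2 = 1$ in $B_n(\rp)$. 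So the whole lemma comes down to showing that the only element of order dividing $2$ in $B_n(\rp)$ whose permutation image could be nontrivial is in fact already in $P_n(\rp)$ — equivalently, that $B_n(\rp)$ has no $2$-torsion mapping onto a nontrivial permutation, or more precisely that any torsion element of order $2$ lies in the pure braid subgroup.

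The key input I would use is the known structure of the torsion in $B_n(\rp)$, due to Gon\c{c}alves--Guaschi (which the authors would cite from their earlier work): the finite-order elements of $B_n(\rp)$ have been classified, and in particular one knows precisely which orders occur. The relevant fact is that $B_n(\rp)$ does contain $2$-torsion (the full twist and related elements), but I need to check the permutation type. Actually the cleaner route: $P_n(\rp)$ itself contains elements of order $2$ (the image of $-\mathrm{Id}$ type elements, or the generator coming from $\pi_1(\rp)=\Z/2$ acting on each strand). A torsion element $x \in B_n(\rp)$ with $x^2 = 1$ and $\tau(x) \neq 1$ would give an order-$2$ element projecting to a nontrivial involution in $S_n$; one then argues, using the classification of finite subgroups or finite cyclic subgroups of $B_n(\rp)$, that no such element exists — all order-$2$ elements are pure. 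I would phrase the proof as: let $x = \Phi_{\#}(g)$; then $x^2=1$; by [the torsion classification for $B_n(\rp)$], any element of $B_n(\rp)$ of order $1$ or $2$ lies in $P_n(\rp)$ (or is trivial); hence $\operatorname{\text{Im}}(\Phi_{\#}) \subseteq \operatorname{\text{Im}}(\pi_{\#})$, and \relem{split1} gives the result.

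The main obstacle is pinning down the precise torsion statement for $B_n(\rp)$ and making sure the "$2$-torsion is pure" claim is actually true and correctly attributed — this is where I would need to be careful, since $B_2(\rp)$ and small cases might behave differently, and the general $n$ case relies on the (nontrivial) classification of finite-order elements in surface braid groups of $\rp$. An alternative, more self-contained route that avoids the full classification would be a geometric/homotopy-theoretic argument: use that the universal cover of $\rp$ is $\St$, lift the $n$-unordered map to $\St$, and exploit the fact established in the previous subsection that $n$-valued maps of $\St$ are split, together with equivariance under the deck transformation; but reconciling the $\Z/2$-action with the $S_n$-symmetry on configuration spaces is delicate, so I expect the braid-group torsion argument via \relem{split1} to be the intended and shorter path. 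In either case, the crux is the same: showing the order-$\leq 2$ element $\Phi_{\#}(g)$ induces the trivial permutation.
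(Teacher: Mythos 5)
Your proposal is correct and follows essentially the same route as the paper: reduce via \relem{split1} to showing that $\im{\Phi_{\#}}$, which is generated by an element of order dividing $2$, lies in $P_n(\rp)$, and then invoke the torsion classification for $B_n(\rp)$ — the paper cites precisely the fact that the full twist $\ft$ is the unique element of order $2$ in $B_n(\rp)$ and is pure. The torsion fact you were unsure about is indeed true and is the intended key input (\cite[Proposition~23]{GG3}).
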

 
\begin{proof}
Let $\phi\colon\thinspace \rp \multimap \rp$ be an $n$-valued map, let $\Phi\colon\thinspace \rp \to D_n(\rp)$ be the associated $n$-unordered map, and let $\Phi_{\#}\colon\thinspace  \pi_{1}(\rp) \to B_{n}(\rp)$ be the homomorphism induced on the level of fundamental groups. Since $\pi_{1}(\rp)$ is isomorphic to the cyclic group of order $2$, it follows that $\im{\Phi_{\#}}$ is contained in the subgroup $\ang{\ft}$ of $B_{n}(\rp)$ generated by the full twist braid $\ft$ of $B_n(\rp)$ because $\ft$ is the unique element of $B_n(\rp)$ of order $2$ \cite[Proposition~23]{GG3}. But $\ft$ is a pure braid, so $\im{\Phi_{\#}}\subset P_{n}(\rp)$. Thus $\Phi$ factors through $\pi$, and hence $\phi$ is split by \relem{split1} as required.
\end{proof}

\begin{prop}\label{prop:RP2fp}
Let $n\geq 1$. Then any $n$-valued map of $\rp$ has at least $n$ fixed points, in particular $\rp$ has the fixed point property for $n$-valued maps.
\end{prop}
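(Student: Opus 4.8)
The plan is to exploit \relem{rp2split}, which tells us that any $n$-valued map $\phi\colon\thinspace \rp \multimap \rp$ is split, so that we may write $\phi=\brak{f_{1},\ldots,f_{n}}$ with a lift $\widehat{\Phi}=(f_{1},\ldots,f_{n})\colon\thinspace \rp \to F_{n}(\rp)$, where $f_{i}(x)\ne f_{j}(x)$ for all $x\in \rp$ and all $i\ne j$. It then suffices to show that each single-valued self-map $f_{i}$ of $\rp$ has a fixed point, since then $\phi$ has at least $n$ fixed points, one for each $f_{i}$, and these are distinct fixed points of $\phi$ because the values $f_{i}(x)$ are pairwise distinct. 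So the crux is really the classical fact that $\rp$ has the fixed point property for single-valued maps: indeed every self-map of $\rp$ has nonzero Lefschetz number, because $H_{*}(\rp;\mathbb{Q})$ is concentrated in degree $0$, so the Lefschetz number of any self-map equals $1$, and hence the map has a fixed point by the Lefschetz fixed point theorem. (Equivalently, one may cite that $\rp$ has the fixed point property, which is well known.)

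The first step I would carry out is to fix the lift and record that $\operatorname{\text{Fix}}(\widehat{\Phi})=\operatorname{\text{Fix}}(\phi)$, as already noted in the discussion following the definition of a lift. Second, I would observe that $x\in\operatorname{\text{Fix}}(f_{i})$ for some $i$ if and only if $x\in\operatorname{\text{Fix}}(\phi)$, and more precisely that $\operatorname{\text{Fix}}(\phi)=\bigcup_{i=1}^{n}\operatorname{\text{Fix}}(f_{i})$, this union being disjoint by the defining property of an $n$-ordered map. Third, I would invoke the Lefschetz argument (or the known fixed point property of $\rp$) to conclude that each $\operatorname{\text{Fix}}(f_{i})$ is non-empty, hence contains at least one point $x_{i}$. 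Finally, the points $x_{1},\ldots,x_{n}$ need not be distinct from one another a priori, but the $n$ pairs $(x_{i},f_{i}(x_{i}))=(x_{i},x_{i})$ together with the indices give $n$ distinct fixed points of $\phi$ in the appropriate sense: more carefully, for each $i$ the point $x_{i}$ satisfies $x_{i}\in\phi(x_{i})$, and since $\widehat{\Phi}(x_{i})=(f_{1}(x_{i}),\ldots,f_{n}(x_{i}))$ has pairwise distinct coordinates with $f_{i}(x_{i})=x_{i}$, no $x_{j}$ with $j\ne i$ can equal $x_{i}$ as a consequence of $f_{j}(x_{i})\ne f_{i}(x_{i})=x_{i}$ — wait, that only shows $x_{i}$ is not a fixed point of $f_{j}$, not that $x_{i}\ne x_{j}$. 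The cleanest formulation, which avoids this issue entirely, is simply: $\phi$ has at least $\lvert\operatorname{\text{Fix}}(\phi)\rvert\geq n$ fixed points because $\operatorname{\text{Fix}}(\phi)$ contains $\operatorname{\text{Fix}}(f_{i})$ for each $i$, and these $n$ sets are pairwise disjoint and each non-empty.

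I do not expect any serious obstacle here; the proposition is essentially a formal consequence of \relem{rp2split} combined with the single-valued fixed point property of $\rp$, exactly parallel to the proof of \reth{sccfpp} via \relem{split1}. The only point requiring a small amount of care is the counting argument for the lower bound of $n$ fixed points, which is handled by the disjointness of the sets $\operatorname{\text{Fix}}(f_{i})$ coming from the definition of an $n$-ordered map. If one wished to make the write-up fully self-contained one could spell out the Lefschetz computation $\Lambda(f)=\sum_{q}(-1)^{q}\operatorname{tr}(f_{*}\colon H_{q}(\rp;\mathbb{Q})\to H_{q}(\rp;\mathbb{Q}))=1\ne 0$ for every self-map $f$ of $\rp$, but citing the known fixed point property of $\rp$ suffices.
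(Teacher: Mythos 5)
Your proposal is correct and follows essentially the same route as the paper: split $\phi$ via \relem{rp2split}, apply the classical fixed point property of $\rp$ to each $f_i$, and count at least $n$ fixed points using the pairwise coincidence-freeness of the $f_i$. Your explicit remark that the sets $\operatorname{\text{Fix}}(f_i)$ are pairwise disjoint is exactly the (implicit) justification the paper uses for the lower bound of $n$.
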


\begin{proof}
Let $\phi\colon\thinspace \rp \multimap \rp$ be an $n$-valued map of $\rp$. Then $\phi$ is split by \relem{rp2split}, and so $\phi=\brak{f_1,\ldots,f_n}$, where $f_{1},\ldots, f_{n}\colon\thinspace \rp \to \rp$ are pairwise coincidence-free self-maps of $\rp$. But $\rp$ has the fixed point property, and so for $i=1,\ldots,n$, $f_{i}$ has a fixed point. Hence $\phi$ has at least $n$ fixed points. 
\end{proof}

\subsection{$n$-valued maps of  $\R P^{2k}$,  $k>1$}\label{sec:proje}

The aim of this section is to prove that $\R P^{2k}$ has the fixed point property for $n$-valued maps for all  $n\geq 1$ and $k>1$. Indeed, we will show that every such $n$-valued map has at least $n$ fixed points. 

Given an $n$-valued map $\phi\colon\thinspace  X \multimap X$ of a topological space $X$, we consider the corresponding map $\Phi\colon\thinspace X \to D_n(X)$, and the induced homomorphism $\Phi_{\#}\colon\thinspace \pi_1(X) \to \pi_1(D_n(X))$ on the level of fundamental groups, where $\pi_1(D_n(X))=B_n(X)$. By the short exact sequence~\reqref{sesbraid}, $P_n(X)$ is a normal subgroup of $B_n(X)$ of finite index $n!$, so the subgroup $H=\Phi_{\#}^{-1}(P_n(X))$ is a normal subgroup of $\pi_1(X)$ of finite index. Further, if $L=\pi_1(X)/H$, the composition $\pi_1(X) \stackrel{\Phi_{\#}}{\to} B_n(X) \stackrel{\tau}{\to} S_n$ is a homomorphism that induces a homomorphism between $L$ and $S_n$. 

\begin{prop}\label{prop:nielsen}
Let $n\in \N$. Suppose that $X$ is a connected, locally  arcwise-connected metric space.
With the above notation, there exists a covering $q\colon\thinspace  \widehat{X} \to X$ of $X$ that corresponds to the subgroup $H$, and the $n$-valued map $\phi_1=\phi \circ q\colon\thinspace  \widehat{X}\multimap X$ admits exactly $n!$ lifts, which are  $n$-ordered  maps from $\widehat{X}$ to $F_n(X)$. If one such lift $\widehat{\Phi}_{1}\colon\thinspace \widehat{X}\to F_n(X)$ is given by $\widehat{\Phi}_{1}=(f_1,\ldots, f_n)$, where for $i=1,\ldots,n$, $f_i$ is a map from $\widehat{X}$ to $X$, then the other lifts are of the form $(f_{\tau(1)},\ldots,f_{\tau(n)})$, where $\tau\in S_n$.  
\end{prop}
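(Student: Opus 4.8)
The plan is to construct the covering $q\colon\widehat{X}\to X$ directly from the subgroup $H=\Phi_{\#}^{-1}(P_n(X))$ and then use the classical lifting criterion for maps into the covering space $\pi\colon F_n(X)\to D_n(X)$. First I would recall that since $X$ is connected and locally arcwise-connected, and since $H$ has finite index $n!$ in $\pi_1(X)$ (because $P_n(X)$ has index $n!$ in $B_n(X)$ by~\reqref{sesbraid} and $H$ is the preimage of $P_n(X)$ under $\Phi_{\#}$), the standard theory of covering spaces (e.g.~\cite[Chapter~5]{Mas}) yields a connected covering $q\colon\widehat{X}\to X$, unique up to equivalence, with $q_{\#}(\pi_1(\widehat{X}))=H$. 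Local arcwise-connectedness is exactly the hypothesis needed to guarantee that such a covering exists and that the lifting criterion applies.

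Next I would consider the composite $n$-valued map $\phi_1=\phi\circ q\colon\widehat{X}\multimap X$, or equivalently the $n$-unordered map $\Phi_1=\Phi\circ q\colon\widehat{X}\to D_n(X)$. Its induced homomorphism on fundamental groups is $(\Phi_1)_{\#}=\Phi_{\#}\circ q_{\#}$, whose image is $\Phi_{\#}(H)\subseteq \Phi_{\#}(\Phi_{\#}^{-1}(P_n(X)))\subseteq P_n(X)=\im(\pi_{\#})$. By the lifting criterion for the covering $\pi\colon F_n(X)\to D_n(X)$ (here I use \relem{split1}, which packages precisely this criterion), $\Phi_1$ admits a lift $\widehat{\Phi}_1\colon\widehat{X}\to F_n(X)$, so $\phi_1$ is a split $n$-valued map. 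Writing $\widehat{\Phi}_1=(f_1,\ldots,f_n)$ with each $f_i\colon\widehat{X}\to X$ and $f_i(x)\neq f_j(x)$ for $i\neq j$, I have produced one lift of the required form.

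Finally I would count the lifts. Since $\pi\colon F_n(X)\to D_n(X)$ is a regular covering with deck transformation group $S_n$ acting freely by permutation of coordinates, any two lifts of $\Phi_1$ differ by a deck transformation \emph{provided} $\widehat X$ is connected; concretely, if $\widehat{\Phi}_1'$ is another lift then, fixing a basepoint, $\widehat{\Phi}_1'=\sigma\cdot\widehat{\Phi}_1$ for a unique $\sigma\in S_n$, which in coordinates reads $\widehat{\Phi}_1'=(f_{\sigma(1)},\ldots,f_{\sigma(n)})$. Conversely every such $(f_{\sigma(1)},\ldots,f_{\sigma(n)})$ is visibly a lift. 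Hence there are exactly $|S_n|=n!$ lifts, all of the stated form. The one point requiring a little care — and the main obstacle — is verifying that $\widehat X$ is connected, i.e.\ that $H$ has a preimage under $\Phi_{\#}$ that is genuinely all of $\pi_1(\widehat X)$ and that the covering associated to $H$ is connected (it is, since $H$ is a subgroup of $\pi_1(X)$, not a disconnected union); once connectedness is in hand, the free transitivity of the $S_n$-action on each fibre makes the bijection between lifts and elements of $S_n$ immediate.
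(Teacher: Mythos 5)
Your proof is correct and follows essentially the same route as the paper's, which simply invokes Massey's lifting criterion together with the observation that $S_n$ is the deck transformation group of $\pi\colon F_n(X)\to D_n(X)$ acting freely; your verification that $\Phi_{\#}(H)\subseteq P_n(X)$ and your deck-transformation count of the $n!$ lifts fill in exactly the details the paper leaves implicit. One small slip: the index of $H=\Phi_{\#}^{-1}(P_n(X))$ in $\pi_1(X)$ only \emph{divides} $n!$ and need not equal it (for instance $H=\pi_1(X)$ when $\phi$ is already split), but this is harmless, since neither the existence of the covering $q$ nor the count of $n!$ lifts — which comes from the $n!$ deck transformations of $\pi$, not from the index of $H$ — depends on that value.
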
 

\begin{proof}
The first part is a consequence of~\cite[Theorem~5.1, Chapter~V, Section~5]{Mas}, using the observation that  $S_{n}$ is the deck transformation group corresponding to the covering $\pi\colon\thinspace F_n(X) \to D_n(X)$. The second part follows from the fact that $S_n$ acts freely on the covering space $F_n(X)$ by permuting coordinates.
\end{proof}

The fixed points of the $n$-valued map $\phi\colon\thinspace X\multimap X$ may be described in terms of the coincidences of the covering map $q\colon\thinspace \widehat{X} \to X$ with the maps $f_{1},\ldots, f_{n}$ given in the statement of \repr{nielsen}. 

\begin{prop}\label{prop:coinfix} Let $n\in \N$, let
 $X$ be a connected, locally arcwise-connected, metric space, let $\phi\colon\thinspace  X \multimap X$ be an $n$-valued  map, and let ${\widehat \Phi_1}=(f_1,\ldots,f_n)\colon\thinspace  \widehat{X} \to F_n(X)$ be an $n$-ordered  map that is a lift of $\phi_1=\phi\circ q\colon\thinspace \widehat{X} \multimap X$ as in \repr{nielsen}. Then the map $q$ restricts to a surjection $q\colon\thinspace \bigcup_{i=1}^{n} \operatorname{\text{Coin}}(q, f_i) \to \operatorname{\text{Fix}}(\phi)$.
Furthermore, the pre-image of a point $x\in \operatorname{\text{Fix}}(\phi)$ by this map is precisely $q^{-1}(x)$, namely the fibre over $x\in X$ of the covering map $q\colon\thinspace \widehat{X} \to X$. 
\end{prop}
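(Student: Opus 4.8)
The plan is to unwind the definitions of $\operatorname{\text{Fix}}(\phi)$ and $\operatorname{\text{Coin}}(q,f_i)$ and use the covering-space structure of $\pi\colon\thinspace F_n(X)\to D_n(X)$ together with the description of the lifts of $\phi_1$ given in \repr{nielsen}. First I would recall that by \reth{metriccont} the $n$-valued map $\phi$ corresponds to a genuine map $\Phi\colon\thinspace X\to D_n(X)$, and that by construction $\Phi\circ q$ admits the lift $\pi\circ\widehat{\Phi}_1 = \Phi\circ q$, where $\widehat{\Phi}_1=(f_1,\dots,f_n)$. So for $\widehat{x}\in\widehat{X}$ we have the unordered set equality $\phi(q(\widehat{x})) = \{f_1(\widehat{x}),\dots,f_n(\widehat{x})\}$ as a point of $D_n(X)$.

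The surjectivity claim comes down to the following. Suppose $x\in\operatorname{\text{Fix}}(\phi)$, i.e.\ $x\in\phi(x)$. Pick any $\widehat{x}\in q^{-1}(x)$ (which is non-empty since $q$ is a covering, hence surjective). Then $x\in\phi(x) = \phi(q(\widehat{x})) = \{f_1(\widehat{x}),\dots,f_n(\widehat{x})\}$, so $x = f_i(\widehat{x})$ for some $i\in\{1,\dots,n\}$; since $q(\widehat{x}) = x = f_i(\widehat{x})$, this says precisely that $\widehat{x}\in\operatorname{\text{Coin}}(q,f_i)\subset\bigcup_{i=1}^n\operatorname{\text{Coin}}(q,f_i)$, and $q(\widehat{x}) = x$. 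This shows $q$ maps $\bigcup_i\operatorname{\text{Coin}}(q,f_i)$ onto $\operatorname{\text{Fix}}(\phi)$. Conversely, if $\widehat{x}\in\operatorname{\text{Coin}}(q,f_i)$ for some $i$, then $q(\widehat{x}) = f_i(\widehat{x})\in\{f_1(\widehat{x}),\dots,f_n(\widehat{x})\} = \phi(q(\widehat{x}))$, so $q(\widehat{x})\in\operatorname{\text{Fix}}(\phi)$; hence $q$ does indeed restrict to a map into $\operatorname{\text{Fix}}(\phi)$, and it is well-defined.

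For the statement about pre-images, fix $x\in\operatorname{\text{Fix}}(\phi)$. One inclusion is the computation just made: every $\widehat{x}\in q^{-1}(x)$ lies in $\bigcup_i\operatorname{\text{Coin}}(q,f_i)$, as shown above, because $x\in\phi(x)$ forces $x = f_i(\widehat{x})$ for some $i$. The other inclusion is immediate: if $\widehat{x}\in\bigcup_i\operatorname{\text{Coin}}(q,f_i)$ maps to $x$ under $q$, then by definition $\widehat{x}\in q^{-1}(x)$. Hence the pre-image of $x$ under the restricted map is exactly the fibre $q^{-1}(x)$.

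I do not expect any serious obstacle here: the result is essentially a bookkeeping exercise once \repr{nielsen} is in place, the only subtlety being to remember that the unordered set $\{f_1(\widehat{x}),\dots,f_n(\widehat{x})\}$ really does equal $\phi(q(\widehat{x}))$ as an element of $D_n(X)$ — which is exactly what the equality $\pi\circ\widehat{\Phi}_1 = \Phi\circ q$ encodes — and to note that this unordered set does not depend on which of the $n!$ lifts from \repr{nielsen} we chose, so the statement is independent of that choice. The mild point worth spelling out is that a single $\widehat{x}$ in the fibre over $x$ suffices to witness $x\in\operatorname{\text{Fix}}(\phi)$ in the forward direction, but that in fact \emph{every} point of the fibre is a coincidence point, which is what gives the clean description of the pre-image.
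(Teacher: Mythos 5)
Your proof is correct and follows essentially the same route as the paper: both arguments unwind the lift equality $\pi\circ\widehat{\Phi}_1=\Phi\circ q$ to check well-definedness (a coincidence of $q$ with some $f_i$ projects to a fixed point of $\phi$), and then establish surjectivity and the fibre description simultaneously by showing that for $x\in\operatorname{\text{Fix}}(\phi)$ every element of $q^{-1}(x)$ must be a coincidence of $q$ with some $f_j$. The only difference is presentational ordering; the substance is identical.
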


\begin{proof}
Let $\widehat{x}\in \operatorname{\text{Coin}}(q, f_i)$ for some $1\leq i\leq n$, and let $x=q(\widehat{x})$. Then $f_i(\widehat{x})=q(\widehat{x})$, and since $\Phi(x)= \pi\circ \widehat{\Phi}_1(\widehat{x})=\brak{f_1(\widehat{x}),\ldots,f_n(\widehat{x})}$, it follows that $x\in \phi(x)$, \emph{i.e.}\ $x\in \operatorname{\text{Fix}}(\phi)$, so the map is well defined. To prove surjectivity and the second part of the statement, it suffices to show that if $x \in \operatorname{\text{Fix}}(\phi)$, then any element $\widehat{x}$ of $q^{-1}(x)$ belongs to $\bigcup_{i=1}^{n} \operatorname{\text{Coin}}(q, f_i) $. So let $x\in \phi(x)$, and let $\widehat{x}\in \widehat{X}$  be such that $q(\widehat{x})=x$. By commutativity of the following diagram:
\begin{equation*}
\begin{tikzcd}[ampersand replacement=\&]
\&\& F_{n}(X) \ar{d}{\pi}\\
\widehat{X} \ar[swap]{r}{q} \ar[dashrightarrow, end anchor=south west]{rru}{\widehat{\Phi}_1}  \& X \ar[swap]{r}{\Phi} \& D_{n}(X),
\end{tikzcd}
\end{equation*}
and the fact that $x\in \operatorname{\text{Fix}}(\phi)$, it follows that $x$ is one of the coordinates, the $j\up{th}$ coordinate say, of $\widehat{\Phi}_1(\widehat{x})$. This implies that $\widehat{x}\in \operatorname{\text{Coin}}(q, f_j)$, which completes the proof of the proposition.
\end{proof}

\begin{prop}\label{prop:nullhomo}
Let $n,k>1$. If $\phi\colon\thinspace  \St[2k] \multimap \R P^{2k}$ is an $n$-valued map, then $\phi$ is split, and for $i=1,\ldots,n$, there exist maps $f_i\colon\thinspace  \St[2k] \to \R P^{2k}$ for which  $\phi=\{f_1,\ldots,f_n\}$. Further, $f_i$ is null homotopic for all $i\in \brak{1,\ldots,n}$. 
\end{prop}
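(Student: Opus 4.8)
The plan is to combine the simple-connectivity of the source $\St[2k]$ with the degree-cancellation argument already used in the proof of \repr{S2fp}, transported to $\St[2k]$ through the double covering $p\colon\thinspace \St[2k]\to\R P^{2k}$. First I would note that, since $\St[2k]$ is simply connected, \relem{split1} shows that $\phi$ is split; writing $\phi=\brak{f_1,\ldots,f_n}$, there is a corresponding lift $\widehat{\Phi}=(f_1,\ldots,f_n)\colon\thinspace\St[2k]\to F_n(\R P^{2k})$, where the maps $f_i\colon\thinspace\St[2k]\to\R P^{2k}$ are pairwise coincidence-free, \emph{i.e.}\ $f_i(x)\neq f_j(x)$ for all $x\in\St[2k]$ and all $i\neq j$. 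It remains to prove that each $f_i$ is null homotopic.

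Next I would introduce the covering space
\[
\widetilde{F}_n=\setr{(x_1,\ldots,x_n)\in(\St[2k])^{n}}{x_i\neq\pm x_j\ \text{for all}\ i\neq j},
\]
an open submanifold of $(\St[2k])^{n}$ on which the finite group $(\Z/2\Z)^{n}$ acts freely by the antipodal map in each coordinate, with quotient $F_n(\R P^{2k})$; hence the map $\pi_n\colon\thinspace\widetilde{F}_n\to F_n(\R P^{2k})$ obtained by applying $p$ in each coordinate is a $2^{n}$-fold covering. As $\St[2k]$ is simply connected, $\widehat{\Phi}$ lifts through $\pi_n$ to a map $\widetilde{\Phi}=(g_1,\ldots,g_n)\colon\thinspace\St[2k]\to\widetilde{F}_n$, with $g_i\colon\thinspace\St[2k]\to\St[2k]$, $p\circ g_i=f_i$ for every $i$, and $g_i(x)\neq\pm g_j(x)$ for all $x\in\St[2k]$ and all $i\neq j$.

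Then I would run the degree argument. Fix $i\in\brak{1,\ldots,n}$ and choose $j\neq i$, which is possible because $n\geq 2$. Equip $\St[2k]$ with the round metric. Since $g_i(x)\neq -g_j(x)$ for all $x$, the homotopy moving $g_j(x)$ to $g_i(x)$ along the unique minimal geodesic between them shows $g_i\simeq g_j$, so $\deg(g_i)=\deg(g_j)$; since $g_i(x)\neq -(-g_j(x))$ for all $x$, the same argument applied to $-g_j$ (the composition of $g_j$ with the antipodal map of $\St[2k]$) gives $g_i\simeq -g_j$, so $\deg(g_i)=(-1)^{2k+1}\deg(g_j)=-\deg(g_j)$. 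Hence $\deg(g_i)=0$, so $g_i$ is null homotopic, and therefore so is $f_i=p\circ g_i$. As $i$ was arbitrary, the proposition follows.

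The only routine points are that $\pi_n$ is indeed a covering map — a finite group acting freely on a Hausdorff manifold acts properly discontinuously — and that the lift $\widetilde{\Phi}$ has the stated coordinate form, which is immediate from the inclusion $\widetilde{F}_n\subset(\St[2k])^{n}$. The one substantive step is the degree cancellation, which is exactly the mechanism used in \repr{S2fp}, so I do not anticipate any real obstacle beyond assembling these standard ingredients.
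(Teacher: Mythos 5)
Your proof is correct, but it follows a genuinely different route from the paper's. The paper, after obtaining splitness from \relem{split1} exactly as you do, works directly with the maps $f_i\colon\thinspace \St[2k]\to \R P^{2k}$: it identifies $[\St[2k],\R P^{2k}]$ with $\pi_{2k}(\St[2k])$ via composition with $p$, computes the Lefschetz coincidence number $L(f_i,f_j)$ using cohomology with coefficients twisted by the orientation of $\R P^{2k}$ to show that $L(f_i,f_j)=\deg(f_i)$, and then uses coincidence-freeness to conclude $\deg(f_i)=0$. You instead lift the whole $n$-ordered map through the $2^{n}$-fold covering $\widetilde{F}_n\to F_n(\R P^{2k})$ (which exists since $\St[2k]$ is simply connected) and extract from the single condition $g_i(x)\neq\pm g_j(x)$ the two homotopies $g_i\simeq g_j$ and $g_i\simeq -g_j$, forcing $\deg(g_i)=0$; this is precisely the degree-cancellation mechanism of \repr{S2fp}, transported upstairs. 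Your argument is more elementary and self-contained: it avoids twisted coefficients and the Lefschetz coincidence formula of~\cite{GJ} entirely, and it yields the null-homotopy of $f_i=p\circ g_i$ directly without invoking the bijection between based homotopy classes. What the paper's approach buys in exchange is a statement about $\deg(f_i)$ itself (as a map into $\R P^{2k}$ with local coefficients), which is the form reused in the subsequent proof of \reth{rp2Kfpp}, where the same Lefschetz computation is applied to the pair $(p,f_i)$. All the steps you flag as routine (freeness of the $(\Z/2\Z)^{n}$-action on $\widetilde{F}_n$, the coordinate form of the lift, degree $-1$ of the antipodal map on an even-dimensional sphere) are indeed standard and correctly invoked.
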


\begin{proof}
The first part follows from \relem{split1}. It remains to prove the second part, \emph{i.e.}\ that each $f_i$ is null homotopic. Since $\St[2k]$ is simply connected, the set $[\mathbb{S}^{2k}, \mathbb{S}^{2k}]=[\mathbb{S}^{2k}, \mathbb{S}^{2k}]_{0}=\pi_{2k}(\mathbb{S}^{2k})$, where $[ \cdot , \cdot  ]_{0}$ denotes basepoint-preserving homotopy classes of maps.  Let $x_0\in  \mathbb{S}^{2k}$ be a basepoint, let $p\colon\thinspace \St[2k] \to \R P^{2k}$ be the two-fold covering, and let $\overline{x}_0=p(x_{0})\in \R P^{2k}$ be the basepoint of $\R P^{2k}$. Consider the  natural map $[(\mathbb{S}^{2k}, x_0), (\mathbb{S}^{2k}, x_0)] \to [(\mathbb{S}^{2k}, x_0), (\R P^{2k}, \overline{x}_0)]$ from the set of  based  homotopy classes of self-maps of $\St[2k]$ to the based homotopy classes of maps from $\St[2k]$ to $\R P^{2k}$,
that to a homotopy class of a basepoint-preserving self-map of $\St[2k]$ associates the homotopy class of the composition of this self-map with $p$.
This correspondence is an isomorphism. The covering map $p$ has topological degree $2$~\cite{Ep}, so the degree of a map $f\colon\thinspace \St[2k] \to \R P^{2k}$ is an even integer (we use the system of local coefficients given by the orientation of $\R P^{2k}$~\cite{Ol}). Since $H^{l}( \R P^{2k} ,  \widetilde{\mathbb{Q}})=0$ for $\widetilde{\mathbb{Q}}$ twisted by the orientation and  $l\ne 2k$, if $i\neq j$, it follows that the Lefschetz coincidence number $L(f_{i},f_{j})$ is equal to $\deg(f_i)$. But $f_i$ and $f_j$ are coincidence free, so their Lefschetz coincidence number must be zero,  
which implies that $\deg(f_i)=0$~\cite{GJ}. Since $n>1$, we conclude that $\deg(f_i)=0$ for all $1\leq i\leq n$, and the result follows.
\end{proof}
 
We are now able to prove the main result of this section, that $\R P^{2k}$ has the fixed point property for $n$-valued maps for all $k,n\geq 1$.

\begin{proof}[Proof of \reth{rp2Kfpp}]
The case $n=1$ is classical, so assume that $n\geq 2$. We use the notation introduced at the beginning of this section, taking $X=\R P^{2k}$. Since $\pi_1(\R P^{2k})\cong\Z_2$, $H$ is either $\pi_1(\R P^{2k})$ or the trivial group.  In the former case, the $n$-valued map $\phi\colon\thinspace \R P^{2k} \multimap \R P^{2k}$ is split, $\operatorname{\text{Fix}}(\phi)=\bigcup_{i=1}^{n} \operatorname{\text{Fix}}(f_i)$, 
where $\phi=\brak{f_1,\ldots,f_n}$, and for all $i=1,\ldots,n$, $f_{i}$ is a self-map of $\R P^{2k}$. It follows that $\phi$ has at least $n$ fixed points. So suppose that $H$ is the trivial subgroup of $\pi_1(\R P^{2k})$. Then $\widehat{\R P^{2k}}=\St[2k]$, and $q$ is the covering map $p\colon\thinspace \St[2k] \to \R P^{2k}$.
We first consider the case $n=2$. Let $\phi\colon\thinspace \R P^{2k} \multimap \R P^{2k}$ be a $2$-valued map, and let $\widehat{\Phi}_1\colon\thinspace \mathbb{S}^{2k} \to F_2(\R P^{2k})$ be a lift of the map $\Phi_1=\Phi\circ p\colon\thinspace \mathbb{S}^{2k} \to D_2(\R P^{2k})$ that factors through the projection $\pi\colon\thinspace F_2(\R P^{2k}) \to D_2(\R P^{2k})$. By \repr{nielsen}, $\widehat{\Phi}_1=(f_1, f_2)$, where for $i=1,2$, $f_i\colon\thinspace \mathbb{S}^{2k} \to \R P^{2k}$ is a single-valued map, and it follows from \repr{nullhomo} that $f_1$ and $f_2$ are null homotopic. 
If $\operatorname{\text{Coin}}(f_i, p)= \vide$ for some $i\in \brak{1,2}$, then arguing as in the second part of the proof of \repr{nullhomo}, it follows that $L(p,f_i)=\deg(p)=2$, which yields a contradiction. So $\operatorname{\text{Coin}}(f_i, p)\ne \vide$ for all $i\in \brak{1,2}$. Using the fact that $f_{1}$ and $f_{2}$ are coincidence free, we conclude that $\phi$ has at least two fixed points, and the result follows in this case.

Finally suppose that $n>2$. Arguing as in the case $n=2$, we obtain a lift of $\phi\circ p$ of the form $(f_1, \ldots, f_n)$, where for $i=1,\ldots,n$,  
$f_i\colon\thinspace \mathbb{S}^{2k} \to \R P^{2k}$ is a map. If $i=1,\ldots,n$ then for all $j\in \brak{1,\ldots,n}$, $j\neq i$, we may apply the above argument to $f_i$ and $f_j$ to obtain $\operatorname{\text{Coin}}(f_i, p)\ne \vide$. Hence $\phi$ has at least $n$ fixed points, and the result follows.
\end{proof}

\begin{rem} 
If $n>1$, we do not know whether there exists a non-split $n$-valued map $\phi\colon\thinspace  \R P^{2k} \multimap \R P^{2k}$. 
\end{rem}

\section{Deforming (split) $n$-valued maps to fixed point and root-free maps}\label{sec:fixfree}

In this section, we generalise a standard procedure for deciding whether a single-valued map may be deformed to a fixed point free map to the $n$-valued case. We start by giving a necessary and sufficient condition for an $n$-valued map (resp.\ a split $n$-valued map) $\phi\colon\thinspace X \to X$ to be deformable to a fixed point free $n$-valued map (resp.\ split fixed point free $n$-valued map), at least in the case where $X$ is a manifold without boundary. This enables us to prove \reth{defchineg}. We then go on to give the analogous statements for roots. Recall from \resec{intro} that $D_{1,n}(X)$ is the quotient of $F_{n+1}(X)$ by the action of the subgroup $\brak{1} \times S_n$ of the symmetric group $S_{n+1}$, and that $B_{1,n}(X)=\pi_1(D_{1,n}(X))$.  

\begin{prop}\label{prop:defor} Let $n\in \N$, let $X$ be a metric space, and let $\phi\colon\thinspace  X \multimap X$ be an $n$-valued map. 
If $\phi$ can be deformed to a fixed point free $n$-valued map, then there exists a map $\Theta\colon\thinspace  X\to D_{1,n}(X)$ such that the following diagram is commutative up to homotopy: 
\begin{equation}\label{eq:commdiag3}
\begin{tikzcd}[ampersand replacement=\&]
\& \& D_{1,n}(X) \ar{d}{\iota_{1,n}}\\
X \ar[swap]{rr}{\id_{X}\times \Phi} \ar[dashrightarrow, end anchor=south west]{rru}{\Theta}
\& \& X \times D_{n}(X),
\end{tikzcd}
\end{equation}
where $\iota_{1,n}\colon\thinspace D_{1,n}(X) \to X \times D_n(X)$ is the inclusion map. Conversely, if $X$ is a manifold without boundary and there exists a map $\Theta\colon\thinspace  X\to D_{1,n}(X)$ such that diagram~\reqref{commdiag3} is commutative up to homotopy, then the $n$-valued map $\phi\colon\thinspace  X \multimap X$ may be deformed to a fixed point free $n$-valued map.
\end{prop}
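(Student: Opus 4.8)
The plan is to establish the two implications separately, each time translating a statement about fixed points of $n$-valued maps into a statement about sections/lifts of the fibration $\iota_{1,n}$, using the description of $D_{1,n}(X)$ as the configuration-type space that records a point together with an $n$-point subset avoiding it.

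First I would prove the forward implication. Suppose $\phi$ is homotopic to a fixed point free $n$-valued map $\phi'$, with associated $n$-unordered map $\Phi'\colon X\to D_n(X)$. The key observation is that $\phi'$ being fixed point free means exactly that for every $x\in X$, the point $x$ does not lie in the $n$-point set determined by $\Phi'(x)$; hence the pair $(x,\Phi'(x))$ lies in the image of $\iota_{1,n}$, and in fact defines a point of $D_{1,n}(X)$ (a point together with an $n$-point subset disjoint from it). This gives a map $\Theta'\colon X\to D_{1,n}(X)$ with $\iota_{1,n}\circ\Theta' = \id_X\times\Phi'$ \emph{on the nose}. Since $\Phi'$ is homotopic to $\Phi$ (via the homotopy from $\phi'$ to $\phi$, using \reth{metriccont}), the composite $\id_X\times\Phi'$ is homotopic to $\id_X\times\Phi$, and so taking $\Theta=\Theta'$ makes diagram~\reqref{commdiag3} commute up to homotopy. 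This direction requires no hypothesis on $X$ beyond being a metric space, as stated.

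For the converse, assume $X$ is a manifold without boundary and that $\Theta\colon X\to D_{1,n}(X)$ is given with $\iota_{1,n}\circ\Theta$ homotopic to $\id_X\times\Phi$. Write $H\colon X\times I\to X\times D_n(X)$ for such a homotopy, with $H(\cdot,0)=\iota_{1,n}\circ\Theta$ and $H(\cdot,1)=\id_X\times\Phi$. The strategy is to lift $H$ through the fibration $\iota_{1,n}$, which is where the manifold hypothesis is needed: for $X$ a manifold without boundary, $\iota_{1,n}\colon D_{1,n}(X)\to X\times D_n(X)$ is (up to the natural identifications) a fibration — this is the analogue, in the unordered setting, of the Fadell--Neuwirth-type fibration $\overline{p}_{m,n}$ mentioned in \resec{intro}, since $D_{1,n}(X)$ can be viewed as the total space of the bundle over $X\times D_n(X)$ whose fibre over $(y,A)$ records nothing when $y\notin A$ and is empty when $y\in A$; more precisely one works with the fibration $F_{n+1}(X)\to X\times F_n(X)$ and passes to quotients. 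By the homotopy lifting property, the path $H$ lifts to $\widetilde H\colon X\times I\to D_{1,n}(X)$ with $\widetilde H(\cdot,0)=\Theta$. Set $\Theta_1=\widetilde H(\cdot,1)$; then $\iota_{1,n}\circ\Theta_1 = \id_X\times\Phi$ exactly. Projecting $\Theta_1$ to its $D_n(X)$-coordinate gives a map $\Phi_1\colon X\to D_n(X)$ with $\Phi_1$ homotopic to $\Phi$ (via the $D_n(X)$-component of $\widetilde H$, which lies over the $D_n(X)$-component of $H$), hence the $n$-valued map $\phi_1$ corresponding to $\Phi_1$ is homotopic to $\phi$; and the fact that $(x,\Phi_1(x))=\iota_{1,n}(\Theta_1(x))$ lands in the image of $\iota_{1,n}$ for all $x$ forces $x\notin\phi_1(x)$ for all $x$, i.e.\ $\phi_1$ is fixed point free.

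The main obstacle is the converse direction, and specifically the verification that $\iota_{1,n}$ (or rather the ordered version $\widehat\iota_{n+1}\colon F_{n+1}(X)\to X\times F_n(X)$, from which the unordered statement follows by taking the $S_n$-quotient of the total space) is a fibration when $X$ is a manifold without boundary, together with the bookkeeping needed to pass between the ordered and unordered pictures so that the lifted homotopy genuinely produces an $n$-valued map homotopic to $\phi$. Once the fibration property is in hand, the homotopy lifting argument is routine; the forward direction is essentially a reformulation of the definition of ``fixed point free''. I would also remark that this Proposition is the geometric heart of \reth{defchineg}: passing from the homotopy-commutative diagram~\reqref{commdiag3} to the algebraic diagram~\reqref{commdiag1} is then a matter of applying $\pi_1$ and using that $\iota_{1,n}$ induces $(\iota_{1,n})_{\#}$, which will be carried out subsequently.
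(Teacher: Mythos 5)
Your forward implication is correct and is exactly the paper's argument: a fixed point free deformation $\phi'$ of $\phi$ yields $\Theta(x)=(x,\Phi'(x))$, which lands in $D_{1,n}(X)$ precisely because $x\notin\phi'(x)$ for all $x$, and the diagram then commutes up to homotopy.

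The converse, however, contains a genuine gap. You propose to lift the homotopy between $\iota_{1,n}\circ\Theta$ and $\id_{X}\times\Phi$ through $\iota_{1,n}$ itself, asserting that $\iota_{1,n}\colon D_{1,n}(X)\to X\times D_{n}(X)$ (or its ordered version $\widehat{\iota}_{n+1}$) is a fibration. It is not: it is an open embedding whose image is the set of pairs $(y,A)$ with $y\notin A$, so its fibres are singletons over the image and empty elsewhere, and no such map over a connected base has the homotopy lifting property. Worse, the homotopy you want to lift must leave the image of $\iota_{1,n}$ whenever $\phi$ has a fixed point --- which is exactly the case of interest --- so the lifting fails at precisely the points where it is needed. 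The Fadell--Neuwirth-type fibrations are the \emph{projections}, not these inclusions, and that is what the correct argument uses: project the homotopy-commutative diagram onto the first factor to obtain a homotopy $H\colon X\times I\to X$ from $\overline{p}_{1,n}\circ\Theta$ to $\id_{X}$, lift $H$ through the fibration $\overline{p}_{1,n}\colon D_{1,n}(X)\to X$ starting from $\Theta$ at time $0$, and set $\Theta_{1}=\widetilde{H}(\cdot,1)$. Then $\overline{p}_{1,n}\circ\Theta_{1}=\id_{X}$, so $\Theta_{1}(x)=(x,\Phi_{1}(x))$ with $x\notin\Phi_{1}(x)$ for all $x$, and the $D_{n}(X)$-component of $\widetilde{H}$, concatenated with the $D_{n}(X)$-component of the original homotopy, shows that $\Phi_{1}$ is homotopic to $\Phi$. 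With this substitution the rest of your write-up (the final bookkeeping, and the remark about the passage to \reth{defchineg}) goes through.
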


\begin{proof}
For the first part, if $\phi'$ is a fixed point free deformation of $\phi$, then we may take the factorisation map $\Theta\colon\thinspace  X\to D_{1,n}(X)$ to be that defined by $\Theta(x)= (x, \Phi'(x))$. For the converse, the argument  is similar to the proof of the case of single-valued maps, and is as follows. 
Let $\Theta\colon\thinspace X \to D_{1,n}(X)$ be a homotopy factorisation that satisfies the hypotheses. Composing $\Theta$ with the projection $\overline{p}_{1,n}\colon\thinspace  D_{1,n}(X)\to X$ onto the first coordinate, we obtain a self-map of $X$ that is homotopic to the identity. Let $H\colon\thinspace  X\times I \to X$ be a homotopy between $\overline{p}_{1,n}\circ \Theta$   and  $\id_{X}$. Since $\overline{p}_{1,n}\colon\thinspace  D_{1,n}(X)\to X$ is a fibration and there is a lift of the restriction of $H$ to $X\times \brak{0}$, $H$ lifts to a homotopy $\widetilde{H}\colon\thinspace  X\times I \to  D_{1,n}(X)$. The restriction of $\widetilde{H}$ to $X\times \brak{1}$ yields the required deformation.
\end{proof}

For split $n$-valued maps, the correspondence given by \relem{split}(\ref{it:splitII}) gives rise to a statement analogous to that of \repr{defor} in terms of $F_{n}(X)$.
 
\begin{prop}\label{prop:equivsplit} Let $n\in \N$, let $X$ be a metric space, and let $\phi\colon\thinspace  X \multimap X$ be a split $n$-valued map. If $\phi$ can be deformed  to a fixed point free $n$-valued map, and if $\widehat{\Phi}\colon\thinspace X \to F_{n}(X)$ is a lift of $\phi$, then there exists a map $\widehat{\Theta}\colon\thinspace  X \to F_{n+1}(X)$ such that the following diagram is commutative up to homotopy: 
\begin{equation}\label{eq:commdiag4}
\begin{tikzcd}[ampersand replacement=\&]
\&\& F_{n+1}(X) \ar{d}{\widehat{\iota}_{n+1}}\\
X \ar[swap]{rr}{\id_{X}\times \widehat{\Phi}}  \ar[dashrightarrow, end anchor=south west]{rru}{\widehat{\Theta}}  \&\& X \times F_{n}(X),
\end{tikzcd}
\end{equation}
where $\widehat{\iota}_{n+1}\colon\thinspace F_{n+1}(X) \to X \times F_n(X)$ is the inclusion map. Conversely, if $X$ is a manifold without boundary and there exists a map $\widehat{\Theta}\colon\thinspace  X\to F_{n+1}(X)$ such that diagram~\reqref{commdiag4} is commutative up to homotopy, then the split $n$-valued map  $\phi\colon\thinspace  X \multimap X$ may be deformed through split maps to a  fixed point free split $n$-valued map.
\end{prop}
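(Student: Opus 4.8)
The plan is to mirror the proof of \repr{defor}, but to carry out the lifting argument on the level of ordered configuration spaces, and then to use \relem{split}(\ref{it:splitIV}) together with \reco{inject} to guarantee that the resulting deformation stays within the class of split $n$-valued maps. For the first (``only if'') direction, assume $\phi$ can be deformed to a fixed point free $n$-valued map. Since $\phi$ is split with lift $\widehat{\Phi}\colon\thinspace X \to F_n(X)$, by \reco{inject} the lift $\widehat{\Phi}$ may be deformed within $F_n(X)$ to a fixed point free map $\widehat{\Phi}'=(f_1',\ldots,f_n')\colon\thinspace X \to F_n(X)$, where additionally the intermediate $n$-tuples stay in $F_n(X)$; in particular $\widehat{\Phi}$ is homotopic to $\widehat{\Phi}'$ through $n$-ordered maps. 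Because $\widehat{\Phi}'$ is fixed point free, $x \ne f_i'(x)$ for all $i$ and all $x$, so the assignment $x \mapsto (x, f_1'(x),\ldots,f_n'(x))$ is a well-defined map $\widehat{\Theta}'\colon\thinspace X \to F_{n+1}(X)$, and then $\widehat{\Theta}'$ composed with $\widehat{\iota}_{n+1}$ equals $\id_X \times \widehat{\Phi}'$, which is homotopic to $\id_X \times \widehat{\Phi}$. Setting $\widehat{\Theta} = \widehat{\Theta}'$ makes diagram~\reqref{commdiag4} commute up to homotopy.

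For the converse, suppose $X$ is a manifold without boundary and $\widehat{\Theta}\colon\thinspace X \to F_{n+1}(X)$ makes diagram~\reqref{commdiag4} commute up to homotopy. The natural projection $\widehat{p}_{1,n}\colon\thinspace F_{n+1}(X) \to X$ onto the first coordinate is a fibration (this is the ordered-configuration-space analogue of the fibration $\overline{p}_{1,n}$ used in \repr{defor}, and holds for $X$ a manifold without boundary). Composing $\widehat{\Theta}$ with $\widehat{p}_{1,n}$ gives a self-map of $X$ that, by the homotopy commutativity of~\reqref{commdiag4}, is homotopic to $\id_X$. Let $H\colon\thinspace X \times I \to X$ be such a homotopy, with $H(\cdot,0) = \widehat{p}_{1,n}\circ\widehat{\Theta}$ and $H(\cdot,1)=\id_X$. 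Since $\widehat{p}_{1,n}$ is a fibration and $\widehat{\Theta}$ is a lift of $H(\cdot,0)$, the homotopy $H$ lifts to $\widetilde{H}\colon\thinspace X \times I \to F_{n+1}(X)$ with $\widetilde{H}(\cdot,0) = \widehat{\Theta}$. Write $\widetilde{H}(\cdot,1) = (g_0, g_1,\ldots,g_n)\colon\thinspace X \to F_{n+1}(X)$; then $g_0 = \widehat{p}_{1,n}\circ\widetilde{H}(\cdot,1) = H(\cdot,1) = \id_X$, so $(g_1,\ldots,g_n)\colon\thinspace X \to F_n(X)$ satisfies $g_i(x) \ne x$ for all $i$ and all $x$, \emph{i.e.}\ the $n$-ordered map $\widehat{\Phi}' = (g_1,\ldots,g_n)$ is fixed point free. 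Moreover, projecting $\widetilde{H}$ by deleting the first coordinate gives a homotopy in $F_n(X)$ from $\widehat{p}_{2,\ldots,n+1}\circ\widehat{\Theta}$, which is homotopic to $\widehat{\Phi}$ via the homotopy commutativity of~\reqref{commdiag4} (the second coordinate block of $\widehat{\iota}_{n+1}\circ\widehat{\Theta} \simeq \id_X \times \widehat{\Phi}$), to $\widehat{\Phi}'$. Applying $\pi$ to this homotopy and using \reth{metriccont} and \reco{inject}, we conclude that $\phi$ may be deformed through split $n$-valued maps to the split fixed point free $n$-valued map $\pi \circ \widehat{\Phi}'$.

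\textbf{Main obstacle.} I expect the only genuine subtlety to be bookkeeping about which homotopies land in which configuration space, and in particular making the ``second coordinate block'' argument precise: the diagram~\reqref{commdiag4} only commutes up to homotopy, so I must chain together the homotopy $\widehat{\iota}_{n+1}\circ\widehat{\Theta} \simeq \id_X \times \widehat{\Phi}$ with the lifted homotopy $\widetilde{H}$, and then project correctly onto the last $n$ coordinates to get a homotopy of $n$-ordered maps from $\widehat{\Phi}$ to $\widehat{\Phi}'$. Since projection $F_{n+1}(X) \to F_n(X)$ (deleting the first coordinate) is continuous, this projection of the combined homotopy is automatically a homotopy through $n$-ordered maps, so no new point-distinctness needs to be checked; one just has to be careful that the endpoints match up as claimed. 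The fibration property of $\widehat{p}_{1,n}$ for manifolds without boundary is the analogue of the fact, recalled in \resec{intro}, that $\overline{p}_{m,n}$ is a fibration, so I will invoke it in the same way.
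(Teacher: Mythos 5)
Your proposal is correct and follows essentially the same route as the paper, which simply declares the proof ``similar to that of Proposition~\ref{prop:defor}'': you transcribe that argument to ordered configuration spaces, using the Fadell--Neuwirth fibration $F_{n+1}(X)\to X$ in place of $\overline{p}_{1,n}$ and invoking Corollary~\ref{cor:inject} to carry the given lift $\widehat{\Phi}$ along the fixed point free deformation. The extra care you take in chaining the homotopies and checking that the deformation stays split is exactly the detail the paper leaves implicit.
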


\begin{proof}
Similar to that of \repr{defor}.
\end{proof}
 
We now apply Propositions~\ref{prop:defor} and~\ref{prop:equivsplit} to prove \reth{defchineg}, which treats the case where $X$ is a compact  surface without boundary (orientable or not) of non-positive Euler characteristic. 
 
\begin{proof}[Proof of \reth{defchineg}]  The space  $D_{1,n}(X)$ is a finite covering of $D_{1+n}(X)$, so it is a   $K(\pi, 1)$ since 
 $D_{1+n}(X)$ is a $K(\pi, 1)$ by~\cite[Corollary~2.2]{FaN}.  To prove the `only if' implication of part~(\ref{it:defchinega}),  diagram~\reqref{commdiag1} 
implies the existence of diagram~\reqref{commdiag3} using the fact that the space $D_{1,n}(X)$ is a $K(\pi, 1)$, 
where $\varphi=\Theta_{\#}$ is the homomorphism induced by $\Theta$  on the level of the fundamental groups. Conversely, diagram~\reqref{commdiag3} implies that the two maps $\iota_{1,n}\circ \Theta$ and $\id_X\times \Phi$ are homotopic, but not necessarily by a basepoint-preserving homotopy, so diagram~\reqref{commdiag1} is commutative up to conjugacy. Let $\delta\in\pi_{1}(X) \times B_{n}(X)$ be such that $(\iota_{1,n})_{\#}\circ \varphi(\alpha)=\delta (\id_X\times \phi)_{\#}(\alpha) \delta^{-1}$ for all $\alpha\in \pi_1(X)$, and let $\widehat{\delta}\in B_{1,n}(X)$ be an element  such that  $(\iota_{1,n})_{\#}(\widehat{\delta})=\delta.$ Considering the homomorphism  $\varphi'\colon\thinspace \pi_1(X) \to B_{1,n}(X)$ defined by $\varphi'(\alpha)=\widehat{\delta}^{-1}\varphi (\alpha)\widehat{\delta}$ for all $\alpha\in \pi_1(X)$, we obtain the commutative diagram~\reqref{commdiag1}, where we replace $\varphi$ by $\varphi'$. The proof of part~(\ref{it:defchinegb}) is similar, and is left to the reader. 
\end{proof}

For the case of roots, we now give statements analogous to those of Propositions~\ref{prop:defor} and~\ref{prop:equivsplit} and of \reth{defchineg}. The proofs are similar to those of the corresponding statements for fixed points, and the details are left to the reader.  
 
\begin{prop}\label{prop:rootI} Let $n\in \N$, let $X$ and $Y$ be metric spaces, let $y_0\in Y$ be a basepoint, and let $\phi\colon\thinspace  X \multimap Y$ be an $n$-valued map. If $\phi$ can be deformed to a root-free $n$-valued map, then there exists a map $\Theta\colon\thinspace  X\to D_{n}(Y\backslash\{y_0\})$ such that the following diagram is commutative up to homotopy: 
\begin{equation}\label{eq:commdiag3I}
\begin{tikzcd}[ampersand replacement=\&]
\& \&  D_{n}(Y\backslash\{y_0\}) \ar{d}{\iota_{n}}\\
X \ar[swap]{rr}{ \Phi} \ar[dashrightarrow, end anchor=south west]{rru}{\Theta}
\& \&  D_{n}(Y),
\end{tikzcd}
\end{equation}
where the map $\iota_{n}\colon\thinspace D_{n}(Y\backslash\{y_0\}) \to   D_n(Y)$ is induced by the inclusion map $Y\backslash \{y_0\} \lhra Y$. Conversely, if $Y$ is a manifold without boundary and there exists a map $\Theta\colon\thinspace  X\to D_{n}(Y)$ such that diagram~\reqref{commdiag3I} is commutative up to homotopy, then the $n$-valued map $\phi\colon\thinspace  X \multimap Y$ may be deformed to a root-free $n$-valued map.
\end{prop}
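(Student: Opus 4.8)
The plan is to mimic the proof of \repr{defor} almost verbatim, replacing the fixed point set with the root set at $y_0$, and the intermediate configuration space $D_{1,n}(X)$ with $D_{n}(Y\setminus\{y_0\})$. The underlying principle is the same: ``$\phi$ has no root at $y_0$'' means that for every $x\in X$, the point $y_0$ does not belong to the $n$-point set $\phi(x)$, which is exactly the condition that the $n$-unordered map $\Phi$ takes its values in $D_{n}(Y\setminus\{y_0\})\subset D_{n}(Y)$.

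For the ``only if'' direction, I would suppose $\phi'$ is a root-free $n$-valued map homotopic to $\phi$, so that its associated $n$-unordered map $\Phi'\colon\thinspace X\to D_{n}(Y)$ actually factors through the subspace $D_{n}(Y\setminus\{y_0\})$, since $y_0\notin \phi'(x)$ for all $x$. Call this factorisation $\Theta\colon\thinspace X\to D_{n}(Y\setminus\{y_0\})$, so that $\iota_{n}\circ\Theta=\Phi'$ on the nose. Since $\phi\simeq\phi'$, the maps $\Phi$ and $\Phi'=\iota_n\circ\Theta$ are homotopic as maps into $D_n(Y)$, which is precisely the statement that diagram~\reqref{commdiag3I} commutes up to homotopy.

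For the ``if'' direction, assume $Y$ is a manifold without boundary and that $\Theta\colon\thinspace X\to D_{n}(Y\setminus\{y_0\})$ is given with $\iota_n\circ\Theta\simeq\Phi$. The inclusion $Y\setminus\{y_0\}\lhook\joinrel\to Y$ induces a fibration-like map $D_{n}(Y\setminus\{y_0\})\to D_{n}(Y)$; more precisely, $D_n(Y\setminus\{y_0\})$ is an open subspace of $D_n(Y)$, and I would argue that a homotopy $H\colon\thinspace X\times I\to D_n(Y)$ from $\iota_n\circ\Theta$ to $\Phi$ can be deformed, rel $X\times\{0\}$, into $D_n(Y\setminus\{y_0\})$ using the manifold hypothesis on $Y$ (one may push a path that hits $y_0$ off $y_0$, since the ``bad'' locus where one of the $n$ points equals $y_0$ has codimension $\dim Y\geq 1$; alternatively, invoke the fibration $\overline{p}_{n,1}\colon\thinspace D_{n,1}(Y)\to D_n(Y)$ whose fibre over a configuration is $Y$ minus $n$ points, and lift $H$). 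The endpoint $H(\cdot,1)$ then gives an $n$-unordered map homotopic to $\Phi$ whose values avoid $y_0$; translating back via \reth{metriccont}, this is a root-free $n$-valued map homotopic to $\phi$.

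The main obstacle is the same point as in \repr{defor}: justifying that the homotopy $H$ into $D_n(Y)$ can be lifted or deformed into the open subspace $D_n(Y\setminus\{y_0\})$, which is where the ``manifold without boundary'' hypothesis on $Y$ is essential. The cleanest way is to identify $D_n(Y\setminus\{y_0\})$ with the fibre of the fibration $\overline{p}_{n,1}\colon\thinspace D_{n,1}(Y)\to D_1(Y)=Y$ over $y_0$ (so that there is an evaluation-type fibration, parallel to the use of $\overline{p}_{1,n}$ in \repr{defor}), use the homotopy lifting property to lift $H$, and take the restriction to $X\times\{1\}$; the remaining verifications are formal and are exactly as in \repr{defor}, so I would simply say ``the details are left to the reader'' as the surrounding text already announces.
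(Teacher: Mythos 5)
Your ``only if'' direction is correct and is exactly what the paper's omitted proof would say: a root-free deformation $\phi'$ has its $n$-unordered map $\Phi'$ landing in $D_n(Y\setminus\{y_0\})$, giving $\Theta$, and the homotopy $\phi\simeq\phi'$ makes the diagram commute up to homotopy.

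Your ``if'' direction reaches the right conclusion but is needlessly complicated, and the extra machinery you reach for (pushing paths off $y_0$ by a codimension argument, or lifting $H$ through $\overline{p}_{n,1}\colon D_{n,1}(Y)\to D_n(Y)$) is not justified in the form stated and, more to the point, not needed. The homotopy $H\colon X\times I\to D_n(Y)$ from $\Phi$ to $\iota_n\circ\Theta$ that the hypothesis hands you \emph{is already the required deformation}: by \reth{metriccont} it corresponds to a homotopy of $n$-valued maps from $\phi$ to the $n$-valued map associated to $\iota_n\circ\Theta$, and the latter is root-free because $\iota_n\circ\Theta$ takes values in $D_n(Y\setminus\{y_0\})$. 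There is nothing further to do; in particular there is no need to deform $H$ rel $X\times\{0\}$ into the subspace $D_n(Y\setminus\{y_0\})$, which would amount to re-deriving a root-free map homotopic to $\Phi$ when $\iota_n\circ\Theta$ already is one. This also reveals a structural point you should internalise: in \repr{defor} the fibration $\overline{p}_{1,n}$ and the manifold hypothesis are essential because knowing $\iota_{1,n}\circ\Theta\simeq\id_X\times\Phi$ does \emph{not} guarantee that the first coordinate of $\Theta$ equals $\id_X$, so the $D_n$-part of $\Theta$ need not be fixed point free until one corrects this via the lifted homotopy. In the root setting there is no such discrepancy to correct—membership of $\Theta(x)$ in $D_n(Y\setminus\{y_0\})$ directly encodes root-freeness—so the converse is genuinely easier than in \repr{defor}, and the manifold hypothesis plays no role in it (it appears in the statement only to mirror \repr{defor}).
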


For split $n$-valued maps, the correspondence of  \relem{split}(\ref{it:splitII}) gives rise to a statement analogous to that of Proposition \ref{prop:rootI} in terms of $F_{n}(Y)$.
 
\begin{prop}\label{prop:equivsplitI}  Let $n\in \N$, let $X$ and $Y$ be   metric  spaces, let $y_0\in Y$ a basepoint, and let $\phi\colon\thinspace  X \multimap Y$ be a split $n$-valued map. If $\phi$ can be deformed  to a root-free $n$-valued map then there exists a map $\widehat{\Theta}\colon\thinspace  X \to F_{n}(Y\backslash \{y_0\})$  and a lift $\widehat{\Phi}$ of $\phi$ such that the following diagram is commutative up to homotopy: 
\begin{equation}\label{eq:commdiag4I}
\begin{tikzcd}[ampersand replacement=\&]
\&\& F_{n}(Y\backslash \{y_0\}) \ar{d}{\widehat{\iota}_{n}}\\
X \ar[swap]{rr}{ \widehat{\Phi}}  \ar[dashrightarrow, 
end anchor=south west]{rru}{\widehat{\Theta}}  \&\&  F_{n}(Y),
\end{tikzcd}
\end{equation}
where the map $\widehat{\iota}_{n}\colon\thinspace F_{n}(Y\backslash \{y_0\}) \to  F_n(Y)$ is induced by the inclusion map $Y\backslash \{y_0\} \lhra Y$. Conversely, if $Y$ is a manifold without boundary, and there exists a map $\widehat{\Theta}\colon\thinspace  X\to F_{n}(Y\backslash \{y_0\})$ such that diagram~\reqref{commdiag4I} is commutative up to homotopy, then the split $n$-valued map  $\phi\colon\thinspace  X \multimap Y$ may be deformed through split maps to a root-free split $n$-valued map.
\end{prop}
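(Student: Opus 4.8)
\textbf{Proof strategy for \repr{equivsplitI}.} The plan is to mirror the proof of \repr{equivsplit} exactly, the only change being that the fibration $\overline{p}_{1,n}\colon\thinspace D_{1,n}(X)\to X$ is replaced by the projection/evaluation map coming from the inclusion $Y\backslash\{y_0\}\lhra Y$, and the fixed point set is replaced by the root set. For the direct implication, suppose $\phi$ is deformed through $n$-valued maps to a root-free $n$-valued map. Since $\phi$ is split and the deformation is a homotopy through $n$-valued maps starting at a split map, \relem{split}(\ref{it:splitII}) (together with the lifting property of the covering $\pi$, exactly as in \reco{inject}) shows that the deformation can be taken through split maps, so we get a lift $\widehat{\Phi}'=(f_1',\ldots,f_n')\colon\thinspace X\to F_n(Y)$ of the terminal map with $y_0\notin\{f_1'(x),\ldots,f_n'(x)\}$ for every $x\in X$; that is, $\widehat{\Phi}'$ has image in $F_n(Y\backslash\{y_0\})$. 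Composing the lifted deformation with $\widehat{\iota}_n$ gives the required homotopy-commutativity of diagram~\reqref{commdiag4I}, with $\widehat{\Theta}=\widehat{\Phi}'$ viewed as a map into $F_n(Y\backslash\{y_0\})$ and $\widehat{\Phi}$ the chosen lift of the initial map.

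For the converse, assume $Y$ is a manifold without boundary and that $\widehat{\Theta}\colon\thinspace X\to F_n(Y\backslash\{y_0\})$ makes diagram~\reqref{commdiag4I} homotopy-commute, i.e.\ $\widehat{\iota}_n\circ\widehat{\Theta}\simeq\widehat{\Phi}$ as maps $X\to F_n(Y)$. The point is that the inclusion-induced map $F_n(Y\backslash\{y_0\})\to F_n(Y)$ fits into a fibration-type sequence: removing the point $y_0$ from $Y$ and then taking ordered configurations is, up to the relevant lifting properties, governed by the fibration $F_{n+1}(Y)\to Y$ projecting onto the (distinguished) first coordinate, whose fibre over $y_0$ is precisely $F_n(Y\backslash\{y_0\})$. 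Concretely, let $\widehat{H}\colon\thinspace X\times I\to F_n(Y)$ be a homotopy from $\widehat{\iota}_n\circ\widehat{\Theta}$ to $\widehat{\Phi}$. The constant path at $y_0$ in $Y$, together with $\widehat{H}$, gives a path in $F_{n+1}(Y)$ at time $0$ sitting over $y_0$; using that $F_{n+1}(Y)\to Y$ is a fibration (here we use that $Y$ is a manifold without boundary, via the Fadell--Neuwirth fibrations) we lift and transport $\widehat{\Theta}$ along $\widehat{H}$ to obtain $\widehat{\Phi}'\colon\thinspace X\to F_n(Y\backslash\{y_0\})$ homotopic to $\widehat{\Phi}$ inside $F_n(Y)$. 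Then $\widehat{\Phi}'=(f_1',\ldots,f_n')$ with $y_0\notin\{f_i'(x)\}$ for all $x$, so the split $n$-valued map $\brak{f_1',\ldots,f_n'}$ is root-free and is homotopic to $\phi$ through split $n$-valued maps (push the homotopy $\widehat{\Phi}\simeq\widehat{\Phi}'$ down by $\pi$ and invoke \reth{metriccont}).

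\textbf{Main obstacle.} The routine parts are the diagram-chasing and the appeal to \relem{split} and \reth{metriccont}. The one point requiring care — and the analogue of where \repr{equivsplit} uses that $\overline{p}_{1,n}$ is a fibration — is setting up the correct fibration to run the lifting argument for the converse: one must identify $F_n(Y\backslash\{y_0\})$ as the fibre of a genuine fibration over $Y$ and check that the hypothesis that $Y$ is a manifold without boundary is exactly what guarantees this (it is, by the Fadell--Neuwirth fibration theorem, which also underlies the $K(\pi,1)$ statements used in \reth{defchineg}). Once that fibration is in place, lifting the homotopy $\widehat{H}$ rel $X\times\{0\}$ and restricting to $X\times\{1\}$ produces the desired root-free representative, completing the proof.
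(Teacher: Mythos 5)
Your forward implication is sound: you lift the given deformation through the covering $\pi$ starting from a chosen lift $\widehat{\Phi}$, exactly as in \reco{inject}; the terminal map of the lifted homotopy has image in $F_n(Y\setminus\{y_0\})$ because its projection is root-free, and this produces $\widehat{\Theta}$ together with the homotopy-commutativity of diagram~\reqref{commdiag4I}.

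The converse, however, rests on a misidentified analogue, and the lifting argument you sketch has a genuine gap. In Propositions~\ref{prop:defor} and~\ref{prop:equivsplit} a fibration is indispensable because the map $\Theta$ produced by the hypothesis has a first coordinate that is only \emph{homotopic} to $\id_{X}$, not equal to it; homotopy lifting is what converts the condition $\theta(x)\notin\phi'(x)$ into the desired $x\notin\phi''(x)$. In the root case there is no first-coordinate slot to correct, so there is nothing for a fibration to do. The hypothesis already hands you a homotopy $\widehat{H}$ in $F_n(Y)$ from $\widehat{\iota}_n\circ\widehat{\Theta}$ to $\widehat{\Phi}$, and $\widehat{\iota}_n\circ\widehat{\Theta}$ has image in $F_n(Y\setminus\{y_0\})$: it \emph{is} the desired root-free $n$-ordered map. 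Pushing $\widehat{H}$ down by $\pi$ and invoking \reth{metriccont} gives the required deformation of $\phi$, through split $n$-valued maps (each $\pi\circ\widehat{H}(\cdot,t)$ lifts to $\widehat{H}(\cdot,t)$), to the root-free split $n$-valued map $\pi\circ\widehat{\iota}_n\circ\widehat{\Theta}$. Your specific lifting does not run as written: $(y_0,\widehat{H}(x,t))$ is not a point of $F_{n+1}(Y)$ once $t>0$, since $\widehat{H}(\cdot,t)$ need not avoid $y_0$, so there is no well-defined lift to feed into the Fadell--Neuwirth fibration $F_{n+1}(Y)\to Y$; and if you instead lift $\widehat{H}$ along $F_{n+1}(Y)\to F_n(Y)$ starting from $(y_0,\widehat{\iota}_n\circ\widehat{\Theta})$, you obtain at time $1$ some $(g(x),\widehat{\Phi}(x))$ with $g(x)$ not a root of $\widehat{\Phi}$, which is not the statement that $y_0$ is not a root. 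Replace the fibration machinery by the one-line observation above and the proof is complete.
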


Propositions~\ref{prop:rootI} and~\ref{prop:equivsplitI}  may be applied to the case where $X$ and $Y$ are  compact surfaces without boundary of non-positive Euler characteristic to obtain the analogue of \reth{defchineg} for roots.

\begin{thm}\label{th:defchinegI} 
Let $n\in \N$, and let $X$ and $Y$ be compact surfaces without boundary of non-positive Euler characteristic.
\begin{enumerate}[(a)]
\item\label{it:defchinegal} An $n$-valued map 
$\phi\colon\thinspace  X \multimap Y$ can be deformed to a root-free $n$-valued  map if and only if there is a homomorphism 
$\varphi\colon\thinspace  \pi_1(X) \to B_{n}(Y\backslash\{y_0\})$ that makes the following diagram commute: 

\begin{equation*}
\begin{tikzcd}[ampersand replacement=\&]
\&\& B_{n}(Y\backslash\{y_0\}) \ar{d}{(\iota_{n})_{\#}}\\
\pi_{1}(X) \ar[swap]{rr}{\Phi_{\#}}  \ar[dashrightarrow, end anchor=south west]{rru}{\varphi}  \&\& B_{n}(Y),
\end{tikzcd}
\end{equation*}
where $\iota_{n}\colon\thinspace D_{n}(Y\backslash\{y_0\}) \to  D_{n}(Y)$ is induced by the inclusion map $Y\backslash \{y_0\} \lhra Y$, and $\Phi\colon\thinspace X\to D_{n}(Y)$ is the $n$-unordered map associated to $\phi$.

\item\label{it:defchinegbI} A split $n$-valued map $\phi\colon\thinspace  X \multimap Y$ can be deformed to a root-free $n$-valued map if and only if there exist a lift $\widehat{\Phi}\colon\thinspace X \to F_n(Y)$ of $\phi$ and a homomorphism $\widehat{\varphi}\colon\thinspace  \pi_1(X) \to P_{n}(Y\backslash\{y_0\})$ that make the following diagram commute:
\begin{equation*}
\begin{tikzcd}[ampersand replacement=\&]
\&\& P_{n}(Y\backslash\{y_0\}) \ar{d}{(\widehat{\iota}_{n})_{\#}}\\
\pi_{1}(X) \ar[swap]{rr}{\widehat{\Phi}_{\#}}  \ar[dashrightarrow, end anchor=south west]{rru}{\widehat{\varphi}}  \&\&  P_{n}(Y),
\end{tikzcd}
\end{equation*} 
where $\widehat{\iota}_{n}\colon\thinspace F_{n}(Y\backslash\{y_0\}) \to  F_{n}(Y)$ is induced by the inclusion map $Y\backslash \{y_0\} \lhra Y$.
\end{enumerate}
\end{thm}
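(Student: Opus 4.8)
The plan is to deduce \reth{defchinegI} from Propositions~\ref{prop:rootI} and~\ref{prop:equivsplitI} by exactly the same passage from a homotopy-commutative diagram of spaces to an honest (up to conjugacy, hence up to replacement) commutative diagram of fundamental groups that was used in the proof of \reth{defchineg}. The one genuinely new input that must be checked is that the space $D_{n}(Y\setminus\{y_0\})$ appearing as the apex of diagram~\reqref{commdiag3I} is a $K(\pi,1)$; once this is known, a map into it is determined up to (free) homotopy by the induced homomorphism on $\pi_1$, which is what converts the two propositions into the asserted algebraic criteria.

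First I would establish the asphericity of $D_{n}(Y\setminus\{y_0\})$. Since $Y$ is a compact surface without boundary with $\chi(Y)\le 0$, the punctured surface $Y\setminus\{y_0\}$ is a non-compact surface, hence an open surface that is a $K(\pi,1)$ (its universal cover is $\R^2$); more to the point, by~\cite[Corollary~2.2]{FaN} the ordered configuration space $F_{n}(Y\setminus\{y_0\})$ is a $K(\pi,1)$, and $D_{n}(Y\setminus\{y_0\})$ is the quotient of $F_{n}(Y\setminus\{y_0\})$ by the free action of $S_n$, so it is a finite-index cover-quotient and therefore also a $K(\pi,1)$, exactly as in the first sentence of the proof of \reth{defchineg}. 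The same remark applies trivially to $F_{n}(Y\setminus\{y_0\})$ for part~(\ref{it:defchinegbI}).

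Next, for part~(\ref{it:defchinegal}): if $\phi$ can be deformed to a root-free $n$-valued map, \repr{rootI} supplies a map $\Theta\colon X\to D_{n}(Y\setminus\{y_0\})$ making~\reqref{commdiag3I} commute up to homotopy; taking $\varphi=\Theta_{\#}$ gives the commuting triangle of groups. Conversely, given $\varphi\colon\pi_1(X)\to B_{n}(Y\setminus\{y_0\})$ with $(\iota_n)_{\#}\circ\varphi=\Phi_{\#}$, since $D_{n}(Y\setminus\{y_0\})$ is aspherical and $X$ is a compact surface (hence a CW-complex), $\varphi$ is realised by a map $\Theta\colon X\to D_{n}(Y\setminus\{y_0\})$ with $\Theta_{\#}=\varphi$, and then $\iota_n\circ\Theta$ and $\Phi$ induce the same map on $\pi_1$ up to conjugacy, so they are freely homotopic (again using that the target $D_n(Y)$ is a $K(\pi,1)$); this gives a diagram as in~\reqref{commdiag3I} commuting up to homotopy, and \repr{rootI} then yields the required root-free deformation (here $Y$ being a manifold without boundary is exactly the hypothesis needed for the converse half of \repr{rootI}). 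The conjugacy ambiguity is absorbed, precisely as in the proof of \reth{defchineg}, by replacing $\varphi$ with the conjugate homomorphism $\varphi'(\alpha)=\widehat{\delta}^{-1}\varphi(\alpha)\widehat{\delta}$ for a suitable lift $\widehat{\delta}\in B_{n}(Y\setminus\{y_0\})$ of the conjugating element $\delta\in B_n(Y)$; such a lift exists because $(\iota_n)_{\#}$ restricted appropriately is onto the relevant element, or more simply because we may first arrange the free homotopy and then conjugate. Part~(\ref{it:defchinegbI}) is identical with $F$ in place of $D$ and $P$ in place of $B$, using \repr{equivsplitI} and the asphericity of $F_{n}(Y\setminus\{y_0\})$, together with \relem{split}(\ref{it:splitII}) to pass between lifts and split maps.

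The main obstacle is not any single step but ensuring the bookkeeping of basepoints and conjugacy is airtight: one must be careful that the conjugating element $\delta$ produced by the free (rather than based) homotopy in the converse direction genuinely lifts along $(\iota_n)_{\#}$ to an element $\widehat\delta$ of the punctured braid group, so that the replacement $\varphi\mapsto\varphi'$ lands back in $B_{n}(Y\setminus\{y_0\})$ and still makes the triangle commute on the nose; this is the only place where the argument is not a verbatim transcription of the fixed-point case, but it goes through for the same reason it does there. Since the paper explicitly says ``The proofs are similar to those of the corresponding statements for fixed points, and the details are left to the reader,'' the write-up would simply indicate this correspondence and flag the asphericity of $D_{n}(Y\setminus\{y_0\})$ and $F_{n}(Y\setminus\{y_0\})$ as the one point requiring comment.
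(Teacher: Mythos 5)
Your proposal is correct and takes essentially the same route the paper intends: the paper explicitly reduces \reth{defchinegI} to Propositions~\ref{prop:rootI} and~\ref{prop:equivsplitI} by the same argument as in the proof of \reth{defchineg}, leaving the details to the reader. The details you supply --- asphericity of $D_{n}(Y\setminus\{y_0\})$ and $F_{n}(Y\setminus\{y_0\})$, the $K(\pi,1)$ dictionary between homotopy-commutative diagrams of spaces and conjugacy-commutative diagrams of groups, and the conjugation adjustment via a lift of $\delta$ along the surjective $(\iota_{n})_{\#}$ --- are exactly the ones needed.
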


\section{An application to split $2$-valued maps of the $2$-torus}\label{sec:toro}

In this section, we will use some of the ideas and results of \resec{fixfree} to study the fixed point theory of $2$-valued maps of the $2$-torus $\T$. We restrict our attention to the case where the maps are split, \emph{i.e.}\ we consider $2$-valued maps of the form $\phi\colon\thinspace  \T \multimap \T$ that admit a lift $\widehat{\Phi}\colon\thinspace  \T \to F_{2}(\T)$,  where $\widehat{\Phi}=(f_1, f_2)$, $f_1$ and $f_2$ being coincidence-free self-maps of $\T$. We classify the set of homotopy classes of split $2$-valued maps of $\T$, and we study the question of the characterisation of those split $2$-valued maps that can be deformed to fixed point free $2$-valued maps. The case of arbitrary $2$-valued maps of $\T$ will be treated in a forthcoming paper. In \resec{toro2}, we give presentations of the groups $P_{2}(\T)$, $B_{2}(\T)$ and $P_{2}(\T\setminus\brak{1})$ that will be used in the following sections, where $1$ denotes a basepoint of $\T$. In \resec{descript}, we describe the set of based and free homotopy classes of split $2$-valued maps of $\T$.  In \resec{fptsplit2}, we give a formula for the Nielsen number, and we derive a necessary condition for such a split $2$-valued map to be deformable to a fixed point free $2$-valued map. We then give an infinite family of homotopy classes of 
split $2$-valued maps of $\T$ that satisfy this condition and that 
may be deformed to fixed point free $2$-valued maps. To facilitate the calculations, in \resec{p2Tminus1}, we shall show that the fixed point problem is equivalent to a root problem.

\subsection{The groups  $P_{2}(\T)$, $B_{2}(\T)$ and $P_{2}(\T\setminus\brak{1})$}\label{sec:toro2}

In this section, we give presentations of $P_{2}(\T)$, $B_{2}(\T)$ and $P_{2}(\T\setminus\brak{1})$ that will be used in the following sections. Other presentations of these groups may be found in the literature, see~\cite{Bel,Bi,GM,Sco} for example. We start by considering the group $P_{2}(\T\setminus\brak{1})$. If $u$ and $v$ are elements of a group $G$, we denote their commutator $uvu^{-1}v^{-1}$ by $[u,v]$, and the commutator subgroup of $G$ by $\Gamma_{2}(G)$. If $A$ is a subset of $G$ then $\ang{\!\ang{A}\!}_{G}$ will denote the normal closure of $A$ in $G$. 

\begin{prop}\label{prop:presP2T1}
The group $P_{2}(\T\setminus\brak{1})$ admits the following presentation:
\begin{enumerate}
\item[generators:] $\rho_{1,1}$, $\rho_{1,2}$, $\rho_{2,1}$,  $\rho_{2,2}$, $B_{1,2}$, $B$ and $B'$.
\item[relations:]\mbox{}
\begin{enumerate}[(a)]
\item\label{it:presP2T1a} $\rho_{2,1}\rho_{1,1}\rho_{2,1}^{-1}=B_{1,2}\rho_{1,1}B_{1,2}^{-1}$.
\item $\rho_{2,1}\rho_{1,2}\rho_{2,1}^{-1}=B_{1,2}\rho_{1,2}\rho_{1,1}^{-1}B_{1,2}\rho_{1,1}B_{1,2}^{-1}$.
\item $\rho_{2,2}\rho_{1,1}\rho_{2,2}^{-1}=\rho_{1,1}B_{1,2}^{-1}$.
\item $\rho_{2,2}\rho_{1,2}\rho_{2,2}^{-1}=B_{1,2}\rho_{1,2}B_{1,2}^{-1}$.
\item\label{it:presP2T1e} $\rho_{2,1}B\rho_{2,1}^{-1}=B$ and $\rho_{2,2}B\rho_{2,2}^{-1}=B$.
\item\label{it:presP2T1f} $\rho_{2,1}B_{1,2}\rho_{2,1}^{-1}=B_{1,2} \rho_{1,1}^{-1}B_{1,2} \rho_{1,1}B_{1,2}^{-1}$ and $\rho_{2,2}B_{1,2}\rho_{2,2}^{-1}=B_{1,2} \rho_{1,2}^{-1}B_{1,2} \rho_{1,2}B_{1,2}^{-1}$.
\item\label{it:relf} $B'\rho_{1,1}B'^{-1}=\rho_{1,1}$ and $B'\rho_{1,2}B'^{-1}=\rho_{1,2}$.
\item\label{it:relg} $B'B_{1,2}B'^{-1}=B_{1,2}^{-1}B^{-1} B_{1,2}BB_{1,2}$ and $B'BB'^{-1}=B_{1,2}^{-1}BB_{1,2}$.
\item\label{it:relh} $[\rho_{1,1},\rho_{1,2}^{-1}]=BB_{1,2}$ and $[\rho_{2,1},\rho_{2,2}^{-1}]=B_{1,2}B'$.
\end{enumerate}
\end{enumerate}
In particular, $P_{2}(\T\setminus\brak{1})$ is a semi-direct product of the free group of rank three generated by $\brak{\rho_{1,1},\rho_{1,2},B_{1,2}}$ by the free group of rank two generated by $\brak{\rho_{2,1},\rho_{2,2}}$.
\end{prop}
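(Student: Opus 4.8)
The plan is to obtain the presentation of $P_{2}(\T\setminus\brak{1})$ by applying the standard method of building up the configuration space via a sequence of fibrations coming from forgetting points, and then reading off a presentation of $\pi_{1}$ from the associated short exact sequences. Concretely, $\T\setminus\brak{1}$ is an open surface, so it is homotopy equivalent to a wedge of two circles, and $\pi_{1}(\T\setminus\brak{1})\cong\mathbb{F}_{2}$; I would fix this free group and choose generators that correspond geometrically to loops based at the first configuration point going around the two handles while avoiding $1$. The Fadell--Neuwirth fibration $p\colon F_{2}(\T\setminus\brak{1})\to \T\setminus\brak{1}$ that forgets the second point has fibre $\T\setminus\brak{1,1'}$ (a surface homotopy equivalent to a wedge of three circles), so there is a short exact sequence
\begin{equation*}
1\to \pi_{1}(\T\setminus\brak{1,1'})\to P_{2}(\T\setminus\brak{1})\stackrel{p_{\#}}{\to}\pi_{1}(\T\setminus\brak{1})\to 1.
\end{equation*}
The kernel is free of rank three on $\brak{\rho_{1,1},\rho_{1,2},B_{1,2}}$, and the quotient is free of rank two on the images of $\brak{\rho_{2,1},\rho_{2,2}}$; since the quotient is free, the sequence splits, which already gives the claimed semi-direct product structure and shows that a presentation is obtained from generators of kernel and quotient together with the relations recording the conjugation action of the splitting lifts $\rho_{2,1},\rho_{2,2}$ on the free generators of the kernel. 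This is exactly the shape of relations (a)--(f). The surviving generators $B$ and $B'$ and the relations (g)--(i) come from expressing the surface relations of the fibre $\pi_{1}(\T\setminus\brak{1,1'})$ and of the surfaces $\T\setminus\brak{1}$ and $\T\setminus\brak{1,1'}$, and the boundary/commutator relations $[\rho_{1,1},\rho_{1,2}^{-1}]=BB_{1,2}$, $[\rho_{2,1},\rho_{2,2}^{-1}]=B_{1,2}B'$ record how the deleted points contribute boundary loops.

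The key steps, in order, are as follows. First I would set up geometric generators for all groups in sight: generators $\rho_{1,1},\rho_{1,2}$ of $\pi_{1}(\T\setminus\brak{1,1'})$ coming from the handles near the first point, $B_{1,2}$ the loop of the first point around the second, $B$ (resp.\ $B'$) the loop around the deleted basepoint $1$ as seen from the first (resp.\ second) point; and $\rho_{2,1},\rho_{2,2}$ the analogous handle generators for the second point. Second, I would write down a presentation of the fibre $\pi_{1}(\T\setminus\brak{1,1'})$: this is a free group, but with a distinguished "surface-with-two-punctures" relation $[\rho_{1,1},\rho_{1,2}^{-1}]$ equal to a product of the two boundary loops, which after a choice of generators becomes relation (h) $[\rho_{1,1},\rho_{1,2}^{-1}]=BB_{1,2}$. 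Third, I would compute the conjugation action of the section lifts $\rho_{2,1},\rho_{2,2}$ on each kernel generator; this is the heart of the argument and is done by explicit isotopies of configurations, or equivalently by using known braid-group relations for $P_{2}$ of a surface (e.g.\ from~\cite{GM,Sco}), yielding (a)--(f). Fourth, I would deal with the extra pair $B,B'$: since both $B$ and $B'$ are loops around the same deleted point $1$ but based at different configuration points, they are conjugate, and tracking this through the fibration gives relations (g) and the second half of (i). Finally I would assemble the full presentation by the standard recipe (generators of base $\cup$ generators of fibre; relations of fibre; relations of base lifted; conjugation relations) and check that after Tietze moves the redundancies collapse to exactly the stated list, and that the subgroup generated by $\brak{\rho_{1,1},\rho_{1,2},B_{1,2}}$ is free of rank three and normal with free quotient of rank two on $\brak{\rho_{2,1},\rho_{2,2}}$ — the "in particular" clause being immediate from the split exact sequence.

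The main obstacle I anticipate is the bookkeeping in the third step: pinning down the precise form of the conjugates $\rho_{2,i}\rho_{1,j}\rho_{2,i}^{-1}$ and $\rho_{2,i}B_{1,2}\rho_{2,i}^{-1}$ as words in $\brak{\rho_{1,1},\rho_{1,2},B_{1,2}}$. These depend on a consistent orientation and ordering convention for the two configuration points and on how the chosen loops wind around each other and around the puncture; a sign error or a transposition of the two indices would propagate into the wrong relations. I would control this by fixing one explicit picture of $\T$ as a square with identifications, placing $1$, the first point, and the second point in a standard position, drawing the generating loops once and for all, and then computing every conjugate by pushing the second point around its loop while dragging the first point's loop along — i.e.\ a disc-pushing/point-pushing argument. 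As a consistency check I would abelianise the resulting presentation and verify that $H_{1}(F_{2}(\T\setminus\brak{1}))$ has the expected rank, and I would verify that forgetting the second point (killing $\rho_{2,1},\rho_{2,2}$ and identifying $B_{1,2}$ with a boundary loop) recovers the known presentation of $\pi_{1}(\T\setminus\brak{1})\cong\mathbb{F}_{2}$, and that forgetting the first point recovers $\pi_{1}(\T\setminus\brak{1})$ as well; agreement of both specialisations is a strong indication that the conjugation relations have been recorded correctly.
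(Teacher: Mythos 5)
Your proposal follows essentially the same route as the paper's proof: the Fadell--Neuwirth short exact sequence with free kernel of rank three and free quotient of rank two, redundant generating sets that adjoin the puncture loops $B$ and $B'$ so that the single defining relations of kernel and quotient become the two halves of relation~(i), the standard extension-presentation recipe (fibre relations, lifted base relation, conjugation relations computed geometrically) for the rest, and the splitting from freeness of the quotient. The only slip is an indexing one: for $\langle\rho_{1,1},\rho_{1,2},B_{1,2},B\rangle$ to be the kernel, the fibration must forget the \emph{first} point (project onto the second coordinate), as in the paper, not the second.
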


Geometric representatives of the generators of $P_{2}(\T\setminus\brak{1})$ are illustrated in Figure~\ref{fig:gens}. The torus is obtained from this figure by identifying the boundary to a point.

\begin{figure}[!h]
\hspace*{\fill}
\begin{tikzpicture}[>=stealth,scale=0.8]
\draw[line width=1.3mm] (0,0) circle(5);

\draw[fill,white] (4.24,2.67) circle(0.77);
\draw[fill,white] (-4.24,2.67) circle(0.77);

\draw[fill,white] (1.5,4.7) circle(0.77);
\draw[fill,white] (-1.5,4.7) circle(0.77);

\centerarc[line width=1.3mm,rotate around={-20:(0,0)}](0,5)(-23.5:203.5:4)
\centerarc[line width=1.3mm,rotate around={-20:(0,0)}](0,5)(-14.5:194.5:2.5)

\draw[fill,white] (0,6.5) circle(0.25);
\draw[fill,white] (0,8.3) circle(0.25);
\draw[fill,white] (-1.4,7.1) circle(0.25);
\draw[fill,white] (1.4,7.1) circle(0.25);

\draw[fill,white] (0.8,7) circle(0.8);
\draw[fill,white] (-0.7,7.6) circle(0.8);

\draw[fill] (-1.5,0) circle(0.12);
\draw[fill] (1.5,0) circle(0.12);

\draw[fill] (1.5,-3) circle(0.1);
\draw[fill,white] (1.5,-3) circle(0.04);

\begin{scope}[shift={(-1.5,0)}, rotate=0]
\draw[middlearrowrev=0.7] (0,0) .. controls (-5.5,3.5) and (-3.5,7.4) .. (-1.5,8.2);
\draw (0,0) .. controls (5.5,3.8) and (3,9) .. (-1.5,8.2);
\end{scope}

\begin{scope}[shift={(-1.5,0)}, rotate=-39]
\draw[middlearrowrev=0.4] (0,0) .. controls (-4.5,3.5) and (-4.5,8.4) .. (-1.5,9.2);
\draw (0,0) .. controls (4,3.8) and (3,10) .. (-1.5,9.2);
\end{scope}

\begin{scope}[shift={(-1.5,0)},rotate=-9]
\draw[middlearrowrev=0.4] (0,0) .. controls (1.5,0) and (3.8,-2) .. (3.9,0);
\draw(0,0) .. controls (1.5,0) and (3.8,2) .. (3.9,0);
\end{scope}

\begin{scope}[shift={(1.5,0)}, rotate=-4, ultra thick]
\draw (0,0) .. controls (4.5,4) and (3,6) .. (1.5,7.2);
\draw[middlearrowrev=0.3] (0,0) .. controls (-5.5,3.8) and (-2,9.5) .. (1.5,7.2);
\end{scope}

\begin{scope}[shift={(1.5,0)}, rotate=40, ultra thick]
\draw (0,0) .. controls (4.5,3.5) and (4.5,8) .. (1.5,8.5);
\draw[middlearrowrev=0.45] (0,0) .. controls (-4,3.8) and (-3,9) .. (1.5,8.5);
\end{scope}

\begin{scope}[shift={(-1.5,0)},rotate=-45]
\draw[middlearrowrev=0.4] (0,0) .. controls (1.5,0) and (4.3,-2) .. (4.8,0);
\draw(0,0) .. controls (1.5,0) and (4.3,1.8) .. (4.8,0);
\end{scope}

\begin{scope}[shift={(1.5,0)},rotate=-90, ultra thick]
\draw[middlearrowrev=0.5] (0,0) .. controls (1.5,0) and (3.8,-2) .. (3.9,0);
\draw(0,0) .. controls (1.5,0) and (3.8,2) .. (3.9,0);
\end{scope}

\draw[fill,white] (1.02,7.63) circle(0.2);
\draw[fill,white] (0.63,7.95) circle(0.2);

\draw[fill,white] (-0.25,6.75) circle(0.2);
\draw[fill,white] (-0.55,6.9) circle(0.2);

\centerarc[line width=1.3mm,rotate around={20:(0,0)}](0,5)(-23.5:203.5:4)
\centerarc[line width=1.3mm,rotate around={20:(0,0)}](0,5)(-14.5:194.5:2.5)

\node at (-3.5,0.95){$\rho_{1,1}$};
\node at (-2.25,3.3){$\rho_{1,2}$};
\node at (-3.1,2.4){$\rho_{2,1}$};
\node at (-0.5,4){$\rho_{2,2}$};
\node at (-0.75,-2){$B$};
\node at (2.75,-3){$B'$};
\node at (2.9,-0.3){$B_{1,2}$};
\node at (-1.8,-0.2){$x_{1}$};
\node at (1.1,-0.3){$x_{2}$};
\node at (1.2,-3){$1$};

\draw[fill] (-4.63,1.93) circle(0.08);
\draw[fill] (4.63,1.93) circle(0.08);
\draw[fill] (3.78,3.3) circle(0.08);
\draw[fill] (2.28,4.47) circle(0.08);
\draw[fill] (0.77,4.93) circle(0.08);
\draw[fill] (-3.78,3.3) circle(0.08);
\draw[fill] (-2.28,4.47) circle(0.08);
\draw[fill] (-0.77,4.93) circle(0.08);

\end{tikzpicture}
\hspace*{\fill}
\caption{The generators of  $P_{2}(\T\setminus\brak{1})$ given in \repr{presP2T1}.}
\label{fig:gens}
\end{figure}

\begin{rem}
The inclusion of $P_{2}(\T\setminus\{1\})$ in $P_{2}(\T)$ induces a surjective homomorphism $\map{\alpha}{P_2(\T\setminus\{1\})}{P_{2}(\T)}$ that sends $B$ and $B'$ to the trivial element, and sends each of the remaining generators (considered as an element of $P_2(\T\setminus\{1\})$) to itself (considered as an element of $P_2(\T)$). Applying this to the presentation of $P_{2}(\T\setminus\{1\})$ given by \repr{presP2T1}, we obtain the presentation of $P_{2}(\T)$ given in~\cite{FH}.
\end{rem}

\begin{proof}[Proof of \repr{presP2T1}]
Consider the following Fadell-Neuwirth short exact sequence:
\begin{equation}\label{eq:fnses}
1\to  P_{1}(\T\setminus\brak{1,x_{2}},x_{1}) \to P_{2}(\T\setminus\brak{1}, (x_{1},x_{2}))  \xrightarrow{(p_{2})_{\#}}  P_{1}(\T\setminus\brak{1}, x_{2}) \to 1,
\end{equation}
where $(p_{2})_{\#}$ is the homomorphism given geometrically by forgetting the first string and induced by the projection $p_{2}\colon\thinspace  F_{2}(\T\setminus\brak{1})\to \T\setminus\brak{1}$ onto the second coordinate. The kernel $K=P_{1}(\T\setminus\brak{1,x_{2}},x_{1})$ of $(p_{2})_{\#}$ (resp.\ the quotient $Q=P_{1}(\T\setminus\brak{1}, x_{1})$) is a free group of rank three (resp.\ two). It will be convenient to choose presentations for these two groups that have an extra generator. From Figure~\ref{fig:gens}, we take $K$ (resp.\ $Q$) to be generated by $X=\brak{\rho_{1,1},\rho_{1,2},B_{1,2},B}$ (resp.\ $Y=\brak{\rho_{2,1},\rho_{2,2},B'}$) subject to the single relation $[\rho_{1,1},\rho_{1,2}^{-1}]=BB_{1,2}$ (resp.\ $[\rho_{2,1},\rho_{2,2}^{-1}]=B'$). We apply standard methods to obtain a presentation of the group extension $P_{2}(\T\setminus\brak{1}, (x_{1},x_{2}))$~\cite[Proposition~1, p.~139]{Jo}. This group is generated by the union of $X$ with coset representatives of $Y$, which we take to be the same elements geometrically, but considered as elements of $P_{2}(\T\setminus\brak{1}, (x_{1},x_{2}))$. This yields the given generating set. There are three types of relation. The first is that of $K$. The second type of relation is obtained by lifting the relation of $Q$ to $P_{2}(\T\setminus\brak{1}, (x_{1},x_{2}))$, which gives rise to the relation $[\rho_{2,1},\rho_{2,2}^{-1}]B'^{-1}=B_{1,2}$. The third type of relation is obtained by rewriting the conjugates of the elements of $X$ by the chosen coset representatives of the elements of $Y$ in terms of the elements of $X$ using the geometric representatives of $X$ and $Y$ illustrated in Figure~\ref{fig:gens}. We leave the details to the reader. The last part of the statement is a consequence of the fact that $K$ (resp.\ $Q$) is a free group of rank three (resp.\ two), so the short exact sequence~\reqref{fnses} splits.
\end{proof}

\begin{rem}
For future purposes, it will be convenient to have the following relations at our disposal:
\begin{align*}
\rho_{2,1}^{-1}\rho_{1,1}\rho_{2,1}&=\rho_{1,1}B_{1,2}^{-1}\rho_{1,1}B_{1,2}\rho_{1,1}^{-1} &
\rho_{2,1}^{-1}\rho_{1,2}\rho_{2,1}&=\rho_{1,1}B_{1,2}^{-1}\rho_{1,1}^{-1} \rho_{1,2}B_{1,2}^{-1} \rho_{1,1}B_{1,2}\rho_{1,1}^{-1}\\
\rho_{2,2}^{-1}\rho_{1,1}\rho_{2,2}&=\rho_{1,1}\rho_{1,2}B_{1,2}\rho_{1,2}^{-1} &
\rho_{2,2}^{-1}\rho_{1,2}\rho_{2,2}&=\rho_{1,2}B_{1,2}^{-1} \rho_{1,2}B_{1,2}\rho_{1,2}^{-1}\\
\rho_{2,1}^{-1}B_{1,2}\rho_{2,1}&=\rho_{1,1}B_{1,2}\rho_{1,1}^{-1} & 
\rho_{2,2}^{-1}B_{1,2}\rho_{2,2}&=\rho_{1,2}B_{1,2}\rho_{1,2}^{-1}.
\end{align*}
As in the proof of \repr{presP2T1}, these equalities may be derived geometrically.
\end{rem}

The presentation of $P_2(\T\setminus\{1\}, (x_1,x_2))$ given by \repr{presP2T1} may be modified to obtain another presentation that highlights its algebraic structure as a semi-direct product of free groups of finite rank.

\begin{prop}\label{prop:presTminus1alta}
The group $P_2(\T\setminus\{1\}, (x_1,x_2))$ admits the following presentation:
\begin{enumerate}
\item[generators:] $B$, $u$, $v$, $x$ and $y$.
\item[relations:]\mbox{}
\begin{enumerate}[(a)]
\item\label{it:altpresa} $xux^{-1}=u$.
\item\label{it:altpresb} $xvx^{-1}=v[v^{-1}, u]B^{-1}[u, v^{-1}]$.
\item\label{it:altpresc} $xBx^{-1}=u[v^{-1}, u]B[u, v^{-1}]u^{-1}$.
\item\label{it:altpresd} $yuy^{-1}=v[v^{-1}, u]Buv^{-1}$.
\item\label{it:altprese} $yvy^{-1}=v$.
\item\label{it:altpresf} $yBy^{-1}=v[v^{-1},u]B[u,v^{-1}]v^{-1}=uvu^{-1}Buv^{-1}u^{-1}$.
\end{enumerate}
\end{enumerate}
In particular, $P_2(\T\setminus\{1\}, (x_1,x_2))$ is a semi-direct product of the free group of rank three generated by $\brak{u,v,B}$ by the free group of rank two generated by $\brak{x,y}$.
\end{prop}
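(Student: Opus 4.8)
The plan is to obtain this presentation from that of \repr{presP2T1} by a change of generators (Tietze transformations). By \repr{presP2T1}, $P_2(\T\setminus\{1\},(x_1,x_2))$ is the semidirect product of its fibre $K=\langle\rho_{1,1},\rho_{1,2},B_{1,2}\rangle$, which is free of rank~$3$, by the base $Q=\langle\rho_{2,1},\rho_{2,2}\rangle$, which is free of rank~$2$, the splitting being realised by $Q$. The last two relations of \repr{presP2T1}, namely $[\rho_{1,1},\rho_{1,2}^{-1}]=BB_{1,2}$ and $[\rho_{2,1},\rho_{2,2}^{-1}]=B_{1,2}B'$, exhibit $B$ and $B'$ as redundant generators, with $B=[\rho_{1,1},\rho_{1,2}^{-1}]B_{1,2}^{-1}\in K$ and $B'=B_{1,2}^{-1}[\rho_{2,1},\rho_{2,2}^{-1}]$; in particular $\{\rho_{1,1},\rho_{1,2},B\}$ is a second free basis of $K$, and $B_{1,2}=B^{-1}[\rho_{1,1},\rho_{1,2}^{-1}]$.

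First I would set $u=\rho_{1,1}$, $v=\rho_{1,2}$, keep $B$, and use the identities above to eliminate $\rho_{1,1}$, $\rho_{1,2}$, $B_{1,2}$ and $B'$ (so now $B_{1,2}=B^{-1}[u,v^{-1}]$ is a word in $u,v,B$). Next I would adjoin the generators
\[ x=\rho_{1,1}B_{1,2}^{-1}\rho_{2,1}=uB_{1,2}^{-1}\rho_{2,1},\qquad y=\rho_{1,2}B_{1,2}^{-1}\rho_{2,2}=vB_{1,2}^{-1}\rho_{2,2}, \]
and use them to eliminate $\rho_{2,1}=B_{1,2}u^{-1}x$ and $\rho_{2,2}=B_{1,2}v^{-1}y$. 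The resulting generating set is $\{B,u,v,x,y\}$, with $\langle u,v,B\rangle=K$. Since $B_{1,2}\in K$, the images of $x$ and $y$ under the projection $P_2(\T\setminus\{1\})\to Q$ are $\rho_{2,1}$ and $\rho_{2,2}$, so $\langle x,y\rangle$ maps isomorphically onto $Q$ (a reduced word in $x,y$ is sent to the same reduced word in $\rho_{2,1},\rho_{2,2}$, which is non-trivial in the free group $Q$), and is a free complement of rank~$2$ to $K$. This already gives the last assertion of the statement.

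It remains to rewrite the relations of \repr{presP2T1} in the new generators. Relations~(a)--(d) of \repr{presP2T1} together with the two relations $\rho_{2,i}B\rho_{2,i}^{-1}=B$ become, after substitution and after cancellation of the conjugating factors $B_{1,2}^{\pm1}$ and of the prefactors $u^{\pm1}$, $v^{\pm1}$, precisely the six relations~(\ref{it:altpresa})--(\ref{it:altpresf}): for instance $\rho_{2,1}\rho_{1,1}\rho_{2,1}^{-1}=B_{1,2}\rho_{1,1}B_{1,2}^{-1}$ yields $xux^{-1}=u$, and $\rho_{2,1}B\rho_{2,1}^{-1}=B$ yields $xBx^{-1}=uB_{1,2}^{-1}BB_{1,2}u^{-1}=u[v^{-1},u]B[u,v^{-1}]u^{-1}$ after substituting $B_{1,2}=B^{-1}[u,v^{-1}]$, and the remaining four are entirely analogous (the two expressions in relation~(\ref{it:altpresf}) agree after expanding the commutators). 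The other relations of \repr{presP2T1} (the two conjugation relations for $B_{1,2}$, and the two relations each for $B'$) are now redundant: the presentation $\langle B,u,v,x,y\mid(\ref{it:altpresa})\text{--}(\ref{it:altpresf})\rangle$ has the form of a semidirect product of the free group of rank~$3$ on $u,v,B$ by the free group of rank~$2$ on $x,y$, so the natural surjection from it onto $P_2(\T\setminus\{1\})$ restricts to the identity on the free factor $\langle u,v,B\rangle=K$ and induces the isomorphism $\langle x,y\rangle\to P_2(\T\setminus\{1\})/K=Q$ found above, hence is an isomorphism; thus relations~(\ref{it:altpresa})--(\ref{it:altpresf}) already present the group.

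The only genuine work is the computational one of carrying out these substitutions and simplifying the resulting words carefully, keeping track of the commutators $[u,v^{-1}]=[v^{-1},u]^{-1}$ and of the conjugating prefactors. There is no conceptual difficulty; the one point requiring foresight is the choice of the \emph{shifted} generators $x=\rho_{1,1}B_{1,2}^{-1}\rho_{2,1}$ and $y=\rho_{1,2}B_{1,2}^{-1}\rho_{2,2}$ — the naive choice $B_{1,2}^{-1}\rho_{2,1}$, $B_{1,2}^{-1}\rho_{2,2}$ also gives $xux^{-1}=u$ and $yvy^{-1}=v$, but leaves the other four relations conjugated by $u$ or $v$, whereas the shifted choice produces the relations in exactly the stated form.
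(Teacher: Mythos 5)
Your change of variables is exactly the one used in the paper (equation~\reqref{uvxy}: $u=\rho_{1,1}$, $v=\rho_{1,2}$, $x=\rho_{1,1}B_{1,2}^{-1}\rho_{2,1}$, $y=\rho_{1,2}B_{1,2}^{-1}\rho_{2,2}$, with $B_{1,2}$ and $B'$ eliminated via relation~(\ref{it:relh})), and your identification of relations~(\ref{it:presP2T1a})--(\ref{it:presP2T1e}) of \repr{presP2T1} with relations~(\ref{it:altpresa})--(\ref{it:altpresf}) of \repr{presTminus1alta} is the same first step; your sample computations check out. Where you genuinely diverge is in disposing of the remaining relations~(\ref{it:presP2T1f}), (\ref{it:relf}) and~(\ref{it:relg}): the paper verifies by explicit word computations (occupying most of its proof) that each of these, rewritten in $u,v,B,x,y$, is a consequence of~(\ref{it:altpresa})--(\ref{it:altpresf}), whereas you argue structurally that the group presented by~(\ref{it:altpresa})--(\ref{it:altpresf}) is already $\mathbb{F}_3(u,v,B)\rtimes\mathbb{F}_2(x,y)$ and that the canonical surjection onto $P_2(\T\setminus\{1\})$ is an isomorphism because it restricts to an isomorphism $\langle u,v,B\rangle\to K$ and induces an isomorphism $\langle x,y\rangle\to Q$ (a short-five-lemma argument). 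This is valid and arguably cleaner: it shows at one stroke that \emph{every} relation holding in $P_2(\T\setminus\{1\})$ is a consequence of the six listed ones, and it simultaneously yields the final assertion of the proposition. The one point you should make explicit is why the presented group really is a semidirect product with $\langle u,v,B\rangle$ free of rank three: for a presentation of this shape, that requires the substitutions $u,v,B\mapsto xux^{-1},xvx^{-1},xBx^{-1}$ (and likewise for $y$) to define \emph{automorphisms}, not merely endomorphisms, of $\mathbb{F}_3(u,v,B)$; this does hold here, since the relations are realised in $P_2(\T\setminus\{1\})$ by conjugation by $x$ and $y$, which preserves the normal subgroup $K$ and acts on it bijectively, and $\{u,v,B\}$ is a free basis of $K$. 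With that remark added, your proof is complete; the price you pay relative to the paper is that the explicit conjugation formulas derived in the paper's computations (e.g.\ \reqref{conjxyuv}) are reused later in \resec{toro2} and \resec{fptsplit2}, so they would still have to be worked out separately.
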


\begin{proof}
Using relation~(\ref{it:relh}) of \repr{presP2T1}, we define $B'$ as $B_{1,2}^{-1} [\rho_{2,1},\rho_{2,2}^{-1}]$ and $B_{1,2}$ as $B^{-1} [\rho_{1,1},\rho_{1,2}^{-1}]$. We then apply the following change of variables:
\begin{equation}\label{eq:uvxy}
\text{$u=\rho_{1,1}$,  $v=\rho_{1,2}$, $x=\rho_{1,1}B_{1,2}^{-1}\rho_{2,1}$ and $y=\rho_{1,2}B_{1,2}^{-1}\rho_{2,2}$.}
\end{equation}
Relations~(\ref{it:presP2T1a})--(\ref{it:presP2T1e}) of \repr{presP2T1} may be seen to give rise to relations~(\ref{it:altpresa})--(\ref{it:altpresf}) of \repr{presTminus1alta}. Rewritten in terms of the generators of \repr{presTminus1alta}, relations~(\ref{it:presP2T1f})--(\ref{it:relg}) of \repr{presP2T1} are consequences of relations~(\ref{it:altpresa})--(\ref{it:altpresf}) of \repr{presTminus1alta}. To see this, using the relations of \repr{presTminus1alta}, first note that:
\begin{align*}
xB_{1,2}x^{-1}&= x B^{-1} [u,v^{-1}] x^{-1}=u[v^{-1}, u]B^{-1} [u, v^{-1}]u^{-1} \ldotp \bigl[u, [v^{-1}, u] B[u, v^{-1}] v^{-1}\bigr]\\
&= B^{-1} [u,v^{-1}] = B_{1,2}\;\text{and}\\
yB_{1,2}y^{-1} &= y B^{-1} [u,v^{-1}] y^{-1} = v[v^{-1},u]B^{-1}[u,v^{-1}]v^{-1}\ldotp \bigl[ v[v^{-1}, u]Buv^{-1}, v^{-1}\bigr]\\
&= B^{-1} [u,v^{-1}] = B_{1,2}.
\end{align*}
In light of these relations, it is convenient to carry out the calculations using $B_{1,2}$ instead of $B$. 
In conjunction with the relations of the preceding remark, we obtain the following relations:
\begin{equation}\label{eq:conjxyuv}
\left\{
\begin{aligned}
yuy^{-1}&=vB_{1,2}^{-1}uv^{-1},\; xvx^{-1}=uvu^{-1} B_{1,2}\\
y^{-1}uy&=B_{1,2} v^{-1}uv,\; x^{-1}vx=u^{-1}v B_{1,2}^{-1}u,
\end{aligned}
\right.
\end{equation}
from which it follows that:
\begin{align*}
\rho_{2,1}B_{1,2}\rho_{2,1}^{-1}&=B_{1,2}u^{-1}x B_{1,2} x^{-1}u B_{1,2}^{-1}= B_{1,2}u^{-1} B_{1,2}u B_{1,2}^{-1}=B_{1,2} \rho_{1,1}^{-1}B_{1,2} \rho_{1,1}B_{1,2}^{-1}\;\text{and}\\
\rho_{2,2}B_{1,2}\rho_{2,2}^{-1}&=B_{1,2}v^{-1}y B_{1,2} y^{-1}v B_{1,2}^{-1}=B_{1,2}v^{-1} B_{1,2} v B_{1,2}^{-1}= B_{1,2} \rho_{1,2}^{-1}B_{1,2} \rho_{1,2}B_{1,2}^{-1}.
\end{align*}
Thus relations~(\ref{it:altpresa})--(\ref{it:altpresf}) of \repr{presTminus1alta} imply relations~(\ref{it:presP2T1f}) of \repr{presP2T1}. Now $B'= B_{1,2}^{-1}[\rho_{2,1},\rho_{2,2}^{-1}]$, and a straightforward computation shows that:
\begin{align*}
B'=u^{-1}x  y^{-1}vB_{1,2}^{-1}  x^{-1}u v^{-1}y&=u^{-1}x  y^{-1}vB_{1,2}^{-1}  x^{-1}u v^{-1} \ldotp xyx^{-1}\ldotp xy^{-1}x^{-1}y\\
&=[v^{-1},u]B_{1,2}\ldotp xy^{-1}x^{-1}y.
\end{align*}
One may then check that:
\begin{align*}
xy^{-1}x^{-1}y u y^{-1}xyx^{-1}&=B_{1,2}^{-1}[u,v^{-1}] u[v^{-1}, u] B_{1,2}\;\text{and}\\
xy^{-1}x^{-1}y v y^{-1}xyx^{-1}&=B_{1,2}^{-1}[u,v^{-1}] v [v^{-1}, u] B_{1,2},
\end{align*}
and that:
\begin{align*}
B' \rho_{1,1}B'^{-1}&=B' uB'^{-1}=[v^{-1},u] B_{1,2}\ldotp xy^{-1}x^{-1}y u y^{-1}xyx^{-1} B_{1,2}^{-1}  [u,v^{-1}]=u=\rho_{1,1}\; \text{and}\\
B' \rho_{1,2} B'^{-1}&= B' vB'^{-1}=[v^{-1},u] B_{1,2}\ldotp xy^{-1}x^{-1}y v y^{-1}xyx^{-1} B_{1,2}^{-1}  [u,v^{-1}]=v=\rho_{1,2}.
\end{align*}
Hence relations~(\ref{it:altpresa})--(\ref{it:altpresf}) of \repr{presTminus1alta} imply relations~(\ref{it:relf}) of \repr{presP2T1}. Furthermore,
\begin{align*}
B' B_{1,2}B'^{-1} &= v^{-1}uvu^{-1} B_{1,2} xy^{-1}x^{-1}y B_{1,2} y^{-1}xyx^{-1} B_{1,2}^{-1}  uv^{-1}u^{-1}v\\
&= v^{-1}uvu^{-1} B_{1,2} uv^{-1}u^{-1}v=B_{1,2}^{-1}\ldotp B_{1,2} [v^{-1},u] B_{1,2} [u,v^{-1}] B_{1,2}^{-1} \ldotp B_{1,2}\\
&= B_{1,2}^{-1} B^{-1} B_{1,2} B B_{1,2}, 
\end{align*}
and since
\begin{align*}
xy^{-1}x^{-1}y [u , v^{-1}] y^{-1}xyx^{-1} &= \left[B_{1,2}^{-1}[u,v^{-1}] u[v^{-1}, u] B_{1,2}, B_{1,2}^{-1}[u,v^{-1}] v^{-1} [v^{-1}, u] B_{1,2}\right]\\
&= B_{1,2}^{-1}[u,v^{-1}] B_{1,2},
\end{align*}
we obtain:
\begin{align*}
B' B B'^{-1} &=[v^{-1},u]B_{1,2} B_{1,2}^{-1}[u,v^{-1}] B_{1,2}^{-1} B_{1,2} B_{1,2}^{-1}[u,v^{-1}]=B_{1,2}^{-1} [u,v^{-1}] B_{1,2}^{-1} \ldotp B_{1,2}\\
&=B_{1,2}^{-1} B B_{1,2}.
\end{align*}
Hence relations~(\ref{it:altpresa})--(\ref{it:altpresf}) of \repr{presTminus1alta} imply relations~(\ref{it:relg}) of \repr{presP2T1}. This proves the first part of the statement. The last part of the statement is a consequence of the nature of the presentation.
\end{proof}

The homomorphism $\map{\alpha}{P_2(\T\setminus\{1\})}{P_{2}(\T)}$ mentioned in the remark that follows the statement of \repr{presP2T1} may be used to obtain a presentation of $P_{2}(T)$ in terms of the generators of \repr{presTminus1alta}. To do so, we first show that $\ker{\alpha}$ is the normal closure in $P_{2}(\T\setminus\brak{1})$ of $B$ and $B'$. 

\begin{prop}\label{prop:presTminus2alta}
$\ker{\alpha}=\ang{\!\ang{B,B'}\!}_{P_{2}(\T\setminus\brak{1})}$.
\end{prop}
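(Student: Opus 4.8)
The first inclusion, $\ang{\!\ang{B,B'}\!}_{P_{2}(\T\setminus\brak{1})}\subseteq\ker{\alpha}$, will be immediate: by the description of $\alpha$ in the remark following \repr{presP2T1} one has $\alpha(B)=\alpha(B')=1$, and $\ker{\alpha}$ is normal in $P_{2}(\T\setminus\brak{1})$. The content of the statement is the reverse inclusion, and my plan for it is geometric.

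The starting point is the observation that $F_{2}(\T\setminus\brak{1})$ is the complement, inside the $4$-manifold $F_{2}(\T)$, of the disjoint union $N=N_{1}\sqcup N_{2}$ of two closed submanifolds of codimension two, where $N_{i}=\set{(x_{1},x_{2})\in F_{2}(\T)}{x_{i}=1}\cong\T\setminus\brak{1}$, and that $\alpha$ is the homomorphism induced by the inclusion $F_{2}(\T\setminus\brak{1})\lhra F_{2}(\T)$. Moreover the normal bundle of $N_{i}$ in $F_{2}(\T)$ is trivial, being the pull-back to $N_{i}$ of the tangent space $T_{1}\T$, so the associated circle bundle is a product $N_{i}\times\St[1]$. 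I would then apply the standard argument describing the effect on fundamental groups of deleting a codimension-two submanifold: choosing disjoint tubular neighbourhoods $\nu(N_{1})$ and $\nu(N_{2})$ and using the van Kampen theorem twice, first for the decomposition $F_{2}(\T)\setminus N_{2}=F_{2}(\T\setminus\brak{1})\cup\nu(N_{1})$ and then for $F_{2}(\T)=\bigl(F_{2}(\T)\setminus N_{2}\bigr)\cup\nu(N_{2})$. This would recover the surjectivity of $\alpha$ (also stated in the remark following \repr{presP2T1}) and, more to the point, would show that $\ker{\alpha}$ is the normal closure in $P_{2}(\T\setminus\brak{1})$ of a meridian $\mu_{1}$ of $N_{1}$ and a meridian $\mu_{2}$ of $N_{2}$.

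It would then remain to identify $\mu_{1}$ with $B$ and $\mu_{2}$ with $B'$; since $\ker{\alpha}$ is the \emph{normal} closure of the $\mu_{i}$, it suffices to do this up to conjugacy in $P_{2}(\T\setminus\brak{1})$. Now $\mu_{i}$ is represented by the loop that moves the $i\th$ configuration point once around a small circle about the puncture $1$ while keeping the other point fixed, and by the geometric description of the generators in Figure~\ref{fig:gens} and in the proof of \repr{presP2T1} --- where $B$, respectively $B'$, is exactly the class of such a loop for the first, respectively the second, string --- this gives that $\mu_{1}$ is conjugate to $B$ and $\mu_{2}$ to $B'$, whence $\ker{\alpha}=\ang{\!\ang{B,B'}\!}_{P_{2}(\T\setminus\brak{1})}$.

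The step I expect to require the most care is this last identification of the abstract meridians of $N_{1}$ and $N_{2}$ with $B$ and $B'$: it is just a matter of following the geometric loops through the construction of the presentation in \repr{presP2T1}, but it must be done carefully, although, being insensitive to conjugacy, it is ultimately routine. As a purely algebraic alternative to the last two paragraphs, one could note that $\alpha$ factors through a surjection $\overline{\alpha}\colon P_{2}(\T\setminus\brak{1})/\ang{\!\ang{B,B'}\!}\to P_{2}(\T)$; adjoining the relations $B=B'=1$ to the presentation of \repr{presTminus1alta} and simplifying --- modulo $B$ one has $B'=xy^{-1}x^{-1}y$, so $x$ and $y$ come to commute, and relations~(\ref{it:altpresa})--(\ref{it:altpresf}) reduce to the statement that $x$ and $y$ centralise $u$ and $v$ --- identifies the source of $\overline{\alpha}$ with $\mathbb{F}_{2}(u,v)\times\Z^{2}$, and since $P_{2}(\T)$ is isomorphic to this same finitely generated, residually finite, hence Hopfian, group, $\overline{\alpha}$ must be an isomorphism, giving the claim.
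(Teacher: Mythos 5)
Your argument is correct, but it takes a genuinely different route from the paper's. The paper works entirely inside the commutative diagram of Fadell--Neuwirth short exact sequences~\reqref{keralpha}: it identifies $\ker{\alpha'}$ and $\ker{\overline{\alpha}}$ as the normal closures of $B'$ and $B$ in the free groups $P_{1}(\T\setminus\brak{1},x_{2})$ and $P_{1}(\T\setminus\brak{1,x_{2}},x_{1})$ respectively (each of $\alpha'$, $\overline{\alpha}$ being the quotient by a single surface relation), constructs a section $s$ of $(p_{2})_{\#}\left\lvert_{\ker{\alpha}}\right.$, and exhibits $\ker{\alpha}$ as an internal semi-direct product of a subgroup of $\ang{\!\ang{B}\!}_{P_{2}(\T\setminus\brak{1})}$ by a subgroup of $\ang{\!\ang{B'}\!}_{P_{2}(\T\setminus\brak{1})}$. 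Your main argument instead applies van Kampen directly to the removal of the two disjoint, connected, properly embedded codimension-two submanifolds $N_{1},N_{2}$ from $F_{2}(\T)$, so that $\ker{\alpha}$ is the normal closure of one meridian per component, and then matches the meridians with $B$ and $B'$ using Figure~\ref{fig:gens} (this matching is sound: $B$, resp.\ $B'$, encircles only the puncture $1$ with the other point held fixed, so it is freely homotopic in the complement to a small meridian circle of $N_{1}$, resp.\ $N_{2}$). This is shorter and more conceptual than the diagram chase, at the price of the routine but necessary meridian identification; it is essentially the geometric content that the paper encodes algebraically. Your purely algebraic alternative (adjoining $B=B'=1$ to the presentation of \repr{presTminus1alta} and invoking Hopficity) is also valid and the simplification of the relations is computed correctly, but note one point of logical order: it presupposes that $P_{2}(\T)\cong \mathbb{F}_{2}(u,v)\times\Z^{2}$, which in this paper is \reco{compactpres} and is itself deduced from the present proposition. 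To avoid circularity you must import that isomorphism from the literature (e.g.\ \cite{BGG} or \cite{FH}), which is legitimate but should be stated explicitly.
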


\begin{proof}
Consider the presentation of $P_{2}(\T\setminus\brak{1})$ given in \repr{presP2T1}, as well as the short exact sequence~\reqref{fnses} and the surjective homomorphism $\map{\alpha}{P_2(\T\setminus\{1\})}{P_{2}(\T)}$. In terms of the generators of \repr{presP2T1}, for $i=1,2$, $\alpha$ sends $\rho_{i,j}$ (resp.\ $B_{1,2}$) (considered as an element of $P_{2}(\T\setminus\brak{1}, (x_{1},x_{2}))$) to $\rho_{i,j}$ (resp.\ to $B_{1,2}$) (considered as an element of $P_{2}(\T)$), and it sends $B$ and $B'$ to the trivial element. Since $B,B'\in \ker{\alpha}$, it is clear that $\ang{\!\ang{B,B'}\!}_{P_{2}(\T\setminus\brak{1})}\subset \ker{\alpha}$. We now proceed to prove the converse inclusion. Using the projection $(p_{2})_{\#}$ and $\alpha$, we obtain the following commutative diagram of short exact sequences:
\begin{equation}\label{eq:keralpha}
\begin{gathered}
\begin{tikzcd}[ampersand replacement=\&]
\& 1 \ar{d} \& 1 \ar{d} \& 1 \ar{d} \&\\
1 \ar{r} \& \ker{\overline{\alpha}} \ar{r}  \ar[dashed]{d}{\tau\left\lvert_{\ker{\overline{\alpha}}}\right.} \& P_{1}(\T\setminus\brak{1,x_{2}},x_{1}) \ar[dashed]{r}{\overline{\alpha}} \ar{d}{\tau} \& P_{1}(\T\setminus\brak{x_{2}},x_{1}) \ar{r} \ar{d} \& 1\\
1 \ar{r}  \& \ker{\alpha} \ar{r} \ar[dashed, shift left=1]{d}{(p_{2})_{\#}\left\lvert_{\ker{\alpha}}\right.} \& P_{2}(\T\setminus\brak{1}, (x_{1},x_{2}))  \ar{r}{\alpha} \ar{d}{(p_{2})_{\#}} \& P_{2}(\T, (x_{1},x_{2})) \ar{r} \ar{d}{(p_{2}')_{\#}} \& 1\\
1 \ar{r} \& \ker{\alpha'} \ar{r} \ar{d} \ar[dashed, shift left=1]{u}{s} \& P_{1}(\T\setminus\brak{1}, x_{2})  \ar{r}{\alpha'} \ar{d} \& P_{1}(\T,x_{2}) \ar{r} \ar{d} \& 1.\\
\& 1 \& 1  \& 1  \& 
\end{tikzcd}
\end{gathered}
\end{equation}
Diagram~\reqref{keralpha} is constructed in the following manner: the surjective homomorphism $\map{\alpha'}{P_{1}(\T\setminus\brak{1}, x_{2})}{P_{1}(\T,x_{2})}$ is induced by the inclusion $\map{\iota'}{\T\setminus\brak{1}}{\T}$, and the right-hand column is a Fadell-Neuwirth short exact sequence, where the fibration $\map{p_{2}'}{F_{2}(\T)}{\T}$ is given by projecting onto the second coordinate. The homomorphism $\map{\tau}{P_{1}(\T\setminus\brak{1,x_{2}},x_{1})}{P_{2}(\T\setminus\brak{1}, (x_{1},x_{2}))}$ is interpreted as inclusion. Taking $\brak{\rho_{2,1},\rho_{2,2},B'}$ to be a generating set of $P_{1}(\T\setminus\brak{1}, x_{2})$ subject to the relation $[\rho_{2,1},\rho_{2,2}^{-1}]=B'$, as for $\alpha$, we see that $\alpha'(\rho_{2,j})=\rho_{2,j}$ and $\alpha'(B')=1$. Alternatively, we may consider $P_{1}(\T\setminus\brak{1}, x_{2})$ to be the free group on $\brak{\rho_{2,1},\rho_{2,2}}$, and $P_{1}(\T,x_{2})$ to be the free Abelian group on $\brak{\rho_{2,1},\rho_{2,2}}$, so $\alpha'$ is Abelianisation, and $\ker{\alpha'}=\Gamma_{2}(P_{1}(\T\setminus\brak{1}, x_{2}))$. Interpreting $\alpha'$ as the canonical projection of $P_{1}(\T\setminus\brak{1}, x_{2})$ onto its quotient by the normal closure of the element $[\rho_{2,1},\rho_{2,2}^{-1}]$ in $P_{1}(\T\setminus\brak{1}, x_{2})$, we see that $\ker{\alpha'}=\ang{\!\ang{\,[\rho_{2,1},\rho_{2,2}^{-1}]\,}\!}_{P_{1}(\T\setminus\brak{1}, x_{2})}$. But $[\rho_{2,1},\rho_{2,2}^{-1}]=B'$ in $P_{1}(\T\setminus\brak{1}, x_{2})$, hence:
\begin{equation}\label{eq:keralphaprime}
\ker{\alpha'}=\ang{\!\ang{B'}\!}_{P_{1}(\T\setminus\brak{1},x_{2})}.
\end{equation}
The fact that $P_{1}(\T\setminus\brak{1}, x_{2})$ is a free group implies that $\ker{\alpha'}$ is too (albeit of infinite rank). The commutativity of the lower right-hand square of~\reqref{keralpha} is a consequence of the following commutative square:
\begin{equation*}
\begin{tikzcd}[ampersand replacement=\&]
F_{2}(\T\setminus\brak{1})  \ar[r, "\iota"] \ar[d, "p_{2}"] \& F_{2}(\T) \ar[d, "p_{2}'"]\\
F_{1}(\T\setminus\brak{1})  \ar[r, "\iota'"]  \& F_{2}(\T),
\end{tikzcd}
\end{equation*}
where $\map{\iota}{F_{2}(\T\setminus\brak{1})}{F_{2}(\T)}$ is induced by the inclusion $\iota'$. Together with $\alpha$ and $\alpha'$, the second two columns of~\reqref{keralpha} give rise to a homomorphism $\map{\overline{\alpha}}{P_{1}(\T\setminus\brak{1,x_{2}}, x_{1})}{P_{1}(\T \setminus\brak{x_{2}},x_{1})}$. Taking $\brak{\rho_{1,1},\rho_{1,2},B_{1,2},B}$ to be a generating set of $P_{1}(\T\setminus\brak{1,x_{2}}, x_{1})$, by commutativity of the diagram, we see that $\overline{\alpha}$ sends each of $\rho_{1,1}$, $\rho_{1,2}$ and $B_{1,2}$ (considered as an element of $P_{1}(\T\setminus\brak{1,x_{2}}, (x_{1},x_{2}))$) to itself (considered as an element of $P_{1}(\T\setminus\brak{x_{2}}, x_{1})$), and sends $B'$ to the trivial element. In particular, $\overline{\alpha}$ is surjective. As for $\alpha'$, we may consider $P_{1}(\T\setminus\brak{1,x_{2}}, x_{1})$ to be the free group of rank three generated by $\brak{\rho_{1,1},\rho_{1,2},B_{1,2}}$, and $P_{1}(\T\setminus\brak{x_{2}}, x_{1})$ to be the group generated by $\brak{\rho_{1,1},\rho_{1,2},B_{1,2}}$ subject to the relation $[\rho_{1,1},\rho_{1,2}^{-1}]=B_{1,2}$. The homomorphism $\alpha'$ may thus be interpreted as the canonical projection of $P_{1}(\T\setminus\brak{x_{2}}, x_{1})$ onto its quotient by $\ang{\!\ang{[\rho_{1,1},\rho_{1,2}^{-1}]B_{1,2}^{-1}}\!}_{P_{1}(\T\setminus\brak{x_{2}}, x_{1})}$. But by relation~(\ref{it:relh}) of \repr{presP2T1}, $B'=[\rho_{1,1},\rho_{1,2}^{-1}]B_{1,2}^{-1}$, hence:
\begin{equation*}
\ker{\overline{\alpha}}=\ang{\!\ang{B'}\!}_{P_{1}(\T\setminus\brak{x_{2}},x_{1})}.
\end{equation*}
By exactness, the first (resp.\ second) two rows of~\reqref{keralpha} give rise to an induced homomorphism $\map{\tau\left\lvert_{\ker{\overline{\alpha}}}\right.}{\ker{\overline{\alpha}}}{\ker{\alpha}}$ (resp.\ $\map{(p_{2})_{\#}\left\lvert_{\ker{\alpha}}\right.}{\ker{\alpha}}{\ker{\alpha'}}$), and $\tau\left\lvert_{\ker{\overline{\alpha}}}\right.$ is injective because $\tau$ is. The homomorphism $(p_{2})_{\#}\left\lvert_{\ker{\alpha}}\right.$ is surjective, because by~\reqref{keralphaprime}, any element $x$ of $\ker{\alpha'}$ may be written as a product of conjugates of $B'$ and its inverse by products of $\rho_{2,1}$, $\rho_{2,2}$ and $B'$. This expression, considered as an element of $P_{2}(\T\setminus\brak{1}, (x_{1},x_{2}))$, belongs to $\ker{\alpha}$, and its image under $(p_{2})_{\#}$ is equal to $x$.

The fact that $\im{\tau\left\lvert_{\ker{\overline{\alpha}}}\right.}\subset \ker{(p_{2})_{\#}\left\lvert_{\ker{\alpha}}\right.}$ follows from exactness of the second column of~\reqref{keralpha}. Conversely, if $z\in \ker{(p_{2})_{\#}\left\lvert_{\ker{\alpha}}\right.}$ then $z\in \ker{\tau}$ by exactness of the second column. So there exists $y\in P_{1}(\T\setminus\brak{1,x_{2}},x_{1})$ such that $\tau(y)=z$. But $\overline{\alpha}(y)=\alpha(\tau(y))=\alpha(z)=1$, and hence $y\in \ker{\overline{\alpha}}$. This proves that $\ker{(p_{2})_{\#}\left\lvert_{\ker{\alpha}}\right.}\subset \im{\tau\left\lvert_{\ker{\overline{\alpha}}}\right.}$, and we deduce that the first column is exact.

Finally, since $\ker{\alpha'}$ is free, we may pick an infinite basis consisting of conjugates of $B'$ by certain elements of $P_{1}(\T\setminus\brak{1}, x_{2})$. Further, there exists a section $\map{s}{\ker{\alpha'}}[\ker{\alpha}]$ for $(p_{2})_{\#}\left\lvert_{\ker{\alpha}}\right.$ that consists in sending each of these conjugates (considered as an element of $\ker{\alpha'}$) to itself (considered as an element of $\ker{\alpha}$). In particular, $\ker{\alpha}$ is an internal semi-direct product of $\tau(\ker{\overline{\alpha}})$, which is contained in $\ang{\!\ang{B}\!}_{P_{2}(\T\setminus\brak{1})}$, by $s(\ker{\overline{\alpha}})$, which is contained in $\ang{\!\ang{B'}\!}_{P_{2}(\T\setminus\brak{1})}$. Hence $\ker{\alpha}$ is contained in the subgroup $\ang{\!\ang{B,B'}\!}_{P_{2}(\T\setminus\brak{1})}$. This completes the proof of the proposition.
\end{proof}

We may thus deduce the following useful presentation of $P_{2}(\T)$.

\begin{cor}\label{cor:compactpres}
The group $P_2(\T, (x_1,x_2))$ admits the following presentation:
\begin{enumerate}
\item[generators:] $u$, $v$, $x$ and $y$.
\item[relations:]\mbox{}
\begin{enumerate}[(a)]
\item\label{it:altpresA} $xux^{-1}=u$ and $yuy^{-1}=u$.
\item\label{it:altpresB} $xvx^{-1}=v$ and $yvy^{-1}=v$.
\item\label{it:altpresC} $xyx^{-1}=y$. 
\end{enumerate}
\end{enumerate}
In particular, $P_2(\T, (x_1,x_2))$ is isomorphic to the direct product of the free group of rank two generated by $\brak{u,v}$ and the free Abelian group of rank two generated by $\brak{x,y}$.
\end{cor}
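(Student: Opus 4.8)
The plan is to derive \reco{compactpres} by feeding \repr{presTminus2alta} into \repr{presTminus1alta}. Since the surjection $\map{\alpha}{P_2(\T\setminus\brak{1})}{P_{2}(\T)}$ has kernel $\ker{\alpha}=\ang{\!\ang{B,B'}\!}_{P_{2}(\T\setminus\brak{1})}$ by \repr{presTminus2alta}, the group $P_2(\T,(x_1,x_2))$ is isomorphic to the quotient of $P_2(\T\setminus\brak{1},(x_1,x_2))$ by the normal closure of $\brak{B,B'}$. By the standard description of a presentation of such a quotient, I would take the presentation of \repr{presTminus1alta}, adjoin the relators $B$ and $B'$, and then eliminate the generator $B$ by the corresponding Tietze transformation (which is licit because the added relator $B$ is just the generator $B$, i.e.\ the added relation is $B=1$). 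It then remains to rewrite the surviving relations in the generators $u,v,x,y$.

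Setting $B=1$ in relations~(\ref{it:altpresa})--(\ref{it:altpresf}) of \repr{presTminus1alta}, relations~(\ref{it:altpresa}) and~(\ref{it:altprese}) are unchanged and give $xux^{-1}=u$ and $yvy^{-1}=v$. Using the identity $[v^{-1},u][u,v^{-1}]=1$, relation~(\ref{it:altpresb}) collapses to $xvx^{-1}=v$, and a brief cancellation with $[v^{-1},u]=v^{-1}uvu^{-1}$ turns relation~(\ref{it:altpresd}) into $yuy^{-1}=u$; relations~(\ref{it:altpresc}) and~(\ref{it:altpresf}) become trivial identities. This recovers relations~(\ref{it:altpresA}) and~(\ref{it:altpresB}) of \reco{compactpres}. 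For the remaining relation I would use the formula for $B'$ established in the proof of \repr{presTminus1alta}, namely $B'=[v^{-1},u]B_{1,2}\cdot xy^{-1}x^{-1}y$ with $B_{1,2}=B^{-1}[u,v^{-1}]$; after eliminating $B$ this reads $B'=[v^{-1},u][u,v^{-1}]\cdot xy^{-1}x^{-1}y=xy^{-1}x^{-1}y$, so the added relator $B'$ becomes $xy^{-1}x^{-1}y$, that is, the relation $xyx^{-1}=y$ of~(\ref{it:altpresC}). This gives exactly the presentation in the statement.

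For the last assertion I would simply read off the group from this presentation: the generators $x$ and $y$ commute with each other and with $u$ and $v$, there is no relation among $u$ and $v$, and $[x,y]=1$ is the only relation among $x$ and $y$; this is precisely the presentation of the direct product of the free group of rank two on $\brak{u,v}$ and the free abelian group of rank two on $\brak{x,y}$, so $P_2(\T,(x_1,x_2))\cong\mathbb{F}_2(u,v)\times\Z^2$. Every step here is routine computation; the only point demanding any care is transcribing the word representing $B'$ in the generators $u,v,x,y$ correctly from the proof of \repr{presTminus1alta} and confirming that relations~(\ref{it:altpresc}) and~(\ref{it:altpresf}) genuinely reduce to tautologies rather than concealing an extra constraint, so I anticipate no real obstacle.
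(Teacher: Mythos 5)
Your proposal is correct and follows essentially the same route as the paper: quotient the presentation of $P_2(\T\setminus\brak{1})$ from \repr{presTminus1alta} by the normal closure of $\brak{B,B'}$ (justified by \repr{presTminus2alta}), observe that setting $B=1$ reduces relations~(\ref{it:altpresa})--(\ref{it:altpresf}) to relations~(\ref{it:altpresA}) and~(\ref{it:altpresB}) with~(\ref{it:altpresc}) and~(\ref{it:altpresf}) becoming trivial, and extract $[x,y^{-1}]=1$ from the relator $B'$. The only cosmetic difference is that you substitute into the already-simplified expression $B'=[v^{-1},u]B_{1,2}\ldotp xy^{-1}x^{-1}y$ from the proof of \repr{presTminus1alta}, whereas the paper expands $B'=B_{1,2}^{-1}[\rho_{2,1},\rho_{2,2}^{-1}]$ afresh and simplifies using relations~(\ref{it:altpresA}) and~(\ref{it:altpresB}); both yield the same conclusion.
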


\begin{rem}
The decomposition of \reco{compactpres} is a special case of~\cite[Lemma~17]{BGG}.
\end{rem}

\begin{proof}[Proof of \reco{compactpres}]
By \repr{presTminus2alta}, a presentation of $P_2(\T)$ may be obtained from the presentation of $P_2(\T\setminus\{1\})$ given in \repr{presTminus1alta} by setting $B$ and $B'$ equal to $1$. Under this operation, relations~(\ref{it:altpresa}) and~(\ref{it:altpresd}) (resp.~(\ref{it:altpresb}) and~(\ref{it:altprese})) of \repr{presTminus1alta} are sent to relations~(\ref{it:altpresA}) (resp.~(\ref{it:altpresB})) of \reco{compactpres}, and relations~(\ref{it:altpresc}) and~(\ref{it:altpresf}) of \repr{presTminus1alta} become trivial. We must also take into account the fact that $B'=1$ in $P_2(\T)$. In $P_2(\T\setminus\{1\})$, we have:
\begin{equation*}
B' = B_{1,2}^{-1} [\rho_{2,1}, \rho_{2,2}^{-1}]= [v^{-1},u] B \bigl[B^{-1}[u,v^{-1}]\ldotp u^{-1}x, y^{-1}v  [v^{-1},u] B\bigr],
\end{equation*}
and taking the image of this equation by $\alpha$, we obtain:
\begin{align*}
1 &=  [v^{-1},u] \bigl[[u,v^{-1}] u^{-1}x, y^{-1}v  [v^{-1},u] \bigr]\\
& =  [v^{-1},u] \ldotp [u,v^{-1}] u^{-1}x \ldotp y^{-1}v  [v^{-1},u] \ldotp x^{-1}u  [v^{-1},u] \ldotp  [u,v^{-1}] v^{-1} y =  u^{-1}x y^{-1}uvu^{-1} x^{-1}u v^{-1} y
\end{align*}
in $P_2(\T)$. Using relations~(\ref{it:altpresA}) (resp.~(\ref{it:altpresB})) of \reco{compactpres}, it follows that $[x, y^{-1}]=1$, which yields relation~(\ref{it:altpresC}) of \reco{compactpres}. The last part of the statement is a consequence of the nature of the presentation.
\end{proof}

\begin{rem}
As we saw in \repr{presTminus1alta}, $P_2(\T\setminus\{1\}, (x_1, x_2))$ is a semi-direct product of the form $\mathbb{F}_3(u,v,B)\rtimes \mathbb{F}_2(x,y)$, the action being given by the relations of that proposition. Transposing the second two columns of the commutative diagram~\reqref{keralpha}, we obtain, up to isomorphism, the following commutative diagram: 
\begin{equation}\label{eq:f3f2}
\begin{tikzcd}[ampersand replacement=\&]
1 \ar{r}\& \mathbb{F}_3(u,v,B) \ar{d}{\overline{\alpha}} \ar{r}{\tau} \& \mathbb{F}_3(u,v,B)\rtimes \mathbb{F}_2(x,y)  \ar{d}{\alpha} \ar{r}{(p_{2})_{\#}} \ar{r} \&  \mathbb{F}_2(x,y)  \ar{d}{\alpha'} \ar{r} \& 1\\
1 \ar{r}\& \mathbb{F}_2(u,v) \ar{r} \& \mathbb{F}_2(u,v) \times \Z^{2} \ar{r}{(p_{2}')_{\#}} \& \Z^{2} \ar{r} \& 1,
\end{tikzcd}
\end{equation}
where $\alpha(u)=u$, $\alpha(v)=v$, $\alpha(B)=1$, $\alpha(x)=(1; (1,0))$ and $\alpha(y)=(1; (0,1))$. This is a convenient setting to study the question of whether a $2$-valued map may be deformed to a root-free $2$-valued map (which implies that the corresponding map may be deformed to a fixed point free map), since using~\reqref{f3f2} and \reth{defchinegI}(\ref{it:defchinegbI}), the question is equivalent to a lifting problem, to which we will refer in \resec{fptsplit2}, notably in Propositions~\ref{prop:necrootfree2}  and~\ref{prop:construct2valprop}.    
\end{rem}
 
Using the short exact sequence~\reqref{sesbraid}, we obtain the following presentation of the full braid group $B_2(\T, (x_1,x_2))$ from that of $P_2(\T, (x_1,x_2))$ given by \reco{compactpres}.

\begin{prop}\label{prop:presful}
The group $B_2(\T, (x_1,x_2))$ admits the following presentation:
\begin{enumerate}
\item[generators:] $u$, $v$, $x$, $y$ and $\sigma$.
\item[relations:]\mbox{}
\begin{enumerate}[(a)]
\item\label{it:presB2Ta} $xux^{-1}=u$ and $yuy^{-1}=u$.
\item $xvx^{-1}=v$ and $yvy^{-1}=v$.
\item\label{it:presB2Tc} $xyx^{-1}=y$. 
\item\label{it:presB2Td} $\sigma^{2}=[u,v^{-1}]$.
\item\label{it:presB2Te} $\sigma x \sigma^{-1}=x$ and $\sigma y \sigma^{-1}=y$.
\item\label{it:presB2Tf} $\sigma u \sigma^{-1}=[u,v^{-1}]u^{-1}x$ and $\sigma v \sigma^{-1}=[u,v^{-1}]v^{-1}y$.
\end{enumerate}
\end{enumerate}
\end{prop}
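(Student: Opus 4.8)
The plan is to use the short exact sequence~\reqref{sesbraid} for $n=2$, namely $1\to P_2(\T)\to B_2(\T)\stackrel{\tau}{\to} S_2\to 1$, together with the presentation of $P_2(\T)$ obtained in \reco{compactpres}. Since $S_2\cong\Z/2\Z$, to pass from a presentation of $P_2(\T)$ to one of $B_2(\T)$ by the standard extension procedure (as in~\cite[Proposition~1, p.~139]{Jo}, already invoked in the proof of \repr{presP2T1}), I need three ingredients: a coset representative in $B_2(\T)$ of the non-trivial element of $S_2$, a description of how conjugation by that representative acts on the generators $u,v,x,y$ of $P_2(\T)$, and an expression for the square of the representative as a word in $P_2(\T)$.

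First I would take $\sigma\in B_2(\T, (x_1,x_2))$ to be the standard band generator that swaps the two strings (so $\tau(\sigma)$ is the transposition), exactly as in the classical surface braid group presentations of~\cite{Bel,Bi,GM,Sco}; the generating set of $B_2(\T)$ is then $\brak{u,v,x,y,\sigma}$. The relations of $P_2(\T)$ from \reco{compactpres} carry over verbatim, giving relations~(\ref{it:presB2Ta})--(\ref{it:presB2Tc}). Next I would identify $\sigma^2\in P_2(\T)$: geometrically $\sigma^2$ is the loop in which the first point encircles the second, which in the generators of the free factor $\mathbb{F}_2(u,v)$ is the commutator $[u,v^{-1}]$ (this is consistent with the relation $[\rho_{1,1},\rho_{1,2}^{-1}]=BB_{1,2}$ of \repr{presP2T1} after setting $B=1$ and observing that $B_{1,2}$ becomes this same commutator under $\alpha$); this gives relation~(\ref{it:presB2Td}). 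The conjugation relations~(\ref{it:presB2Te})--(\ref{it:presB2Tf}) are then obtained by the same geometric bookkeeping used throughout \resec{toro2}: conjugating the "first-string" generators $x,y$ (which I would match with the $\rho_{2,j}$-type classes that commute with $\sigma$ up to the action already recorded) shows $\sigma x\sigma^{-1}=x$, $\sigma y\sigma^{-1}=y$, while conjugating $u,v$ by $\sigma$ interchanges the roles of the two points and, after rewriting in terms of $u,v,x,y$ using~\reqref{conjxyuv} and the relation $\sigma^2=[u,v^{-1}]$, yields $\sigma u\sigma^{-1}=[u,v^{-1}]u^{-1}x$ and $\sigma v\sigma^{-1}=[u,v^{-1}]v^{-1}y$. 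One should double-check consistency: applying the $\sigma$-conjugation relation twice must return $u$ (resp.\ $v$), i.e.\ $\sigma([u,v^{-1}]u^{-1}x)\sigma^{-1}=u$ modulo the other relations, and similarly $\sigma^2$ must centralise what it should; these are the routine verifications that make the presentation complete rather than merely a quotient.

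To finish, I would invoke the extension-of-presentations lemma: because the relations listed are (i) the relations of the normal subgroup $P_2(\T)$, (ii) the rewriting of $\sigma g\sigma^{-1}$ for each generator $g$ of $P_2(\T)$ as an element of $P_2(\T)$, and (iii) the expression of $\sigma^2$ as an element of $P_2(\T)$, and since $S_2$ has the one-generator one-relator presentation $\ang{\bar\sigma\mid\bar\sigma^2}$, the group defined by generators $\brak{u,v,x,y,\sigma}$ and relations~(\ref{it:presB2Ta})--(\ref{it:presB2Tf}) is exactly $B_2(\T,(x_1,x_2))$. The main obstacle I expect is pinning down the $\sigma$-conjugation formulas~(\ref{it:presB2Tf}) with the correct words on the right-hand side: this is purely a matter of carefully tracking the geometric loops under the identifications of Figure~\ref{fig:gens} and the change of variables~\reqref{uvxy}, but it is error-prone, and the self-consistency check "$\sigma^2$ conjugation equals conjugation by $[u,v^{-1}]$, which is an inner automorphism of $P_2(\T)$" is the natural safeguard to confirm the signs and placements are right.
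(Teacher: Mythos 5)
Your proposal follows exactly the paper's argument: apply the extension-of-presentations method of \cite[Proposition~1, p.~139]{Jo} to the short exact sequence~\reqref{sesbraid} with $n=2$, using the presentation of $P_2(\T)$ from \reco{compactpres}, taking $\sigma$ as a coset representative with $\sigma^2=B_{1,2}=[u,v^{-1}]$, and translating the geometric conjugation of $\rho_{i,j}$ by $\sigma$ into $u,v,x,y$ via the change of variables~\reqref{uvxy}. The only difference is that the paper records the intermediate geometric formulas $\sigma\rho_{1,j}\sigma^{-1}$ and $\sigma\rho_{2,j}\sigma^{-1}$ explicitly before rewriting, whereas you describe that step more loosely; the structure and content of the argument are the same.
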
 

\begin{proof}
Once more, we apply the methods of~\cite[Proposition~1, p.~139]{Jo}, this time to the short exact sequence~\reqref{sesbraid} for $X=\T$ and $n=2$, where we take $P_{2}(\T)$ to have the presentation given by \reco{compactpres}. A coset representative of the generator of $\Z_{2}$ is given by the braid $\sigma=\sigma_{1}$ that swaps the two basepoints. Hence $\brak{u,v,x,y,\sigma}$ generates $B_{2}(\T)$. Relations~(\ref{it:presB2Ta})--(\ref{it:presB2Tc}) emanate from the relations of $P_{2}(\T)$. Relation~(\ref{it:presB2Td}) is obtained by lifting the relation of $\Z_{2}$ to $B_{2}(\T)$ and using the fact that $\sigma^{2}=B_{1,2}=[u,v^{-1}]$. To obtain relations~(\ref{it:presB2Te}) and~(\ref{it:presB2Tf}), by geometric arguments, one may see that for $j\in \brak{1,2}$, $\sigma \rho_{1,j} \sigma^{-1}= \rho_{2,j}$ and $\sigma \rho_{2,j} \sigma^{-1}= B_{1,2}\rho_{2,j}B_{1,2}^{-1}$, and one then uses \req{uvxy} to express these relations in terms of $u,v,x$ and $y$.
\end{proof}

\subsection{A description of the homotopy classes of $2$-ordered and split $2$-valued maps of $\T$, and the computation of the Nielsen number}\label{sec:descript}

In this section, we describe the homotopy classes of $2$-ordered (resp.\ split $2$-valued) maps of $\T$ using the group structure of $P_{2}(\T)$ (resp.\ of $B_{2}(\T)$) given in \resec{toro2}. 

\begin{prop}\label{prop:baseT2}\mbox{}
\begin{enumerate}
\item\label{it:baseT2a} The set $[\T,F_{2}(\T)]_0$  of based homotopy classes of $2$-ordered  maps of $\T$ is in one-to-one correspondence with the set of commuting, ordered pairs of elements of $P_{2}(\T)$.
\item\label{it:baseT2b} The set $[\T,F_{2}(\T)]$ of homotopy classes of $2$-ordered  maps of $\T$ is in one-to-one correspondence with the set of commuting, conjugate, ordered pairs of $P_{2}(\T)$, i.e.\ two  commuting pairs $(\alpha_1, \beta_1)$ and $(\alpha_2, \beta_2)$ of $P_{2}(\T)$ give rise to the same homotopy class of $2$-ordered maps of $\T$ if there exists  $\delta\in P_{2}(\T)$ such that $\delta\alpha_1\delta^{-1}=\alpha_2$ and $\delta\beta_1\delta^{-1}=\beta_2$.
\item\label{it:baseT2c} Under the projection $\widehat{\pi}\colon\thinspace [\T,F_{2}(\T)] \to [\T,D_{2}(\T)]$ induced by the covering map $\pi\colon\thinspace F_2(\T) \to D_2(\T)$, two homotopy classes of $2$-ordered maps of $\T$ are sent to the same homotopy class of $2$-unordered maps if and only if any two pairs of braids that represent the maps are conjugate in $B_{2}(\T)$. 
\end{enumerate} 
\end{prop}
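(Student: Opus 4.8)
The plan is to reduce everything to the classical classification of maps into aspherical spaces. Observe first that all three spaces involved are Eilenberg--MacLane spaces: $\T$ is a $K(\Z^{2},1)$, and since $\chi(\T)=0$, both $F_{2}(\T)$ and $D_{2}(\T)$ are $K(\pi,1)$'s, with $\pi_{1}(F_{2}(\T))=P_{2}(\T)$ and $\pi_{1}(D_{2}(\T))=B_{2}(\T)$; this is the same asphericity input (via~\cite{FaN}) that was used in the proof of \reth{defchineg}. I would then recall the two standard facts about a $K(\pi,1)$ target $Y$ and a connected CW complex $X$ with basepoint: the map $[X,Y]_{0}\to \operatorname{Hom}(\pi_{1}(X),\pi_{1}(Y))$ sending a based homotopy class to the induced homomorphism is a bijection; and the forgetful map $[X,Y]_{0}\to [X,Y]$ is surjective, its fibres being the orbits of $\pi_{1}(Y)$ acting on $\operatorname{Hom}(\pi_{1}(X),\pi_{1}(Y))$ by post-composition with inner automorphisms (i.e.\ by conjugation). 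Since a $2$-ordered map is by definition a map $\T\to F_{2}(\T)$ and a homotopy of $2$-ordered maps is a homotopy of such maps, the sets $[\T,F_{2}(\T)]_{0}$ and $[\T,F_{2}(\T)]$ in the statement are precisely these sets of (based) homotopy classes, so these two facts apply directly.

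For part~(\ref{it:baseT2a}): the group $\pi_{1}(\T)=\Z^{2}$ has the presentation $\setangr{e_{1},e_{2}}{[e_{1},e_{2}]}$, so a homomorphism $\Z^{2}\to P_{2}(\T)$ is exactly the datum of the ordered pair $(f_{\#}(e_{1}),f_{\#}(e_{2}))$ of its values on the generators, subject to the single constraint that these two elements commute in $P_{2}(\T)$. Composing the bijection $[\T,F_{2}(\T)]_{0}\cong\operatorname{Hom}(\Z^{2},P_{2}(\T))$ with this identification gives the asserted one-to-one correspondence between $[\T,F_{2}(\T)]_{0}$ and the set of commuting ordered pairs of $P_{2}(\T)$. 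For part~(\ref{it:baseT2b}): applying the description of the fibres of $[\T,F_{2}(\T)]_{0}\to[\T,F_{2}(\T)]$, two based classes have the same free class precisely when the associated homomorphisms differ by an inner automorphism of $P_{2}(\T)$; transported through part~(\ref{it:baseT2a}), this says exactly that the commuting ordered pairs $(\alpha_{1},\beta_{1})$ and $(\alpha_{2},\beta_{2})$ satisfy $\delta\alpha_{1}\delta^{-1}=\alpha_{2}$ and $\delta\beta_{1}\delta^{-1}=\beta_{2}$ for some $\delta\in P_{2}(\T)$, which is the claim.

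For part~(\ref{it:baseT2c}): the covering map $\pi\colon F_{2}(\T)\to D_{2}(\T)$ induces on fundamental groups the inclusion $\pi_{\#}\colon P_{2}(\T)\hookrightarrow B_{2}(\T)$ of the short exact sequence~\reqref{sesbraid}. By naturality of the bijection $[\,\cdot\,,Y]_{0}\cong\operatorname{Hom}(\pi_{1}(\cdot),\pi_{1}(Y))$ in $Y$, the induced map $\widehat{\pi}\colon[\T,F_{2}(\T)]\to[\T,D_{2}(\T)]$ corresponds to the map on $\operatorname{Hom}$-sets modulo conjugation induced by post-composition with $\pi_{\#}$; concretely, a class represented by a commuting pair $(\alpha,\beta)$ of $P_{2}(\T)$ is sent to the class of the same pair, now viewed in $B_{2}(\T)$, modulo $B_{2}(\T)$-conjugacy. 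Hence $\widehat{\pi}$ identifies the classes of $(\alpha_{1},\beta_{1})$ and $(\alpha_{2},\beta_{2})$ if and only if there is $\delta\in B_{2}(\T)$ with $\delta\alpha_{1}\delta^{-1}=\alpha_{2}$ and $\delta\beta_{1}\delta^{-1}=\beta_{2}$ in $B_{2}(\T)$, i.e.\ if and only if the two representing pairs of braids are conjugate in $B_{2}(\T)$ (note that a change of basepoint-representative only alters such a pair by a $P_{2}(\T)$-conjugacy, hence by a $B_{2}(\T)$-conjugacy, so the condition does not depend on the choices).

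The argument is entirely formal once asphericity is in hand, so there is no genuine obstacle; the points that require care are: (i) invoking the asphericity of $F_{2}(\T)$ and $D_{2}(\T)$ correctly, so that the $K(\pi,1)$ classification is legitimately available; (ii) keeping the basepoint bookkeeping straight, so that the conjugating group in part~(\ref{it:baseT2b}) is $P_{2}(\T)$ while in part~(\ref{it:baseT2c}) it is the strictly larger group $B_{2}(\T)$; and (iii) being explicit that it is the naturality of $[\,\cdot\,,Y]_{0}\cong\operatorname{Hom}$ that transports $\widehat{\pi}$ into post-composition with the inclusion $P_{2}(\T)\hookrightarrow B_{2}(\T)$.
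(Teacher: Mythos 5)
Your proof is correct and follows essentially the same route as the paper: the paper establishes part~(\ref{it:baseT2a}) by the same identification of based classes with commuting pairs (phrased via extending a map from $\St[1]\vee\St[1]$ over the $2$-cell of $\T$), and then deduces parts~(\ref{it:baseT2b}) and~(\ref{it:baseT2c}) from the classical classification of maps into $K(\pi,1)$ spaces, exactly as you do. The only cosmetic difference is that you invoke the $K(\pi,1)$ classification uniformly for all three parts, whereas the paper runs the low-dimensional obstruction-theory argument explicitly for part~(\ref{it:baseT2a}).
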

 
\begin{proof}  Let $x_0\in \T$ and $(y_0, z_0)\in F_2(\T)$ be basepoints, and let $\Psi\colon\thinspace  \T \to F_{2}(\T)$ be a basepoint-preserving $2$-ordered map. The restriction of $\Psi$ to the meridian $\mu$ and the longitude $\lambda$ of $\T$, which are geometric representatives of the elements of the basis $(e_{1},e_{2})$ of $\pi_1(\T)$, gives rise to a pair of geometric braids. The resulting pair $(\Psi_{\#}(e_{1}),\Psi_{\#}(e_{2}))$ of elements of $P_{2}(\T)$ obtained via the induced homomorphism $\Psi_{\#}\colon\thinspace \pi_{1}(\T) \to P_{2}(\T)$ is an invariant of the based homotopy class of the map $\Psi$, and the two braids $\Psi_{\#}(e_{1})$ and $\Psi_{\#}(e_{2})$ commute. Conversely, given a pair of braids $(\alpha, \beta)$ of $P_{2}(\T)$,  let $f_{1}\colon\thinspace  \St[1] \to F_{2}(\T)$ and $f_{2}\colon\thinspace  \St[1] \to F_{2}(\T)$ be geometric representatives of $\alpha$ and $\beta$ respectively, \emph{i.e.}\ $\alpha=[f_{1}]$ and $\beta=[f_{2}]$. Then we define a geometric map from the wedge of two circles into $F_{2}(\T)$ by sending $x\in \St[1] \vee \St[1]$ to $f_{1}(x)$ (resp.\ to $f_{2}(x)$) if $x$ belongs to the first (resp.\ second) copy of $\St[1]$. By classical obstruction theory in low dimension, this map extends to $\T$ if and only if $\alpha$ and $\beta$ commute as elements of $P_{2}(\T)$, and part~(\ref{it:baseT2a}) follows.  Parts~(\ref{it:baseT2b}) and~(\ref{it:baseT2c}) are consequences of  part~(\ref{it:baseT2a}) and classical general facts about maps between spaces of type $K(\pi, 1)$, see~\cite[Chapter~V, Theorem~4.3]{Wh} for example.  
\end{proof}
 
Applying \repr{equivsplit} to $\T$, we obtain the following consequence.

\begin{prop}\label{prop:lifttorus}
If $\phi\colon\thinspace  \T \multimap \T$ is a split $2$-valued map and $\widehat{\Phi}\colon\thinspace \T \to F_{2}(\T)$ is a lift of $\phi$, the map $\phi$ can be deformed to a fixed point free $2$-valued map if and only if there exist commuting elements $\alpha_1, \alpha_2 \in P_{3}(\T)$ such that $\alpha_1$ (resp.\ $\alpha_2$) projects to $(e_1, \widehat{\Phi}_{\#}(e_1))$ (resp.\ $(e_2, \widehat{\Phi}_{\#}(e_2))$) under the homomorphism induced by the inclusion map $\widehat{\iota}_{3}\colon\thinspace F_{3}(\T) \to \T\times F_{2}(\T)$.
\end{prop}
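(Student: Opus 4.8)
The plan is to obtain \repr{lifttorus} as the specialisation of \repr{equivsplit} (equivalently, of \reth{defchineg}\ref{it:defchinegb}) to the case $X=\T$, $n=2$, exploiting the fact that $\pi_{1}(\T)$ is free abelian of rank two.

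First I would check the hypotheses: $\T$ is a compact surface without boundary with $\chi(\T)=0\leq 0$, and in particular a manifold without boundary, so both directions of \repr{equivsplit} apply. With $n=2$, so $n+1=3$, that proposition says that $\phi$ can be deformed to a fixed point free $2$-valued map if and only if there is a map $\widehat{\Theta}\colon\thinspace \T \to F_{3}(\T)$ making diagram~\reqref{commdiag4} commute up to homotopy, i.e.\ with $\widehat{\iota}_{3}\circ \widehat{\Theta}$ freely homotopic to $\id_{\T}\times \widehat{\Phi}$, where $\widehat{\iota}_{3}\colon\thinspace F_{3}(\T) \to \T\times F_{2}(\T)$ is the inclusion. (The converse part of \repr{equivsplit} even produces a deformation through split maps, so nothing is lost by restricting to split representatives.)

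Next I would turn the existence of $\widehat{\Theta}$ into a statement about fundamental groups. The space $F_{3}(\T)$ is a finite covering of $D_{3}(\T)$, which is a $K(\pi,1)$ by~\cite[Corollary~2.2]{FaN} (the same reasoning used for $D_{1,n}(X)$ in the proof of \reth{defchineg}), and $\T\times F_{2}(\T)$ is likewise aspherical. Hence, exactly as in the proof of \repr{baseT2}, a map $\widehat{\Theta}$ realising this homotopy factorisation exists if and only if there is a homomorphism $\widehat{\varphi}\colon\thinspace \pi_{1}(\T) \to \pi_{1}(F_{3}(\T))=P_{3}(\T)$ such that $(\widehat{\iota}_{3})_{\#}\circ \widehat{\varphi}$ is conjugate, in $\pi_{1}(\T)\times P_{2}(\T)$, to $(\id_{\T}\times \widehat{\Phi})_{\#}$. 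To replace conjugacy by equality one argues verbatim as in the proof of \reth{defchineg}: conjugating $\widehat{\varphi}$ by a suitable $\widehat{\delta}\in P_{3}(\T)$ one may arrange $(\widehat{\iota}_{3})_{\#}\circ \widehat{\varphi}=(\id_{\T}\times \widehat{\Phi})_{\#}$ on the nose. (In fact this whole step is nothing but \reth{defchineg}\ref{it:defchinegb} for $X=\T$, which may simply be quoted.)

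Finally, since $\pi_{1}(\T)=\ang{e_{1},e_{2}}$ is free abelian of rank two, a homomorphism $\widehat{\varphi}\colon\thinspace \pi_{1}(\T)\to P_{3}(\T)$ is the same thing as an ordered pair $(\alpha_{1},\alpha_{2})=(\widehat{\varphi}(e_{1}),\widehat{\varphi}(e_{2}))$ of \emph{commuting} elements of $P_{3}(\T)$, the only constraint coming from the relation $[e_{1},e_{2}]=1$ of $\pi_{1}(\T)$. Evaluating the equality $(\widehat{\iota}_{3})_{\#}\circ \widehat{\varphi}=(\id_{\T}\times \widehat{\Phi})_{\#}$ on $e_{i}$ gives $(\widehat{\iota}_{3})_{\#}(\alpha_{i})=(e_{i},\widehat{\Phi}_{\#}(e_{i}))$ for $i=1,2$, which is precisely the assertion that $\alpha_{i}$ projects to $(e_{i},\widehat{\Phi}_{\#}(e_{i}))$; this proves the proposition. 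I do not expect a genuine obstacle: the statement is essentially a rephrasing of \reth{defchineg}, and the only point requiring a little care is the basepoint bookkeeping in the conjugacy-versus-equality step, which has already been handled there. What the reformulation buys us is concreteness: together with an explicit presentation of $P_{3}(\T)$, it reduces the fixed-point-free deformation problem for split $2$-valued maps of $\T$ to a computation with commuting pairs of pure braids.
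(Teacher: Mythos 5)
Your proof is correct and follows essentially the same route as the paper's: specialise \repr{equivsplit} to $X=\T$, $n=2$, use the asphericity of the spaces involved to pass to fundamental groups, and identify a homomorphism from $\pi_{1}(\T)\cong\Z^{2}$ to $P_{3}(\T)$ with a commuting pair of elements. The paper's argument is terser but identical in substance; your extra care over the conjugacy-versus-equality step is a reasonable elaboration rather than a departure.
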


\begin{proof}
Since $\T$ is a space of type $K(\pi, 1)$, the existence of diagram~\reqref{commdiag4} is equivalent to that of the corresponding induced diagram on the level of fundamental groups. It then suffices to take $\alpha_{1}=\Theta_{\#}(e_1)$ and $\alpha_{2}=\Theta_{\#}(e_2)$ in the statement of \repr{equivsplit}.
\end{proof}

Proposition~\ref{prop:lifttorus} gives a criterion to decide whether a split $2$-valued map of $\T$ can be deformed to a fixed point free $2$-valued map. However, from a computational point of view, it seems better to use an alternative condition in terms of roots (see \resec{exrfm}).

In the following proposition, we make use of the identification of $P_{2}(\T)$ with $\mathbb{F}_{2} \times \Z^{2}$ given in \reco{compactpres}.

\begin{prop}\label{prop:exismaps}\mbox{}
\begin{enumerate}
\item\label{it:exismapsa} The set $[\T,F_{2}(\T)]_0$ of  based  homotopy classes of $2$-ordered maps of $\T$ is in one-to-one correspondence with the set of pairs $(\alpha, \beta)$ of elements of $\mathbb{F}_2 \times \Z^{2}$ of the form $\alpha=(w^r,(a,b))$, $\beta=(w^s, (c,d))$, where $(a,b), (c,d), (r,s)\in \Z^{2}$ and $w\in \mathbb{F}_2$. Further, up to taking a root of $w$ if necessary, we may assume that $w$ is either trivial or is a primitive element of $\mathbb{F}_2$ (i.e.\ $w$ is not a proper power of another element of $\mathbb{F}_2$).
 \item\label{it:exismapsb} The set  $[\T,F_{2}(\T)]$ of homotopy classes of $2$-ordered maps of $\T$  is in one-to-one correspondence with the set of the equivalence classes of pairs $(\alpha, \beta)$ of elements of $\mathbb{F}_2 \times \Z^{2}$ of the form given in part~(\ref{it:exismapsa}), where the equivalence relation is defined as follows: the pairs of elements $( (w_1^{r_{1}},(a_1,b_1)), (w_1^{s_{1}}, (c_1,d_1)))$ and $( (w_2^{r_{2}},(a_2,b_2)), (w_2^{s_{2}}, (c_2,d_2)))$ of $\mathbb{F}_2 \times \Z^{2}$ are equivalent if and only if $(a_1,b_1,c_1,d_1) = (a_2,b_2,c_2,d_2)$, and either:
  \begin{enumerate}[(i)]
\item $w_1=w_2=1$, or 
\item $w_1$ and $w_2$ are primitive, and there exists $\epsilon\in \brak{1,-1}$ such that $w_1$ and $w_2^{\epsilon}$ are conjugate in $\mathbb{F}_2$, 
and $(r_1,s_{1})=\epsilon (r_2,s_{2})\ne (0,0)$.
\end{enumerate}
\end{enumerate}
\end{prop}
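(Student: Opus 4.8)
The plan is to reduce everything to \repr{baseT2}, which already identifies $[\T,F_{2}(\T)]_{0}$ (resp.\ $[\T,F_{2}(\T)]$) with the set of commuting ordered pairs (resp.\ the simultaneous-conjugacy classes of commuting ordered pairs) of $P_{2}(\T)$, and then to rewrite these sets using the isomorphism $P_{2}(\T)\cong\mathbb{F}_{2}(u,v)\times\Z^{2}$ of \reco{compactpres}. The only non-formal input will be two standard facts about free groups: that two elements of a free group commute if and only if they are powers of a common element, which may be chosen primitive (their root); and that if $g^{m}$ is conjugate to $h^{n}$ in a free group with $g,h$ primitive and $m,n\neq 0$, then there is a single $\epsilon\in\{1,-1\}$ with $m=\epsilon n$ and $g$ conjugate to $h^{\epsilon}$.

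For part~(\ref{it:exismapsa}), write a pair of $\mathbb{F}_{2}\times\Z^{2}$ as $\alpha=(g,(a,b))$ and $\beta=(h,(c,d))$. Since the $\Z^{2}$-coordinates always commute, $(\alpha,\beta)$ is a commuting pair exactly when $g$ and $h$ commute in $\mathbb{F}_{2}$. If $g=h=1$ take $w=1$; otherwise $g$ and $h$ lie in a common cyclic subgroup $\langle w\rangle$ with $w$ primitive, so $g=w^{r}$ and $h=w^{s}$ for some $r,s\in\Z$. This is the asserted normal form, the clause ``up to taking a root of $w$'' meaning that a non-primitive common element is replaced by its unique primitive root (and $w=1$ is chosen whenever the $\mathbb{F}_{2}$-coordinates are trivial). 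Conversely, any pair of this shape has commuting coordinates, hence commutes in $\mathbb{F}_{2}\times\Z^{2}$, so \repr{baseT2}(\ref{it:baseT2a}) yields part~(\ref{it:exismapsa}).

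For part~(\ref{it:exismapsb}), by \repr{baseT2}(\ref{it:baseT2b}) two normalised pairs represent the same element of $[\T,F_{2}(\T)]$ if and only if some $\delta=(\delta_{0},(e,f))\in\mathbb{F}_{2}\times\Z^{2}$ simultaneously conjugates the one to the other. Conjugation is trivial on the $\Z^{2}$-factor, forcing $(a_{1},b_{1},c_{1},d_{1})=(a_{2},b_{2},c_{2},d_{2})$, and on the $\mathbb{F}_{2}$-factor it requires $\delta_{0}w_{1}^{r_{1}}\delta_{0}^{-1}=w_{2}^{r_{2}}$ and $\delta_{0}w_{1}^{s_{1}}\delta_{0}^{-1}=w_{2}^{s_{2}}$. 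If $w_{1}=1$, the first two equalities force $w_{2}^{r_{2}}=w_{2}^{s_{2}}=1$, so the second pair also has trivial $\mathbb{F}_{2}$-coordinates; re-normalising it with $w_{2}=1$ puts us in case~(i). If instead $w_{1}$, and hence $w_{2}$, is primitive, then $(r_{1},s_{1})\neq(0,0)$ by the normalisation, say $r_{1}\neq 0$; applying the second free-group fact to $\delta_{0}w_{1}^{r_{1}}\delta_{0}^{-1}=w_{2}^{r_{2}}$ produces a sign $\epsilon\in\{1,-1\}$ with $\delta_{0}w_{1}\delta_{0}^{-1}=w_{2}^{\epsilon}$ and $r_{1}=\epsilon r_{2}$; feeding $\delta_{0}w_{1}\delta_{0}^{-1}=w_{2}^{\epsilon}$ into the $s$-equality then gives $s_{1}=\epsilon s_{2}$, so that $(r_{1},s_{1})=\epsilon(r_{2},s_{2})\neq(0,0)$, i.e.\ case~(ii); the case $s_{1}\neq 0$ is symmetric. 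Conversely, if the $\Z^{2}$-coordinates agree, $w_{1}$ is conjugate to $w_{2}^{\epsilon}$ by some $\delta_{0}$, and $(r_{1},s_{1})=\epsilon(r_{2},s_{2})$, then $\delta=(\delta_{0},(0,0))$ realises the simultaneous conjugacy, which completes the verification of the equivalence relation.

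The whole argument is essentially bookkeeping once \repr{baseT2} and \reco{compactpres} are in hand, the genuine content being the two free-group lemmas. The step requiring the most care is the conjugacy analysis in part~(\ref{it:exismapsb}): one must verify that the sign $\epsilon$ produced by the $r$-equality is the same sign that the $s$-equality forces, and one must handle the degenerate configurations $w=1$ and $(r,s)=(0,0)$ so that they fall cleanly into case~(i) and are excluded from case~(ii)~--~this is precisely the compatibility between the normalisation of part~(\ref{it:exismapsa}) and the equivalence relation of part~(\ref{it:exismapsb}).
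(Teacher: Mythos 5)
Your proposal is correct and follows essentially the same route as the paper: reduce to \repr{baseT2}, rewrite commuting pairs and simultaneous conjugacy classes via the decomposition $P_{2}(\T)\cong\mathbb{F}_{2}(u,v)\times\Z^{2}$ of \reco{compactpres}, and invoke the standard facts that commuting elements of a free group share a primitive root and that conjugate powers of primitive elements force the roots to be conjugate up to inversion. The paper's own proof is a two-sentence citation of exactly these ingredients; your write-up simply supplies the conjugacy analysis and the handling of the degenerate cases ($w=1$, $(r,s)=(0,0)$) that the paper leaves implicit.
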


\begin{proof}
Part~(\ref{it:exismapsa}) follows using \repr{baseT2}(\ref{it:baseT2a}), the identification of $P_{2}(\T)$ with $\mathbb{F}_2\times \Z^2$ given by \reco{compactpres}, and the fact that two elements of $\mathbb{F}_2$ commute if and only if they are powers of some common element of $\mathbb{F}_2$. Part~(\ref{it:exismapsb}) is a consequence of \repr{baseT2}(\ref{it:baseT2b}), \reco{compactpres}, and the straightforward description of the conjugacy classes of the group $\mathbb{F}_2\times \Z^2$. 
\end{proof}

To describe the  homotopy classes of split $2$-valued maps of $\T$, let us consider the set  $[\T,F_{2}(\T)]$ of  homotopy classes of $2$-ordered maps  and the action of $\Z_2$ on this set that is induced by the action of $\Z_2$ on $F_2(\T)$.  By \relem{split}(\ref{it:splitIII}), the corresponding set of orbits $[\T,F_{2}(\T)]/\Z_{2}$ is in one-to-one correspondence with the set $\splitmap{\T}{\T}{2}/\!\sim$ of homotopy classes of split $2$-valued maps of $\T$. 
Given a homotopy class of a $2$-ordered map of $\T$, choose a based representative $f\colon\thinspace  \T \to F_2(\T)$. The based homotopy class of $f$ is determined by the element $f_{\#}$ of $\operatorname{\text{Hom}}(\Z^{2}, P_2(\T))$. In turn, by \repr{exismaps}(\ref{it:exismapsa}), 
$f_{\#}$ is determined by a pair of elements of $P_{2}(\T)$ of the form $( (w^r,(a,b)), (w^s, (c,d)))$, where $(a,b)$, $(c,d)$ and $(r,s)$ belong to $\Z^2$, and $w\in \mathbb{F}_2$. To characterise the equivalence class of $f$ in $[\T,F_{2}(\T)]/\Z_{2}$, we first consider the set of conjugates of this pair by the elements of $P_2(\T)$, which by \repr{baseT2}(\ref{it:baseT2b}) describes the homotopy class of $f$ in $[\T,F_{2}(\T)]$, and secondly, we take into account the $\Z_2$-action by conjugating by the elements of $B_2(\T)$. So the equivalence class of $f$ in $[\T,F_{2}(\T)]/\Z_{2}$ is characterised by the set of conjugates of the pair $( (w^r,(a,b)), (w^s, (c,d)))$ by elements of $B_2(\T)$. The presentation of $B_{2}(\T)$ given by~\repr{presful} contains the action by conjugation of $\sigma$ on $P_{2}(\T)$. Consider the homomorphism involution of $\mathbb{F}_2(u,v)$ that is defined on the generators of $\mathbb{F}_2(u,v)$ by $u \longmapsto u^{-1}$ and $v \longmapsto v^{-1}$. The image of an element $w\in \mathbb{F}_2(u,v)$ by this automorphism will be denoted by $\widehat{w}$. With respect to the decomposition of $P_{2}(\T)$ given by \reco{compactpres}, let $\gamma\colon\thinspace  P_2(\T) \to \mathbb{F}_2(u,v)$ denote the projection onto $\mathbb{F}_2(u,v)$. Let $\operatorname{\text{Ab}}\colon\thinspace  \mathbb{F}_2(u,v) \to \Z^{2}$ denote the Abelianisation homomorphism that sends $u$ to $(1,0)$ and $v$ to $(0,1)$. We write $\operatorname{\text{Ab}}(w)=(|w|_{u}, |w|_{v})$, where $|w|_{u}$ (resp.\ $|w|_{v}$) denotes the exponent sum of $u$ (resp.\ $v$) in the word $w$, and $\ell(w)$ will denote the word length of $w$ with respect to $u$ and $v$.  One may check easily that $(\widehat{w})^{-1}=\widehat{w^{-1}}$ and $\ell(w)=\ell(\widehat{w})$ for all $w\in \mathbb{F}_2(u,v)$. Note also that if $\lambda\in \mathbb{F}_2(u,v)$ and $r\in \Z$ then:
\begin{equation}\label{eq:lambdahat}
\widehat{\lambda} (\lambda \widehat{\lambda})^{r} \lambda=\widehat{\lambda} (\lambda \widehat{\lambda})^{r} \widehat{\lambda}^{-1} \ldotp \widehat{\lambda} \lambda=(\widehat{\lambda} \lambda \widehat{\lambda}\widehat{\lambda}^{-1})^{r} \widehat{\lambda} \lambda=(\widehat{\lambda}\lambda)^{r+1}.
\end{equation}

\begin{lem}\label{lem:conjhat}
For all $w\in \mathbb{F}_{2}(u,v)$, $\widehat{w}=vu^{-1}\gamma(\sigma w\sigma^{-1})uv^{-1}$ and 
\begin{equation}\label{eq:conjsigma}
\sigma w \sigma^{-1}=(uv^{-1}\widehat{w} vu^{-1}, (|w|_{u}, |w|_{v})).
\end{equation}
In particular, $\widehat{w}$ is conjugate to $\gamma(\sigma w\sigma^{-1})$ in $\mathbb{F}_2(u,v)$. 
\end{lem}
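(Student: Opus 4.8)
The plan is to observe that, regarded as functions of $w$, both sides of~\reqref{conjsigma} are group homomorphisms $\mathbb{F}_2(u,v)\to P_2(\T)$, so that it suffices to verify the identity for $w=u$ and $w=v$, where it follows directly from the presentation of $B_2(\T)$ in \repr{presful}. First I would record that the assignment $\Theta(w)=(uv^{-1}\widehat{w}vu^{-1},\operatorname{\text{Ab}}(w))$ defines a homomorphism from $\mathbb{F}_2(u,v)$ to $P_2(\T)=\mathbb{F}_2(u,v)\times\Z^2$: indeed $w\mapsto\widehat{w}$ is the automorphism of $\mathbb{F}_2(u,v)$ determined by the involutive assignment $u\mapsto u^{-1}$, $v\mapsto v^{-1}$ on the free generators, and composing it with the inner automorphism $g\mapsto(uv^{-1})g(uv^{-1})^{-1}=uv^{-1}gvu^{-1}$ yields the first coordinate of $\Theta$, while the second coordinate is the Abelianisation homomorphism; since a map into a direct product is a homomorphism exactly when each of its coordinates is, $\Theta$ is a homomorphism.

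Next I would note that $\Psi\colon w\mapsto\sigma w\sigma^{-1}$ is likewise a homomorphism from the subgroup $\mathbb{F}_2(u,v)\leq P_2(\T)\leq B_2(\T)$ to $P_2(\T)$: conjugation by $\sigma$ is an automorphism of $B_2(\T)$, and $P_2(\T)$ is normal in $B_2(\T)$ by~\reqref{sesbraid}, so $\sigma\mathbb{F}_2(u,v)\sigma^{-1}\subseteq\sigma P_2(\T)\sigma^{-1}=P_2(\T)$.

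Then I would check that $\Theta$ and $\Psi$ coincide on $u$ and $v$, using relation~(\ref{it:presB2Tf}) of \repr{presful} together with $\sigma^{2}=[u,v^{-1}]$ and the identifications $x=(1;(1,0))$, $y=(1;(0,1))$ in $P_2(\T)=\mathbb{F}_2(u,v)\times\Z^2$ coming from \reco{compactpres} (see the remark following it). Concretely, $\sigma u\sigma^{-1}=[u,v^{-1}]u^{-1}x=(uv^{-1}u^{-1}vu^{-1};(1,0))$, and since $\widehat{u}=u^{-1}$ one has $uv^{-1}\widehat{u}vu^{-1}=uv^{-1}u^{-1}vu^{-1}$ and $\operatorname{\text{Ab}}(u)=(1,0)$, so $\Theta(u)=\Psi(u)$; similarly $\sigma v\sigma^{-1}=[u,v^{-1}]v^{-1}y=(uv^{-1}u^{-1};(0,1))$ and $\widehat{v}=v^{-1}$ gives $uv^{-1}\widehat{v}vu^{-1}=uv^{-1}u^{-1}$, so $\Theta(v)=\Psi(v)$. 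As $u$ and $v$ generate $\mathbb{F}_2(u,v)$, this forces $\Theta=\Psi$, which is precisely~\reqref{conjsigma}. Applying the projection $\gamma$ to~\reqref{conjsigma} yields $\gamma(\sigma w\sigma^{-1})=uv^{-1}\widehat{w}vu^{-1}$, and conjugating by $vu^{-1}$ gives $\widehat{w}=vu^{-1}\gamma(\sigma w\sigma^{-1})uv^{-1}$; this same equation exhibits $\widehat{w}$ as the conjugate of $\gamma(\sigma w\sigma^{-1})$ by $vu^{-1}$, which gives the last assertion.

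I do not expect a real obstacle here, as the statement is essentially a bookkeeping exercise once the ``homomorphism'' viewpoint is adopted. The only point that requires care is separating the $\mathbb{F}_2(u,v)$-component from the $\Z^2$-component when reading off $\sigma u\sigma^{-1}$ and $\sigma v\sigma^{-1}$ from the presentation in \repr{presful}, that is, remembering that $x$ and $y$ contribute only to the $\Z^2$-factor under the decomposition of \reco{compactpres}.
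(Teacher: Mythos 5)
Your proof is correct and is essentially the same as the paper's: both verify the identity on the generators $u$ and $v$ using relation~(\ref{it:presB2Tf}) of \repr{presful}, and then extend to a general word $w$ — the paper by noting that $x$ and $y$ are central in $P_2(\T)$ so the $\Z^{2}$-factors can be collected, you by the equivalent observation that both sides of~\reqref{conjsigma} are homomorphisms of $w$. Your packaging via the explicit homomorphism $\Theta$ is a slightly more formal way of saying the same thing.
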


\begin{proof}
By \repr{presful},
we have:
\begin{equation*}
\text{$\sigma u \sigma^{-1} = [u,v^{-1}]u^{-1} x=uv^{-1}\widehat{u} vu^{-1} x^{|u|_{u}}$ and 
$\sigma v \sigma^{-1} = [u,v^{-1}]v^{-1} y=uv^{-1}\widehat{v} vu^{-1} y^{|v|_{v}}$.}
\end{equation*}
If $w\in \mathbb{F}_{2}(u,v)$ then~\reqref{conjsigma} follows because
$x$ and $y$ belong to the centre of $P_{2}(\T)$. Thus $\gamma(\sigma w \sigma^{-1})=uv^{-1}\widehat{w} vu^{-1}$ as required.
\end{proof}

\begin{lem}\label{lem:conjhat1}\mbox{}
\begin{enumerate}
\item\label{it:classifI} Let $a,b\in \mathbb{F}_{2}(u,v)$ be such that $ab$ is written in reduced form. Then   $ab=\widehat{b}\,\widehat{a}$  if and only if  there exist $\lambda\in \mathbb{F}_{2}(u,v)$ and $r,s\in \Z$ such that:
\begin{equation}\label{eq:concl1}
\text{$a=(\widehat{\lambda} \lambda)^s\widehat{\lambda}$ and $b=(\lambda\widehat{\lambda})^r\lambda$.}
\end{equation}

\item\label{it:classifII} For all $w\in \mathbb{F}_{2}(u,v)$, $w$ and $\widehat{w}$ are conjugate in $\mathbb{F}_2(u,v)$ if and only if there exist $\lambda\in \mathbb{F}_2(u,v)$ and $l\in \Z$ such that:
\begin{equation}\label{eq:concl2}
w=(\lambda \widehat{\lambda})^l.
\end{equation}
\end{enumerate}
\end{lem}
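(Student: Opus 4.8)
The plan is to study $\widehat{\cdot}$ through its effect on reduced words, to prove part~(\ref{it:classifI}) by a combinatorics-on-words argument, and then to deduce part~(\ref{it:classifII}) from it. The basic observation is that for a reduced word $c=c_{1}c_{2}\cdots c_{n}$ in the letters $u^{\pm1},v^{\pm1}$ one has $\widehat{c}=c_{1}^{-1}c_{2}^{-1}\cdots c_{n}^{-1}$, each letter being inverted while the order is unchanged; hence $\widehat{\cdot}$ sends reduced words to reduced words of the same length, sends cyclically reduced words to cyclically reduced words, and satisfies $\widehat{c}=c$ only if $c$ is trivial. We use the identity~\eqref{eq:lambdahat} freely.

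For part~(\ref{it:classifI}), the `if' direction is a computation: if $a=(\widehat{\lambda}\lambda)^{s}\widehat{\lambda}$ and $b=(\lambda\widehat{\lambda})^{r}\lambda$ then, by~\eqref{eq:lambdahat}, $ab=(\widehat{\lambda}\lambda)^{s}\,\widehat{\lambda}(\lambda\widehat{\lambda})^{r}\lambda=(\widehat{\lambda}\lambda)^{r+s+1}$, while $\widehat{b}=(\widehat{\lambda}\lambda)^{r}\widehat{\lambda}$ and $\widehat{a}=(\lambda\widehat{\lambda})^{s}\lambda$, so $\widehat{b}\widehat{a}=(\widehat{\lambda}\lambda)^{r}\,\widehat{\lambda}(\lambda\widehat{\lambda})^{s}\lambda=(\widehat{\lambda}\lambda)^{r+s+1}=ab$. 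For the `only if' direction, put $c=ab$, $p=\ell(a)$, $q=\ell(b)$, $n=p+q$. Since $ab$ is reduced, $\ell(c)=n$; as $\ell(\widehat{b}\widehat{a})\le q+p$, the hypothesis $c=\widehat{b}\widehat{a}$ forces $\widehat{b}\widehat{a}$ to be reduced and to agree with $ab$ letter by letter, and applying $\widehat{\cdot}$ and arguing on lengths again gives $\widehat{c}=ba$ as reduced words. Comparing, the cyclic word $(c_{i})_{i\in\mathbb{Z}/n}$ satisfies $\widehat{c_{i}}=c_{i+p}$, hence $c_{i}=c_{i+2p}$, so $c$ is periodic of period $e$ for some $e\mid\gcd(2p,n)$. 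Writing $c=P^{n/e}$ with $P$ of length $e$ and of minimal period, $\widehat{P}$ is the rotation of $P$ by $p\bmod e$; since $\widehat{\widehat{P}}=P$ and $P$ has minimal period $e$, $2(p\bmod e)\equiv0\pmod{e}$, and $p\bmod e\ne0$ (otherwise $\widehat{P}=P$, impossible), so $e$ is even and $P=\mu\widehat{\mu}$ with $\ell(\mu)=e/2$. Thus $p$ is an odd multiple of $\ell(\mu)$; reading off the prefix of $c=(\mu\widehat{\mu})^{n/e}$ of length $p$ and the suffix of length $q$ gives $a=(\mu\widehat{\mu})^{s}\mu$ and $b=\widehat{\mu}(\mu\widehat{\mu})^{r}=(\widehat{\mu}\mu)^{r}\widehat{\mu}$ for suitable integers $r,s\ge0$, which is~\eqref{eq:concl1} with $\lambda=\widehat{\mu}$. (If $a=1$ or $b=1$ then necessarily $a=b=1$ and $\lambda=1$ works.)

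For part~(\ref{it:classifII}), the `if' direction is immediate from~\eqref{eq:lambdahat}: $\widehat{(\lambda\widehat{\lambda})^{l}}=(\widehat{\lambda}\lambda)^{l}=\widehat{\lambda}(\lambda\widehat{\lambda})^{l}\widehat{\lambda}^{-1}$, so $w=(\lambda\widehat{\lambda})^{l}$ is conjugate to $\widehat{w}$. Conversely, suppose $w$ is conjugate to $\widehat{w}$ and $w\ne1$. Write $w=zw_{0}z^{-1}$ with $w_{0}$ cyclically reduced; then $\widehat{w}=\widehat{z}\widehat{w_{0}}\widehat{z}^{-1}$ with $\widehat{w_{0}}$ cyclically reduced, and $w_{0}$ is conjugate to $\widehat{w_{0}}$. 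It is classical that conjugate cyclically reduced words in a free group are cyclic rotations of one another, so $\widehat{w_{0}}$ is the cyclic rotation of $w_{0}$ by some amount; writing $w_{0}=ab$ with $\ell(a)$ equal to that amount, this reads $\widehat{ab}=ba$, i.e.\ $ab=\widehat{b}\widehat{a}$, and $ab=w_{0}$ is reduced. By part~(\ref{it:classifI}), $a=(\widehat{\mu}\mu)^{s}\widehat{\mu}$ and $b=(\mu\widehat{\mu})^{r}\mu$ for some $\mu,r,s$, whence $w_{0}=ab=(\widehat{\mu}\mu)^{r+s+1}=(\kappa\widehat{\kappa})^{r+s+1}$ with $\kappa=\widehat{\mu}$. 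Finally one passes back using the following identity: for all $z,\kappa\in\mathbb{F}_{2}(u,v)$, putting $\nu=z\kappa\widehat{z}^{-1}$ one has $\widehat{\nu}=\widehat{z}\widehat{\kappa}z^{-1}$, so $\nu\widehat{\nu}=z\kappa(\widehat{z}^{-1}\widehat{z})\widehat{\kappa}z^{-1}=z(\kappa\widehat{\kappa})z^{-1}$. Raising to the power $r+s+1$ gives $w=zw_{0}z^{-1}=(\nu\widehat{\nu})^{r+s+1}$, which is~\eqref{eq:concl2} with $\lambda=\nu$ and $l=r+s+1$.

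I expect the main obstacle to be the combinatorial core of part~(\ref{it:classifI}): turning the letter-by-letter constraints coming from reducedness into the statement that, after reduction to its primitive period, $c$ is a word of the form $\mu\widehat{\mu}$, and then reading off $a$ and $b$ as the correct odd powers of $\mu$ and $\widehat{\mu}$. This step requires care with cyclic words, rotations and a Fine--Wilf-type periodicity argument; once it is in place, part~(\ref{it:classifII}) follows formally from the reduction to the cyclically reduced case and the explicit conjugation identity above.
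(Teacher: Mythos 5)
Your proof is correct, but it takes a genuinely different route from the paper's. The paper proves both parts by a single intertwined induction on word length: it names the two equations (E1) $ab=\widehat{b}\,\widehat{a}$ and (E2) $\widehat{w}=\theta w\theta^{-1}$, checks base cases of length at most $4$, and then alternates between them, each induction step resting on a case analysis of where cancellation occurs (whether $b$ begins and ends with $\widehat{a}$, whether the conjugator cancels entirely into $w$ or on both sides, etc.). You instead prove part~(a) directly: observing that $\widehat{\cdot}$ acts on a reduced word by inverting each letter in place, the hypothesis forces $\widehat{c}$ to be the cyclic rotation of $c=ab$ by $\ell(a)$, and a short periodicity argument on the cyclic word (the stabiliser under rotation is a subgroup of $\Z/n$) shows that the primitive period has the form $\mu\widehat{\mu}$ and that $\ell(a)$ is an odd multiple of $\ell(\mu)$, from which \reqref{concl1} is read off. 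Part~(b) then follows formally by cyclic reduction, the standard fact that conjugate cyclically reduced words are rotations of one another, and the explicit identity $\nu\widehat{\nu}=z(\kappa\widehat{\kappa})z^{-1}$ for $\nu=z\kappa\widehat{z}^{-1}$. Your argument avoids the delicate cancellation bookkeeping of the induction, makes the appearance of the normal form $(\lambda\widehat{\lambda})^{l}$ conceptually transparent (it is exactly the period structure), and, like the paper's proof, generalises verbatim to free groups of arbitrary rank. The price is that you must be careful with the cyclic-word conventions (that $\widehat{\cdot}$ preserves reducedness and cyclic reducedness, that $\widehat{c}=c$ forces $c=1$, and the degenerate cases $a=1$ or $b=1$), all of which you do address.
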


\begin{rem}
By modifying the definition of the $\widehat{\cdot}$ homomorphism appropriately, \relem{conjhat1} and its proof may be generalised to any free group of finite rank on a given set.
\end{rem}

%

\begin{proof}[Proof of \relem{conjhat1}]
We first prove the `if' implications of~(\ref{it:classifI}) and~(\ref{it:classifII}). For part~(\ref{it:classifI}), let $a,b\in \mathbb{F}_{2}(u,v)$ be such that $ab$ is written in reduced form, and that~\reqref{concl1} holds. Using~\reqref{lambdahat}, we have:
\begin{equation*}
ab=(\widehat{\lambda} \lambda)^s\widehat{\lambda}(\lambda\widehat{\lambda})^r\lambda=(\widehat{\lambda} \lambda)^{r+s+1}=(\widehat{\lambda} \lambda)^{r}\widehat{\lambda}(\lambda \widehat{\lambda})^{s}\lambda=
\widehat b\,\widehat a.
\end{equation*}
For part~(\ref{it:classifII}), if~\reqref{concl2} holds then $\widehat{w}=(\widehat{\lambda} \lambda)^l=\widehat{\lambda}(\lambda\widehat{\lambda})^l(\widehat{\lambda})^{-1}=\widehat{\lambda} w\widehat{\lambda}^{-1}$ by~\reqref{lambdahat}, so $w$ and $\widehat{w}$ are conjugate in $\mathbb{F}_2(u,v)$.

Finally, we prove the `only if' implications of~(\ref{it:classifI}) and~(\ref{it:classifII}) simultaneously by induction on the length $k$ of the words $ab$ and $w$, which we assume to be non trivial and written in reduced form. Let~(E1) denote the equation $ab=\widehat{b}\,\widehat{a}$, and let~(E2) denote the equation $\widehat{w}=\theta w \theta^{-1}$, where $\theta\in \mathbb{F}_2(u,v)$. Note that if $a$ and $b$ satisfy~(E1) then both $a$ and $b$ are non trivial, and that $\widehat{b}\,\widehat{a}$ is also in reduced form. Further, if $z\in \mathbb{F}_2(u,v)$, then since $|z|_{y}=-|\widehat{z}|_{y}$ for $y\in\brak{u,v}$, it follows from the form of~(E1) and~(E2) that $k$ must be even in both cases. We carry out the proof of the two implications by induction as follows.

\begin{enumerate}[(i)]
\item If $k\leq 4$ then (E1) implies~\reqref{concl1}. One may check easily that $k$ cannot be equal to $2$. Suppose that $k=4$ and that~(E1) holds. Now $\ell(a)\neq 1$, for otherwise $b$ would start with $a^{-1}$, but then $ab$ would not be reduced. Similarly, $\ell(b)\neq 1$, so we must have $\ell(a)=\ell(b)=2$, in which case $b=\widehat{a}$, and it suffices to take $r=s=0$ and $\lambda=b$ in~\reqref{concl1}. 

\item If $k\leq 4$ then (E2) implies~\reqref{concl2}. Once more, it is straightforward to see that $k$ cannot be equal to $2$. Suppose that $k=4$. Since $w$ and $\widehat{w}$ are conjugate, we have $|w|_{y}=|\widehat{w}|_{y}$ for $y\in\brak{u,v}$, and so $|w|_{y}=0$. Since $w$ is in reduced form, one may then check that~\reqref{concl2} holds, where $\ell(\lambda)=2$.

\item\label{it:induct} Suppose by induction that for some $k\geq 4$,~(E1) implies~\reqref{concl1} if $\ell(ab)<k$ and~(E2) implies~\reqref{concl2} if $\ell(w)<k$. Suppose that $a$ and $b$ satisfy~(E1) and that $\ell(ab)=k$. If $\ell(a)=\ell(b)$ then $b=\widehat{a}$, and as above, it suffices to take $r=s=0$ and $\lambda=b$ in~\reqref{concl1}. So assume that $\ell(a)\neq \ell(b)$. By applying the automorphism $\widehat{\cdot}$ and exchanging the r\^{o}les of $a$ and $b$ if necessary, we may suppose that $\ell(a)<\ell(b)$. We consider two subcases.

\begin{enumerate}[(A)]
\item $\ell(a)\leq \ell(b)/2$: since both sides of~(E1) are in reduced form, $b$ starts and ends with $\widehat{a}$, and there exists $b_{1}\in \mathbb{F}_2(u,v)$ such that $b=\widehat{a}b_{1}\widehat{a}$, written in reduced form. Substituting this into~(E1), we see that $a\widehat{a}b_{1}\widehat{a}= a\widehat{b}_{1}a\widehat{a}$, and thus $\widehat{a}b_{1}=\widehat{b}_{1}a$, written in reduced form. This equation is of the form~(E1), and since $\ell(\widehat{a}b_{1})<\ell(b)<\ell(ab)$, we may apply the induction hypothesis. Thus there exist $\lambda\in \mathbb{F}_{2}(u,v)$ and $r,s\in \Z$ such that 
$\widehat{a}=(\widehat{\lambda} \lambda)^s\widehat{\lambda}$, and $b_1=(\lambda\widehat{\lambda})^{r}\lambda$.  
Therefore $a=(\lambda\widehat{\lambda} )^s \lambda$ and 
\begin{equation*}
b=\widehat{a} b_1 \widehat{a}=(\widehat{\lambda} \lambda)^s\widehat{\lambda}  (\lambda\widehat{\lambda})^{r}\lambda (\widehat{\lambda} \lambda)^s\widehat{\lambda}= (\widehat{\lambda} \lambda)^{2s+r+1}\widehat{\lambda},
\end{equation*} 
using~\reqref{lambdahat}, which proves the result in this case.

\item $\ell(b)/2< \ell(a) <\ell(b)$: since both sides of~(E1) are in reduced form, $b$ starts with $\widehat{a}$, and there exists $b_{1}\in \mathbb{F}_2(u,v)$, $b_{1}\neq 1$, such that $b=\widehat{a}b_{1}$. Substituting this into~(E1), we obtain $a\widehat{a}b_{1}=a\widehat{b}_{1}\widehat{a}$, which is equivalent to $\widehat{a}b_{1}\widehat{a}^{-1}=\widehat{b}_{1}$. This equation is of the form~(E2), and since $\ell(\widehat{b}_{1})<\ell(b)<\ell(ab)=k$, we may apply the induction hypothesis. Thus there exist $\lambda\in \mathbb{F}_{2}(u,v)$ and $l\in \Z$ such that $b_1=(\lambda\widehat{\lambda})^{l}$. The fact that $b_{1}\neq 1$ implies that $\lambda\widehat{\lambda}\neq 1$ and $l\neq 0$. We claim that $\lambda\widehat{\lambda}$ may be chosen to be primitive. To prove the claim, suppose that $\lambda\widehat{\lambda}$ is not primitive. Since $\mathbb{F}_{2}(u,v)$ is a free group of rank $2$, the centraliser of $\lambda\widehat{\lambda}$ in $\mathbb{F}_{2}(u,v)$ is infinite cyclic, generated by a primitive element $v$, and replacing $v$ by $v^{-1}$ if necessary, there exists $s\geq 2$ such that $v^s= \lambda\widehat{\lambda}$. Therefore $b_{1}=v^{sl}$, and substituting this into the relation $\widehat{b}_{1}=\widehat{a}b_{1}\widehat{a}^{-1}$, we obtain $\widehat{v}^{sl}=\widehat{a} v^{sl}\widehat{a}^{-1}=(\widehat{a} v\widehat{a}^{-1})^{sl}$ in the free group $\mathbb{F}_{2}(u,v)$, from which we conclude that $\widehat{v}=\widehat{a} v\widehat{a}^{-1}$. We may thus apply the induction hypothesis to this relation because $\ell(v)<\ell(b_1)<k$, and since $v$ is primitive, there exists $\gamma\in \mathbb{F}_{2}(u,v)$ for which $v=\gamma\widehat{\gamma}$. Hence $b_{1}=(\gamma\widehat{\gamma})^{sl}$, where $\gamma\widehat{\gamma}$ is primitive, which proves the claim. Substituting $b_1=(\lambda\widehat{\lambda})^{l}$ into the relation $\widehat{a} b_1\widehat{a}^{-1}=\widehat{b}_1$, we obtain:
\begin{equation*}
(\widehat{a} \lambda\widehat{\lambda} \widehat{a}^{-1})^{l}=\widehat{a} (\lambda\widehat{\lambda})^{l}\widehat{a}^{-1}=\widehat{(\lambda\widehat {\lambda})^{l}}=(\widehat{\lambda} \lambda)^{l},
\end{equation*}
where we take $\lambda\widehat{\lambda}$ to be primitive. Once more, since $\mathbb{F}_{2}(u,v)$ is a free group of rank $2$ and $l\neq 0$, it follows that $\widehat{a} \lambda\widehat{\lambda} \widehat{a}^{-1}=\widehat{\lambda} \lambda=\widehat{\lambda} \lambda \widehat{\lambda} \widehat{\lambda}^{-1}$, from which we conclude that $\widehat{\lambda}^{-1}\widehat{a}$ belongs to the centraliser of $\lambda\widehat{\lambda}$. But $\lambda\widehat{\lambda}$ is primitive, so there exists $t\in \Z$ such that $\widehat{\lambda}^{-1}\widehat{a}=(\lambda\widehat{\lambda})^{t}$, and hence $\widehat{a}=\widehat{\lambda} (\lambda\widehat{\lambda})^{t}$. Hence $a=\lambda (\widehat\lambda \lambda)^{t}=(\lambda \widehat\lambda)^{t}\lambda$ and $b=\widehat a b_1=\widehat{\lambda} (\lambda\widehat{\lambda})^{t+l}= (\widehat{\lambda}\lambda)^{t+l}\widehat{\lambda}$ in a manner similar to that of~\reqref{lambdahat}, so~\reqref{concl1} holds.
\end{enumerate}

\item By the induction hypothesis and~(\ref{it:induct}), we may suppose that for some $k\geq 4$,~(E1) implies~\reqref{concl1} if $\ell(ab)\leq k$ and~(E2) implies~\reqref{concl2} if $\ell(w)<k$. Suppose that $\ell(w)=k$ and that $w$ and $\widehat{w}$ are conjugate. Let $\widehat{w}=\theta w\theta^{-1}$, where $\theta\in \mathbb{F}_{2}(u,v)$. If $\theta=1$ then $w=\widehat{w}$, which is impossible. So $\theta\neq 1$, and since $\ell(w)=\ell(\widehat{w})$, there must be cancellation in the expression $\theta w\theta^{-1}$. Taking the inverse of the relation $\widehat{w}=\theta w\theta^{-1}$ if necessary, we may suppose that cancellation occurs between $\theta$ and $w$.  So there exist $\theta_1,\theta_2\in \mathbb{F}_{2}(u,v)$ such that $\theta=\theta_1\theta_2$ written in reduced form, and such that the cancellation between $\theta$ and $w$ is maximal \emph{i.e.}\ if $w_{1}=\theta_{2}w$ is written in reduced form then $\theta_{1}w_{1}$ is also reduced. Let $\ell(\theta)=n$ and $\ell(\theta_2)=r$. We again consider two subcases.
\begin{enumerate}[(A)]
\item Suppose first that $r=n$. Then $\theta_{1}=1$, $\theta_{2}=\theta$ and $w=\theta^{-1}w_{1}$, so:
\begin{equation}\label{eq:ellwinequ}
\ell(w)=\ell(\theta^{-1}w_{1})\leq \ell(\theta^{-1})+\ell(w_{1})=n+\ell(w)-n=\ell(w).
\end{equation}
Hence $\ell(\theta^{-1}w_{1})= \ell(\theta^{-1})+\ell(w_{1})$, from which it follows that $w=\theta^{-1}w_1$ is written in reduced form. Therefore $\widehat{w}=\widehat{\theta}^{-1}\widehat{w}_{1}$ is also written in reduced form. Now $\widehat{w}=\theta w \theta^{-1}=w_{1}\theta^{-1}$, and applying an inequality similar to that of~\reqref{ellwinequ}, we see that $\widehat{w}=w_{1}\theta^{-1}$ is written in reduced form. Hence $\widehat{\theta}^{-1}\widehat{w}_{1}=w_{1}\theta^{-1}$, which is in the form of~(E1), both sides being written in reduced form. Thus $\ell(w_{1}\theta^{-1})= \ell(\theta^{-1}w_1)=\ell(w)=k$, and by the induction hypothesis, there exist $\lambda\in \mathbb{F}_{2}(u,v)$ and $r,s\in \Z$ such that $w_1=(\widehat{\lambda} \lambda)^s\widehat{\lambda} $ and $\theta^{-1}=(\lambda\widehat{\lambda})^r\lambda$. So by~\reqref{lambdahat}, $w=\theta^{-1}w_1= (\lambda\widehat{\lambda})^r\lambda(\widehat{\lambda} \lambda)^s\widehat{\lambda} =( \lambda \widehat{\lambda})^{r+s+1}$, which proves the result in the case $r=n$.

\item Now suppose that $r<n$. Then there must be cancellation on both sides of $w$. Taking the inverse of both sides of the equation $\widehat{w}=\theta w\theta^{-1}$ if necessary, we may suppose that the length of the cancellation on the left is less than or equal to that on the right. So there exist $\theta_1,\theta_2,\theta_3, w_2\in \mathbb{F}_{2}(u,v)$ such that $\theta=\theta_1\theta_2\theta_3$, $w=\theta_3^{-1}w_2 \theta_2\theta_3$ and $\widehat{w}=\theta_1\theta_2 w_2 \theta_1^{-1}$, all these expressions being written in reduced form. Since $\ell(w)=\ell(\widehat{w})$, it follows from the second two expressions that $\ell(\theta_1)=\ell(\theta_3)$, and that $w=\theta_3^{-1}w_2 \theta_2\theta_3=\widehat{\theta}_1\widehat{\theta}_2 \widehat{w}_2 \widehat{\theta}_1^{-1}$, written in reduced form, from which we conclude that $\widehat{\theta}_1=\theta_3^{-1}$, and that $w_2 \theta_2=\widehat{\theta}_2 \widehat{w}_2$, written in reduced form, which is in the form of~(E1). Now $\ell(w_2 \theta_2)<\ell(w)=k$, and applying the induction hypothesis, there exist $\lambda\in \mathbb{F}_{2}(u,v)$ and $r,s\in \Z$ such that $w_2=(\widehat{\lambda} \lambda)^s\widehat{\lambda} $ and $\theta_2=(\lambda\widehat{\lambda})^r\lambda$. Hence $w=\theta_3^{-1}w_2 \theta_2\theta_3= \theta_3^{-1} (\widehat{\lambda} \lambda)^{r+s+1} \theta_3=  (\theta_3^{-1}\widehat{\lambda} \widehat{\theta}_3 \ldotp\widehat{\theta}_3^{-1} \lambda \theta_3)^{r+s+1}=(\gamma \widehat{\gamma})^{r+s+1}$, where $\gamma=\theta_3^{-1}\widehat{\lambda} \widehat{\theta}_3$. This completes the proof of the induction step, and hence that of the lemma.\qedhere
\end{enumerate}
\end{enumerate}
\end{proof}
    
\begin{prop}\mbox{}\label{prop:classif}
\begin{enumerate}
\item\label{it:classifa} Let $\phi\colon\thinspace \T \multimap \T$ be a split $2$-valued map, and let $\widehat{\Phi}\colon\thinspace \T\to F_{2}(\T)$ be a lift of $\phi$ that is determined by the pair $((w^r,(a,b)), (w^s, (c,d)))$ as described in \repr{exismaps}. Let $\mathcal{O}_{\phi}$ denote the set of conjugates of this pair by elements of $B_2(\T)$. Then $\mathcal{O}_{\phi}$ is the union of the sets $\mathcal{O}_{\phi}^{(1)}$ and $\mathcal{O}_{\phi}^{(2)}$, where $\mathcal{O}_{\phi}^{(1)}$ is the subset of pairs of the form $((w_1^r,(a,b)), (w_1^s, (c,d)))$, where $w_1$ runs over the set of conjugates of $w$ in $\mathbb{F}_2(u,v)$, and $\mathcal{O}_{\phi}^{(2)}$ is the subset of pairs of the form $((w_2^r,(a+r|w|_{u},b+r|w|_{v})), (w_2^s, (c+s|w|_{u},d+s|w|_{v})))$, where $w_{2}$ runs over the set of conjugates of $\widehat{w}$ in $\mathbb{F}_2(u,v)$. Further, the correspondence that to $\phi$ associates $\mathcal{O}_{\phi}$ induces a bijection between the set of homotopy classes of split $2$-valued maps and the set of conjugates of the pairs of the form given by \repr{exismaps}(\ref{it:exismapsa}) by elements of $B_2(\T)$.

\item\label{it:classifb} Let $f=(f_1,f_2)\colon\thinspace  \T \to F_2(\T)$ be a $2$-ordered map of $\T$ determined by the pair $((w^r,(a,b)), (w^s, (c,d)))$, let $g=(g_1,g_2)\colon\thinspace  \T \to F_2(\T)$, and let $\widehat{\pi}\colon\thinspace  [\T, F_2(\T)] \to [\T, D_2(\T)]$ be the projection defined in the proof of \relem{split}(\ref{it:splitIII}). Then $\widehat{\pi}^{-1}(\widehat{\pi}([f]))= \brak{[f],[g]}$. Further, $[f]=[g]$ if and only if there exist $\lambda\in \mathbb{F}_2(u,v)$ and $l\in \Z$ such that $w=(\lambda \widehat{\lambda})^l$.
\end{enumerate}
\end{prop}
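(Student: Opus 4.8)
The plan is to translate everything into the group theory of \resec{toro2} and reduce to \relem{conjhat} and \relem{conjhat1}. By \relem{split}(\ref{it:splitIII}) the set of homotopy classes of split $2$-valued maps of $\T$ is in bijection with $[\T,F_2(\T)]/\Z_2$, and by \repr{baseT2}(\ref{it:baseT2b}) together with \reco{compactpres} the set $[\T,F_2(\T)]$ is identified with the set of $P_2(\T)$-conjugacy classes of pairs of the form $((w^r,(a,b)),(w^s,(c,d)))$ described in \repr{exismaps}(\ref{it:exismapsa}). The generator of the $\Z_2$-action is the deck transformation of the covering $\pi\colon\thinspace F_2(\T)\to D_2(\T)$ that interchanges the two coordinates; on $\pi_1(F_2(\T))=P_2(\T)$ it induces, up to an inner automorphism that is irrelevant for conjugacy classes, conjugation by the coset representative $\sigma$ of the non-trivial element of $B_2(\T)/P_2(\T)$ used in \repr{presful}. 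Hence the $\Z_2$-orbit of the $P_2(\T)$-conjugacy class of a pair is exactly its $B_2(\T)$-conjugacy class, which is precisely the set $\mathcal{O}_\phi$; this observation already yields the bijection asserted at the end of part~(\ref{it:classifa}), and it remains to compute $\mathcal{O}_\phi$ explicitly.

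To do so I write a general element of $B_2(\T)$ as $h$ or as $\sigma h$ with $h=(\xi,(p,q))\in P_2(\T)=\mathbb{F}_2(u,v)\times\Z^2$. Conjugation by $h$ fixes the central $\Z^2$-coordinates and replaces $w^t$ by $(\xi w\xi^{-1})^t$; as $\xi$ ranges over $\mathbb{F}_2(u,v)$ this produces exactly $\mathcal{O}_\phi^{(1)}$. For conjugation by $\sigma h$ I first apply the previous step and then \req{conjsigma} of \relem{conjhat}, obtaining $\sigma(\xi w^t\xi^{-1},(a,b))\sigma^{-1}=(uv^{-1}\widehat{\xi}\widehat{w}^t\widehat{\xi}^{-1}vu^{-1},(a+t\lvert w\rvert_u,b+t\lvert w\rvert_v))$, using that $\widehat{\cdot}$ is a homomorphism and that conjugation does not change exponent sums. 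Since $\widehat{\cdot}$ is an involution of $\mathbb{F}_2(u,v)$, the element $uv^{-1}\widehat{\xi}$ runs over all of $\mathbb{F}_2(u,v)$, so $uv^{-1}\widehat{\xi}\widehat{w}\widehat{\xi}^{-1}vu^{-1}$ runs over all conjugates of $\widehat{w}$; applying this with the same $\xi$ to both entries of the pair yields exactly $\mathcal{O}_\phi^{(2)}$. Therefore $\mathcal{O}_\phi=\mathcal{O}_\phi^{(1)}\cup\mathcal{O}_\phi^{(2)}$, which completes part~(\ref{it:classifa}).

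For part~(\ref{it:classifb}), by \repr{baseT2}(\ref{it:baseT2c}) the fibre $\widehat{\pi}^{-1}(\widehat{\pi}([f]))$ consists of the homotopy classes of $2$-ordered maps whose defining pair lies in the $B_2(\T)$-conjugacy class $\mathcal{O}_\phi$; since each of $\mathcal{O}_\phi^{(1)}$ and $\mathcal{O}_\phi^{(2)}$ is a single $P_2(\T)$-conjugacy class, part~(\ref{it:classifa}) shows this fibre is $\{[f],[g]\}$, with $[f]$ (resp.\ $[g]$) represented by the pairs of $\mathcal{O}_\phi^{(1)}$ (resp.\ $\mathcal{O}_\phi^{(2)}$). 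Consequently $[f]=[g]$ if and only if $\mathcal{O}_\phi^{(1)}=\mathcal{O}_\phi^{(2)}$, i.e.\ if and only if the defining pair of $f$ is $P_2(\T)$-conjugate to the one generating $\mathcal{O}_\phi^{(2)}$; comparing the central coordinates this forces $r\lvert w\rvert_u=r\lvert w\rvert_v=s\lvert w\rvert_u=s\lvert w\rvert_v=0$ together with the existence of $\xi\in\mathbb{F}_2(u,v)$ satisfying $\xi w^r\xi^{-1}=uv^{-1}\widehat{w}^rvu^{-1}$ and $\xi w^s\xi^{-1}=uv^{-1}\widehat{w}^svu^{-1}$. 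If $(r,s)=(0,0)$ the normalisation of \repr{exismaps}(\ref{it:exismapsa}) lets us take $w=1$ and both statements hold trivially, with $w=1=(\lambda\widehat{\lambda})^0$. Otherwise, say $r\neq0$; then $\operatorname{\text{Ab}}(w)=(0,0)$, and since roots are unique in the free group $\mathbb{F}_2(u,v)$, the equality $(\xi w\xi^{-1})^r=(uv^{-1}\widehat{w}vu^{-1})^r$ forces $\xi w\xi^{-1}=uv^{-1}\widehat{w}vu^{-1}$, so $w$ and $\widehat{w}$ are conjugate in $\mathbb{F}_2(u,v)$; by \relem{conjhat1}(\ref{it:classifII}) this is equivalent to $w=(\lambda\widehat{\lambda})^l$ for some $\lambda\in\mathbb{F}_2(u,v)$ and $l\in\Z$. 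Conversely, if $w=(\lambda\widehat{\lambda})^l$ then $\widehat{w}=\widehat{\lambda}w\widehat{\lambda}^{-1}$ and $\operatorname{\text{Ab}}(w)=(0,0)$ (as $\operatorname{\text{Ab}}(\widehat{\lambda})=-\operatorname{\text{Ab}}(\lambda)$), so $\xi=uv^{-1}\widehat{\lambda}$ satisfies the two displayed relations and $\delta=(\xi,(0,0))$ conjugates the defining pair of $f$ onto the generator of $\mathcal{O}_\phi^{(2)}$, whence $[f]=[g]$.

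The identification of the $\Z_2$-action with conjugation by $\sigma$ and the bookkeeping of the $\Z^2$-coordinates are routine; the genuinely delicate step is the reduction of the equality $[f]=[g]$ to the conjugacy of $w$ and $\widehat{w}$ in $\mathbb{F}_2(u,v)$ and the appeal to the structural description \relem{conjhat1}(\ref{it:classifII}), together with the attention needed for the degenerate case $(r,s)=(0,0)$.
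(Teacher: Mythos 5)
Your proof is correct and takes essentially the same route as the paper's: part (a) is the same explicit computation of conjugation by $z$ and by $\sigma z$ ($z\in P_2(\T)$) using \relem{conjhat}, and part (b) reduces $[f]=[g]$ to $w$ and $\widehat{w}$ being conjugate in $\mathbb{F}_2(u,v)$ and then appeals to \relem{conjhat1}(\ref{it:classifII}). Your explicit handling of the degenerate case $(r,s)=(0,0)$ and the use of uniqueness of roots in $\mathbb{F}_2(u,v)$ to pass from the equality of $r$-th powers to the conjugacy of $w$ and $\widehat{w}$ are spelled out a bit more carefully than in the paper, but the argument is the same in substance (you also correctly cite part (\ref{it:classifII}) of \relem{conjhat1}, where the paper's text contains a small typographical slip pointing to part (\ref{it:classifI})).
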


\begin{proof}\mbox{}
\begin{enumerate}[(a)]
\item To compute $\mathcal{O}_{\phi}$, we determine the conjugates of the pair $((w^r,(a,b)), (w^s, (c,d)))$ by elements of $B_2(\T)$, namely by words of the form $\sigma^{\epsilon}z$, where $\epsilon \in \brak{0,1}$, and $z\in P_2(\T)$. With respect to the decomposition of \reco{compactpres}, if $\epsilon=0$, we obtain the elements of $\mathcal{O}_{\phi}^{(1)}$. If $\epsilon=1$, using the computation for $\epsilon=0$ and the fact that 
$\sigma z^{m} \sigma^{-1}=(uv^{-1}\widehat{z}^{m} vu^{-1}, (m|z|_{u}, m|z|_{v}))$ for all $z\in \mathbb{F}_{2}(u,v)$ and $m\in \Z$ by \relem{conjhat},
we obtain the elements of $\mathcal{O}_{\phi}^{(2)}$. This proves the first part of the statement. The second part is a consequence of classical general facts about maps between spaces of type $K(\pi, 1)$, see~\cite[Chapter~V, Theorem~4.3]{Wh} for example. 

\item Let $\alpha\in [\T, F_2(\T)]$ and let $f=(f_1,f_2)\colon\thinspace  \T \to F_2(\T)$ be such that $f\in \alpha$. Taking $g=(f_2,f_1)\colon\thinspace \T \to F_2(\T)$, under the projection $\widehat{\pi}\colon\thinspace  [\T, F_2(\T)] \to [\T, D_2(\T)]$, $\widehat{\pi}(\beta)=\widehat{\pi}(\alpha)$, where $\beta=[g]$. From \resec{relnsnvm}, $\alpha$ and $\beta$ are the only elements of $[\T, F_2(\T)]$ that project under $\widehat{\pi}$ to $\widehat{\pi}(\alpha)$, which proves the first part. It remains to decide whether $\alpha=\beta$. Suppose that $f$ is determined by the pair $P_1=((w^r,(a,b)), (w^s, (c,d)))$. Since $\widehat{\pi}(\beta)=\widehat{\pi}(\alpha)$, $g$ is determined by a pair belonging to $\mathcal{O}_{\phi}$. Using the fact that $g$ is obtained from $f$ via the $\Z_2$-action on $F_2(\T)^{\T}$ that arises from the covering map $\pi$ and applying covering space arguments, there exists $g'\in \beta$ that is determined by the pair $P_2=((\widehat{w}^r,(a+r|w|_{u},b+r|w|_{v})), (\widehat{w}^s, (c+s|w|_{u},d+s|w|_{v})))$. Then $\alpha=\beta$ if and only if $P_1$ and $P_2$ are conjugate by an element of $P_2(\T)$, which is the case if and only if $w$ and $\widehat{w}$ are conjugate in the free group $\mathbb{F}_2(u,v)$ (recall from the proof of \relem{conjhat1} that if $w$ and $\widehat{w}$ are conjugate then $|w|_{u}=|w|_{v}=0$). By part~(\ref{it:classifI}) of that lemma, this is equivalent to the existence of $\lambda\in \mathbb{F}_2(u,v)$ and $l\in \Z$ such that $w=(\lambda \widehat{\lambda})^l$.\qedhere
\end{enumerate}
\end{proof}

\subsection{Fixed point theory of split $2$-valued maps}\label{sec:fptsplit2}

In this section, we give a sufficient condition for a split $2$-valued map of $\T$ to be deformable to a fixed point free $2$-valued map. \repr{lifttorus} already provides one such condition. We shall give an alternative condition in terms of roots, which seems to provide a more convenient framework from 
a computational point of view. To obtain fixed point free $2$-valued maps of $\T$, we have two possibilities at our disposal: we may either use \reth{defchineg}(\ref{it:defchinegb}), in which case we should determine the group $P_{3}(\T)$, or \reth{defchinegI}(\ref{it:defchinegbI}), in which case we may make use of the results of \resec{toro2}, and notably \repr{presTminus1alta}. We choose the second possibility. We divide the discussion into three parts. In \resec{proofexisfpf}, we prove \repr{exisfpf}. In \resec{p2Tminus1}, we give the analogue for roots of the second part of \repr{exisfpf}, and in
\resec{exrfm}, we will give some examples of split $2$-valued maps that may be deformed to root-free $2$-valued maps. 

\subsubsection{The proof of \repr{exisfpf}}\label{sec:proofexisfpf}

Let $\phi\colon\thinspace \T \multimap \T$ be a $2$-valued map of $\T$. As in \reco{compactpres}, we identify $P_{2}(\T)$ with $\mathbb{F}_{2} \times \Z^{2}$. The Abelianisation homomorphism is denoted by $\operatorname{\text{Ab}}\colon\thinspace \mathbb{F}_2(u,v) \to \Z^2$. Recall from   the beginning of \resec{relnsnvm} that if $\phi$ is split and $\widehat{\Phi}\colon\thinspace \T \to F_{2}(\T)$ is a lift of $\phi$, then $\operatorname{\text{Fix}}(\widehat{\Phi})=\operatorname{\text{Fix}}(\phi)$. In this section, we compute the Nielsen number $N(\phi)$ of $\phi$,  
and we give necessary conditions for $\phi$ to be homotopic to a fixed point free $2$-valued map. Although the Nielsen number is not the main subject of this paper, it is an important invariant in fixed point theory. The Nielsen number of $n$-valued maps was defined in~\cite{Sch1}. The following result of that paper will enable us to compute $N(\phi)$ in our setting.

\begin{thm}[{\cite[Corollary~7.2]{Sch1}}]\label{th:helgath01}
Let $n\in \N$, let $K$ be a compact polyhedron, and let $\phi=\{f_1,\ldots,f_n\}\colon\thinspace  K \multimap K$ be a split $n$-valued map. Then $N(\phi)=N(f_1)+\cdots+N(f_n)$.
\end{thm}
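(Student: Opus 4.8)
The plan is to exploit the global splitting $\phi=\{f_1,\ldots,f_n\}$, which, since $\phi$ is split, produces a lift $\widehat{\Phi}=(f_1,\ldots,f_n)\colon K\to F_n(K)$ and so $f_i(x)\ne f_j(x)$ for all $x\in K$ and all $i\ne j$. First I would record that $\operatorname{Fix}(\phi)=\bigcup_{i=1}^{n}\operatorname{Fix}(f_i)$ and that this union is \emph{disjoint}: if $x\in\operatorname{Fix}(f_i)\cap\operatorname{Fix}(f_j)$ then $f_i(x)=x=f_j(x)$, which forces $i=j$. As $K$ is a compact metric space, each $\operatorname{Fix}(f_i)$ is closed, so $\operatorname{Fix}(f_1),\ldots,\operatorname{Fix}(f_n)$ are pairwise disjoint compact subsets of $K$ and may be separated by pairwise disjoint open sets. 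The theorem will follow once I establish that: (i) the Nielsen fixed point classes of $\phi$ are exactly the fixed point classes of the maps $f_1,\ldots,f_n$, each viewed as a subset of the corresponding $\operatorname{Fix}(f_i)\subset\operatorname{Fix}(\phi)$; and (ii) the index of such a class computed for $\phi$ agrees with its index computed for the relevant $f_i$. Granting (i) and (ii), a fixed point class of $\phi$ is essential if and only if it is essential as a class of the corresponding $f_i$, and counting essential classes yields $N(\phi)=\sum_{i=1}^{n}N(f_i)$.

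For (i) I would use Schirmer's path description of Nielsen equivalence for $n$-valued maps from \cite{Sch1}. Let $x_0\in\operatorname{Fix}(f_i)$ and $x_1\in\operatorname{Fix}(f_j)$. A path $\gamma$ in $K$ from $x_0$ to $x_1$ realises a Nielsen equivalence of these two fixed points of $\phi$ exactly when the $n$-valued map $\phi\circ\gamma$ on $I$ has a branch $g$ with $g(0)=x_0$, $g(1)=x_1$ and $g$ homotopic to $\gamma$ rel endpoints. Since $\phi$ is globally split, $\phi\circ\gamma=\{f_1\circ\gamma,\ldots,f_n\circ\gamma\}$, and, the paths $f_k\circ\gamma$ being pairwise distinct at every point, these are precisely the branches of $\phi\circ\gamma$. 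Now $g(0)=x_0$ means $f_k(x_0)=x_0$, which by disjointness of the fixed point sets forces $k=i$, and likewise $g(1)=x_1$ forces $k=j$. Hence a Nielsen equivalence under $\phi$ can occur only between fixed points lying in the same $\operatorname{Fix}(f_i)$, and in that case it amounts exactly to the condition that $\gamma$ be homotopic to $f_i\circ\gamma$ rel endpoints, i.e.\ to Nielsen equivalence under $f_i$; the converse is immediate by taking the branch $f_i\circ\gamma$. Thus the fixed point classes of $\phi$ are the disjoint union, over $i$, of those of $f_i$.

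For (ii), let $\mathbf{F}$ be a fixed point class of $\phi$, so $\mathbf{F}$ is a fixed point class of some $f_i$ with $\mathbf{F}\subset\operatorname{Fix}(f_i)$. Choose an open set $U\supset\mathbf{F}$ with $\overline{U}$ disjoint from $\operatorname{Fix}(\phi)\setminus\mathbf{F}$; in particular $\overline{U}\cap\operatorname{Fix}(f_k)=\varnothing$ for $k\ne i$ and $\overline{U}\cap\operatorname{Fix}(f_i)=\mathbf{F}$. On $U$ the only branch of $\phi$ carrying fixed points is $f_i$, so Schirmer's index of $\mathbf{F}$ for the $n$-valued map $\phi$ — defined through the single-valued description of $\phi$ on an isolating neighbourhood — reduces to the ordinary fixed point index $\operatorname{ind}(f_i,U)$, which is by definition the index of $\mathbf{F}$ as a fixed point class of $f_i$. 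This establishes (ii).

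The main obstacle is not conceptual but rather a matter of bookkeeping with the definitions of \cite{Sch1}: one must verify that Schirmer's notion of Nielsen equivalence and, above all, her notion of the index of a fixed point class of an $n$-valued map genuinely localise as used above — that on an isolating neighbourhood of a class the $n$-valued index is computed from the single branch that carries the fixed points there. Once those definitions are unwound, both (i) and (ii) are direct consequences of the fact that the global splitting $\phi=\{f_1,\ldots,f_n\}$ makes the branches of every composite $\phi\circ\gamma$ and of every local restriction of $\phi$ completely explicit, with pairwise disjoint fixed point sets.
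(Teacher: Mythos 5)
This statement is not proved in the paper: it is imported as Corollary~7.2 of Schirmer's paper \cite{Sch1} and used as a black box, so there is no in-text proof to compare against. Your reconstruction is correct and is essentially Schirmer's original argument — disjointness of the sets $\operatorname{Fix}(f_i)$ forced by the global splitting, identification of the Nielsen classes of $\phi$ with those of the individual $f_i$ via the branches $f_k\circ\gamma$ of $\phi\circ\gamma$, and localisation of the $n$-valued index to the single branch carrying fixed points — with the only remaining work being the definitional bookkeeping in \cite{Sch1} that you already flag.
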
 

\begin{proof}[Proof of \repr{exisfpf}]
Let $\phi\colon\thinspace \T \multimap \T$ be a split $2$-valued map of $\T$, and let $\widehat{\Phi}=(f_1,f_2) \colon\thinspace \T \to F_{2}(\T)$ be a lift of $\phi$ such that $\widehat{\Phi}_{\#}(e_{1})=(w^r,(a,b))$ and $\widehat{\Phi}_{\#}(e_{2})= (w^s, (c,d)))$, where $(r,s)\in \Z^{2}\setminus \brak{(0,0)}$, $a,b,c,d\in \Z$ and $w\in \mathbb{F}_2(u,v)$. For $i=1,2$, we shall compute the matrix $M_{i}$  of the homomorphism $f_{i\#}\colon\thinspace \Z^{2} \to \Z^{2}$ induced by $f_i$ on the fundamental group of $\T$ with respect to the basis $(e_{1},e_{2})$ of $\pi_1(\T)$ (up to the canonical identification of $\pi_1(\T)$ for different basepoints if necessary). In practice, the bases in the target are the images of the elements $(\rho_{1,1},\rho_{1,2})$ and $(\rho_{2,1},\rho_{2,2})$ by the homomorphism $p_{i\#}\colon\thinspace  P_2(\T) \to \pi_{1}(\T)$ induced by the projection  $p_i\colon\thinspace  F_2(\T) \to \T$ onto the $i\up{th}$ coordinate, where $i=1,2$. Note that $f_{i\#}=p_{i\#}\circ \widehat{\Phi}_{\#}$. Setting $w=w(u,v)$, and using multiplicative notation and \req{uvxy}, we have:
\begin{align*}
\widehat{\Phi}_{\#}(e_{1})&=w^r x^{a} y^{b}=(w(\rho_{1,1},\rho_{1,2}))^r (\rho_{1,1}B_{1,2}^{-1}\rho_{2,1})^{a} (\rho_{1,2}B_{1,2}^{-1}\rho_{2,2})^{b}\\
\widehat{\Phi}_{\#}(e_{2})&=w^s x^{c} y^{d}=(w(\rho_{1,1},\rho_{1,2}))^s (\rho_{1,1}B_{1,2}^{-1}\rho_{2,1})^{c} (\rho_{1,2}B_{1,2}^{-1}\rho_{2,2})^{d}.
\end{align*}
Projecting onto the first (resp.\ second) coordinate, it follows that $f_{1\#}(e_{1})=\rho_{1,1}^{rm+a}\rho_{1,2}^{rn+b}$ and $f_{1\#}(e_{2})=\rho_{1,1}^{sm+c}\rho_{1,2}^{sn+d}$ (resp.\ $f_{2\#}(e_{1})=\rho_{2,1}^{a}\rho_{2,2}^{b}$ and $f_{2\#}(e_{2})=\rho_{2,1}^{c}\rho_{2,2}^{d}$), so $M_{1} =\left( \begin{smallmatrix}
 rm+a & sm+c  \\
 rn+b & sn+d 
\end{smallmatrix}\right)$ and $M_{2} =\left( \begin{smallmatrix}
 a & c  \\
 b & d 
\end{smallmatrix}\right)$. 
One then obtains the equation for $N(\phi)$ as a consequence of \reth{helgath01} and the usual formula for the Nielsen number of a self-map of $\T$~\cite{BrooBrPaTa}. The second part of the statement is clear, since if $\phi$ can be deformed to a fixed point free $2$-valued map then $f_1, f_2$ can both be deformed to fixed point free maps. To prove the last part, $f_1$ and $f_2$ can both be deformed to fixed point free maps if and only if $\det(M_{i}-I_{2})=0$ for $i=1,2$, which using linear algebra is equivalent to:   
\begin{gather}
\text{$\det(M_{2}-I_{2})=0$, and}\label{eq:onedet}\\
\det\begin{pmatrix}
 a-1 & sm  \\
 b & sn 
\end{pmatrix}+
\det\begin{pmatrix}
 rm & c  \\
 rn & d-1 
\end{pmatrix}=0.\label{eq:twodet}
\end{gather}
Equation~\reqref{onedet} is equivalent to the proportionality of $(c,d-1)$ and $(a-1,b)$. Suppose that \req{twodet} holds. If one of the determinants in that equation is zero, then so is the other, and it follows that $(a-1, b),(c,d-1)$ and $(m,n)$ generate a subgroup of $\Z^{2}$ isomorphic to $\Z$, which yields condition~(\ref{it:exisfpfa}) of the statement. If both of these determinants are non zero then $(m,n)$ is neither proportional to $(a-1,b)$ nor to $(c,d-1)$, and since $(c,d-1)$ and $(a-1,b)$ are proportional, $(m,n)$ is not proportional to any linear combination of the two. Further,~\reqref{twodet} may be written as:
\begin{equation*}
0=s\det\begin{pmatrix}
 a-1 & m  \\
 b & n 
\end{pmatrix}+
r\det\begin{pmatrix}
 m & c  \\
 n & d-1 
\end{pmatrix}=
\det\begin{pmatrix}
s(a-1)-rc & m  \\
sb-r(d-1) & n 
\end{pmatrix},
\end{equation*}
from which it follows that $s(a-1, b)=r(c,d-1)$, which is condition~(\ref{it:exisfpfb}) of the statement. The converse is straightforward.
\end{proof}

\begin{rem}
Within the framework of \repr{exisfpf}, the fact that $f_1$ and $f_2$ can be deformed to fixed point free maps does not necessarily imply that there exists a deformation of the pair $(f_1, f_2)$, regarded as a map from $\T$ to $F_2(\mathbb{T}^{2})$, to a pair $(f_1', f_2')$ where the maps $f_1'$ and $f_2'$ are fixed point free. To answer the question of whether the $2$-ordered map $(f_1, f_2)\colon\thinspace \T \to F_2(\T)$ can be deformed or not to a fixed point free  $2$-ordered map under the hypothesis that each map can be deformed to a fixed point free map would be a major step in understanding the fixed point theory of $n$-valued maps, and would help in deciding whether the Wecken property holds or not for $\T$ for the class of split $n$-valued maps.
\end{rem}

\subsubsection{Deformations to root-free $2$-valued maps}\label{sec:p2Tminus1}

Recall from the introduction that a root of an $n$-valued map $\phi_0\colon\thinspace  X \multimap Y$, with respect  to a basepoint $y_0\in Y$, is a point $x$ such that $y_0\in \phi_0(x)$. If $\phi\colon\thinspace \T \multimap \T$ is an $n$-valued map then we may construct another $n$-valued 
map $\phi_0\colon\thinspace  \T \multimap \T$ as follows. 
If $x\in \T$ and $\phi(x)=\{ x_1,\ldots,x_n\}$, then let $\phi_0(x)=\{x_1\ldotp x^{-1},\ldots,x_n \ldotp x^{-1}\}$. The correspondence that to $\phi$ associates $\phi_{0}$ is bijective. Moreover, if $\phi$ is split, so that $\phi=\{f_1, f_2,\ldots, f_n\}$, where the self-maps $f_{i}\colon\thinspace  \T \to \T$, $i=1,\ldots,n$, are coincidence-free, then $\phi_{0}$ is also split, and is given by $\phi_{0}(x)=\{f_1(x)\ldotp x^{-1},\ldots,  f_n(x)\ldotp x^{-1}\}$ for all $x\in \T$.  The restriction of the above-mentioned correspondence to the case where the $n$-valued maps are split is also a bijection. The following lemma implies that the question of deciding whether an $n$-valued map $\phi$ can be deformed to a fixed point free map is equivalent to deciding whether the associated map $\phi_{0}$ can be deformed to a root-free map. Let $1$ denote the basepoint of $\T$.

\begin{lem}\label{lem:equivroot}
With the above notation, a point $x_0\in \T$ is a fixed point of an $n$-valued map $\phi$ of $\T$ if and only if it is a root of the $n$-valued map $\phi_0$ (i.e.\ $1\in \phi(x_0)$). Further, $\phi$ may  be deformed to an $n$-valued map $\phi'$ such that $\phi'$ has $k$ fixed points if and only if $\phi_0$ may be deformed to an $n$-valued map $\phi_0'$ such that $\phi_0'$ has $k$ roots. 
\end{lem}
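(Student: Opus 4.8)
The plan is to show that the correspondence $\phi\mapsto\phi_0$ is a bijection on $n$-valued maps that carries fixed points to roots and homotopies to homotopies, so that everything follows by transporting the statements across this bijection. First I would verify the pointwise equivalence: for $x_0\in\T$, the definition gives $\phi_0(x_0)=\{x_1\cdot x_0^{-1},\ldots,x_n\cdot x_0^{-1}\}$ where $\phi(x_0)=\{x_1,\ldots,x_n\}$, and $1\in\phi_0(x_0)$ iff $x_i\cdot x_0^{-1}=1$ for some $i$, i.e.\ iff $x_0=x_i\in\phi(x_0)$, which is precisely the condition $x_0\in\operatorname{\text{Fix}}(\phi)$. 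This identifies $\operatorname{\text{Fix}}(\phi)$ with the root set of $\phi_0$ at $1$ as subsets of $\T$; in particular the two sets have the same cardinality.

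Next I would check that $\phi\mapsto\phi_0$ sends $n$-valued maps to $n$-valued maps, i.e.\ that $\phi_0$ is continuous. Working through the identification of \reth{metriccont}, the associated $n$-unordered map $\Phi_0\colon\thinspace\T\to D_n(\T)$ is the composite of $\Phi$ with the map $D_n(\T)\times\T\to D_n(\T)$, $(\{y_1,\ldots,y_n\},x)\mapsto\{y_1x^{-1},\ldots,y_nx^{-1}\}$, combined with the diagonal $x\mapsto(x,x)$; since the group operation and inversion of $\T$ are continuous, this composite is continuous, so $\phi_0$ is an $n$-valued map. The inverse correspondence $\psi\mapsto\psi'$ with $\psi'(x)=\{z_1x,\ldots,z_nx\}$ (for $\psi(x)=\{z_1,\ldots,z_n\}$) is defined analogously and is two-sided inverse to $\phi\mapsto\phi_0$, so the correspondence is a bijection; the same formulas restrict to a bijection on split $n$-valued maps, since pairwise-distinctness of $\{f_i(x)\}$ is preserved under right translation by $x^{-1}$, matching the remark already made in the text.

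For the deformation statement, suppose $H\colon\thinspace\T\times I\multimap\T$ is a homotopy with $H(\cdot,0)=\phi$ and $H(\cdot,1)=\phi'$. Define $H_0\colon\thinspace\T\times I\multimap\T$ by applying the above construction fibrewise: $H_0(x,t)=\{y\cdot x^{-1}: y\in H(x,t)\}$. By the same continuity argument as above (now with domain $\T\times I$), $H_0$ is an $n$-valued map, and it is a homotopy from $\phi_0$ to $\phi_0'$ where $\phi_0'$ is the map associated to $\phi'$. By the pointwise equivalence applied to $\phi'$, the root set of $\phi_0'$ at $1$ equals $\operatorname{\text{Fix}}(\phi')$, so if $\phi'$ has $k$ fixed points then $\phi_0'$ has $k$ roots. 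The converse direction is identical using the inverse correspondence $\psi\mapsto\psi'$. Since these two constructions are mutually inverse, a deformation on one side produces a deformation on the other with the matching count, which is exactly the assertion.

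I do not expect any serious obstacle here: the content is entirely formal, and the only point requiring a little care is the continuity of $\phi_0$ (and of $H_0$), which I would handle cleanly via the configuration-space reformulation of \reth{metriccont} rather than by a direct $\epsilon$--$\delta$ argument on multifunctions. One should also be mildly careful that the basepoint $1\in\T$ used for roots is the identity element of the group structure on $\T$, so that "$x_0=x_i$" and "$x_i\cdot x_0^{-1}=1$" really are equivalent; this is implicit in the statement and causes no trouble.
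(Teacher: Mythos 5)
The paper itself records no argument for this lemma: its ``proof'' is the single sentence ``Straightforward, and left to the reader.'' Your proposal supplies exactly the argument that phrase gestures at, and it is correct: the pointwise identification $\operatorname{\text{Fix}}(\phi)=\set{x}{1\in\phi_0(x)}$ follows immediately from the group structure of $\T$, the continuity of $\phi_0$ (and of $H_0$) is cleanly obtained via \reth{metriccont} and the continuity of the torus group operations (the translation map $D_n(\T)\times\T\to D_n(\T)$ descends from the obvious continuous map on $F_n(\T)\times\T$ because $\T$ is locally compact, so $\pi\times\id$ is a quotient map), the explicit inverse $\psi\mapsto\psi'$ makes $\phi\mapsto\phi_0$ a bijection compatible with homotopies, and the fixed-point and root counts match term by term under this bijection. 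Your explicit care over the identity element $1$ and the configuration-space route to continuity are exactly the details the authors elide; nothing is missing.
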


\begin{proof}
Straightforward, and left to the reader.
\end{proof}

The algebraic condition given by \reth{defchinegI}(\ref{it:defchinegbI}) is equivalent to the existence of  a homomorphism $g_{\#}\colon\thinspace \pi_1(\T) \to P_{2}(\T)$ that factors through $P_2(\T\setminus\{1\})$. The following result is the analogue for roots of the second part of \repr{exisfpf}. 

\begin{prop}\label{prop:exisrf}
Let $g\colon\thinspace \T \multimap \T$ be a split $2$-valued map, and let $\widehat{g}=(g_1, g_2)\colon\thinspace  \T \to F_{2}(\T)$ be a lift of $g$ such that $\widehat{g}_{\#}(e_{1})=(w^{r},(a',b))$ and $\widehat{g}_{\#}(e_{2})=(w^{s}, (c,d'))\in P_{2}(\T)$,  
where $(r,s)\in \Z^{2}\setminus \brak{(0,0)}$, $a',b,c,d'\in \Z$, $w\in \mathbb{F}_2(u,v)$ and $\operatorname{\text{Ab}}(w)=(m,n)$.
 If $g$ can be deformed to a root-free map, then each of the maps $g_1, g_2\colon\thinspace \T \to \T$ can be deformed to a root-free map. Further, $g_1$ and $g_2$ can both be deformed to root-free maps if and only if either:
\begin{enumerate}
\item\label{it:exisrfa}  the pairs $(a',b),(c,d')$ and $(m,n)$ belong to a cyclic subgroup of $\Z^2$, or 
\item\label{it:exisrfb} $s(a',b)=r(c,d')$.
\end{enumerate}
\end{prop}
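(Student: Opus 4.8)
The plan is to mirror closely the proof of \repr{exisfpf}, replacing the fixed point problem by the corresponding root problem via the substitution that governs the passage between self-maps and root maps of $\T$. First I would compute, exactly as in the proof of \repr{exisfpf}, the matrices $M_1, M_2$ of the induced homomorphisms $g_{1\#}, g_{2\#}\colon\thinspace \Z^2 \to \Z^2$ with respect to the basis $(e_1, e_2)$ of $\pi_1(\T)$. Using the change of variables~\req{uvxy} and projecting onto the first and second coordinates via $p_{i\#}\colon\thinspace P_2(\T) \to \pi_1(\T)$, one finds
\[
M_1 = \begin{pmatrix} rm+a' & sm+c \\ rn+b & sn+d' \end{pmatrix}, \qquad M_2 = \begin{pmatrix} a' & c \\ b & d' \end{pmatrix}.
\]
The key difference with the fixed point case is that for roots, a single-valued map $g_i\colon\thinspace \T \to \T$ can be deformed to a root-free map (with respect to a basepoint) if and only if $g_i$ is not surjective on $\pi_2$-degree, which for maps of the torus is equivalent to $\det(M_i) = 0$ rather than $\det(M_i - I_2) = 0$. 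This is the standard criterion for roots of self-maps of $\T$; I would invoke it as the analogue of the Nielsen-number criterion used in \repr{exisfpf}, citing the same type of classical result on root theory for tori.

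Next I would carry out the linear-algebra reduction. The conditions $\det(M_1) = 0$ and $\det(M_2) = 0$ become, after expanding $M_1$ by multilinearity of the determinant in the columns and subtracting off the $\det(M_2) = 0$ relation:
\begin{gather*}
\det\begin{pmatrix} a' & c \\ b & d' \end{pmatrix} = 0, \\
\det\begin{pmatrix} a' & sm \\ b & sn \end{pmatrix} + \det\begin{pmatrix} rm & c \\ rn & d' \end{pmatrix} = 0.
\end{gather*}
The first equation says $(a',b)$ and $(c,d')$ are proportional (i.e.\ lie in a common cyclic subgroup of $\Z^2$). For the second: if one of the two determinants vanishes so does the other, whence $(a',b)$, $(c,d')$ and $(m,n)$ are pairwise proportional and all lie in a cyclic subgroup of $\Z^2$, giving condition~(\ref{it:exisrfa}). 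If both determinants are nonzero, then $(m,n)$ is not proportional to $(a',b)$ nor to $(c,d')$; factoring $s$ and $r$ out of the two determinants and recombining the columns, the second equation rewrites as
\[
\det\begin{pmatrix} s(a') - rc & m \\ sb - r(d') & n \end{pmatrix} = 0,
\]
and since $(m,n) \neq (0,0)$ (as $w$ is nontrivial in this branch) and $(m,n)$ is not a multiple of the proportional vectors $(a',b)$, $(c,d')$, the only way the first column can be proportional to $(m,n)$ is for it to be zero, i.e.\ $s(a',b) = r(c,d')$, which is condition~(\ref{it:exisrfb}). The converse implications are immediate: each of~(\ref{it:exisrfa}) and~(\ref{it:exisrfb}) forces both displayed determinant equations, hence $\det(M_i)=0$ for $i=1,2$.

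Finally, the first assertion---that if $g$ itself deforms to a root-free $2$-valued map then both $g_1$ and $g_2$ deform to root-free maps---follows exactly as the corresponding statement in \repr{exisfpf}: by \relem{split}(\ref{it:splitIV}) (or rather its root analogue, obtained the same way, together with \reco{inject} applied via \relem{equivroot}), a fixed-point-free, equivalently root-free, deformation of the split $2$-valued map lifts to a deformation of $\widehat{g} = (g_1, g_2)$ within $F_2(\T)$ to a pair of root-free maps. I expect the main obstacle to be the bookkeeping in the linear-algebra step—keeping track of which vectors are proportional and correctly handling the degenerate branch where $w$ is trivial (so $(m,n) = (0,0)$ and conditions~(\ref{it:exisrfa}) and~(\ref{it:exisrfb}) partially collapse)—rather than any conceptual difficulty, since the structural part is a direct transcription of the fixed point argument through \relem{equivroot}.
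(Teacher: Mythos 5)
Your proof is correct, but it takes a genuinely different route from the paper. The paper's proof is much shorter: it observes that deforming $g_i$ to a root-free map is, via \relem{equivroot}, the same as deforming $f_i(x)=g_i(x)\cdot x$ to a fixed-point-free map, that these $f_i$ arise from the lift determined by $(w^r,(a'+1,b))$, $(w^s,(c,d'+1))$, and then simply invokes \repr{exisfpf} with $a=a'+1$, $d=d'+1$; the shift $a\mapsto a'+1$, $d\mapsto d'+1$ converts conditions~(\ref{it:exisfpfa}),~(\ref{it:exisfpfb}) of \repr{exisfpf} directly into conditions~(\ref{it:exisrfa}),~(\ref{it:exisrfb}) here. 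You instead redo the linear algebra from the proof of \repr{exisfpf} in parallel, replacing the Nielsen criterion $\det(M_i-I_2)=0$ by the root criterion $\det(M_i)=0$. That is a valid approach (and your determinant manipulations and case analysis are correct), but it duplicates work the paper has already done, and it requires you to invoke the root Wecken/Nielsen theory for self-maps of $\T$ (that a torus self-map deforms to a root-free map iff its degree $\det(M_i)$ vanishes) as a separate ingredient, whereas the paper sidesteps this by piggy-backing on the already-established fixed-point computation. One small imprecision: your phrase ``not surjective on $\pi_2$-degree'' does not quite make sense for $\T$ (since $\pi_2(\T)=0$); you mean the ordinary topological degree, i.e.\ $\det(M_i)=\deg(g_i)$, and the correct criterion is that the Nielsen root number $\lvert\deg(g_i)\rvert$ vanishes. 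Apart from this wording, and the (unnecessary) extra casework over whether $w$ is trivial, the argument is sound; but if you already have \repr{exisfpf} available, the substitution $a=a'+1$, $d=d'+1$ is the more economical route.
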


\begin{proof}
If $g$ can be deformed to a root-free map then clearly  the maps $g_1, g_2\colon\thinspace \T \to \T$ can be deformed to root-free maps. For the second part of the statement, for $i=1,2$, consider the maps  $f_{i}\colon\thinspace  \T  \to \T$, where $f_i(x)=g_i(x)\ldotp x$, and where $g_1$ and $g_2$ can both be deformed to root-free maps. Then $f_1$ and $f_2$ can be deformed to fixed point free maps, and the maps $f_1, f_2$ are determined by the elements $(w^r, (a,b))$, $(w^s, (c,d))$ of $P_2(\T)$, where $a=a'+1$ and $d=d'+1$. By \repr{exisfpf}, either the elements $(a-1,b),(c,d-1)$ and $(m,n)$ belong to a cyclic subgroup of $\Z^2$, or $s(a-1,b)=r(c,d-1)$, which is the same as saying that either the elements $(a',b),(c,d')$ and $(m,n)$ belong to a cyclic subgroup of $\Z^2$, or $s(a',b)=r(c,d')$, and the result follows.
\end{proof}

\subsubsection{Examples of split $2$-valued maps that may be deformed to root-free $2$-valued maps}\label{sec:exrfm}

We now give a family of examples of split $2$-valued maps of $\T$ that satisfy the necessary condition of \repr{exisrf} for such a map to be deformable to a root-free map. To do so, we exhibit a family of $2$-ordered maps that we compose with the projection $\pi\colon\thinspace F_2(\T) \to D_2(\T)$ to obtain a family of split $2$-valued maps. We begin by studying a $2$-ordered map of $\T$ determined by a pair of braids of the form $((w^{r},(a,b))$, $ (w^{s}, (c,d))$, where $s(a,b)=r(c,d)$ (we make use of the notation of \repr{exisrf}). 
 
\begin{prop}\label{prop:necrootfree2}
If $\widehat{\Phi}\colon\thinspace \T \to F_2({\T})$ is a lift of a split $2$-valued map $\phi\colon\thinspace \T \multimap \T$ that satisfies $\widehat{\Phi}_{\#}(e_{1})=(w^{r},(a,b))$ and $\widehat{\Phi}_{\#}(e_{2})= (w^{s}, (c,d))$, where $w\in \mathbb{F}_2(u,v)$, $a,b,c,d\in \Z$ and $(r,s)\in \Z^{2}\setminus \brak{(0,0)}$ satisfy $s(a,b)=r(c,d)$, then $\phi$ may be deformed to a root-free  $2$-valued map. 
\end{prop}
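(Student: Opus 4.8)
The plan is to verify the algebraic criterion of \reth{defchinegI}(\ref{it:defchinegbI}), applied with $X=Y=\T$, $n=2$ and basepoint $y_{0}=1$. By that result, it suffices to produce a homomorphism $\widehat{\varphi}\colon\thinspace \pi_{1}(\T)\to P_{2}(\T\setminus\brak{1})$ with $(\widehat{\iota}_{2})_{\#}\circ\widehat{\varphi}=\widehat{\Phi}_{\#}$. Now $(\widehat{\iota}_{2})_{\#}$ is precisely the surjection $\alpha\colon\thinspace P_{2}(\T\setminus\brak{1})\to P_{2}(\T)$ appearing in the remark after \repr{presP2T1} and in the commutative diagram~\reqref{f3f2}. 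Using the identification $\pi_{1}(\T)=\Z^{2}$ with basis $(e_{1},e_{2})$ and $P_{2}(\T)\cong \mathbb{F}_{2}(u,v)\times\Z^{2}$ of \reco{compactpres} and~\req{uvxy}, and the fact that a homomorphism out of $\Z^{2}$ is the same as an ordered pair of commuting elements of the target, the whole task reduces to exhibiting commuting elements $\xi_{1},\xi_{2}\in P_{2}(\T\setminus\brak{1})$ with $\alpha(\xi_{1})=\widehat{\Phi}_{\#}(e_{1})=(w^{r},(a,b))$ and $\alpha(\xi_{2})=\widehat{\Phi}_{\#}(e_{2})=(w^{s},(c,d))$.

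The key observation is that the hypothesis $s(a,b)=r(c,d)$ forces $\widehat{\Phi}_{\#}(e_{1})$ and $\widehat{\Phi}_{\#}(e_{2})$ to be powers of a single element of $P_{2}(\T)$. Since $(r,s)\neq(0,0)$, let $g>0$ generate the subgroup $r\Z+s\Z$ of $\Z$, and write $r=gr'$, $s=gs'$, so that $\gcd(r',s')=1$. Then $s'(a,b)=r'(c,d)$, and since $\gcd(r',s')=1$ there is $(p,q)\in\Z^{2}$ with $(a,b)=r'(p,q)$ and $(c,d)=s'(p,q)$ (if $r'\neq0$ then $r'\mid a$ and $r'\mid b$, which gives $(p,q)$; if $r'=0$ or $s'=0$ then one of $(a,b)$, $(c,d)$ vanishes and the claim is immediate). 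Set $\theta=(w^{g},(p,q))\in \mathbb{F}_{2}(u,v)\times\Z^{2}=P_{2}(\T)$. As this is a direct product, $\theta^{k}=(w^{gk},k(p,q))$ for all $k\in\Z$, so $\theta^{r'}=(w^{r},(a,b))=\widehat{\Phi}_{\#}(e_{1})$ and $\theta^{s'}=(w^{s},(c,d))=\widehat{\Phi}_{\#}(e_{2})$.

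I would then choose, using surjectivity of $\alpha$, some $\widetilde{\theta}\in P_{2}(\T\setminus\brak{1})$ with $\alpha(\widetilde{\theta})=\theta$ (for instance a lift of the word $w^{g}x^{p}y^{q}$ in the generators of \repr{presTminus1alta}), and define $\widehat{\varphi}\colon\thinspace\Z^{2}\to P_{2}(\T\setminus\brak{1})$ by $\widehat{\varphi}(e_{1})=\widetilde{\theta}^{r'}$ and $\widehat{\varphi}(e_{2})=\widetilde{\theta}^{s'}$. These two elements commute, being powers of $\widetilde{\theta}$, so $\widehat{\varphi}$ is a well-defined homomorphism; moreover $\alpha(\widehat{\varphi}(e_{1}))=\alpha(\widetilde{\theta})^{r'}=\theta^{r'}=\widehat{\Phi}_{\#}(e_{1})$ and likewise $\alpha(\widehat{\varphi}(e_{2}))=\theta^{s'}=\widehat{\Phi}_{\#}(e_{2})$, whence $\alpha\circ\widehat{\varphi}=\widehat{\Phi}_{\#}$. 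The diagram of \reth{defchinegI}(\ref{it:defchinegbI}) therefore commutes, and $\phi$ may be deformed to a root-free $2$-valued map.

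I do not expect a serious obstacle: the asphericity of the relevant configuration spaces that makes \reth{defchinegI} applicable has already been established in the proof of \reth{defchineg}, and the surjectivity of $\alpha$ together with the direct-product description of $P_{2}(\T)$ are available from \resec{toro2}. The only point that needs a little care is the elementary number-theoretic reduction yielding the pair $(p,q)$ from $s(a,b)=r(c,d)$, including the degenerate cases $r=0$ and $s=0$.
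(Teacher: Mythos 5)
Your proposal is correct and follows essentially the same route as the paper's proof: reduce via \reth{defchinegI}(\ref{it:defchinegbI}) to lifting $\widehat{\Phi}_{\#}$ through $\alpha\colon\thinspace P_{2}(\T\setminus\brak{1})\to P_{2}(\T)$, observe that the hypothesis $s(a,b)=r(c,d)$ makes $\widehat{\Phi}_{\#}(e_{1})$ and $\widehat{\Phi}_{\#}(e_{2})$ powers of a single element $(w^{\gcd(r,s)},(p,q))$, and lift that element so that the required images commute automatically. The only difference is that you spell out the divisibility argument producing $(p,q)$ slightly more explicitly than the paper does.
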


\begin{proof} By hypothesis, the subgroup $\Gamma$ of $\Z^2$ generated by $(a,b)$ and $(c,d)$ is contained in a subgroup isomorphic to $\Z$. Let $\gamma$ be a generator of $\Gamma$. Suppose first that $r$ and $s$ are both non zero. Then we may take $\gamma=(a_0, b_0)=(\ell/r)(a,b)= (\ell/s)(c,d)$, where $\ell=\gcd(r,s)$, and the elements $(w^{r},(a,b))$ and $(w^{s}, (c,d))$ belong to the subgroup of $P_{2}(\T)$ generated by $(w^{\ell},(a_0,b_0))$. Let  $z\in P_2(\T\setminus\brak{1})$ be an element that projects to $(w^{\ell},(a_0,b_0))$ under the homomorphism $\alpha\colon\thinspace P_2(\T\setminus\brak{1})\to P_2(\T)$ induced by the inclusion $\T\setminus\brak{1} \to \T$. The map $\varphi\colon\thinspace \pi_1(\T)\to P_2(\T\setminus\brak{1})$ defined by $\varphi(e_1)=z^{r/\ell}$ and $\varphi(e_2)=z^{s/\ell}$ extends to a homomorphism, and is a lift of $\widehat{\Phi}_{\#}$. The result in this case follows by \reth{defchinegI}(\ref{it:defchinegbI}). Now suppose thar $r=0$ (resp.\ $s=0$). Then $(a,b)=(0,0)$ (resp.\ $(c,d)=(0,0)$) and $(w^{r},(a,b))$ (resp.\ $(w^{s}, (c,d))$) is trivial in $P_{2}(\T)$. Let $z\in P_2(\T\setminus\brak{1})$ be an element that projects to $(w^{s},(c,d))$ (resp.\ to $(w^{r},(a,b))$). Then we define $\varphi(e_1)=1$ and $\varphi(e_2)=z$ (resp.\ $\varphi(e_1)=z$ and $\varphi(e_2)=1$), and once more the result follows.
\end{proof}

\begin{proof}[Proof of \reth{necrootfree3}]  This follows directly from \repr{necrootfree2} and the relation between the fixed point and root problems described by \relem{equivroot}.
\end{proof}

\begin{lem}\label{lem:exfreero}
Let $k,l\in \Z$ and suppose that either $p\in \{0,1\}$ or $q\in \{0,1\}$. With the notation of \repr{presTminus1alta}, the elements $(x^py^q)^k$ and $(u^pv^q)^l$ of $P_2(\T\setminus\{1\}; (x_1,x_2))$ commute. 
\end{lem}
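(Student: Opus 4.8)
The goal is to show that $(x^p y^q)^k$ and $(u^p v^q)^l$ commute in $P_2(\T\setminus\{1\};(x_1,x_2))$ when $p\in\{0,1\}$ or $q\in\{0,1\}$. Since $\mathbb{F}_3(u,v,B)\rtimes\mathbb{F}_2(x,y)$ is the semi-direct product structure of Proposition~\ref{prop:presTminus1alta}, and $x,y$ generate the quotient $\mathbb{F}_2(x,y)$ acting on the normal subgroup $\mathbb{F}_3(u,v,B)$, it suffices to show that conjugation by $(x^p y^q)^k$ fixes $(u^p v^q)^l$, i.e.\ that $(x^p y^q)^k (u^p v^q)^l (x^p y^q)^{-k} = (u^p v^q)^l$ inside the normal subgroup $\mathbb{F}_3(u,v,B)$. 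First I would dispose of the degenerate cases: if $p=0$ then by relation~(\ref{it:altprese}) of Proposition~\ref{prop:presTminus1alta} we have $yvy^{-1}=v$, so $y^{qk}$ commutes with $v^{ql}$ (and $x^0=u^0=1$), and symmetrically if $q=0$ then relation~(\ref{it:altpresa}) gives $xux^{-1}=u$, so $x^{pk}$ commutes with $u^{pl}$. So the substance is in the two remaining cases $p=1$ (with $q$ arbitrary) and $q=1$ (with $p$ arbitrary).

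By the symmetry of the presentation under the involution swapping $(u,x)\leftrightarrow(v,y)$ (which one should check is an automorphism of the relations, exchanging (\ref{it:altpresa})$\leftrightarrow$(\ref{it:altprese}), (\ref{it:altpresb})$\leftrightarrow$(\ref{it:altpresd}), (\ref{it:altpresc})$\leftrightarrow$(\ref{it:altpresf})), the case $q=1$ reduces to the case $p=1$. So I would concentrate on showing that $(xy^q)^k$ commutes with $(uv^q)^l$ for all $q,k,l\in\Z$. The key computation is to determine the conjugation action of $xy^q$ on the element $uv^q$. Using relations~(\ref{it:altpresa}),~(\ref{it:altpresb}),~(\ref{it:altpresd}),~(\ref{it:altprese}) and the derived relations~(\ref{eq:conjxyuv}) — in particular $yuy^{-1}=vB_{1,2}^{-1}uv^{-1}$ and $xvx^{-1}=uvu^{-1}B_{1,2}$, where $B_{1,2}=B^{-1}[u,v^{-1}]$ — I expect to find after simplification that $xy^q$ conjugates $uv^q$ to itself, or more likely conjugates it to a conjugate of $uv^q$ by a fixed element, so that iterating $k$ times still centralizes $uv^q$. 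Concretely, I would compute $y^q(uv^q)y^{-q}$ first, then apply conjugation by $x$; the appearance of $B_{1,2}$ terms should telescope because $x$ and $y$ both centralize $B_{1,2}$ (shown in the proof of Proposition~\ref{prop:presTminus1alta}: $xB_{1,2}x^{-1}=B_{1,2}$ and $yB_{1,2}y^{-1}=B_{1,2}$).

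The cleanest route is probably this: show directly that $xy^q$ and $uv^q$ commute (the case $k=l=1$), and then the general case follows since powers of commuting elements commute. To prove $[xy^q, uv^q]=1$, I would compute $(xy^q)(uv^q)(xy^q)^{-1}$ step by step. We have $(xy^q)^{-1} = y^{-q}x^{-1}$, and using $x^{-1}vx = u^{-1}vB_{1,2}^{-1}u$ and $y^{-1}uy = B_{1,2}v^{-1}uv$ from~(\ref{eq:conjxyuv}), together with $x,y$ commuting with $B_{1,2}$, I expect the conjugate of $uv^q$ by $xy^q$ to collapse back to $uv^q$ — essentially because $xy^q$ is, up to the central-ish element $B_{1,2}$, a "lift" of the translation corresponding to $uv^q$ in the torus, and in $P_2(\T)$ these genuinely commute by Corollary~\ref{cor:compactpres}. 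In fact a slicker argument: under $\alpha\colon P_2(\T\setminus\{1\})\to P_2(\T)$, the images of $xy^q$ and $uv^q$ lie in $\mathbb{F}_2(u,v)\times\Z^2$ and commute there (the $\Z^2$-part is central, the $\mathbb{F}_2$-parts $u v^q$ commute with themselves), but this only shows $[xy^q,uv^q]\in\ker\alpha=\langle\!\langle B,B'\rangle\!\rangle$; one then needs the explicit $B_{1,2}$-bookkeeping to kill the commutator exactly.

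\textbf{The main obstacle} I anticipate is the explicit word manipulation in $\mathbb{F}_3(u,v,B)$ to verify that the $B$- and $B_{1,2}$-contributions cancel; the relations~(\ref{it:altpresb}),~(\ref{it:altpresc}),~(\ref{it:altpresd}),~(\ref{it:altpresf}) are moderately complicated, so a naive expansion of $(xy^q)(uv^q)(xy^q)^{-1}$ could be messy. The trick to keep it manageable will be to work throughout with $B_{1,2}$ rather than $B$ (as recommended after the statement of Proposition~\ref{prop:presTminus1alta}), to use the simplified relations~(\ref{eq:conjxyuv}) rather than the raw presentation, and to exploit that both $x$ and $y$ centralize $B_{1,2}$ so that all $B_{1,2}$-factors can be shuttled to one side and cancelled. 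A secondary point to be careful about is the case $q<0$: one must check the conjugation formulas~(\ref{eq:conjxyuv}) extend to negative powers of $x,y,u,v$, which is automatic since they are group relations, so $y^{-1}(uv^q)y$ etc.\ are all governed by the same rules. Once the base case $[xy^q, uv^q]=1$ (and its mirror image) is established, the statement for arbitrary $k,l$ is immediate.
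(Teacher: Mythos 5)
Your plan is correct and is essentially the paper's own proof: after disposing of the cases $p=0$ and $q=0$, one checks by direct computation with the relations \reqref{conjxyuv} (extended to arbitrary integer powers) and the fact that $x$ and $y$ centralise $B_{1,2}$ that $x^{p}y^{q}$ conjugates $u^{p}v^{q}$ to itself whenever $p=1$ or $q=1$ -- the $B_{1,2}$-factors telescope exactly as you predict -- and the commutativity of the powers $(x^{p}y^{q})^{k}$ and $(u^{p}v^{q})^{l}$ follows at once. The one step I would discard is the appeal to a $(u,x)\leftrightarrow(v,y)$ involution of the presentation: it is not clear what such an involution must do to $B$ for the relations to be preserved, and it is unnecessary, since the $q=1$ computation is just as short as the $p=1$ one and uses the same two ingredients.
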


\begin{proof}
We will make use of \repr{presTminus1alta} and some of the relations obtained in its proof. If $p$ or $q$ is zero then the result follows easily. So it suffices to consider the two cases $p=1$ and $q=1$. By \req{conjxyuv}, for $\epsilon\in \brak{1,-1}$, we have $x^{\epsilon}v x^{-\epsilon}=u^{\epsilon}v (u^{-1}B_{1,2})^{\epsilon}$ and $y^{\epsilon}u y^{-\epsilon}=(vB_{1,2}^{-1})^{\epsilon} uv^{-\epsilon}$, and by induction on $r$, it follows that $x^{r} v
x^{-r}=u^{r} v (u^{-1}B_{1,2})^{r}$ and $y^{r}u y^{-r}=(vB_{1,2}^{-1})^{r} uv^{-r}$ for all $r\in \Z$. So if $p=1$ or $q=1$ then we have respectively:
\begin{align*}
xy^{q} uv^{q} y^{-q} x^{-1} &= x (vB_{1,2}^{-1})^{q} uv^{-q} \ldotp
v^{q}x^{-1}= uv^{q}u^{-1} u =uv^{q}, \;\text{and}\\
x^{p}y u^{p}v y^{-1} x^{-p} &= x^{p} (yvy^{-1})^{p} vx^{-p}= x^{p}
v(B_{1,2}^{-1}u)^{p} v^{-1} \ldotp v x^{-p}= u^{p} v
(u^{-1}B_{1,2})^{p}(B_{1,2}^{-1}u)^{p}=u^{p} v,
\end{align*}
as required.
\end{proof}

This enables us to prove the following proposition and \reth{construct2val}.

\begin{prop}\label{prop:construct2valprop}
Suppose that $(a, b),(c,d)$ and $(m,n)$  belong to a cyclic subgroup of $\Z^2$ generated by an element of the form $(0,q), (1,q), (p,0)$ or $(p,1)$, where $p,q\in \Z$, and let $r,s\in \Z$. Then there exist $w\in \mathbb{F}_2(u,v)$, a split $2$-valued map $\phi\colon\thinspace \T \multimap \T$ and a lift $\widehat{\Phi} \colon\thinspace  \T \to F_2(\T)$ of $\phi$ for which $\operatorname{\text{Ab}}(w)=(m,n)$,  $\widehat{\Phi}_{\#}(e_1)=((w^{r},(a,b))$ and  $\widehat{\Phi}_{\#}(e_2)= (w^{s}, (c,d))$, and such that $\phi$ can be deformed to a root-free $2$-valued map. 
\end{prop}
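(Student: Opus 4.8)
The plan is to deduce \repr{construct2valprop} from the lifting criterion of \reth{defchinegI}(\ref{it:defchinegbI}), by exhibiting both a suitable split $2$-valued map and an explicit lift of the corresponding homomorphism into $P_2(\T\setminus\{1\})$. First I would use the hypothesis to fix a generator $(p,q)$ of the cyclic subgroup containing $(a,b)$, $(c,d)$ and $(m,n)$, choosing it to be of one of the four admissible forms, and write $(a,b)=a_0(p,q)$, $(c,d)=c_0(p,q)$, $(m,n)=m_0(p,q)$ with $a_0,c_0,m_0\in\Z$. The point to notice is that each of the forms $(0,q),(1,q),(p,0),(p,1)$ satisfies $p\in\brak{0,1}$ or $q\in\brak{0,1}$, so \relem{exfreero} is available with these particular $p$ and $q$. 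I would then set $w=(u^pv^q)^{m_0}\in\mathbb{F}_2(u,v)$, so that $\operatorname{\text{Ab}}(w)=(m,n)$; since $w^r$ and $w^s$ lie in a common cyclic subgroup of $\mathbb{F}_2(u,v)$ and $(a,b),(c,d)$ commute in $\Z^2$, the pairs $(w^r,(a,b))$ and $(w^s,(c,d))$ commute in $P_2(\T)\cong\mathbb{F}_2(u,v)\times\Z^2$ (\reco{compactpres}). By \repr{baseT2}(\ref{it:baseT2a}) there is then a $2$-ordered map $\widehat{\Phi}\colon\thinspace\T\to F_2(\T)$ with $\widehat{\Phi}_{\#}(e_1)=(w^r,(a,b))$ and $\widehat{\Phi}_{\#}(e_2)=(w^s,(c,d))$, and I take $\phi=\pi\circ\widehat{\Phi}$, which is a split $2$-valued map admitting $\widehat{\Phi}$ as a lift realising the prescribed data.

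Next I would construct the lift required by \reth{defchinegI}(\ref{it:defchinegbI}). In the notation of \repr{presTminus1alta}, set $\xi=x^py^q$ and $\omega=u^pv^q$ in $P_2(\T\setminus\{1\})$. By \relem{exfreero}, $\xi^k$ and $\omega^l$ commute for all $k,l\in\Z$, so the assignment
\begin{equation*}
\widehat{\varphi}(e_1)=\omega^{rm_0}\xi^{a_0},\qquad \widehat{\varphi}(e_2)=\omega^{sm_0}\xi^{c_0}
\end{equation*}
defines a homomorphism $\widehat{\varphi}\colon\thinspace\pi_1(\T)\to P_2(\T\setminus\{1\})$, since $\widehat{\varphi}(e_1)$ and $\widehat{\varphi}(e_2)$ commute. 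It then remains to check $\alpha\circ\widehat{\varphi}=\widehat{\Phi}_{\#}$, where $\alpha\colon\thinspace P_2(\T\setminus\{1\})\to P_2(\T)$ is the inclusion-induced homomorphism, which is exactly $(\widehat{\iota}_2)_{\#}$ of that theorem with $X=Y=\T$, $y_0=1$ and $n=2$. From the commutative diagram~\reqref{f3f2} one has $\alpha(u)=u$, $\alpha(v)=v$, $\alpha(x)=(1,(1,0))$, $\alpha(y)=(1,(0,1))$, hence $\alpha(\omega)=((u^pv^q),(0,0))$ and $\alpha(\xi)=(1,(p,q))$; a short computation gives $\alpha(\widehat{\varphi}(e_1))=(w^r,(a,b))=\widehat{\Phi}_{\#}(e_1)$ and similarly $\alpha(\widehat{\varphi}(e_2))=(w^s,(c,d))=\widehat{\Phi}_{\#}(e_2)$. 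Thus $\widehat{\varphi}$ makes the diagram of \reth{defchinegI}(\ref{it:defchinegbI}) commute, and the theorem yields that $\phi$ can be deformed to a root-free $2$-valued map, which is what is claimed. (This also yields \reth{construct2val}: apply \repr{construct2valprop} with $(a,b),(c,d)$ replaced by $(a-1,b),(c,d-1)$ and translate between roots and fixed points via \relem{equivroot}.)

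I do not expect a genuine obstacle in this argument: everything reduces to \relem{exfreero}, which is already established, together with the dictionary between $P_2(\T)$, $P_2(\T\setminus\{1\})$ and the lifting criterion \reth{defchinegI}(\ref{it:defchinegbI}). The only steps requiring a little care are (i) confirming that every admissible form of the generator $(p,q)$ indeed satisfies the hypothesis $p\in\brak{0,1}$ or $q\in\brak{0,1}$ of \relem{exfreero}, so that $\xi$ and all of its powers commute with all powers of $\omega$, and (ii) the bookkeeping that identifies $(\widehat{\iota}_2)_{\#}$ with $\alpha$ and tracks it through the decomposition $P_2(\T)\cong\mathbb{F}_2(u,v)\times\Z^2$ of \reco{compactpres}; both are routine.
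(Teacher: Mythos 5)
Your proposal is correct and follows essentially the same route as the paper: both define the lift $\widehat{\varphi}\colon\thinspace\pi_1(\T)\to P_2(\T\setminus\{1\})$ by $e_1\mapsto(u^pv^q)^{rm_0}(x^py^q)^{a_0}$, $e_2\mapsto(u^pv^q)^{sm_0}(x^py^q)^{c_0}$, invoke \relem{exfreero} for well-definedness, and conclude via \reth{defchinegI}(\ref{it:defchinegbI}) with $w=(u^pv^q)^{m_0}$. Your explicit verification that $\alpha\circ\widehat{\varphi}=\widehat{\Phi}_{\#}$ is a small amount of bookkeeping the paper leaves implicit, but the argument is the same.
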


\begin{proof}  
Once more we apply \reth{defchinegI}(\ref{it:defchinegbI}). Let $(p,q)$ be a generator of the cyclic subgroup given in the statement. So there exist $\lambda_{1}, \lambda_{2}, \lambda_{3}\in \Z$ such that  $(a, b)=\lambda_1(p,q)$, $(c,d)=\lambda_2(p,q)$ and $(m,n)=\lambda_3(p,q)$. We define $\varphi\colon\thinspace \pi_1(\T)\to P_2(\T\setminus\brak{1})$ by $\varphi(e_1)=(u^pv^q)^{\lambda_3r}(x^py^q)^{\lambda_1}$ and $\varphi(e_2)=(u^pv^q)^{\lambda_3s}(x^py^q)^{\lambda_2}$. \relem{exfreero} implies that $\varphi$ extends to a well-defined homomorphism, and we may take $w=(u^pv^q)^{\lambda_3}$.
\end{proof}

\begin{proof}[Proof of \reth{construct2val}]  This follows directly from \repr{construct2valprop} and the relation between the fixed point and root problems described in \relem{equivroot}.
\end{proof}

\begin{rem}
\reth{construct2val} implies that there is an infinite family of homotopy classes of $2$-valued maps of $\T$ that satisfy the necessary condition of \repr{exisrf}(\ref{it:exisrfa}), and can be deformed to root-free maps.  We do not know whether there exist examples of maps that satisfy this condition but    that cannot be deformed to root-free, however it is likely that such examples exist. 
\end{rem}

\section*{Appendix: Equivalence between $n$-valued maps and maps into configuration spaces}

This appendix constitutes joint work with R.~F.~Brown. Let $n\in \N$. As observed in \resec{intro}, the set of $n$-valued functions from $X$ to $Y$ is in one-to-one correspondence with the set of functions from $X$ to $D_n(Y)$.  As we have seen in the main part of this paper, this correspondence facilitates the study of $n$-valued maps, and more specifically of their fixed point theory. In this appendix, we prove \reth{metriccont} that clarifies the topological relationship preserved by the correspondence under some mild hypotheses on $X$ and $Y$.  For the sake of completeness, we will include the proof of a simple fact (\repr{multicont}) mentioned in~\cite{Sch0} that relates the splitting of maps and the continuity of multifunctions.
   
Given a metric space $Y$, let $\mathcal K'$ be the family of non-empty compact sets of $Y$. We equip $\mathcal K'$ with the topology induced by the  Hausdorff metric on  $\mathcal K'$ defined in~\cite[Chapter~VI, Section~6]{Be}.

\begin{thm}[{\cite[Chapter~VI, Section~6, Theorem~1]{Be}}]\label{th:berge}
Let $X$ and $Y$ be metric spaces, let $\mathcal K'$ denote the family of non-empty compact sets of $Y$, let $\Gamma\colon\thinspace X \multimap Y$ be a multifunction such that for all $x\in X$, $\Gamma(x)\in \mathcal K'$ and $\Gamma(x)\ne \varnothing$. Then $\Gamma$ is continuous if and only if it is a single-valued continuous mapping from $X$ to $\mathcal K'$.
\end{thm}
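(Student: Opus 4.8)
The plan is to prove the two implications of \reth{berge} separately, in each case translating between the Hausdorff metric $d_{H}$ on $\mathcal K'$ and the two semi-continuity conditions. The bridge is the elementary observation, which I would record first, that for non-empty compact $A,B\subseteq Y$ and $\epsilon>0$ one has $d_{H}(A,B)<\epsilon$ precisely when $A$ is contained in the open $\epsilon$-neighbourhood $A_{\epsilon}=\set{y\in Y}{d(y,A)<\epsilon}$... more precisely when $A\subseteq B_{\epsilon}$ and $B\subseteq A_{\epsilon}$, the usual strict-versus-non-strict discrepancy being harmless since one is always free to shrink $\epsilon$.

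For the `only if' implication, suppose $\Gamma$ is continuous as a multifunction, and fix $x_{0}\in X$ and $\epsilon>0$. Upper semi-continuity applied to the open set $V=(\Gamma(x_{0}))_{\epsilon/2}$, which contains $\Gamma(x_{0})$, produces a neighbourhood $U_{1}$ of $x_{0}$ with $\Gamma(x)\subseteq(\Gamma(x_{0}))_{\epsilon/2}$ for all $x\in U_{1}$. For the reverse inclusion I would exploit compactness of $\Gamma(x_{0})$: cover it by finitely many balls $B(y_{i},\epsilon/2)$ with $y_{i}\in\Gamma(x_{0})$, $i=1,\ldots,k$; each of these meets $\Gamma(x_{0})$, so lower semi-continuity gives neighbourhoods $W_{i}$ of $x_{0}$ with $\Gamma(x)\cap B(y_{i},\epsilon/2)\neq\varnothing$ for $x\in W_{i}$, and for $x$ in $U_{2}=\bigcap_{i}W_{i}$ the triangle inequality shows that every point of $\Gamma(x_{0})$ lies within $\epsilon$ of $\Gamma(x)$. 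Taking $U=U_{1}\cap U_{2}$ then gives $d_{H}(\Gamma(x),\Gamma(x_{0}))<\epsilon$ on $U$, so $x\mapsto\Gamma(x)$ is continuous into $(\mathcal K',d_{H})$.

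For the `if' implication, suppose $x\mapsto\Gamma(x)$ is continuous into $(\mathcal K',d_{H})$. To verify upper semi-continuity, let $V$ be open in $Y$ with $\Gamma(x_{0})\subseteq V$ (the case $V=Y$ being trivial); since $\Gamma(x_{0})$ is compact, the continuous positive function $y\mapsto d(y,Y\setminus V)$ attains a positive minimum $\epsilon$ on it, whence $(\Gamma(x_{0}))_{\epsilon}\subseteq V$, and continuity of $\Gamma$ supplies a neighbourhood $U$ of $x_{0}$ with $\Gamma(x)\subseteq(\Gamma(x_{0}))_{\epsilon}\subseteq V$ for $x\in U$. To verify lower semi-continuity, let $V$ be open with $\Gamma(x_{0})\cap V\neq\varnothing$, choose $y_{0}\in\Gamma(x_{0})\cap V$ and $\epsilon>0$ with $B(y_{0},\epsilon)\subseteq V$, and use continuity of $\Gamma$ to find $U$ with $d_{H}(\Gamma(x),\Gamma(x_{0}))<\epsilon$ on $U$; then $y_{0}\in(\Gamma(x))_{\epsilon}$, so $\Gamma(x)$ meets $B(y_{0},\epsilon)\subseteq V$, as required.

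The only genuinely delicate step — and the one I expect to be the main obstacle — is the lower-semi-continuity half of the `only if' direction: one cannot merely approximate a single chosen point of $\Gamma(x_{0})$, but must control all of $\Gamma(x_{0})$ simultaneously, and it is precisely here that compactness of the values of $\Gamma$ is indispensable, through the finite subcover. Elsewhere compactness intervenes only in the mild form `a positive continuous function on a compact set has a positive lower bound', and the remaining bookkeeping (strict versus non-strict inequalities, replacing $\epsilon$ by $\epsilon/2$) is routine.
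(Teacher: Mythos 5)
Your proof is correct, but there is nothing in the paper to compare it against: \reth{berge} is stated purely as a citation of Berge's book (Chapter~VI, Section~6, Theorem~1) and is used as a black box in the proof of \reth{metriccont}; the paper supplies no argument of its own. What you have written is a correct, self-contained proof of this classical equivalence, and it follows the standard route: upper semi-continuity controls $\sup_{z\in\Gamma(x)}d(z,\Gamma(x_{0}))$ via the open $\epsilon/2$-neighbourhood of $\Gamma(x_{0})$, lower semi-continuity together with a finite subcover of $\Gamma(x_{0})$ by $\epsilon/2$-balls controls the other half of the Hausdorff distance, and the converse direction uses compactness only to extract a positive distance from $\Gamma(x_{0})$ to the complement of an open set. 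The one point you wave at (``strict versus non-strict'') does need the finite cover to close properly: for $y\in\Gamma(x_{0})$ one gets $d(y,\Gamma(x))\leq d(y,y_{i})+d(y_{i},\Gamma(x))<\epsilon/2+\max_{i}d(y_{i},\Gamma(x))$, and it is the finiteness of the set of indices $i$ that makes this maximum strictly less than $\epsilon/2$ and hence bounds the supremum over all $y$ strictly below $\epsilon$; with that observation made explicit, the argument is complete. Note also that the closedness requirement in the paper's definition of upper semi-continuity is automatic here, since the values of $\Gamma$ are compact subsets of a metric space.
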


As we mentioned in \resec{intro}, $F_n(Y)$ may be equipped with the topology induced by the inclusion of $F_{n}(Y)$ in $Y^n$, and $D_n(Y)$ 
may be equipped with the quotient topology using the quotient map $\pi\colon\thinspace  F_n(Y) \to D_n(Y)$, a subset $W$ of $D_n(Y)$ being open if and 
only if $\pi^{-1}(W)$ is open in $F_n(Y)$. If $Y$ is a metric space with metric $d$, the set $D_n(Y)$ is a subset of $\mathcal K'$, and the Hausdorff metric on $\mathcal K'$ mentioned above restricts to a Hausdorff metric $d_H$ on $D_n(Y)$ defined as follows. If $z,w \in D_n(Y)$ then there exist $(z_1, \ldots , z_n), (w_1, \ldots , w_n)\in F_n(Y)$ such that $z=\pi(z_1, \ldots , z_n)$ and $w=\pi(w_1, \ldots , w_n)$, and 
we define $d_H$ by:
\begin{equation*}
d_H(z,w)=\max\Bigl(\max_{1\leq i\leq n} d(z_i,w), \max_{1\leq i\leq n} d(w_i,z) \Bigr),
\end{equation*}
where $\displaystyle d(z_i,w)=\min_{1\leq j\leq n} d(z_i,w_j)$ for all $1\leq i\leq n$. Notice that $d_H(z,w)$ does not depend on the choice of representatives in $F_n(Y)$. We now prove \reth{metriccont}.
  
\begin{proof}[Proof of \reth{metriccont}] 
By \reth{berge}, it suffices to show that the set $D_n(Y)$ equipped with the Hausdorff metric $d_{H}$ is homeomorphic to the unordered configuration space $D_n(Y)$ equipped with the quotient topology, or equivalently, to show that a subset of $D_n(Y)$ is open with respect to the Hausdorff metric topology if and only if it is open with respect to the quotient topology. Let $y\in D_n(Y)$, and let $(y_1,\ldots,y_n) \in F_n(Y)$ be such that $\pi(y_1,\ldots,y_n)=y$. 
  
For the `if' part, let $U_1,\ldots, U_n$ be open balls in $Y$ whose centres are $y_1,\ldots,y_n$ respectively. Without loss of generality, we may assume  that they have the same radius $\epsilon>0$, and are pairwise disjoint. Consider the Hausdorff ball $U_{H}$ of radius $\epsilon$ in $D_n(Y)$ whose centre is $y$.
Let $z$ be an element of $U_{H}$, and let $(z_1,\ldots,z_n) \in F_n(Y)$ be such that $\pi(z_1,\ldots,z_n)=z$. Suppose that $z \notin \pi(U_1 \times \cdots \times U_n)$. We argue for a contradiction. Then there exists a ball $U_{i}$ such that 
$z_j \notin U_i$ for all $j\in \brak{1,\ldots,n}$.
So $d(y_i,z)\geq \epsilon$, and from the definition of $d_H$, it follows that $d_H(y,z)\geq \epsilon$, which contradicts the choice of $z$. Hence $z\in \pi(U_1 \times \cdots \times U_n)$, and the `if' part follows.

For the `only if' part, let us consider an open ball $U_{H}$ of radius $\epsilon>0$ in the Hausdorff metric $d_H$ whose 
centre is $y$.
We will show that there are open balls $U_1, \ldots ,U_n$ in $Y$ whose centres are $y_1, \ldots , y_n$ respectively, such that the subset of elements $z$ of $D_{n}(Y)$, where $z=\pi(z_1,\ldots,z_n)$ for some $(z_1,\ldots,z_n) \in F_n(Y)$, and where
each $U_i$ contains exactly one point of $\brak{z_1,\ldots,z_n}$, is a subset of $U_{H}$.
Define $\delta>0$ to be the minimum of $\epsilon$ and the distances $d(y_i, y_j)/2$ for all $i \ne j$.  For $j = 1, \dots , n$ let $U_{j}$ be the open ball in $Y$ of radius $\delta$ with respect to $d$ and whose centre is $y_j$.  Clearly, $U_i \cap U_j = \varnothing$ for all $i \ne j$. So for all $z=\brak{z_1, \dots , z_n}$ belonging to the set $\pi(U_1 \times \cdots \times U_n)$, each $U_j$ contains exactly one point of $\brak{z_1,\ldots,z_n}$, which up to permuting indices, we may suppose to be $z_j$. Further, for all $j = 1, \dots , n$, $d(z_j,y)=d(z_j,y_j)<\delta$ and $d(y_j,z)=d(y_j,z_j)<\delta$. 
So from the definition of $d_H$, $d_H(y,z)<\delta<\epsilon$, hence $z\in U_H$, and the `only if' part follows.
\end{proof}

Just above Lemma~1 of \cite[Section~2]{Sch0}, Schirmer wrote `Clearly a multifunction which splits into maps is continuous'. For the sake of completeness, we provide a short proof of this fact.

\begin{prop}\label{prop:multicont}
Let $n\in \N$, let $X$ be a topological space, and let $Y$ be a Hausdorff topological space. For $i=1,\ldots,n$, let $f_i\colon\thinspace X \to Y$ be continuous. Then the split $n$-valued map $\phi= \brak{f_1,\ldots,f_n}\colon\thinspace  X \multimap  Y$ is continuous.
\end{prop}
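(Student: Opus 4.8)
The plan is to verify directly that $\phi=\brak{f_1,\ldots,f_n}\colon\thinspace X\multimap Y$ satisfies the two conditions in the definition of a continuous multifunction, namely lower semi-continuity and upper semi-continuity, using only the continuity of the single-valued maps $f_1,\ldots,f_n$ together with the Hausdorff hypothesis on $Y$ and the finiteness of $n$.

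First I would check lower semi-continuity. Let $V$ be an open subset of $Y$. By the definition of $\phi$, a point $x\in X$ satisfies $\phi(x)\cap V\ne\varnothing$ if and only if $f_i(x)\in V$ for some $i\in\brak{1,\ldots,n}$, so that $\set{x\in X}{\phi(x)\cap V\ne\varnothing}=\bigcup_{i=1}^{n} f_i^{-1}(V)$. Since each $f_i$ is continuous, each set $f_i^{-1}(V)$ is open in $X$, and hence so is their union; this establishes lower semi-continuity.

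Next I would check upper semi-continuity. For each $x\in X$, the set $\phi(x)=\brak{f_1(x),\ldots,f_n(x)}$ is a finite subset of the Hausdorff space $Y$, and is therefore closed in $Y$. Now let $V$ be an open subset of $Y$. A point $x\in X$ satisfies $\phi(x)\subset V$ if and only if $f_i(x)\in V$ for every $i\in\brak{1,\ldots,n}$, so that $\set{x\in X}{\phi(x)\subset V}=\bigcap_{i=1}^{n} f_i^{-1}(V)$. As this is a finite intersection of open subsets of $X$, it is open, which establishes upper semi-continuity. Combining the two conditions, $\phi$ is continuous.

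The argument is essentially routine: the two semi-continuity conditions reduce immediately to the continuity of the $f_i$ and to the fact that $n$ is finite, so that the relevant intersection is a finite one. The only point that must be stated with care — rather than a genuine obstacle — is the observation that the finite set $\phi(x)$ is closed in $Y$, which is exactly where the Hausdorff assumption on $Y$ enters.
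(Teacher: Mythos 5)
Your proof is correct and follows essentially the same route as the paper's: both verify lower and upper semi-continuity directly from continuity of the $f_i$, with the Hausdorff hypothesis entering only to conclude that each finite set $\phi(x)$ is closed. The only cosmetic difference is that you phrase the argument globally via the identities $\set{x}{\phi(x)\cap V\ne\varnothing}=\bigcup_i f_i^{-1}(V)$ and $\set{x}{\phi(x)\subset V}=\bigcap_i f_i^{-1}(V)$, whereas the paper works pointwise with neighbourhoods; the content is identical.
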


\begin{proof}
Let  $x_0 \in X$, and let $V$ be an open subset of  $Y$ such that $\phi(x_0) \cap V\ne \varnothing$. Then there exists $j\in \brak{1,\ldots,n}$ such that $f_j(x_0) \in V$. Since $f_j$ is continuous, there exists an open subset $U_j$ containing $x_0$ such that if $x \in U_j$ then $f_j(x) \in V$, so $\phi(x)\cap V\ne \varnothing$. Therefore $\phi$ is lower semi-continuous. To prove upper semi-continuity, first note that for all $x\in X$, $\phi(x)$ is closed in $Y$ because $\phi(x)$ is a finite set and $Y$ is Hausdorff. Now let  $x_0 \in X$ be such that $\phi(x_0) \subset V$. We then use the continuity of the $f_j$ to define the $U_j$ as before, and we set $U = \bigcap_{j=1}^n U_j$. Since $\phi(U) \subset V$, we have proved that $\phi$ is also upper semi-continuous, and so it is continuous.
\end{proof}

\end{document}